\documentclass[12pt]{amsart}
 \usepackage{amssymb}
 \usepackage{latexsym}
 \usepackage{graphicx}
 \usepackage{multicol}
 \usepackage{mathrsfs}
 \usepackage{pstricks, pst-node}
 \usepackage{young}
 \usepackage[vcentermath,enableskew,stdtext]{youngtab}
 \usepackage[left=1.18in,right=1.18in,top=1.1in,bottom=1.14in]{geometry}
 \usepackage[all]{xy}
    \SelectTips{cm}{10}     
    \everyxy={<2.5em,0em>:} 
 \usepackage{fancyhdr}      
    \linespread{1}
 \usepackage{multicol}
 \usepackage{multirow}
 \usepackage{enumitem}
 \usepackage{stmaryrd}



\newcounter{ctr}
\theoremstyle{plain}
\newtheorem{theorem}{Theorem}[section]

\newtheorem*{lemma*}{Lemma}
\newtheorem{lemma}[theorem]{Lemma}
\newtheorem{corollary}[theorem]{Corollary}
\newtheorem{proposition}[theorem]{Proposition}
\newtheorem{conjecture}[theorem]{Conjecture}

\newtheorem{propdef}[theorem]{Proposition-Definition}

\theoremstyle{definition}
\newtheorem{definition}[theorem]{Definition}

\newtheorem{remark}[theorem]{Remark}
\newtheorem{example}[theorem]{Example}
\newtheorem{algorithm}[theorem]{Algorithm}



\renewcommand{\AA}{\ensuremath{\mathbb{A}}}

\newcommand{\ann}{\text{\rm Ann}\,}

\newcommand{\CC}{\ensuremath{\mathbb{C}}}

\newcommand{\gl}{\ensuremath{\mathfrak{gl}}}

\newcommand{\gr}{\text{\rm gr}}
\newcommand{\grin}{\text{\rm in}}
\renewcommand{\H}{\ensuremath{\mathscr{H}}}

\renewcommand{\hom}{\text{\rm Hom}}

\newcommand{\idelm}{\ensuremath{id}}

\newcommand{\I}[2]{\ensuremath{\mathcal{I}^{\text{#1}}_{#2}}}
\newcommand{\Il}[1]{\ensuremath{\I{#1}{\lambda}}}
\newcommand{\ind}{\text{\rm Ind}}

\renewcommand{\L}{\ensuremath{\mathscr{L}}}

\newcommand{\Mod}{\ensuremath{\mathbf{Mod}}}

\newcommand{\R}{\ensuremath{\mathscr{R}}}

\newcommand{\sgn}{\text{\rm sgn}}
\newcommand{\spec}{\text{\rm Spec\,}}

\newcommand{\ZZ}{\ensuremath{\mathbb{Z}}}

\newcommand{\be}{\begin{equation}}
\newcommand{\ee}{\end{equation}}
\newcommand{\Res}{\text{\rm Res}}
\renewcommand{\S}{\ensuremath{\mathcal{S}}}
\newcommand{\tsr}{\ensuremath{\otimes}}
\newcommand{\C}{\ensuremath{C^{\prime}}} 

\newcommand{\br}[1]{\ensuremath{\overline{#1}}}

\renewcommand{\u}{\ensuremath{u}}  
\newcommand{\ui}{\ensuremath{u^{-1}}} 

\newcommand{\leftexp}[2]{{\vphantom{#2}}^{#1}{#2}}
\newcommand{\leftsub}[2]{{\vphantom{#2}}_{#1}{#2}}

\newcommand{\Tab}{\ensuremath{\mathcal{T}}}
\newcommand{\word}{\ensuremath{\mathcal{W}}}
\newcommand{\rsd}[1]{\ensuremath{\hat{#1}}}
 \newcommand{\dual}[1]{\ensuremath{{#1}^\vee}}

\newcommand{\y}{\ensuremath{Y}} 
\newcommand{\eW}{\ensuremath{{W_e}}}
\newcommand{\pW}{\ensuremath{{W_e^+}}}
\newcommand{\eH}{\ensuremath{\widehat{\H}}} 
\newcommand{\pH}{\ensuremath{\widehat{\H}^{+}}} 
\newcommand{\pY}{\ensuremath{Y^+}}
\newcommand{\aH}{\ensuremath{\widetilde{\H}}}
\newcommand{\eS}{\widehat{\S}}
\newcommand{\pS}{\widehat{\S}^+}
\newcommand{\aW}{\ensuremath{{W_a}}}
\newcommand{\cs}{CSQ\ensuremath{(\pH)}}
\newcommand{\pPi}{\ensuremath{\Pi^+}}

\newcommand{\lC}{\ensuremath{\overleftarrow{C}^{\prime}}}
\newcommand{\rC}{\ensuremath{\overrightarrow{C}^\prime}}
\newcommand{\lP}{\ensuremath{\overleftarrow{P}}}

\newcommand{\lj}[2]{{#1}_{#2}}
\newcommand{\lJ}[2]{{#1}^{#2}}
\newcommand{\rj}[2]{\leftsub{#2}{#1}}
\newcommand{\rJ}[2]{\leftexp{#2}{#1}}

\newcommand{\klo}[1]{\ensuremath{\leq_{#1}}}
\newcommand{\klocov}[1]{\ensuremath{\xleftarrow[#1]{}}}
\newcommand{\cinvn}[1]{\ensuremath{\mathscr{R}_{1^{#1}}}}
\newcommand{\cinv}{\ensuremath{\cinvn{n}}}

\newcommand{\cc}{\ensuremath{\xrightarrow{\text{cc}}}}
\newcommand{\cat}{\text{rcat}}
\newcommand{\ccat}{\text{ccat}}

\DeclareMathOperator{\ccp}{CCP}
\newcommand{\charge}{\text{charge}}
\newcommand{\ccharge}{\text{cocharge}}
\newcommand{\cl}[1]{\ensuremath{{#1}^\text{cc}}}

\DeclareMathOperator{\ctype}{ctype}
\newcommand{\invlr}{\ensuremath{\Psi}}
\DeclareMathOperator{\rank}{rank}

\newcommand{\gd}{\ensuremath{\trianglerighteq}}
\newcommand{\ld}{\ensuremath{\trianglelefteq}}

\newcommand{\tto}{\ensuremath{\rightsquigarrow}}
\newcommand{\tleftright}{\ensuremath{\leftrightsquigarrow}}

\newcommand{\reading}{\text{\rm rowword}}
\newcommand{\creading}{\text{\rm colword}}
\newcommand{\sh}{\text{\rm sh}}

\DeclareMathOperator{\row}{row}
\DeclareMathOperator{\col}{column}
\newcommand{\skl}[3]{\ensuremath{#1 \xrightarrow{#2} #3}}

\newcommand{\ds}{\ensuremath{D^S}}
\newcommand{\dsw}{\ensuremath{D^S w_0}}

\newcommand{\chena}[2]{\ensuremath{\AA^\text{Chen}_{#1, #2}}} 
\newcommand{\csqa}[2]{\ensuremath{\AA^\text{csq}_{#1, #2}}} 
\newcommand{\moda}[2]{\ensuremath{\AA^\text{mod}_{#1, #2}}}
\newcommand{\swca}[2]{\ensuremath{\AA}^\text{SWc}_{#1,#2}}
\newcommand{\swra}[2]{\ensuremath{\AA}^\text{SWr}_{#1,#2}}
\newcommand{\gpda}[2]{\ensuremath{\AA}^{\text{GP}^\vee}_{#1,#2}}
\newcommand{\gpa}[2]{\ensuremath{\AA}^{\text{GP}}_{#1,#2}}
\newcommand{\sgnpa}[1]{\ensuremath{\AA}^{\text{sgn}}_{#1}}
\newcommand{\nnA}{\ensuremath{A_{\geq 0}}}
\newcommand{\posA}{\ensuremath{A_{>0}}}
\newcommand{\fxccp}{\ensuremath{F^\text{xccp}}}
\newcommand{\fmod}{\ensuremath{F^\text{mod}}}
\newcommand{\fsp}{\ensuremath{F^\text{ccp}}}
\newcommand{\lrcelllong}[1]{\ensuremath{\mathbf{c}_{#1}}}
\newcommand{\lrcell}{\ensuremath{\lrcelllong{\nu}}}
\newcommand{\sgnrec}[1]{\ensuremath{\textbf{sgnQ}_{#1}}}
\newcommand{\wposet}{\ensuremath{\mathscr{P}}}

\renewcommand{\ng}{\text{-}}

\DeclareMathOperator{\sgnp}{sgn_P}
\DeclareMathOperator{\sgnq}{sgn_Q}
\DeclareMathOperator{\partition}{Part}

\newlength{\cellsize}
\cellsize=2.5ex
\newcommand\tableau[1]{
\vcenter{
\let\\=\cr
\baselineskip=-16000pt
\lineskiplimit=16000pt
\lineskip=0pt
\halign{&\tableaucell{##}\cr#1\crcr}}}


\newcommand{\tableaucell}[1]{{%
\def \arg{#1}\def \void{}%
\ifx \void \arg
\vbox to \cellsize{\vfil \hrule width \cellsize height 0pt}%
\else
\unitlength=\cellsize
\begin{picture}(1,1)
\put(0,0){\makebox(1,1){$#1$}}
\put(0,0){\line(1,0){1}}
\put(0,1){\line(1,0){1}}
\put(0,0){\line(0,1){1}}
\put(1,0){\line(0,1){1}}
\end{picture}%
\fi}}

\begin{document}

\author{Jonah Blasiak$^\dagger$}
\title{Cyclage, catabolism, and the affine Hecke algebra}
\thanks{$^\dagger$Is currently an NSF postdoctoral fellow. This research was partially conducted during the period the author was employed by the Clay Mathematics Institute as a Liftoff Fellow.}
\address{The University of Chicago, Department of Computer Science, 1100 East 58th Street, Chicago, IL 60637, USA}
\email{jblasiak@gmail.com}
\keywords{Garsia-Procesi modules, affine Hecke algebra, canonical basis, symmetric group, $k$-atoms}

\begin{abstract}
We identify a  subalgebra $\pH_n$ of the extended affine Hecke algebra $\eH_n$ of type $A$.  The subalgebra $\pH_n$ is a $\u$-analogue of the  monoid algebra of $\S_n \ltimes \ZZ_{\geq 0}^n$ and inherits a canonical basis from that of $\eH_n$. We show that its left cells are naturally labeled by tableaux filled with positive integer entries having distinct residues mod $n$, which we term \emph{positive affine tableaux} (PAT).

We then exhibit a cellular subquotient $\cinv$ of $\pH_n$ that is a $\u$-analogue of the ring of coinvariants $\CC[y_1,\ldots,y_n]/(e_1, \ldots,e_n)$ with left cells labeled by PAT that are essentially standard Young tableaux with cocharge labels. Multiplying canonical basis elements by a certain element $\pi \in \pH_n$ corresponds to rotations of words, and on cells corresponds to cocyclage.
We further show that $\cinv$ has cellular quotients  $\R_\lambda$ that are $\u$-analogues of the Garsia-Procesi modules $R_\lambda$ with left cells labeled by (a PAT version of) the  $\lambda$-catabolizable tableaux.

We give a conjectural description of a cellular filtration of $\pH_n$, the subquotients of which are isomorphic to dual versions of $\R_\lambda$ under the perfect pairing on $\cinv$.  This turns out to be closely related to the combinatorics of the cells of  $\eH_n$ worked out by Shi, Lusztig, and Xi, and we state explicit conjectures along these lines.
We also conjecture that the $k$-atoms of Lascoux, Lapointe, and Morse \cite{LLM} and the $R$-catabolizable tableaux of Shimozono and Weyman \cite{SW} have cellular counterparts in $\pH_n$.  We extend the idea of atom copies from \cite{LLM} to positive affine tableaux and give descriptions, mostly conjectural, of some of these copies in terms of catabolizability.

\end{abstract}

\maketitle
\section{Introduction}
It is well-known that the ring of coinvariants
$R_{1^n} = \CC[y_1,\ldots,y_n]/(e_1, \ldots,e_n)$, thought of as a
$\CC\S_n$-module with $\S_n$ acting by permuting the variables, is a
graded version of the regular representation.  However, how a
decomposition of this module into irreducibles is compatible with
multiplication by the $y_i$ remains a mystery.

A precise question one can ask along these lines  goes as follows.
Let $E \subseteq R_d$ be an $\S_n$-irreducible, where $R_d$ is the $d$-th graded part of the  polynomial ring $R = \CC[y_1,\ldots,y_n]$.  Suppose that the isotypic component of $R_d$ containing $E$ is $E$ itself.  Then define
$I \subseteq R$ to be the sum of all homogeneous ideals $J \subseteq R$ that are left stable under the $\S_n$-action and satisfy $J \cap E = 0$.
The quotient $R/I$ contains $E$ as the unique $\S_n$-irreducible of top degree $d$.
It is natural to ask

\begin{center}What is the graded character of $R/I$?
\end{center}

The most familiar examples of such quotients are the Garsia-Procesi modules $R_\lambda$ (see \cite{GP}), which correspond to the case that $E$ is of shape $\lambda$ and $d = n(\lambda) = \sum_i (i-1)\lambda_i$; refer to this representation $E \subseteq R_{n(\lambda)}$ as the \emph{Garnir representation of shape} $\lambda$ or, more briefly, $G_\lambda$. Combining the work of Hotta-Springer (see \cite{H3}) and Lascoux \cite{La} (see also \cite{SW}) gives the Frobenius series
\be
\label{e frobenius}
\mathscr{F}_{R_\lambda}(t) = \sum_{\substack{T \in SYT\\\ctype(T) \gd \lambda}} t^{\ccharge(T)} s_{\sh(T)},\ee
where $\ctype(T)$ is the catabolizability of $T$ (see \textsection\ref{ss catabolizability 2}).

Though this interpretation of the character of $R_\lambda$ has been known for some time, the only proofs were difficult and indirect. One of the goals of this research, towards which we have been partially successful, was to give a more transparent explanation of the appearance of catabolism in the combinatorics of the coinvariants.

More recent work suggests that there are other combinatorial mysteries hiding in the ring of coinvariants. We strongly suspect that modules with graded characters corresponding to the $k$-atoms of Lascoux, Lapointe, and Morse \cite{LLM} and a generalization of $k$-atoms due to Li-Chung Chen \cite{Ch} sit inside the coinvariants as subquotients. It is also natural to conjecture that the generalization of catabolism due to Shimozono and Weyman \cite{SW} gives a combinatorial description of certain subquotients of the coinvariants which are graded versions of induction products of  $\S_n$-irreducibles.

This paper describes an approach to these problems using canonical bases, which has so far been quite successful and will hopefully help solve some of the difficult conjectures in this area.
After reviewing the necessary background on Weyl groups and Hecke algebras (\textsection\ref{s Hecke algebra}) and canonical bases and cells (\textsection\ref{s canonical bases and cells}), we introduce the central algebraic object of our work, a subalgebra $\pH$ of the extended affine Hecke algebra which is a $\u$-analogue of the monoid algebra of $\S_n \ltimes \ZZ_{\geq 0}^n$. In \textsection\ref{s type A and the positive part of eH}, we establish some basic properties of this subalgebra and describe its left cells.
It turns out that these cells are naturally labeled by tableaux filled with positive integer entries having distinct residues mod $n$, which we term \emph{positive affine tableaux} (PAT). Our investigations have convinced us that these are excellent combinatorial objects for describing graded $\S_n$-modules.


After some preparatory combinatorics and formalism in \textsection\ref{s cocyclage catabolism atoms}, we go on to show in \textsection\ref{s pW-graph version of R_(1^n)} that $\pH$ has a cellular quotient $\cinv$ that is a $\u$-analogue of $R_{1^n}$.  The module $\cinv$ has a canonical basis labeled by affine words that are essentially standard words with cocharge labels, with left cells labeled by PAT that are essentially standard tableaux with cocharge labels.
Multiplying canonical basis elements by a certain element $\pi \in \pH$ corresponds to rotations of words, and on left cells corresponds to cocyclage.

In this cellular picture of the coinvariants,  $G_\lambda$ corresponds to a left cell of $\cinv$ labeled by a PAT of shape $\lambda$, termed the \emph{Garnir tableau of shape}  $\lambda$, again denoted $G_\lambda$. In \textsection\ref{s extending garsia-procesi}, we identify $\u$-analogues $\R_\lambda$ of  the $R_\lambda$ and give several equivalent descriptions of these objects. Most importantly, we show that $\R_\lambda$ is cellular and its left cells are labeled by  (a PAT version of) the $\lambda$-catabolizable tableaux. The proof uses several ingredients:
\begin{itemize}
\item The positivity of the structure coefficients of the canonical basis of $\pH$,
\item Identifying certain canonical basis elements of $\pH$ as elementary symmetric functions in subsets of the Bernstein generators $\y_1, \dots, \y_n$ (Theorem \ref{t elemsymkl}),
\item The $\u=1$ results of Garsia-Procesi and Bergeron-Garsia.
\end{itemize}
Given these ingredients, the proof is quite easy. One of the hopes of this approach was to give a proof of equation (\ref{e frobenius}) not relying on the $\u=1$ results. Though we have not yet achieved this goal, the cellular picture provided by $\pH$ seems to give an extremely good way of connecting representation theory with difficult combinatorics, both intuitively and conjecturally.

There is a well-known perfect pairing $\langle , \rangle: R_{1^n} \times R_{1^n} \to \CC$ given by $\langle f_1, f_2 \rangle$ equal to the projection of $f_1 f_2$ onto the sign representation of $R_{1^n}$. In section \ref{cj flip}, we conjecture a stronger duality for the canonical basis of $\cinv$ which is surprisingly subtle.  Under the perfect pairing, the Garsia-Procesi modules correspond to what we call dual Garsia-Procesi modules. If the conjectured duality holds, then these modules have $\u$-analogues that are cellular, called dual GP csq (csq stands for cellular subquotient).

The final goal of this paper, the subject of \textsection\ref{s atoms}, is to describe our progress towards connecting more elaborate combinatorics with other cellular subquotients of $\pH$. Though we are primarily interested in subquotients of the coinvariants, it appears that there are many other copies of these subquotients in $\pH$. Though we believe these copies to be isomorphic as cellular subquotients, they come with genuinely different combinatorics, just as the cocyclage poset on semistandard tableaux is not obviously isomorphic to a subposet of the cocyclage poset on standard tableaux.  We conjecture that there is a cellular filtration of $\pH$, the subquotients of which are isomorphic to dual GP csq.    This turns out to be closely related to the combinatorics of the cells of the extended affine Weyl group worked out by Shi, Lusztig, and Xi \cite{Shi, L two-sided An, X2}, and we state explicit conjectures along these lines. We also conjecture descriptions of some of these copies of dual GP csq in terms of a version of catabolizability  for PAT. In \textsection\ref{ss SSYT in PAT} we show that a certain subset of PAT are essentially the same as semistandard tableaux of partition content, and conjecture a similar statement for arbitrary content. This leads to a new interpretation of charge for semistandard tableaux (proven for partition content, conjectural in general).

We also conjecture that the $R$-catabolizable tableaux of Shimozono and Weyman, the $k$-atoms of Lascoux, Lapointe, and Morse, and Chen's atoms all have cellular counterparts in $\pH$. The conjectural isomorphic copies of such atoms in $\pH$ generalize both Lascoux's standardization map \cite{La} and the atom copies in \cite{LLM}.
We believe that a critical problem towards understanding $k$-atoms and catabolizability is to produce a combinatorial structure less rigid than tableaux that makes it obvious that these copies are isomorphic. See the introduction to \textsection\ref{s atoms} and Remark \ref{r sign insertion fail} for more about this.

\section{Hecke algebras}
\label{s Hecke algebra}
Following \cite{H} (see also \cite{L3}), we introduce Weyl groups and Hecke algebras in full generality. In \textsection\ref{s type A and the positive part of eH} and on, we work only in type A.

\subsection{}
\label{ss coxeter groups}
Let $(W,S)$ be a Coxeter group and $\Pi$ an abelian group acting on $(W,S)$ by automorphisms. The extended Coxeter group associated to this data is the pair $(\eW, S)$, where $\eW$ is the semidirect product $\Pi \ltimes W$. The length function $\ell$ and partial order $\leq$ on $W$ extend to $\eW$: $\ell(\pi v) = \ell(v)$, and $\pi v \leq \pi' v'$ if and only if $\pi = \pi'$ and $v \leq v'$, where $\pi, \pi' \in \Pi$, $v, v' \in W$.

If $\ell(uv)=\ell(u)+\ell(v)$, then $uv = u\cdot v$ is a \emph{reduced factorization}. The notation $L(w) = \{s\in S : sw < w\}, R(w) = \{s\in S : ws < w\}$ will be used for the left and right descent sets of $w$.

Although it is possible to allow parabolic subgroups to be extended Coxeter groups, we define a parabolic subgroup of $\eW$ to be an ordinary parabolic subgroup of $W$ to simplify the discussion (this is the only case we will need).  

For any $J\subseteq S$, the \emph{parabolic subgroup} $\eW_J = W_J$ is the subgroup of $\eW$ generated by $J$. Each left (resp. right) coset $w\eW_J$ (resp. $\eW_Jw$) of $\eW_J$ contains a unique element of minimal length called a minimal coset representative. The set of all such elements is denoted $\eW^J$ (resp. $\leftexp{J}{\eW}$). For any $w \in \eW$, define $\lJ{w}{J}$, $\rj{w}{J}$ by
\be w=\lJ{w}{J} \cdot \rj{w}{J},\ \lJ{w}{J} \in \eW^J,\ \rj{w}{J} \in \eW_J.\ee
Similarly, define $\lj{w}{J}$, $\rJ{w}{J}$ by
\be w= \lj{w}{J} \cdot \rJ{w}{J},\ \lj{w}{J} \in \eW_J,\ \rJ{w}{J} \in \leftexp{J}\eW.\ee

\subsection{} \label{ss affine weyl groups}
Let $(Y, \alpha'_i, \alpha'^\vee_i)$, $i \in [n-1]$ be the root system specifying a reductive algebraic group $G$ over $\CC$. Write $Y^\vee$ for the dual lattice $\hom(Y,\ZZ)$ and $\langle\ ,\ \rangle$ for the pairing between $Y$ and $Y^\vee$. Let $W_f$ be the Weyl group of this root system and $S = \{s_1, \dots, s_{n-1}\}$ the set of simple reflections.
The group $W_f$ is the subgroup of automorphisms of the lattice $Y$ generated by the reflections $s_i$. Let $R'_f$ be the set of roots and $Q'_f$ the root lattice.

The \emph{extended affine Weyl group} is the semidirect product
\[W_e:=Y \rtimes W_f.\]
Elements of $Y \subseteq \eW$ will be denoted by the multiplicative notation $y^\lambda, \lambda\in Y$.

The group $W_e$ is also equal to $\Pi \ltimes W_a$, where $W_a$ is the Weyl group of an affine root system we will now construct and $\Pi$ is an abelian group. Let $X=Y^\vee\oplus\ZZ$ and $\delta$ be a generator of $\ZZ$. The pairing of $X$ and $X^\vee$ is obtained by extending the pairing of $Y$ and $Y^\vee$ together with $\langle\delta, Y\rangle=0$. Let $\phi'$ be the dominant short root of $(Y, \alpha'_i, \alpha'^\vee_i)$ and $\theta=\phi'^\vee$ the highest coroot. For $i\neq 0$ put $\alpha_i=\alpha'^\vee_i$ and $\alpha^\vee_i=\alpha'_i$; put $\alpha_0=\delta-\theta$ and $\alpha^\vee_0=-\phi'$. Then $(X, \alpha_i, \alpha^\vee_i)$, $i \in [0,n-1]$ is an affine root system with Weyl group $\aW$.

The abelian group $Q'_f$ is realized as a subgroup of $W_a$ acting on $X$ and $X^\vee$ by translations. This action extends to an action of $Y$, which realizes $W_e$ as a subgroup of the automorphisms of $X$ and $X^\vee$. The inclusion $W_a\hookrightarrow W_e$ is given on simple reflections by $s_i \mapsto s_i$ for $i\neq 0$ and $s_0 \mapsto y^{\phi'}s_{\phi'}$. The subgroup $\aW$ is normal in $\eW$ with quotient $W_e/W_a \cong Y/Q'_f$, denoted $\Pi$. And, as was our goal, we have $W_e=\Pi\ltimes W_a$.


The set of dominant weights $Y_+$ is the cone in $Y$ given by
\be Y_+ = \{ \lambda \in Y: \langle \lambda,\alpha'^\vee_i\rangle \geq 0 \text{ for all } i \}. \ee

Let $K = \{s_0, s_1, \dots, s_{n-1}\}$ be the set of simple reflections of $W_a$. The pairs $(W_f, S)$ and $(\aW, K)$ are Coxeter groups, and $(\eW, K)$ is an extended Coxeter group. The parabolic subgroup $\eW_S$ is equal to $W_f$.

\subsection{}

Let $A = \ZZ[\u,\ui]$ be the ring of Laurent polynomials in the indeterminate $\u$ and $A^-$ be the subring $\ZZ[\ui]$. The \emph{Hecke algebra} $\H(W)$ of an (extended) Coxeter group $(W, S)$ is the free $A$-module with basis $\{T_w :\ w\in W\}$ and relations generated by
\begin{equation}\label{hecke eq}
\begin{array}{ll}T_uT_v = T_{uv} & \text{if } uv = u\cdot v\ \text{is a reduced factorization}\\
 (T_s - \u)(T_s + \ui) = 0 & \text{if } s\in S.\end{array}
\end{equation}

For each $J\subseteq S$, $\H(W)_J$ denotes the subalgebra of $\H(W)$ with $A$-basis $\{T_w:\ w\in W_J\}$, which is also the Hecke algebra of $W_J$.
\subsection{}
\label{ss two presentations of eH}
The \emph{extended affine Hecke algebra} $\eH$ is the Hecke algebra $\H(\eW)$. Just as the extended affine Weyl group $W_e$ can be realized both as $\Pi\ltimes W_a$ and $Y \rtimes W_f$, the extended affine Hecke algebra can be realized in two analogous ways:

The algebra $\eH$ contains the Hecke algebra $\H(\aW)$ and is isomorphic to the twisted group algebra $\Pi\cdot\H(W_a)$ generated by $\Pi$ and $\H(W_a)$ with relations generated by
$$\pi T_w=T_{\pi w \pi^{-1}}\pi$$
for $\pi\in\Pi$, $w\in W_a$.

There is also a presentation of $\eH$ due to Bernstein. For any $\lambda\in Y$ there exist $\mu,\nu\in Y_+$ such that $\lambda = \mu-\nu$. Define
\[\y^\lambda := T_{y^\mu}(T_{y^\nu})^{-1},\]
which is independent of the choice of $\mu$ and $\nu$. The algebra $\eH$ is the free $A$-module with basis $\{\y^\lambda T_w :\ w\in W_f,\lambda\in Y\}$ and relations generated by
\[ \begin{array}{ll}T_i\y^\lambda=\y^\lambda T_i & \text{if }\langle\lambda,\alpha'^\vee_i\rangle=0,\\
T^{-1}_i\y^\lambda T^{-1}_i=\y^{s_i(\lambda)} & \text{if }\langle \lambda,\alpha'^\vee_i\rangle=1,\\
(T_i - \u)(T_i + \ui) = 0 & \end{array}\]
for all $i \in [n-1]$, where $T_i := T_{s_i}$. From this, one may deduce the more general commutation relation for $\lambda \in Y$:

\be\label{e TY} T_i \y^\lambda - \y^{s_i(\lambda)} T_i = \frac{(\u-\ui)(\y^{\alpha'_i})}{\y^{\alpha'_i}-1}(\y^\lambda-\y^{s_i(\lambda)}), \quad i \in [n-1]. \ee
Be aware that, in the language of \cite{H}, we are using the right affine Hecke algebra, so this equation differs slightly from its counterpart \cite[(19)]{H} for the left.

We will make use of the following three important bases of $\eH$; the last one, the canonical basis, will be defined in the next section.
\setcounter{ctr}{0}
\begin{list}{\emph{(\roman{ctr})}} {\usecounter{ctr} \setlength{\itemsep}{1pt} \setlength{\topsep}{2pt}}
\item The \emph{standard basis} $\{T_w : w\in W_e\}$,
\item The \emph{Bernstein basis} $\{\y^\lambda T_w:\lambda\in Y,w\in W_f\}$,
\item The \emph{canonical basis} $\{\C_w:w\in W_e\}$.
\end{list}
We remark that $\{T_w \y^\lambda :\lambda\in Y,w\in W_f\}$ is also a basis of $\eH$ and that the results we state using the basis (ii) have counterparts using this basis, but we will not state them explicitly.

\section{Canonical bases and cells}
\label{s canonical bases and cells}
\subsection{}
The bar-involution, $\br{\cdot}$, of $\H(W)$ is the additive map from $\H(W)$ to itself extending the involution $\br{\cdot}$: $A\to A$ given by $\br{u} = \ui$ and satisfying $\br{T_w} = T_{w^{-1}}^{-1}$. Observe that $\br{T_{s}} = T_s^{-1} = T_s + \ui - u$ for $s \in S$. Some simple $\br{\cdot}$-invariant elements of $\H(W)$ are $\C_\text{id} := T_\text{id}$ and $\C_s := T_s + \ui = T_s^{-1} + u$, $s\in S$. The $\br{\cdot}$-invariant $\u$-integers are $[k] := \frac{\u^k - \u^{-k}}{\u - \ui} \in A$.

\subsection{}
\label{ss kl basics}
In \cite{KL}, Kazhdan and Lusztig introduce $W$-graphs as a combinatorial structure for describing an $\H(W)$-module with a special basis. A $W$-graph consists of a vertex set $\Gamma$, an edge weight $\mu(\delta,\gamma)\in \ZZ$ for each ordered pair $(\delta,\gamma)\in\Gamma\times\Gamma$, and a descent set $L(\gamma) \subseteq S$ for each $\gamma\in\Gamma$. These are subject to the condition that $A\Gamma$ has a left $\H(W)$-module structure given by
\begin{equation}\label{Wgrapheq}\C_{s}\gamma = \left\{\begin{array}{ll} [2] \gamma & \text{if}\ s \in L(\gamma),\\
\sum_{\substack{\{\delta \in \Gamma : s \in L(\delta)\}}} \mu(\delta,\gamma)\delta & \text{if}\ s \notin L(\gamma). \end{array}\right.\end{equation}

We will use the same name for a $W$-graph and its vertex set.  If an $\H(W)$-module $E$ has an $A$-basis $\Gamma$ that satisfies (\ref{Wgrapheq}) for some choice of descent sets, then we say that $\Gamma$ gives $E$ a \emph{$W$-graph structure}, or $\Gamma$ is a $W$-graph on $E$.

It is convenient to define two $W$-graphs $\Gamma,\Gamma'$ to be isomorphic if they give rise to isomorphic $\H(W)$-modules with basis. That is, $\Gamma\cong\Gamma'$ if there is a bijection $\alpha:\Gamma\to\Gamma'$ of vertex sets such that $L(\alpha(\gamma)) = L(\gamma)$ and $\mu(\alpha(\delta),\alpha(\gamma)) = \mu(\delta,\gamma)$ whenever $L(\delta)\not\subseteq L(\gamma)$.

 Define the lattice
\[ \L = A^{-} \{ T_w : w \in W \}. \]

\begin{theorem}[Kazhdan-Lusztig \cite{KL}]
\label{t kl canonical basis}
For each $w \in W$, there is  a unique element $\C_w \in \H(W)$ such that $\br{\C_w} = \C_w$ and $\C_w$ is congruent to $T_w \mod \ui \L$.  There exist integers $\mu(x,w)$, $x, w \in W$ so that $\{\C_w: w\in W\}$ gives $\H(W)$ a $W$-graph structure.
 \end{theorem}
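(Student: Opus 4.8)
The statement is the foundational theorem of Kazhdan and Lusztig, so the plan is to reproduce their construction, in three stages. \emph{Stage 1 (bar involution).} First I would record the structural facts about $\br{\cdot}$. Checking the defining relations (\ref{hecke eq}) on generators shows $\br{\cdot}$ is a ring involution of $\H(W)$; then an induction on $\ell(w)$, using a reduced factorization $w = s\cdot w'$, the identity $\br{T_s} = T_s + \ui - \u$, and the lifting property of the Bruhat order, gives the unitriangularity
\[ \br{T_w} = \sum_{x \leq w} R_{x,w}\, T_x, \qquad R_{w,w} = 1,\ R_{x,w} \in A. \]
The extended case needs no change: length-$0$ elements are units with $\br{T_\pi} = T_{\pi^{-1}}$, and Bruhat intervals $[\text{id},w]$ stay finite. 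The only other input is that $\ui A^- \cap \br{\ui A^-} = 0$; combined with unitriangularity (inspect the coefficient of $T_x$ for $x$ maximal in the support) this shows that any $\br{\cdot}$-invariant element of $\ui\L$ vanishes.

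\emph{Stage 2 (existence and uniqueness of $\C_w$).} Uniqueness is immediate from the last remark, since two candidates differ by a $\br{\cdot}$-invariant element of $\ui\L$. For existence, fix $w$ and determine the coefficients of $\C_w = \sum_{x \leq w} P_{x,w} T_x$ by downward induction on $x$ in the finite interval $[\text{id}, w]$: put $P_{w,w}=1$, and at each step the requirement $\br{\C_w} = \C_w$ reduces, via the expansion above, to a single equation $P_{x,w} - \br{P_{x,w}} = m_x$, whose right-hand side $m_x$ is already known and is anti-invariant under $\br{\cdot}$ (because $\br{\C_w}-\C_w$ is anti-invariant and its coefficients of $T_y$ for $y>x$ have been arranged to vanish). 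Such an equation has a unique solution with $P_{x,w}\in\ui A^-$, obtained by splitting the anti-invariant Laurent polynomial into its negative and positive parts. This yields the unique $\C_w$, and $\C_w \equiv T_w \bmod \ui\L$ by construction; in particular $\{\C_w : w \in W\}$ is an $A$-basis of $\H(W)$.

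\emph{Stage 3 ($W$-graph structure).} Take the descent set of the vertex $w$ to be the left descent set $L(w) = \{s \in S : sw < w\}$. A direct computation from the quadratic relation gives $\C_s^2 = (\u+\ui)\C_s = [2]\C_s$. The main work is to prove, by induction on $\ell(w)$, the multiplication formula: $\C_s\C_w = [2]\C_w$ when $s \in L(w)$, and $\C_s\C_w = \C_{sw} + \sum_{z\,:\,sz<z}\mu(z,w)\,\C_z$ when $s \notin L(w)$, for suitable $\mu(z,w)\in\ZZ$. When $s\notin L(w)$, the element $\C_s\C_w = (T_s+\ui)\C_w$ is $\br{\cdot}$-invariant with leading term $T_{sw}$ and support contained in $[\text{id},sw]$, hence equals $\C_{sw} + \sum_{z<sw} a_z\C_z$ with each $a_z$ $\br{\cdot}$-invariant; comparing coefficients of $T_z$ and using that $P_{x,w}\in\ui A^-$ for $x<w$ forces $a_z\in\ui A^-$ unless $sz<z$, so that $a_z\in\ZZ$ and $a_z=0$ when $sz>z$; set $\mu(z,w):=a_z$, extended by $0$ to all pairs. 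When $s\in L(w)$, apply the case just treated to $sw$ (for which $s$ is not a left descent) to get $\C_w = \C_s\C_{sw} - \sum_{z\,:\,sz<z}\mu(z,sw)\C_z$; multiplying on the left by $\C_s$ and using $\C_s^2 = [2]\C_s$ together with the inductively known identities $\C_s\C_z = [2]\C_z$ (valid since $sz<z$ and $\ell(z)<\ell(w)$) gives $\C_s\C_w = [2]\C_w$. Since every basis element occurring in the $s\notin L(w)$ case — namely $\C_{sw}$ and the $\C_z$ with $sz<z$ — has $s$ in its left descent set, these formulas are exactly relation (\ref{Wgrapheq}), so $\{\C_w\}$ gives $\H(W)$ a $W$-graph structure with edge weights $\mu(x,w)$.

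The main obstacle is Stage 3 — extracting the multiplication formula, and in particular verifying that the correction coefficients $a_z$ are honest integers that vanish unless $s$ is a left descent of $z$. This is precisely where the degree bounds on the Kazhdan–Lusztig polynomials $P_{x,w}$ implicitly enter, and it requires careful tracking of $\ui A^-$-membership through multiplication by $T_s$. Stages 1 and 2, by contrast, are essentially formal once the unitriangularity of $\br{\cdot}$ is in hand.
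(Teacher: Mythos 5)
The paper does not prove this theorem; it is cited from Kazhdan--Lusztig \cite{KL} and used as a black box throughout, so there is no in-paper argument to compare against. Your reconstruction is a correct rendering of the classical proof, in the by-now-standard form that separates existence from the multiplication formula: Stage 2 produces $\C_w$ by downward induction on the Bruhat interval, splitting an anti-invariant Laurent polynomial into its $\ui A^-$ and $\u\ZZ[\u]$ parts, rather than the original KL device of defining $\C_w$ recursively as $\C_s\C_{sw}$ minus corrections. Both yield the same basis, and your Stage 3 then recovers the multiplication formula (hence the $W$-graph structure) by writing $\C_s\C_w = \sum_y a_y \C_y$, observing the $a_y$ are $\br{\cdot}$-invariant, and using the constraint $P'_{x,w}\in\ui A^-$ for $x<w$ to force $a_y\in\ZZ$ with $a_y=0$ when $sy>y$; you correctly identify this degree tracking as the crux. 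One small slip: for a length-zero element $\pi$ one has $\br{T_\pi} = (T_{\pi^{-1}})^{-1} = T_\pi$, not $T_{\pi^{-1}}$; but since $T_\pi$ is in any case $\br{\cdot}$-invariant and $\C_{\pi w}=T_\pi\C_w$, the reduction of the extended case to the ordinary Coxeter case goes through unchanged.
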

The $A$-basis $\{\C_w : w\in W\}$ of $\H(W)$ is the \emph{canonical basis} or Kazhdan-Lusztig basis.  The corresponding $W$-graph is denoted $\Gamma_W$.

The coefficients of the $\C$'s in terms of the $T$'s are the \emph{Kazhdan-Lusztig polynomials} $P'_{x,w}$:
\be \C_w = \sum_{x \in W} P'_{x,w} T_x. \ee
(Our $P'_{x,w}$ are equal to $q^{(\ell(x)-\ell(w))/2}P_{x,w}$, where $P_{x,w}$ are the polynomials defined in \cite{KL} and $q^{1/2} = \u$.) The $W$-graph $\Gamma_W$ may be described in terms of Kazhdan-Lusztig polynomials as follows: the edge-weight $\mu(x,w)$ is equal to the coefficient of $\ui$ in $P'_{x,w}$ (resp. $P'_{w,x}$) if $x \leq w$ (resp. $w \leq x$).

\begin{remark}
\label{r wgraph symmetric}
Not all of the integers $\mu(x,w)$ matter for the $\H(W)$-module structure on $A \Gamma_W$, i.e., different choices of certain edge-weights would lead to isomorphic $W$-graphs. However, the convention above in which $\mu(w,x) = \mu(x,w)$  is sometimes convenient and we maintain this throughout the paper.
\end{remark}

\subsection{}
\label{ss cells}
Let $\Gamma$ be a $W$-graph and put $E=A\Gamma$. The preorder $\klo{\Gamma}$ (also denoted $\klo{E}$) on the vertex set $\Gamma$ is generated by the relations/edges
\be
\label{e preorder}
\delta\klocov{\Gamma}\gamma \begin{array}{c}\text{if there is an $h\in\H(W)$ such that $\delta$ appears with non-zero}\\ \text{coefficient in the expansion of $h\gamma$
in the basis $\Gamma$}. \end{array}
\ee
Equivalence classes of $\klo{\Gamma}$ are the \emph{left cells} of $\Gamma$. Sometimes we will speak of the left cells of $E$ or the preorder on $E$ to mean that of $\Gamma$, when the $W$-graph $\Gamma$ is clear from context.
 A \emph{cellular submodule} of $E$ is a submodule of $E$ that is spanned by a subset of $\Gamma$ (and is necessarily a union of left cells). A \emph{cellular quotient} of $E$ is a quotient of $E$ by a cellular submodule, and a \emph{cellular subquotient} of $E$ is a cellular submodule of a cellular quotient. We will abuse notation and sometimes refer to a cellular subquotient by its corresponding union of cells.

\begin{remark} \label{r edge direction}
Throughout this paper we use the convention that when identifying a poset with a directed acyclic graph, edges are directed from bigger elements to smaller ones.
\end{remark}

\subsection{}
\label{ss easyedges}
The preorder $\klo{E}$ induces a partial order on the cells of $E$, which is also denoted $\klo{E}$. This seems to be quite difficult to compute completely; it is not even known for the $\S_n$-graph $\Gamma_{\S_n}$. We will see some results that help determine $\klo{E}$ throughout the paper. We can state one such result now, which originated in the work of Barbasch and Vogan on primitive ideals, and is proven in the generality stated here by Roichman \cite{R} (see also \cite[\textsection3.3]{B0}).

\begin{proposition}\label{p restrict Wgraph}
Let $J \subseteq S$ and $E = \Res_{\H(W_J)} \H(W)$. Then for any $x \in \leftexp{J}{W}$,
\be A \{\C_{vx} : v \in W_J \} \xrightarrow{\cong} \H(W_J), \C_{vx} \mapsto \C_v \ee
is an isomorphism of $\H(W_J)$-modules with basis (equivalently, the corresponding map of $W_J$-graphs is an isomorphism). In particular, any left cell of $E$ is isomorphic to one occurring in $\H(W_J)$.
\end{proposition}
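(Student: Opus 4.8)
The plan is to identify, for each $x\in\leftexp{J}{W}$, the right-coset block $\{u\in W:\rJ{u}{J}=x\}$ as a cellular subquotient of $E$ — this is the cellular subquotient that, in the paper's convention, is denoted $A\{\C_{vx}:v\in W_J\}$ — and to show it is isomorphic, as an $\H(W_J)$-module with canonical basis, to $\H(W_J)$, via $\C_{vx}\mapsto\C_v$. The parabolic facts I will use are: $W=\coprod_{x\in\leftexp{J}{W}}W_Jx$, with retraction $w\mapsto\rJ{w}{J}$; for $v\in W_J$ the factorization $vx=v\cdot x$ is reduced; $a\le b\Rightarrow\rJ{a}{J}\le\rJ{b}{J}$; and the lifting property of the Bruhat order. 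Since left multiplication by $T_s$ ($s\in J$) sends $T_u$ into $A\{T_{u'}:u'\in W_Ju\}$ — the Hecke relations combined with $su\in W_Ju$ — both $E_{\le x}:=A\{T_u:\rJ{u}{J}\le x\}$ and $E_{<x}:=A\{T_u:\rJ{u}{J}<x\}$ are left $\H(W_J)$-submodules of $E$, and the $A$-linear map $\pi_x\colon\H(W)\to\H(W_J)$, $T_{vx}\mapsto T_v$ for $v\in W_J$ and $T_u\mapsto 0$ if $\rJ{u}{J}\ne x$, is a homomorphism of left $\H(W_J)$-modules (one checks this on the generators $T_s$ using the reduced factorization). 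Because $\{u:\rJ{u}{J}\le x\}$ is downward closed in Bruhat order, $E_{\le x}=A\{\C_u:\rJ{u}{J}\le x\}$; hence $E_{\le x}$, and likewise $E_{<x}$, is $\br{\cdot}$-stable and is a cellular submodule. So $\pi_x$ restricts to a surjection $E_{\le x}\twoheadrightarrow\H(W_J)$ with kernel $E_{<x}$, yielding a cellular subquotient $\overline{E}_x:=E_{\le x}/E_{<x}$ with standard basis $\{\overline{T_{vx}}:v\in W_J\}$ and canonical basis $\{\overline{\C_{vx}}:v\in W_J\}$.

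The crux is that $\pi_x|_{E_{\le x}}$ intertwines the bar-involutions. On $T_{vx}$ this reduces — via $\br{T_{vx}}=\br{T_v}\,\br{T_x}$ (the factorization is reduced and $\br{\cdot}$ is a ring homomorphism) and the module-map property of $\pi_x$ — to the identity $\pi_x(\br{T_x})=T_{\text{id}}$; and this holds because $\br{T_x}$ is a combination of $T_y$ with $y\le x$, with leading coefficient $1$, while a length count (using $\rJ{x}{J}=x$) shows $x$ is the only such $y$ lying in the coset $W_Jx$. On $T_u$ with $\rJ{u}{J}<x$ both sides vanish for the same reason. Hence $\pi_x$ descends to an isomorphism $\psi\colon\overline{E}_x\xrightarrow{\cong}\H(W_J)$ of $\H(W_J)$-modules that commutes with $\br{\cdot}$ and sends $A^-\{\overline{T_{vx}}:v\in W_J\}$ onto $A^-\{T_v:v\in W_J\}$. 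Since $\overline{\C_{vx}}$ is $\br{\cdot}$-invariant and congruent to $\overline{T_{vx}}$ modulo $\ui A^-\{\overline{T_{vx}}:v\in W_J\}$, its image $\psi(\overline{\C_{vx}})$ is $\br{\cdot}$-invariant and congruent to $T_v$ modulo $\ui A^-\{T_v:v\in W_J\}$, so $\psi(\overline{\C_{vx}})=\C_v$ by the uniqueness statement of Theorem~\ref{t kl canonical basis}. This is exactly the asserted isomorphism of $\H(W_J)$-modules with basis.

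For the final assertion, I would verify that the $E_{\le x}$, $x\in\leftexp{J}{W}$, form a cellular filtration of $E$; it is enough to show $\rJ{\cdot}{J}$ is weakly decreasing along $\klo{E}$, i.e.\ that every $\C_\delta$ occurring in some $\C_s\C_u$ (with $s\in J$) has $\rJ{\delta}{J}\le\rJ{u}{J}$. For $\delta\le u$ this is the monotonicity of $\rJ{\cdot}{J}$; for $\delta$ with $u\lessdot\delta$ and $s\in L(\delta)\setminus L(u)$ the lifting property forces $\delta=su$, whence $\rJ{\delta}{J}=\rJ{u}{J}$. Therefore every left cell of $E$ is a left cell of exactly one subquotient $\overline{E}_x\cong\Gamma_{W_J}$, so is isomorphic to one occurring in $\H(W_J)$.

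The step I expect to be the main obstacle is the bar-equivariance of $\pi_x$: it holds only after restriction to $E_{\le x}$, not on all of $\H(W)$, and its core is the identity $\pi_x(\br{T_x})=T_{\text{id}}$, which rests on the combinatorial fact that $x$ is the unique element of its right $W_J$-coset lying weakly below $x$ in Bruhat order. The remaining inputs — the parabolic factorizations, monotonicity of $\rJ{\cdot}{J}$, and the lifting property — are standard, but must be deployed carefully so that the correction terms in all the Hecke-algebra computations never leave the coset $W_Jx$.
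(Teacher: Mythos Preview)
Your argument is correct. The paper itself does not give a proof of this proposition: it is attributed to Barbasch--Vogan and Roichman, with a pointer to \cite{R} and \cite[\S3.3]{B0}, and no argument is written out. So there is no in-paper proof to compare against; what you have written is essentially the standard proof of this result.

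A few remarks on presentation. The usual formulation proves directly the polynomial identity $P'_{vx,wx}=P'_{v,w}$ for $v,w\in W_J$ and $x\in\leftexp{J}{W}$ (hence $\mu(vx,wx)=\mu(v,w)$), from which the $W_J$-graph isomorphism is immediate. Your route via the projection $\pi_x$ and its bar-equivariance on $E_{\le x}$ is exactly the clean module-theoretic packaging of that same computation; the key step $\pi_x(\br{T_x})=T_{\mathrm{id}}$, together with the $\H(W_J)$-linearity of $\pi_x$, is precisely what yields that polynomial identity. Your ``final assertion'' paragraph is in fact redundant: once you have shown (via the $T$-basis) that each $E_{\le x}$ is an $\H(W_J)$-submodule spanned by canonical basis elements, it is by definition a cellular submodule, so left cells already sit inside a single coset block $\overline{E}_x$. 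In that paragraph, note also that the only $\delta>u$ occurring in $\C_s\C_u$ (for $s\notin L(u)$) is $\delta=su$ by the multiplication formula $\C_s\C_u=\C_{su}+\sum_{z<u,\,sz<z}\mu(z,u)\C_z$; you do not need the lifting property for this, though invoking it does no harm.
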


Despite the difficulty of computing $\klo{E}$, there are two kinds of easy edges that will be of interest to us.

If $\Gamma$ is a cellular subquotient of the $W$-graph $\Gamma_W$, then
\be \C_{sw} \klo{\Gamma} \C_w, \quad \text{if } sw > w, s \in S. \ee
We will refer to such edges as \emph{ascent-edges} and the corresponding edges between cells as \emph{ascent-induced edges} (that is, for left cells $T_1, T_2$ of $\Gamma$, $T_1 \klo{\Gamma} T_2$ is an ascent-induced edge if there exist $\gamma_1 \in T_1$, $\gamma_2 \in T_2$ such that $\gamma_1 \klo{\Gamma} \gamma_2$ is an ascent-edge).

If $\Gamma$ is a cellular subquotient of the $\eW$-graph $\Gamma_\eW$, then
\be \C_{\pi w} \klo{\Gamma} \C_w, \quad \text{for any } \pi \in \Pi, w\in \eW. \ee
A specific instance of this type of edge will be called a corotation-edge (see \ref{ss cells of pH}).

\section{Type $A$ and the positive part of $\eH$}
\label{s type A and the positive part of eH}
Here we introduce a subalgebra $\pH$ of $\eH$ that plays a crucial role in our goal of relating subquotients of $R$ to tableau combinatorics. We also introduce the set of affine tableaux (AT) and positive affine tableaux (PAT), which label left cells of $\Res_\H \eH$ and $\pH$.

\subsection{}
From now on, specialize to the case $G=GL_n$. The groups $W_f, \aW, \eW$, roots $R'_f$, root lattice $Q'_f$, etc. are now understood to be those of type A. Let $\H, \aH, \eH$ denote the Hecke algebras of $W_f, \aW, \eW$, sometimes decorated with a subscript $n$ to emphasize that they correspond to type $A_{n-1}$ or $\tilde{A}_{n-1}$. As in \textsection\ref{ss affine weyl groups}, $S = \{s_1, \dots, s_{n-1}\}$ are the simple reflections of $W_f$ and $K = \{s_0, \dots, s_{n-1}\}$ are those of $\aW$ and $\eW$.

The lattices $Y$ and $Y^\vee$ are equal to $\ZZ^n$ and $\alpha'_i=\epsilon_i-\epsilon_{i+1}$, $\alpha'^\vee_i=\epsilon^\vee_i-\epsilon^\vee_{i+1}$, where $\epsilon_i$ and $\epsilon^\vee_i$ are the standard basis vectors of $Y$ and $Y^\vee$. The finite Weyl group $W_f$ is $\S_n$ and the subgroup $\Pi$ of $W_e$ is $\ZZ$. The element $\pi=y_1s_1s_2\ldots s_{n-1}\in\Pi$ is a generator of $\Pi$. This satisfies the relation $\pi s_i=s_{i+1}\pi$, where, here and from now on, the subscripts of the $s_i$ are taken mod $n$.

Here is a table that summarizes the algebras defined so far and some to be defined shortly.
\[\begin{array}{c|c|c}
\text{Group, monoid, etc.} & \text{Group algebra over $\CC$} & \u\text{-analogue}\\
\S_n=W_f & \CC\S_n & \H_n\\
\widetilde{\S_n}=W_a\cong Q\rtimes W_f & \CC\widetilde{\S_n} & \aH_n\\
\widehat{\S_n}=W_e\cong Y\rtimes W_f & \CC[{y_1}^{\pm 1},\ldots, {y_n}^{\pm 1}] \star\S_n := \CC(Y\rtimes W_f) & \eH_n\\
\widehat{\S_n}^+=W^+_e\cong \pY\rtimes W_f & \CC[y_1,\ldots, y_n]\star\S_n := \CC(\pY \rtimes W_f) & \pH_n\\
\pY & R=\CC[y_1,\ldots, y_n] & \R\\
\ds & R_{1^n}=\CC[y_1,\ldots, y_n]/(e_1,\ldots,e_n) & \cinv
\end{array}\]

\subsection{}
\label{ss words of eW}
Another description of $\eW$, due to Lusztig, identifies it with the group of permutations $w: \ZZ \to \ZZ$ satisfying $w(i+n) = w(i)+n$ and $\sum_{i = 1}^n (w(i) - i) \equiv 0$ mod $n$. The identification takes $s_i$ to the permutation transposing $i+kn$ and $i+1+kn$ for all $k \in \ZZ$, and takes $\pi$ to the permutation $k \mapsto k+1$ for all $k \in \ZZ$. We take the convention of specifying the permutation of an element $w \in \eW$ by the word
\[{\small n+1-w^{-1}(1)\ \ n+1-w^{-1}(2)\ \dots\ n+1-w^{-1}(n).}\]
We refer to this as the \emph{inverted window word}, \emph{affine word}, or simply \emph{word} of $w$, and, when there is no confusion, the word of $w$ will be written as $w_1 w_2 \cdots w_n$; this is understood to be part of an infinite word so that $w_i = \rsd{i}-i + w_{\rsd{i}}$, where $\rsd{\cdot} : \ZZ \to [n]$ is the map sending an integer $i$ to the integer in $[n]$ it is congruent to mod $n$. For example, if $n = 4$ and $w = \pi^2 s_2 s_0 s_1$, then the word of $w$ is $8\ 3\ 5\ 2$, thought of as part of the infinite word $\dots 12\ 7\ 9\ 6\ 8\ 3\ 5\ 2\ 4\ \ng 1\ 1\ \ng 2 \dots$ .

The following formulas relate multiplication of elements of $\eW$ with manipulations on words.  We adopt the convention of writing $a.b$ in place of $na+b$ ($a,b \in \ZZ$). In examples with actual numbers, $a$ and  $b$ will always be single-digit numbers and we will omit the dot.
{\footnotesize
\renewcommand{\minalignsep}{0pt}
\begin{flalign}
\nonumber \quad \text{\footnotesize{Element of }} \footnotesize{\eW} &&\text{\footnotesize{inverted window word}} &&\\
\label{e wordmult1} \idelm && n\ n-1 \cdots 2\ 1 &&\\
\label{e wordmult2} w &&x_1\ x_2 \cdots x_n&& \\
\label{e wordmult3} s_i w &&x_1\ x_2 \cdots x_{i+1}\ x_i \cdots x_n && i \in [n-1]\\
\label{e wordmult4} s_0 w &&1.x_n\ x_2 \cdots x_{n-1}\ (-1).x_1 &&\\
\label{e wordmult5} w s_{n-i} &&x_1 \cdots x_j + 1 \cdots x_k - 1 \cdots x_n && x_j \equiv i, x_k \equiv i+1, i \in [n] \\
\label{e wordmult6} y^\lambda w &&\lambda_1.x_1\ \lambda_2.x_2 \cdots \lambda_n.x_n && \\
\label{e wordmult7} \pi w &&1 . x_n\ x_1\ x_2 \cdots x_{n-1}  &&\\
\label{e wordmult8} w \pi &&x_1 + 1\ x_2 + 1\cdots x_n + 1
\end{flalign}}
Here are some basic facts we will need about words of $\eW$. See \cite{X2} for a thorough treatment.
\begin{proposition}\label{p affinewordlessthan}
For $w \in \eW$ and $s_i \in S$, $s_i w > w$ if and only if $w_i > w_{i+1}$. Similarly, $w s_{n-i} > w$ if and only if $j > k$, where $j$ and $k$ are such that $w_j = i, w_k = i+1$.
\end{proposition}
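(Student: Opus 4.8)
The plan is to reduce the statement to a concrete verification using the word multiplication formulas \eqref{e wordmult3} and \eqref{e wordmult5} together with Proposition~\ref{p restrict Wgraph}-style descent reasoning (or, more directly, the combinatorial definition of the length function on $\eW$). First, recall that by definition $s_i w > w$ iff $s_i \notin L(w)$, equivalently iff $\ell(s_i w) = \ell(w) + 1$. So the real content is: $\ell(s_i w) > \ell(w)$ if and only if $w_i > w_{i+1}$, where $w_1 w_2 \cdots w_n$ is the inverted window word (read as an infinite periodic sequence with $w_{j+n} = w_j + n$).

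The key step is to use the standard formula for the length of an element of $\eW$ in terms of its realization as a periodic permutation $w \colon \ZZ \to \ZZ$. Writing $w$ for the underlying permutation, one has the classical inversion formula
\[
\ell(w) \;=\; \sum_{\substack{1 \le a \le n \\ a < b}} \bigl[\, w(a) > w(b)\,\bigr],
\]
summing over all $b > a$ with $a$ restricted to a window (this is standard for affine permutations; see \cite{X2}). Multiplying by $s_i$ on the left transposes the \emph{values} $i + kn$ and $i{+}1{+}kn$ for all $k$; in terms of the inverted window word of $w$, which records the sequence $w^{-1}$ up to the normalization $n+1 - w^{-1}(\cdot)$, left multiplication by $s_i$ swaps the entries in positions $i$ and $i+1$ — this is exactly formula \eqref{e wordmult3} (extended $n$-periodically so that \eqref{e wordmult4} is the $i \equiv 0$ case). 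So passing from $w$ to $s_i w$ changes exactly one inversion: the pair of positions $\{i, i+1\}$ flips from non-inversion to inversion or vice versa, and all other inversion counts are unchanged because the entries being compared in those other pairs are unaffected. Hence $\ell(s_i w) = \ell(w) + 1$ precisely when positions $i, i+1$ were not an inversion in $w$, i.e. $w_i > w_{i+1}$ (with the sign/ordering convention fixed by \eqref{e wordmult1}--\eqref{e wordmult3}), and $\ell(s_i w) = \ell(w) - 1$ otherwise.

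The second assertion is the mirror image: right multiplication $w \mapsto w s_{n-i}$ acts on \emph{positions} of $w^{-1}$, equivalently on the \emph{values} appearing in the window word, as described by \eqref{e wordmult5} — it increments the (unique in each period) entry $x_j \equiv i$ and decrements the entry $x_k \equiv i{+}1$, which geometrically swaps the two positions holding values congruent to $i$ and $i+1$. Running the same single-inversion-flip argument on $w^{-1}$ in place of $w$ (using $\ell(w) = \ell(w^{-1})$), one gets $w s_{n-i} > w$ iff $j > k$, where $w_j = i$ and $w_k = i+1$. I expect the main obstacle to be purely bookkeeping: getting the normalization $n + 1 - w^{-1}(\cdot)$, the mod-$n$ conventions on subscripts, and the $a.b := na + b$ notation all to line up so that "inversion of the periodic permutation" translates to the stated inequality $w_i > w_{i+1}$ with the correct direction — in particular checking the wrap-around case $i \equiv 0$ against \eqref{e wordmult4}. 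None of it is deep; it is just a matter of carefully matching the two descriptions of $\eW$ in \textsection\ref{ss words of eW}.
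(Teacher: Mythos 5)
The paper states this proposition as a known fact (referring to \cite{X2}) without giving its own proof, so there is no internal argument to compare against; but the inversion-counting strategy you propose is the right one. For the first assertion your argument essentially works, with two caveats. You cite a permutation-level inversion formula but then speak of ``the pair of positions $\{i,i+1\}$'' flipping, which is a statement about the word (i.e.\ about $w^{-1}$); these pictures are equivalent but you move between them without comment. And the phrase ``the entries being compared in those other pairs are unaffected'' is not quite right: the entries at positions $i$ and $i+1$ \emph{are} affected, and what actually saves the count is that for $a\notin\{i,i+1\}$ the terms for $(a,i)$ and $(a,i+1)$ simply trade places. It is cleaner to use Proposition~\ref{p affinewordlength} directly: swapping $w_i$ and $w_{i+1}$ changes exactly one summand, namely $\bigl|\lfloor(w_i-w_{i+1})/n\rfloor\bigr|$, and since $w_i-w_{i+1}\not\equiv 0\pmod{n}$ that term goes up by $1$ precisely when $w_i>w_{i+1}$.

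The genuine gap is in the second assertion, which you dispatch with ``mirror image'' and ``running the same argument on $w^{-1}$.'' Equation~\eqref{e wordmult5} is \emph{not} a swap of word entries: it sends $w_j\mapsto w_j+1$ and $w_k\mapsto w_k-1$ where $w_j\equiv i$, $w_k\equiv i+1\pmod{n}$, so the multiset $\{w_1,\dots,w_n\}$ changes, and the ``only one pair term changes'' reasoning from the first part does not transfer verbatim. One now needs a separate check that for $a\notin\{j,k\}$ the unit shift does not alter $\lfloor(w_a-w_j)/n\rfloor$ or $\lfloor(w_a-w_k)/n\rfloor$ (which holds because no other window entry has residue $i$ or $i+1$ mod $n$), and then a direct calculation that the one surviving term increases exactly when $j>k$, where $j,k$ are the indices in the \emph{infinite} periodic word with $w_j=i$, $w_k=i+1$ (these may well lie outside $[n]$, so passing from the in-window residue positions of \eqref{e wordmult5} to these exact-value indices is a real, if elementary, step). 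Your sketch skips this entirely. A cleaner route is to use the anti-automorphism $\invlr$ of \textsection\ref{ss invlr}: $\invlr(ws_{n-i})=s_i\invlr(w)$ and $\invlr$ is length-preserving, and the description of the word of $\invlr(w)$ there shows $\invlr(w)_i=j$ and $\invlr(w)_{i+1}=k$, reducing the second assertion to the first applied to $\invlr(w)$ --- with the caveat that when $i=n$ this uses $s_0\notin S$, so one needs the first assertion extended to all of $K$, which the same inversion argument provides but the proposition as stated does not.
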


\begin{proposition}
\label{p affinewordlength}
For $w \in \eW$, the length of $w$ may be expressed in terms of its word by
\be
l(w) = \sum_{1 \leq i < j \leq n} \left|\left\lfloor\frac{w_i - w_j}{n}\right\rfloor\right|,
\ee
where $\lfloor x\rfloor$ is the greatest integer less than $x$.
\end{proposition}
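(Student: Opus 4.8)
The plan is to induct on $\ell(w)$, using Proposition~\ref{p affinewordlessthan} to locate a length-decreasing simple reflection and the word-multiplication rules (\ref{e wordmult3})--(\ref{e wordmult4}) to track its effect on the claimed sum. Write $\rho(w):=\sum_{1\le i<j\le n}|\lfloor (w_i-w_j)/n\rfloor|$. The one standing remark I will lean on is that for $i\ne j$ in $[n]$ the entries $w_i,w_j$ of the inverted window word lie in distinct residue classes mod $n$ (the word is built from $w^{-1}$, which descends to a bijection of $\ZZ/n\ZZ$), so each $(w_i-w_j)/n$ is non-integral; hence the ``greatest integer strictly less than $x$'' in the statement coincides with the ordinary floor on every quantity that occurs, and the identity $\lfloor -x\rfloor=-\lfloor x\rfloor-1$ is available throughout.

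The base case $\ell(w)=0$ is immediate: then $w=\pi^m$, and from the description of $\pi$ in \textsection\ref{ss words of eW} the word of $\pi^m$ is $w_i=n+1+m-i$, so $w_i-w_j=j-i\in\{1,\dots,n-1\}$ for $i<j$ and $\rho(\pi^m)=0=\ell(\pi^m)$. For the inductive step I pick a simple reflection $s_i$ with $s_iw<w$. When $i\in[n-1]$ this is the easy case: Proposition~\ref{p affinewordlessthan} gives $w_i<w_{i+1}$, and by (\ref{e wordmult3}) the word of $s_iw$ is that of $w$ with the entries in positions $i,i+1$ transposed, so $\rho$ barely sees the change --- for $j<i$ the unordered pair $\{w_j-w_i,\ w_j-w_{i+1}\}$ of differences is unchanged, likewise $\{w_i-w_k,\ w_{i+1}-w_k\}$ for $k>i+1$, and all other terms are untouched. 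Thus only the $(i,i{+}1)$-term moves, from $|\lfloor(w_i-w_{i+1})/n\rfloor|$ to $|\lfloor(w_{i+1}-w_i)/n\rfloor|$, which (since $w_i<w_{i+1}$) is a decrease by exactly $1$; hence $\rho(w)=\rho(s_iw)+1=\ell(s_iw)+1=\ell(w)$ by the inductive hypothesis.

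The case that takes work is when the only length-decreasing simple reflection is $s_0$. Here I will first record the descent criterion $s_0w<w\iff w_0<w_1$, i.e.\ $w_n+n<w_1$ (this is the evident $i=0$ instance of Proposition~\ref{p affinewordlessthan}, which one can also extract from $s_0=\pi s_{n-1}\pi^{-1}$ together with the $s_{n-1}$ case), so that $q:=\lfloor(w_1-w_n)/n\rfloor\ge 1$. By (\ref{e wordmult4}) passing to $s_0w$ replaces the entries $w_1,w_n$ in positions $1$ and $n$ by $w_n+n$ and $w_1-n$ and leaves positions $2,\dots,n-1$ alone, so the terms of $\rho$ that move are those indexed by $(1,k)$ and $(k,n)$ for $2\le k\le n-1$, together with the term $(1,n)$. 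The plan is to check, using $\lfloor -x\rfloor=-\lfloor x\rfloor-1$, that for each such $k$ the pair $\{(1,k),(k,n)\}$ contributes the same total before and after, while the $(1,n)$-term drops from $q$ to $q-1$ (its difference passing from $w_1-w_n$ to $w_n-w_1+2n$); then $\rho(s_0w)=\rho(w)-1=\ell(w)-1=\ell(s_0w)$ and the induction closes.

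I expect the $s_0$ step to be the main obstacle: unlike (\ref{e wordmult3}), the move (\ref{e wordmult4}) is not a transposition of word entries, so one must expand several terms of the double sum and verify that the changes telescope to exactly $1$ --- and this is precisely the point at which the ``$n+1-w^{-1}$'' reversal built into the word convention earns its keep. Conceptually, $\rho(w)$ is nothing but the number of inversions of $w$ regarded as an $n$-periodic bijection of $\ZZ$, counted modulo the diagonal shift and sorted according to the residues mod $n$ of the two positions; one could alternatively run the whole argument directly in that language (compare \cite{X2}), with the same bookkeeping showing up when one translates through the reversing convention.
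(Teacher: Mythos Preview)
Your proof is correct. The paper itself does not prove this proposition; it is listed among ``basic facts we will need about words of $\eW$'' with a reference to \cite{X2} for details. Your induction on $\ell(w)$ is the natural approach, and both the $s_i$ ($i\in[n-1]$) and the $s_0$ cases go through exactly as you outline: in the latter, for each $2\le k\le n-1$ the identity $\lfloor -x\rfloor=-\lfloor x\rfloor-1$ (valid since $(w_a-w_b)/n$ is never an integer) gives $|\lfloor(w_n-w_k)/n\rfloor+1|=|b|$ and $|\lfloor(w_k-w_1)/n\rfloor+1|=|a|$ where $a=\lfloor(w_1-w_k)/n\rfloor$, $b=\lfloor(w_k-w_n)/n\rfloor$, so the paired contributions match, and the $(1,n)$ term drops by exactly $1$ as you claim.

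One small point worth making explicit in your write-up: the existence of a length-decreasing $s_i\in K$ when $\ell(w)>0$ uses that $w=\pi^m v$ with $v\in W_a$, $\ell(v)>0$, so some $s'\in K$ satisfies $s'v<v$, and then $s:=\pi^m s'\pi^{-m}\in K$ gives $sw<w$. This is routine but should be said, since the extended group $\eW$ is not itself a Coxeter group.
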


\begin{proposition}
\label{p wordlj}
Given $w \in \eW$, let $x_1 x_2 \cdots x_n$ be the result of replacing the numbers of the word $w_1 w_2 \cdots w_n$ of $w$ by the numbers $1,\dots,n$ so that relative order is preserved. Then $x$ is the word of $\lj{w}{S}$ (the notation $\lj{w}{S}$ is defined in \textsection\ref{ss coxeter groups}).
\end{proposition}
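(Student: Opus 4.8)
The plan is to reduce the claim to two simple facts: how left multiplication by $W_f$ affects window words, and what the words of $\idelm$ and of minimal coset representatives look like. First observe that the entries $w_1,\dots,w_n$ are distinct (they equal $n+1-w^{-1}(1),\dots,n+1-w^{-1}(n)$ and $w^{-1}$ is a bijection), so the standardization $x$ is well defined.

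The key mechanism is: for any $u\in W_f$ there is a fixed permutation $\rho_u$ of the $n$ positions such that $\text{word}(uv)_i = \text{word}(v)_{\rho_u^{-1}(i)}$ for every $v\in\eW$. This follows from formula (\ref{e wordmult3}), which says left multiplication by $s_i$ ($i\in[n-1]$) transposes positions $i$ and $i+1$ of the window word, together with $W_f=\langle s_1,\dots,s_{n-1}\rangle$; the permutation $\rho_u$ is independent of the chosen reduced word for $u$ because applying it to $\text{word}(\idelm)=n\,(n-1)\cdots 1$ (formula (\ref{e wordmult1})) recovers $\text{word}(u)$, and $\text{word}(\idelm)$ has distinct entries. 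Since a place-permutation does not change the multiset of entries of a word, standardization is equivariant: $\text{std}\big(\text{word}(uv)\big)_i = \text{std}\big(\text{word}(v)\big)_{\rho_u^{-1}(i)}$.

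Now write $w=\lj{w}{S}\cdot\rJ{w}{S}$ with $\lj{w}{S}\in \eW_S=W_f$ and $\rJ{w}{S}\in\leftexp{S}{\eW}$, and set $\rho=\rho_{\lj{w}{S}}$. Since $\rJ{w}{S}$ is the minimal-length element of its coset $W_f\,\rJ{w}{S}$, we have $s_i\,\rJ{w}{S}>\rJ{w}{S}$ for all $i\in[n-1]$, so by Proposition \ref{p affinewordlessthan} the word of $\rJ{w}{S}$ is strictly decreasing; hence $\text{std}\big(\text{word}(\rJ{w}{S})\big)=n\,(n-1)\cdots 1=\text{word}(\idelm)$. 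Combining the equivariance of $\text{std}$ applied to $w=\lj{w}{S}\cdot\rJ{w}{S}$ with the identity $\text{word}(\lj{w}{S})=\text{word}(\lj{w}{S}\cdot\idelm)$, we get for every $i$
\[
\text{std}\big(\text{word}(w)\big)_i \;=\; \text{std}\big(\text{word}(\rJ{w}{S})\big)_{\rho^{-1}(i)} \;=\; \text{word}(\idelm)_{\rho^{-1}(i)} \;=\; \text{word}(\lj{w}{S})_i ,
\]
which is exactly the assertion $x=\text{word}(\lj{w}{S})$.

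The only delicate points are conventions: pinning down the direction of the place-permutation action so it matches formula (\ref{e wordmult3}), and correctly using the standard description of $\leftexp{S}{\eW}$ as $\{v\in\eW:s_iv>v\text{ for all }i\in[n-1]\}$; neither is genuinely difficult. If one prefers to dispense with the place-permutation formalism, the same result follows by induction on $\ell(\lj{w}{S})$: the base case $\lj{w}{S}=\idelm$ is the strictly-decreasing-word case just treated, and in the inductive step one picks $s_i\in L(\lj{w}{S})$, verifies $\lj{(s_iw)}{S}=s_i\lj{w}{S}$ and $\rJ{(s_iw)}{S}=\rJ{w}{S}$ (so $\ell(\lj{(s_iw)}{S})<\ell(\lj{w}{S})$), and notes that passing from $w$ to $s_iw$ transposes positions $i,i+1$ in both $\text{word}(w)$ and $\text{std}(\text{word}(w))$ by (\ref{e wordmult3}), so the claimed equality for $s_iw$ gives it for $w$.
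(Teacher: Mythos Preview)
Your proof is correct and takes essentially the same approach as the paper: both arguments rest on the facts that left multiplication by $s_i\in S$ swaps positions $i$ and $i+1$ of the window word (so the standardization is equivariant under the $W_f$-action), and that $\rJ{w}{S}$ is characterized by having strictly decreasing word, whence its standardization is $\text{word}(\idelm)$. The paper phrases this as ``sort $w$ by length-decreasing left multiplications by $s_i$'s; the same sequence sorts $x$ to $\idelm$,'' which is exactly your inductive alternative at the end, while your main argument packages the same content via the place-permutation map $\rho_u$.
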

\begin{proof}
Left-multiply $w$ by a sequence $s_{i_1} s_{i_2} \cdots s_{i_l}$, $i_j \in [n-1]$ until the resulting element $w'$ has word $w'_1 w'_2 \cdots w'_n$ such that $w'_1 > w'_2 > \cdots > w'_n$ and $\{w'_1, w'_2, \ldots, w'_n\} = \{w_1, w_2, \ldots, w_n\}$. This may be done so that each left-multiplication decreases length by 1. The same sequence of left-multiplications transforms $x_1 x_2 \cdots x_n$ into $\idelm = n\ n-1 \cdots 2\ 1$. By Proposition \ref{p affinewordlessthan}, $L(w') \subseteq \{s_0\}$. Therefore, $\rj{w}{S} = w'$ and $\lj{w}{S} = s_{i_1} s_{i_2} \cdots s_{i_l}$, and $s_{i_1} \cdots s_{i_l}$ has word $x_1 \cdots x_n$.
\end{proof}

Let $w^j$ be the subword of the word of $w$ in the alphabet $[jn+1,( j + 1) n]$ and $(w^j)^*$ denote the result of subtracting $jn$ from all the numbers in $w^j$.

\begin{proposition}
\label{p wordrj}
For $w \in \eW$, the word of $\rj{w}{S}$ is given by $w^0 (w^1)^* (w^2)^*\dots$. Equivalently, $\rj{w}{S}$ is given by $w_{j_1} w_{j_2} \cdots w_{j_n}$, where $j_1 < j_2 < \dots < j_n$ are such that $w_{j_i} \in [n]$.
\end{proposition}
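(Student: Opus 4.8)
The plan is to reduce the statement to the characterization of $\rj{w}{S}$ as the unique element of $W_f$ appearing in the reduced factorization $w = \lJ{w}{S}\cdot\rj{w}{S}$ with $\lJ{w}{S}\in\eW^S$, and then to identify this element combinatorially using the length formula of Proposition \ref{p affinewordlength} together with the descent criterion of Proposition \ref{p affinewordlessthan}. First I would observe that $\rj{w}{S}\in W_f=\eW_S$ is exactly the permutation $\sigma$ of $[n]$ characterized by the condition that $w\sigma^{-1}\in\eW^S$, i.e.\ $L(w\sigma^{-1})\subseteq\{s_0\}$; equivalently $w\sigma^{-1}$ is the element whose word is strictly decreasing in positions $1,\dots,n$ but whose values form the same set as those of the word of $w$ within each residue-window $[jn+1,(j+1)n]$. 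Since right multiplication by elements of $W_f$ permutes the \emph{positions} of the window word (by \eqref{e wordmult3}, $s_iw$ swaps positions $i,i{+}1$; note we want $\rj{w}{S}$ on the right, but the analogous statement for right multiplication permuting positions follows from applying the same formulas to $w^{-1}$, or directly: right multiplication by $s_i$ acts on the word by the rule in \eqref{e wordmult5}, while here we reorganize via the simpler left-action picture after passing to inverses), the element $\rj{w}{S}$ is precisely the permutation that sorts the positions $j_1<\dots<j_n$ carrying the entries $w_{j}\in[n]$ into increasing order of position — which is the content of the second, equivalent formulation, and the first formulation $w^0(w^1)^*(w^2)^*\cdots$ is just a restatement in terms of the residue-windows.

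Concretely, here is the sequence of steps. (1) By definition of $\rj{w}{S}$, write $w=\lJ{w}{S}\cdot\rj{w}{S}$ with $\lJ{w}{S}\in\eW^S$; since $\eW^S=\leftexp{}{}\{x : R(x)\cap S=\emptyset\}$ and $\eW_S=W_f$, the element $x:=\lJ{w}{S}$ is characterized by $R(x)\subseteq\{s_0\}$, equivalently by Proposition \ref{p affinewordlessthan} (its right-descent version) the word $x_1\cdots x_n$ of $x$ satisfies a condition that forces, within each window $[jn+1,(j+1)n]$, the subword to be arranged the unique way compatible with $x$ being a minimal coset representative. (2) Check that the candidate element $\sigma$ defined by $w_{j_1}w_{j_2}\cdots w_{j_n}$ (with $j_1<\dots<j_n$ the positions of the entries in $[n]$) indeed lies in $W_f$: its word is a permutation of $1,\dots,n$ because exactly one entry of the infinite word in each residue class mod $n$ lies in $[n]$, so exactly $n$ positions $j_i$ occur and the values $w_{j_i}$ are a permutation of $[n]$; one also checks $\sum(w_{j_i}-i)\equiv 0$ automatically since it is a finite permutation. (3) Verify $w\sigma^{-1}\in\eW^S$. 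Using \eqref{e wordmult3}–\eqref{e wordmult5} to track how right multiplication by a finite permutation reshuffles the window word, one shows that $w\sigma^{-1}$ has word whose restriction to each window $[jn+1,(j+1)n]$ is \emph{decreasing}, hence by the right-descent version of Proposition \ref{p affinewordlessthan}, $R(w\sigma^{-1})\cap S=\emptyset$. (4) Conclude by uniqueness of the factorization that $\rj{w}{S}=\sigma$, and translate the position-description into the window-description $w^0(w^1)^*(w^2)^*\cdots$.

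The main obstacle I anticipate is Step (3): carefully bookkeeping the effect of right multiplication by $\sigma^{-1}$ on the infinite window word. The formula \eqref{e wordmult5} describes right multiplication by a single generator $s_{n-i}$ as incrementing/decrementing by $n$ the (unique) occurrences of certain residues, and iterating this along a reduced word for $\sigma^{-1}$ requires showing the net effect is exactly to sort each residue-window into decreasing order without interfering across windows — which is plausible but needs a clean invariant (e.g.\ tracking, for each residue $r\in[n]$, which window its unique ``representative'' entry lands in, and showing this is governed precisely by $\sigma$). An alternative, possibly cleaner route that sidesteps the $\eW$-side bookkeeping: verify directly via Proposition \ref{p affinewordlength} that $l(w)=l(\lJ{w}{S})+l(\sigma)$ where $\lJ{w}{S}$ is the element with the sorted-within-windows word and $\sigma$ as above, using that $l(w)=\sum_{i<j}|\lfloor (w_i-w_j)/n\rfloor|$ splits as the ``between-windows'' part (depending only on how many entries of each window precede each other entry, unchanged by $\sigma$) plus the ``within-window, i.e.\ finite-permutation'' part (which equals $l(\sigma)=\inv(\sigma)$). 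This additivity of lengths, combined with $w=\lJ{w}{S}\sigma$ and $\sigma\in W_f$, $\lJ{w}{S}\in\eW^S$, pins down the factorization and hence proves the proposition; I would write the proof along these lines, as the length computation is more transparent than the word-manipulation argument.
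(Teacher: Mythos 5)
Your main steps (1)--(4) follow the same route as the paper: identify the candidate $\sigma\in W_f$ with word $w_{j_1}\cdots w_{j_n}$ and verify $w\sigma^{-1}\in\eW^S$ via the descent criterion of Proposition~\ref{p affinewordlessthan}. The paper does this constructively, sorting the subword at positions $j_1,\ldots,j_n$ to $n, n-1, \ldots, 1$ through a sequence of length-decreasing right multiplications by $s_i\in S$, and reading $\rj{w}{S}$ off the sequence of generators used.

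The claim you rest the set-up on, however, is false and would derail step (3) if carried out as written: right multiplication by elements of $W_f$ does \emph{not} permute the positions of the window word. By~\eqref{e wordmult5}, right multiplication by $s_{n-i}$ fixes every position and instead adds $1$ to the entry congruent to $i\pmod n$ and subtracts $1$ from the entry congruent to $i+1\pmod n$. The ``clean invariant'' you were looking for is exactly this: for $i\in[n-1]$ these shifts never change the quantity $(w_p-\rsd{w_p})/n$ at any position $p$, so the set $\{j_1,\ldots,j_n\}$ of positions with values in $[n]$ is preserved, and on the subword at those positions the effect is precisely the swap of the values $i$ and $i+1$, i.e.\ right multiplication by $s_{n-i}$ on the finite word $w_{j_1}\cdots w_{j_n}$, with descents matching via Proposition~\ref{p affinewordlessthan}. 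That compatibility, not position-permuting, is what makes the sorting to $n, n-1, \ldots, 1$ work and gives $R(w\sigma^{-1})\cap S=\emptyset$; ``passing to inverses'' does not rescue the picture, since the word of $w^{-1}$ is not a positional rearrangement of the word of $w$ (the paper introduces $\invlr$ precisely to mediate between them). Your length-additivity alternative also does not close the gap: $\ell(w)=\ell(u)+\ell(\sigma)$ with $w=u\sigma$, $\sigma\in W_f$ holds just as well for $u=w$, $\sigma=\idelm$, so by itself it cannot single out $\rj{w}{S}$; you still must verify $u\in\eW^S$, which is exactly the word-level bookkeeping you were hoping to sidestep, and once that is done the length identity is redundant.
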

\begin{proof}
The proof is essentially the same as that of Proposition \ref{p wordlj}, but right-multiplications on words are harder to deal with. By looking at the word of $w$ on the subword $w_{j_1} w_{j_2} \cdots w_{j_n}$ and using Proposition \ref{p affinewordlessthan}, we can see that the subword on the indices $j_1, j_2, \ldots, j_n$ can be transformed into $n\ n-1 \cdots 1$ by a sequence of right-multiplications by $s_i \in S$ that decrease length by 1. Then again by Proposition \ref{p affinewordlessthan}, the resulting word $w'$ satisfies $R(w') \subseteq \{s_0\}$, so $w' = \lJ{w}{S}$. Therefore, the sequence of right-multiplications gives a factorization of $\rj{w}{S}$ into a product of simple reflections, from which the result follows.
\end{proof}

\subsection{}
\label{ss invlr}
There is an automorphism $\Delta$ of $\eW$ given on generators by $s_i \mapsto s_{n-i}$, $\pi \mapsto \pi^{-1}$.
\begin{definition}
Let $\invlr: \eW \to \eW$ be the anti-automorphism defined by $\invlr(w) = \Delta(w^{-1}) = (\Delta(w))^{-1}$. This restricts to an anti-automorphism $\invlr: \pW \to \pW$. Finally, also denote by $\invlr$ the maps $\eH \to \eH$ and $\pH \to \pH$ given by $T_w \mapsto T_{\invlr(w)}$.
\end{definition}
The word of $\invlr(w)$ is given by $x_1 \cdots x_n$ where $x_i$ is determined by $w_{x_i} = i$. For example,
\[ \Psi(8\ 3\ 5\ 2) = \Psi(\pi^2 s_2 s_0 s_1) = s_3 s_0 s_2 \pi^2 = 7\ 4\ 2\ 5 \]

\subsection{}
The subset $\pY := \ZZ^n_{\geq 0}$ of the weight lattice $Y$ is left stable under the action of the Weyl group $W_f$. Thus $\pY \rtimes W_f$ is a submonoid of $\eW$. Note that this is only true in type $A$.

\begin{propdef}
\label{p pW}
The \emph{positive part} of $\eW$, denoted $\pW$, has the following three equivalent descriptions:
\begin{list}{\emph{(\arabic{ctr})}} {\usecounter{ctr} \setlength{\itemsep}{1pt} \setlength{\topsep}{2pt}}
\item $\pY \rtimes W_f$,
\item The submonoid of $\eW$ generated by $\pi$ and $W_f$,
\item $\{w \in \eW : w_i > 0 \text{ for all } i \in [n] \}$.
\end{list}
\end{propdef}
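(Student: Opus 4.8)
The plan is to prove $(1)\Leftrightarrow(3)$ by a direct computation with inverted window words, and then to prove $(1)=(2)$ by an explicit manipulation inside $\eW$.

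First I would treat $(1)\Leftrightarrow(3)$. Every $w\in\eW$ has a unique factorization $w=y^\lambda v$ with $\lambda\in Y$ and $v\in W_f$, since $\eW=Y\rtimes W_f$. A quick induction on $\ell(v)$ using the word rule (\ref{e wordmult3}) shows that the word $v_1\cdots v_n$ of any $v\in W_f$ is a permutation of $[n]$: the identity has word $n\,(n-1)\cdots 1$, and left multiplication by $s_i$ with $i\in[n-1]$ merely transposes two adjacent letters. Then by (\ref{e wordmult6}) we have $w_i=n\lambda_i+v_i$ with each $v_i\in[n]$; since $1\le v_i\le n$, we get $w_i>0$ for all $i$ exactly when $\lambda_i\ge 0$ for all $i$, i.e.\ exactly when $\lambda\in\pY=\ZZ^n_{\ge 0}$. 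This identifies descriptions (1) and (3).

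Next, $(2)\subseteq(1)$ should be immediate: the defining expression $\pi=y_1s_1s_2\cdots s_{n-1}$ exhibits $\pi\in\pY\rtimes W_f$ (as $\epsilon_1\in\pY$), one trivially has $W_f\subseteq\pY\rtimes W_f$, and $\pY\rtimes W_f$ is already a submonoid of $\eW$ because $\pY$ is stable under the coordinate action of $W_f$ and closed under addition; hence the submonoid generated by $\pi$ and $W_f$ lies inside $\pY\rtimes W_f$. For the reverse inclusion $(1)\subseteq(2)$, let $M\subseteq\eW$ be the submonoid generated by $\pi$ and $W_f$. From $\pi=y_1s_1\cdots s_{n-1}$ I get $y_1=\pi\,(s_1\cdots s_{n-1})^{-1}=\pi\,s_{n-1}\cdots s_1\in M$, using $\pi$ itself and finitely many $s_i$ but never $\pi^{-1}$. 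Conjugating by $\sigma_i:=s_{i-1}s_{i-2}\cdots s_1\in W_f$, which carries $\epsilon_1$ to $\epsilon_i$, yields $y_i=\sigma_i\,y_1\,\sigma_i^{-1}\in M$ for every $i\in[n]$. Hence $y^\lambda=y_1^{\lambda_1}\cdots y_n^{\lambda_n}\in M$ for every $\lambda\in\pY$, and therefore $y^\lambda v\in M$ for every $v\in W_f$, giving $\pY\rtimes W_f\subseteq M$. Combined with the previous inclusion, $M=\pY\rtimes W_f$.

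I do not expect a genuine obstacle here. The only points demanding care are keeping the inverted-window-word conventions of (\ref{e wordmult3}) and (\ref{e wordmult6}) straight in the computation, and observing in the step $(1)\subseteq(2)$ that each Bernstein generator $y_i$ — and hence each $y^\lambda$ with $\lambda\in\pY$ — is produced from $\pi$ with a \emph{positive} exponent together with elements of the finite group $W_f$, so that the \emph{monoid} (not the group) generated by $\pi$ and $W_f$ already captures all of $\pY\rtimes W_f$.
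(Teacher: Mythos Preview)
Your proof is correct and follows essentially the same approach as the paper. The paper arranges the argument as a cyclic chain $(1)\subseteq(2)\subseteq(3)\subseteq(1)$ rather than your two separate equivalences, but the key computations are identical: the expression $y_1=\pi s_{n-1}\cdots s_1$ together with $y_i=s_{i-1}\cdots s_1\,y_1\,s_1\cdots s_{i-1}$ for $(1)\subseteq(2)$, and the word decomposition $w_i=n\lambda_i+v_i$ with $v_i\in[n]$ for the equivalence of $(1)$ and $(3)$.
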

\begin{proof}
We will show $(1) \subseteq (2) \subseteq (3) \subseteq (1)$. As $y_i = s_{i-1} s_{i-2} \cdots s_1 y_1 s_1 s_2 \cdots s_{i-1}$ and $y_1 = \pi s_{n-1} s_{n-2} \cdots s_1$, $(1) \subseteq (2)$. The inclusion $(2) \subseteq (3)$ is clear from (\ref{e wordmult3}) and (\ref{e wordmult7}).

The word of any $w \in \eW$ can be written uniquely as
\[ {\small \lambda_1.x_1\ \ \lambda_2.x_2\ \cdots\ \lambda_n.x_n }\]
with $x_i \in [n]$ and $\lambda \in Y$. Then by (\ref{e wordmult6}), $w = y^\lambda v$ and $v$ has word $x_1 x_2 \cdots x_n$. Therefore $v \in W_f$. Then since $w_i > 0$ implies $\lambda_i \geq 0$, we have $(3) \subseteq (1)$.
\end{proof}

For $d \geq 0$, let $(\pY)_d$ (resp. $(\pY)_{\geq d}$) denote the set $\{\lambda \in \pY : |\lambda| = d\}$ (resp. $\{\lambda \in \pY : |\lambda| \geq d\}$). Define the \emph{degree $d$ part $(\pW)_d$ of $\pW$} to be any of the following
\be
\label{e degree d of pW}
\begin{array}{cl}
\text{(1$'$)}& (\pY)_d \rtimes W_f, \hfill\ \\
\text{(2$'$)}& \{ w \in \pW : w = \pi^d v, v \in \aW \}, \\
\text{(3$'$)}& \{ w \in \pW : \sum_{i=1}^n (w_i - i) = d n \}. \\
\end{array} \ee
The equality of these follows from the proof of Proposition-Definition \ref{p pW}, observing that if $y^\lambda v' = \pi^{d} v = w$, $v' \in W_f$, $v \in \aW$, then $|\lambda| = d = \frac{1}{n} \sum_{i=1}^n (w_i - i)$.

Define the \emph{degree} of a word $w \in \pW$, denoted $\deg(w)$, to be the $d$ for which $w \in (\pW)_d$, or equivalently, $\deg(w) = \frac{1}{n} \sum_{i=1}^n (w_i - i)$. The degree $d$ part of $\eW$ can be similarly defined and the definition of $\deg(w)$ also makes sense for $w \in \eW$.

\begin{lemma}
\label{l pivpi}
Any $w \in \pW$ has a reduced expression of the form $w = v_1\cdot\pi\cdot v_2\cdot\pi \ldots v_d\cdot\pi\cdot v_{d+1}$, where $v_i \in W_f$.
\end{lemma}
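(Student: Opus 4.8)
The plan is to induct on the degree $d = \deg(w)$, peeling off one copy of $\pi$ at a time using the degree-part description (2$'$) in \eqref{e degree d of pW}. The base case $d=0$ is immediate: $(\pW)_0 = W_f$, so $w \in W_f$ has a reduced expression of the desired form with $d=0$.

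For the inductive step, suppose $w \in (\pW)_d$ with $d \geq 1$. First I would use Lemma \ref{l pivpi}'s own hypotheses indirectly—rather, I would argue directly on the word. By Proposition \ref{p wordlj} and the factorization $w = \lj{w}{S}\cdot\rJ{w}{S}$ with $\rJ{w}{S} \in \leftexp{S}{\eW}$, it suffices (since left-multiplying by a reduced word in $W_f$ preserves the desired form) to treat the case $w = \rJ{w}{S}$, i.e.\ $R(w) \cap S = \varnothing$. Wait—more carefully, I want to strip a $\pi$ from the \emph{left}. So instead I would pass to $\lJ{w}{S}$: write $w = \lJ{w}{S}\cdot\rj{w}{S}$ with $\lJ{w}{S}\in\eW^S$ and $\rj{w}{S}\in W_f$, so it suffices to put $\lJ{w}{S}$ in the desired form (a reduced factorization $\lJ{w}{S}\cdot\rj{w}{S}$ just appends the reduced word of $\rj{w}{S}$). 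Now $\lJ{w}{S}\in\eW^S$ means $R(\lJ{w}{S})\cap S=\varnothing$; combined with $\lJ{w}{S}\in\pW$ of degree $d\geq 1$, I claim $\lJ{w}{S} = x\cdot\pi$ for some $x\in\pW$ of degree $d-1$ with $\ell(x)+\ell(\pi)=\ell(x)+0=\ell(\lJ{w}{S})$. Concretely, by \eqref{e wordmult8} the word of $w\pi$ is obtained by adding $1$ to every entry; so I want to realize $\lJ{w}{S}$ as $x\pi$ where $x$ has word obtained by subtracting $1$ from every entry of $\lJ{w}{S}$'s word. Since $\lJ{w}{S}$ has degree $\geq 1$, one checks all entries of its word are $\geq 2$ (if some $w_i \leq 1$ then combined with $\deg \geq 1$ forcing the entries to be large on average, and with $R\cap S=\varnothing$ forcing a monotonicity condition via Proposition \ref{p affinewordlessthan}, one derives a contradiction), hence $x := \lJ{w}{S}\pi^{-1}\in\pW$ has degree $d-1$. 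That $\ell(\lJ{w}{S}) = \ell(x\pi) = \ell(x)$ follows from $\ell(\pi)=0$ and the fact that multiplication by $\pi\in\Pi$ preserves length (it is length-zero), so the factorization $x\cdot\pi$ is automatically reduced. Applying the inductive hypothesis to $x$ gives $x = v_1\cdot\pi\cdot v_2\cdots v_{d-1}\cdot\pi\cdot v_d$ reduced, whence $\lJ{w}{S} = v_1\cdot\pi\cdots v_d\cdot\pi$ reduced, and appending $\rj{w}{S}\in W_f$ as $v_{d+1}$ completes the step.

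The step I expect to be the main obstacle is verifying that the factorization $x\cdot\pi$ (and then the nested product) is genuinely \emph{reduced}, and relatedly that $x\in\pW$—i.e.\ that no entry of the word drops to $0$ upon subtracting $1$. The length bookkeeping is clean because $\pi$ has length $0$ and $\Pi$ acts by length-preserving automorphisms, so any factorization $u\cdot\pi$ with $u\in\eW$ satisfies $\ell(u\pi)=\ell(u)=\ell(u)+\ell(\pi)$ trivially; the real content is the positivity bound on the word entries, for which I would lean on the degree formula $\deg(w)=\frac1n\sum(w_i-i)$ together with Proposition \ref{p affinewordlessthan} applied to the condition $R(\lJ{w}{S})\cap S=\varnothing$. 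An alternative, possibly cleaner route avoiding the $\eW^S$ reduction: directly observe from description (2$'$) that $w=\pi^d v'$ with $v'\in\aW$, but then $\pi^d v'$ is not obviously reducible to the $v_i\cdot\pi$ form without controlling how the $\pi$'s commute past reduced words in $\aW$; pushing each $\pi$ rightward using $\pi s_i = s_{i+1}\pi$ turns $\pi^d v'$ into a product $v_1\pi v_2\pi\cdots$ but one must check lengths add up, which again reduces to the same positivity/length estimate. Either way the crux is the same elementary word inequality.
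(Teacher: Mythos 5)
Your proof is correct and is essentially the mirror image of the paper's argument: the paper peels from the \emph{left} (strip left descents in $S$ to pass from $w$ to $\rJ{w}{S}$, then check that $\pi^{-1}\rJ{w}{S}\in\pW$ and recurse on degree), while you peel from the \emph{right} (strip $\rj{w}{S}\in W_f$, then check that $\lJ{w}{S}\pi^{-1}\in\pW$ and recurse). Both routes are sound, and the length bookkeeping is, as you say, trivial because $\ell(\pi)=0$. The paper's direction is marginally cleaner under this word convention, because $L(w)\cap S=\varnothing$ is exactly the window condition $w_1>w_2>\cdots>w_n$ (Proposition~\ref{p affinewordlessthan}), and then positivity together with the distinct-residues property immediately forces $w_1>n$ unless $w=\idelm$, so $\pi^{-1}w\in\pW$. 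On your side, $R(\lJ{w}{S})\cap S=\varnothing$ translates not to a window condition but to a condition on positions in the infinite word (the value $m$ occurs at a strictly larger position than the value $m+1$, for $m\in[n-1]$), so your parenthetical sketch of the crux (``entries large on average'' plus monotonicity) is not quite the right mechanism. The precise argument you want is: positivity of $v_1,\dots,v_n$ for $v=\lJ{w}{S}$ forces the position of each value $m\in[n]$ to be $\geq 1$; thus if the value $1$ appeared in the window, the strictly decreasing chain of positions of $1,2,\ldots,n$ would be confined to $[1,n]$, forcing $v=\idelm$ and degree $0$, contradicting $\deg\geq1$. Finally, you should say why $\lJ{w}{S}\in\pW$ in the first place---this is immediate since $\lJ{w}{S}=w\cdot(\rj{w}{S})^{-1}$ with $(\rj{w}{S})^{-1}\in W_f\subseteq\pW$ and $\pW$ a monoid---before applying the positivity argument to it.
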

\begin{proof}
Use the description (3) of Proposition-Definition \ref{p pW}. By Proposition \ref{p affinewordlessthan}, one checks that any word of $w \in \eW$ with $w_i > 0$  of the form (3) can be brought to the identity by a sequence of left-multiplications by $\pi^{-1}$ and left-multiplications by $s_i \in S$ that decrease length by 1. This yields a desired reduced expression for $w$.
\end{proof}

\begin{propdef}
\label{p pH exists}
The subalgebra $\pH$ of $\eH$ has the following four equivalent descriptions:
\begin{list}{\emph{(\roman{ctr})}} {\usecounter{ctr} \setlength{\itemsep}{1pt} \setlength{\topsep}{2pt}}
\item $A\{\y^\lambda T_w : \lambda \in \pY, w\in W_f\}$,
\item $A\{T_w:w\in \pW\}$,
\item $A\{\C_w:w\in \pW\}$,
\item the subalgebra of $\eH$ generated by $\pi$ and $\H$.
\end{list}
\end{propdef}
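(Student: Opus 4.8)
The plan is to identify $\pH$ with each of the four $A$-submodules of $\eH$ listed. Writing $M_1,\dots,M_4$ for these in the order (i)--(iv), I would first prove $M_1=M_4$ and $M_2=M_4$ by direct computation inside the two presentations of $\eH$, and then obtain $M_3=M_2$ from the combinatorial fact that $\pW$ is a lower (order-)ideal in the Bruhat order on $\eW$ --- the only step carrying real content.

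For $M_1=M_4$, the key point is that $T_v\,\y^\lambda\in M_1$ for every $v\in W_f$ and every $\lambda\in\pY$. Iterating the commutation relation (\ref{e TY}) rewrites $T_i\,\y^\lambda$ as $\y^{s_i\lambda}T_i$ plus an $A$-linear combination of monomials $\y^\nu$ in which each $\nu$ lies on the segment joining $\lambda$ to $s_i\lambda$, translated by $\alpha'_i$; since $s_i$ merely transposes the $i$-th and $(i+1)$-st coordinates, every such $\nu$ is again in $\pY=\ZZ^n_{\ge 0}$ --- the one place where nonnegativity of $\lambda$, hence type $A$, is used. Induction on $\ell(v)$ then gives the key point, so that $M_1$ is a subalgebra, since $(\y^\lambda T_v)(\y^\mu T_w)=\y^\lambda(T_v\,\y^\mu)T_w$ and $\y^\lambda\cdot\y^\nu T_{v'}=\y^{\lambda+\nu}T_{v'}$ with $\lambda+\nu\in\pY$. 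This subalgebra contains $\H$ and contains $\pi=\y^{\epsilon_1}T_{s_1}^{-1}\cdots T_{s_{n-1}}^{-1}$ (using the reduced factorization $y_1=\pi\cdot s_{n-1}\cdots s_1$ together with $\y^{\epsilon_1}=T_{y_1}$, $\epsilon_1$ being dominant), so $M_4\subseteq M_1$. Conversely, $\y^\lambda=\prod_i(\y^{\epsilon_i})^{\lambda_i}$ (as $\y^{\lambda}\y^{\lambda'}=\y^{\lambda+\lambda'}$), and $\y^{\epsilon_{i+1}}=T_i^{-1}\y^{\epsilon_i}T_i^{-1}$ by the Bernstein relations; building up from $\y^{\epsilon_1}\in M_4$ shows every $\y^\lambda$ with $\lambda\in\pY$ lies in $M_4$, whence $M_1\subseteq M_4$.

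For $M_2=M_4$: by Lemma \ref{l pivpi} each $w\in\pW$ has a reduced factorization $w=v_1\cdot\pi\cdot v_2\cdot\pi\cdots v_d\cdot\pi\cdot v_{d+1}$ with $v_j\in W_f$, so $T_w=T_{v_1}\,\pi\,T_{v_2}\,\pi\cdots T_{v_d}\,\pi\,T_{v_{d+1}}\in M_4$, giving $M_2\subseteq M_4$. In the other direction, $M_4$ is $A$-spanned by products of $\pi$ and the $T_{s_i}$ ($i\in[n-1]$), and the multiplication rules $T_{w'}T_{s_i}\in A\{T_{w's_i},\,T_{w'}\}$ and $T_{w'}\pi=T_{w'\pi}$ expand any such product into $T_w$'s indexed by products of $\pi$'s and simple reflections $s_i$ with $i\ne 0$ --- exactly the elements of $\pW$, by description~(2) of Proposition-Definition \ref{p pW}. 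Hence $M_4\subseteq M_2$.

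Finally $M_3=M_2$, where the first task is the claim that $\pW$ is a lower ideal in the Bruhat order on $\eW$. Granting it, $\C_w=\sum_{x\le w}P'_{x,w}T_x\in M_2$ for each $w\in\pW$ (Theorem \ref{t kl canonical basis}), while conversely each such $T_w$ is a triangular $A$-combination of the $\C_x$ with $x\le w$; so $M_2=M_3$. To prove the claim, realize $\eW$ as Lusztig's group of affine permutations of $\ZZ$ and set $g(w)=\min_{1\le i\le n}w(i)$, so that $\pW=\{w:g(w)\ge1\}$ by description~(3) of Proposition-Definition \ref{p pW}; it suffices to show $g$ weakly decreases along covers. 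A cover has the form $u\lessdot u\,t_{ab}$, where $t_{ab}$ transposes $a+kn\leftrightarrow b+kn$ for all $k$, and --- after a shift --- $a\in[n]$, $b>a$, $a\not\equiv b\pmod{n}$, and $u(a)<u(b)$. Right multiplication by $t_{ab}$ alters the window of $u$ only at positions $a$ and $\rsd b$, where the values become $u(b)$ and $u(a)-(b-\rsd b)$; the latter is $\le u(a)$, and since $u(a)<u(b)=u(\rsd b)+(b-\rsd b)$ it is also $<u(\rsd b)$, hence $\le\min\{u(a),u(\rsd b)\}$, while staying $\le$ every unchanged window entry. Therefore the new window's minimum is $\le g(u)$, i.e.\ $g(u\,t_{ab})\le g(u)$; since Bruhat-comparable elements of $\eW$ have equal degree (\textsection\ref{ss coxeter groups}), $\pW$ is a lower ideal. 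I expect this claim to be the main obstacle: the shifts --- in the reflection $t_{ab}$ and in its action on windows --- must be tracked carefully, and one must check that the ``degree $\ge0$'' condition built into $\pW$ is automatically inherited by smaller elements. (Alternatively, one can read the claim off Lemma \ref{l pivpi} using the subword characterization of Bruhat order, writing every $x\le w$ as a product of $\pi$'s and elements of $W_f$; that route instead needs bookkeeping of $\pi$-conjugates of simple reflections.) Everything else --- closure of $M_1$ under multiplication, the two inclusions for $M_2=M_4$, and the Kazhdan--Lusztig triangularity used for $M_2=M_3$ --- is routine given these ingredients.
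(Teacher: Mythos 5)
Your proof is correct and follows essentially the same skeleton as the paper's: (i)~$\subseteq$~(iv) by expressing the $\y_i$'s through $\pi$ and $T_j$'s, (iv)~$\subseteq$~(i) by reading nonnegativity of the ``error term'' in (\ref{e TY}), (ii)~$\subseteq$~(iv) from Lemma \ref{l pivpi}, and (iv)~$\subseteq$~(ii), (iv)~$=$~(iii) by reducing to the fact that $\pW$ is a Bruhat lower ideal and a monoid.  Where you diverge is that the paper asserts the lower-ideal property in one line (``By Lemma \ref{l pivpi}, $w\in\pW$ implies $v\in\pW$ for any $v\le w$''), whereas you actually prove it, and you offer both the subword route through Lemma \ref{l pivpi} and a direct window-minimum argument via $g(w)=\min_{i\in[n]}w(i)$; the latter is genuinely new relative to what the paper writes down, and your conversion between the paper's description~(3) ($w_i>0$) and $g(w)\ge1$ is the one detail that needed real care.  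For $M_4\subseteq M_2$ you expand a product of generators using $T_{w'}T_{s_i}\in A\{T_{w's_i},T_{w'}\}$, which is the same computation the paper packages as the identity $T_{w_1}T_{w_2}=\sum_{v_1\le w_1,\,v_2\le w_2}c_{v_1,v_2}T_{v_1v_2}$.  Two small imprecisions that do not affect the conclusion: the monomials produced by (\ref{e TY}) are $\y^\nu$ for $\nu$ ranging over $\{\lambda,\,\lambda-\alpha'_i,\dots,s_i\lambda+\alpha'_i\}$ (not quite ``the segment translated by $\alpha'_i$,'' though all these $\nu$ are indeed in $\pY$); and in the cover argument the new value at position $\rsd b$ need not be $\le$ every unchanged window entry --- what you actually need and have is that it is $\le\min\{u(a),u(\rsd b)\}$, which already gives $g(u\,t_{ab})\le g(u)$.
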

\begin{proof}
As $\y_i = T_{i-1}^{-1} T_{i-2}^{-1} \cdots T_1^{-1} \y_1 T_1^{-1} T_2^{-1} \cdots T_{i-1}^{-1}$ and $\y_1 = \pi T_{n-1} T_{n-2} \cdots T_1$, (i) $\subseteq$ (iv). Then since $\pi \in$ (i) and $\H \subseteq$ (i),  (iv) $\subseteq$ (i) follows if we can show that (i) is a subalgebra. This can be seen from the relations (\ref{e TY}) since $\frac{\y^{\alpha'_i}(\y^\lambda - \y^{s_i(\lambda)})}{\y^{\alpha'_i}-1}$ is a polynomial in the $\y_i$ whenever $\lambda \in \ZZ^n_{\geq 0}$.

The inclusion (ii) $\subseteq$ (iv) follows from Lemma \ref{l pivpi}.  Again, showing that (ii) is a subalgebra will prove (iv) $\subseteq$ (ii). Given $w_1, w_2 \in \pW$,
\be \label{e w1w2} T_{w_1} T_{w_2} = \sum_{v_1 \leq w_1, v_2 \leq w_2} c_{v_1,v_2} T_{v_1v_2},\ \ c_{v_1,v_2} \in A. \ee
By Lemma \ref{l pivpi}, $w \in \pW$ implies $v \in \pW$ for any $v \leq w$. Also $v_1, v_2 \in \pW$ implies $v_1v_2 \in \pW$ as $\pW$ is a monoid. Thus the right-hand side of (\ref{e w1w2}) is in (ii). The equality (iv) = (iii) is similar to (iv) = (ii).
\end{proof}

The \emph{degree $d$ part} of $\pH$, $(\pH)_d$, has the corresponding descriptions:
\be
\label{e pH graded}
\begin{array}{cl}
\text{(i$'$)}& A\{\y^\lambda T_w : \lambda \in (\pY)_d, w\in W_f\}, \\
\text{(ii$'$)}& A\{T_w:w\in (\pW)_d\}, \\
\text{(iii$'$)}& A\{\C_w:w\in (\pW)_d\}.
\end{array} \ee

Also define $(\pH)_{\geq d} = \oplus_{i\geq d} (\pH)_d$ and $(\pH)_{\leq d} = \oplus_{i\leq d} (\pH)_d$.  The decomposition $\pH = (\pH)_0 \oplus (\pH)_1 \oplus \dots$ makes $\pH$ into a graded $A$-algebra. The descriptions (i), (ii), (iii) of Proposition-Definition \ref{p pH exists} give three $A$-bases for $\pH$ consisting of homogeneous elements.

Just as we write $\H(W)$ for the Hecke algebra of an extended Coxeter group $W$, generalizing the notion of a Hecke algebra of a Coxeter group, we further extend this to saying that $\pH$ is the Hecke algebra of the monoid $\pW$.

\subsection{} \label{ss cells of pH}
The left cells of $\Res_\H \eH, \pH$ can be determined by Proposition \ref{p restrict Wgraph}. These results are stated as the two corollaries below. Keep in mind our convention from \textsection \ref{ss words of eW} for the word of $w$.

The work of Kazhdan and Lusztig \cite{KL} shows that the left cells of $\H$ are in bijection with the set of SYT and the left cell containing $\C_w$ corresponds to the insertion tableau of $w$ under this bijection. The left cell containing those $\C_w$ such that $w$ has insertion tableau $P$ is the left cell \emph{labeled} by $P$, denoted $\Gamma_P$. A combinatorial discussion of left cells in type $A$ is given in \cite[\textsection 4]{B0}.

\begin{definition}
An \emph{affine tableau} (AT) of size $n$ is a semistandard Young tableau filled with integer entries that have distinct residues mod $n$. A \emph{positive affine tableau} (PAT) of size $n$ is a semistandard Young tableau filled with positive integer entries that have distinct residues mod $n$.
\end{definition}

For $w \in \eW$, the word $w_1 w_2 \cdots w_n$ may be inserted into a tableau, and the result is an affine tableau, denoted $P(w)$ (see \textsection\ref{ss tableau basics} for our tableau conventions). It is a positive affine tableau exactly when  $w \in \pW$. By Proposition \ref{p wordlj}, the SYT $P(\lj{w}{S})$ is obtained from $P(w)$ by replacing its entries with the numbers $1, \dots, n$ so that the relative order of entries in $P(w)$ and $P(\lj{w}{S})$ agree. Since  $P(\lj{w}{S})$ is determined by the tableau $Q := P(w)$, independent of the chosen $w$ inserting to  $Q$, we write  $\lj{Q}{S}$ for this tableau.  For example, for the given $w$ below, $\lj{w}{S}$, $P(w)$, and $P(\lj{w}{S}) = \lj{P(w)}{S}$ are as follows.

\hoogte=10pt
\breedte=12pt
\dikte=0.2pt
\Yboxdim12pt
\newcommand{\duma}{4}
\newcommand{\dumb}{13}
\newcommand{\dumc}{15}
\newcommand{\dumd}{12}
\newcommand{\dume}{16}
\newcommand{\dumf}{21}

\[ \begin{array}{rcccccccc}
w = &21&12&13&16&4&15, & P(w) = {\footnotesize \young(\duma\dumb\dumc,\dumd\dume,\dumf)}\\
\lj{w}{S} = &6&2&3&5&1&4, & P(\lj{w}{S}) = {\footnotesize \young(134,25,6)}
\end{array} \]

Define the \emph{degree} of an affine tableau $Q$, denoted $\deg(Q)$, to be $\deg(w)$ for any (every) $w$ inserting to $Q$.

Let $Q$ be an affine tableau. The set of $w \in \eW$ inserting to $Q$ is  \[ \{v x : v \in W_f, P(v) = \lj{Q}{S}\}, \] where  the word of $x$ is obtained from  $Q$ by sorting its entries in decreasing order.  For any $x \in \leftexp{S}{\eW}$, define
\be \Gamma_Q := \{ \C_{vx} : v \in W_f,\ P(v) = \lj{Q}{S} \} = \{ \C_w : w \in \eW,\ P(w) = Q \}. \ee
By the following result, $\Gamma_Q$ is a left cell of $\Res_\H A \Gamma_{\eW}$, which we refer to as the left cell \emph{labeled by} $Q$.

\begin{corollary}
\label{c restrict cells}
For any $x \in \leftexp{S}{\eW}$, the set $\{ \C_{wx} : w\in W_f \}$ is a cellular subquotient of $\Res_\H \eH$, isomorphic as a $W_f$-graph to $\Gamma_{W_f}$. In particular,
\[ \Gamma_{\eW} = \bigsqcup_{Q \in AT}\Gamma_Q  \]
is the decomposition of $\Res_\H \eH$ into left cells.
\end{corollary}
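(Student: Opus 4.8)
The plan is to deduce Corollary \ref{c restrict cells} directly from Proposition \ref{p restrict Wgraph} together with the combinatorial description of left cells of $\H = \H_n$ in terms of insertion tableaux. First I would apply Proposition \ref{p restrict Wgraph} with $W = \eW$, $J = S$, so that $W_J = W_f = \S_n$ and $\H(W_J) = \H$. The proposition then says that for each $x \in \leftexp{S}{\eW}$, the span $A\{\C_{vx} : v \in W_f\}$ is a cellular submodule of $\Res_\H \eH$ carrying the same $W_f$-graph structure as $\H$ itself via $\C_{vx} \mapsto \C_v$; in particular it is isomorphic to $\Gamma_{W_f}$. This is exactly the first sentence of the corollary, so that part requires essentially no work beyond citing Proposition \ref{p restrict Wgraph}.

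The second step is to organize the resulting cellular submodules. Since $\Res_\H \eH = A\{\C_w : w \in \eW\}$ and every $w \in \eW$ has a unique factorization $w = \lj{w}{S}\cdot \rJ{w}{S}$ with $\lj{w}{S} \in W_f$ and $\rJ{w}{S} \in \leftexp{S}{\eW}$ (as recorded in \textsection\ref{ss coxeter groups}), the vertex set $\eW$ decomposes as the disjoint union, over $x \in \leftexp{S}{\eW}$, of the cosets $W_f x$. So $\Res_\H \eH = \bigoplus_{x \in \leftexp{S}{\eW}} A\{\C_{vx} : v \in W_f\}$ as $\H$-modules with basis, each summand being a copy of $\Gamma_{W_f}$ by the previous step. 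Now I would refine each such copy into its left cells. Inside $\H$, the left cells are the $\Gamma_P$, $P \in SYT$, where $\Gamma_P = \{\C_v : P(v) = P\}$; transporting along the isomorphism $\C_v \mapsto \C_{vx}$, the left cells inside $A\{\C_{vx} : v \in W_f\}$ are the sets $\{\C_{vx} : v \in W_f,\ P(v) = P\}$ as $P$ ranges over $SYT$.

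The third and final step is to re-index these cells by affine tableaux. Given $x \in \leftexp{S}{\eW}$, Proposition \ref{p wordrj} (or rather its left analogue, Proposition \ref{p wordlj}) identifies $P(vx)$: since the word of $vx$ is obtained by substituting the entries of $x$'s word for $1,\dots,n$ in the word of $v$ in an order-preserving way, $P(vx)$ is the affine tableau $Q$ with $\lj{Q}{S} = P(v)$ and with entries the letters of $x$'s (sorted) word. As $x$ ranges over $\leftexp{S}{\eW}$ and $P(v)$ over $SYT$, the pair determines, and is determined by, an affine tableau $Q$ of size $n$: the standardization $\lj{Q}{S}$ records $P(v)$, and the underlying set of entries records $x$ (note that $x \in \leftexp{S}{\eW}$ forces its word to be strictly decreasing, hence $x$ is recovered from its entry set). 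Therefore the left cell $\{\C_{vx} : v \in W_f,\ P(v) = \lj{Q}{S}\}$ is precisely $\Gamma_Q$, and running over all $x$ and all $P(v)$ gives the claimed decomposition $\Gamma_\eW = \bigsqcup_{Q \in AT} \Gamma_Q$. (The two descriptions of $\Gamma_Q$ in the displayed equation preceding the corollary agree by exactly this bookkeeping.)

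\textbf{Main obstacle.} None of the steps is deep — the engine is entirely Proposition \ref{p restrict Wgraph} — so the only real care needed is the combinatorial bookkeeping in the last step: verifying that the map $(x, P(v)) \mapsto Q$ is a bijection between $\leftexp{S}{\eW} \times SYT$ (with matching sizes) and affine tableaux of size $n$, and that it intertwines "$P(v)$ determines the $\H$-cell" with "$Q$ labels the $\Res_\H\eH$-cell." Concretely this amounts to checking that minimal-length right coset representatives for $W_f \backslash \eW$ correspond exactly to strictly decreasing length-$n$ windows of residue-distinct integers (immediate from Proposition \ref{p affinewordlessthan}: $x \in \leftexp{S}{\eW}$ iff $s_i x > x$ for all $i \in [n-1]$ iff $x_i > x_{i+1}$ for $i \in [n-1]$), and that insertion of such a word interacts with standardization as in Proposition \ref{p wordlj}. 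Each of these is routine given the propositions already proved, so the corollary follows without further difficulty.
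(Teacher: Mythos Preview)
Your proposal is correct and follows essentially the same route as the paper, which treats the corollary as immediate from Proposition~\ref{p restrict Wgraph} together with the discussion preceding the corollary (identifying $\{vx : v \in W_f,\ P(v) = \lj{Q}{S}\}$ with the words inserting to $Q$). You have simply spelled out the bookkeeping that the paper leaves implicit; there is nothing missing.
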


Note that the definition (\ref{e preorder}) for the preorder $\klo{E}$ works just as well for any module with a distinguished basis. Write $\klo{\pH}$ for the preorder on the canonical basis of $\pH$ coming from considering $\pH$ as a left $\pH$-module. We also refer to $\pH$ as a $\pW$-graph and say that $\klo{\pH}$ is the preorder on the $\pW$-graph $\pH$. The ascent-edges of \textsection\ref{ss easyedges} have their obvious meaning as certain relations in $\klo{\pH}$. Similar remarks apply to the partial order on the left cells of $\pH$, also denoted $\klo{\pH}$. We refer to the relation $\C_{\pi w} \klo{\pH} \C_w$ as a \emph{corotation-edge} and the corresponding edge between cells as a \emph{cocyclage-edge}. We will soon see that cocyclage-edges are a generalization of cocyclage for standard Young tableaux. Also define a \emph{rotation-edge} to be left-multiplication by $\pi^{-1}$, which is a relation in $\klo{\eH}$ but not $\klo{\pH}$.

\begin{proposition}
\label{p preorder pH}
The preorder $\klo{\pH}$ is the transitive closure of the relation $\klo{\Res_\H \pH}$ and corotation-edges.
\end{proposition}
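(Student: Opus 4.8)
The plan is to show that $\klo{\pH}$ is generated by two kinds of edges: those coming from restriction to $\H$ (i.e.\ the relations $\klo{\Res_\H \pH}$, which by Proposition~\ref{p restrict Wgraph} and Corollary~\ref{c restrict cells} are the $W_f$-graph edges inside each left cell $\Gamma_Q$, together with the induced relations between such cells) and the corotation-edges $\C_{\pi w} \klo{\pH} \C_w$. One containment is trivial: both families of edges are genuine relations in $\klo{\pH}$, since $\H \subseteq \pH$ makes every $\Res_\H \pH$-relation a $\pH$-relation, and $\pi \in \pH$ makes each corotation-edge a $\pH$-relation; hence the transitive closure of these edges is contained in $\klo{\pH}$. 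The substance is the reverse containment.

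For the reverse containment, I would argue that the left $\pH$-module structure on $A\{\C_w : w \in \pW\}$ is generated, as far as the preorder is concerned, by the actions of the algebra generators of $\pH$. By Proposition-Definition~\ref{p pH exists}(iv), $\pH$ is generated by $\pi$ and $\H$; more precisely, using description (i) together with the fact that $\y_1 = \pi T_{n-1}\cdots T_1$ and $\y_i$ is a product of $T_j^{\pm 1}$'s conjugating $\y_1$, one sees $\pH$ is generated as an algebra by $\pi$ and the $T_i$, $i \in [n-1]$ (equivalently by $\pi$ and the $\C_{s_i}$). It therefore suffices to check that whenever $\C_x$ appears with nonzero coefficient in $\C_{s_i}\C_w$ or in $\pi\C_w$, the relation $\C_x \klo{} \C_w$ already lies in the transitive closure of $\klo{\Res_\H \pH}$ and corotation-edges. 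For $\C_{s_i}\C_w$ this is immediate, because the expansion of $\C_{s_i}\C_w$ in the canonical basis is exactly the same inside $\pH$ as inside $\eH$ (the canonical basis of $\pH$ is the restriction of that of $\eH$ by Proposition-Definition~\ref{p pH exists}(iii)), and by equation~(\ref{Wgrapheq}) this expansion only involves $s_i$-action, i.e.\ it is computed entirely within $\Res_\H \pH$; hence every such $\C_x \klo{} \C_w$ is a $\klo{\Res_\H \pH}$-relation. For $\pi \C_w$, one uses that $\pi$ is a unit with $\pi T_v \pi^{-1} = T_{\pi v \pi^{-1}}$, so $\pi \C_w = \C_{\pi w}$ (left multiplication by $\pi$ permutes the standard basis and commutes with the bar-involution, hence sends canonical basis elements to canonical basis elements); thus the only relation produced is the single corotation-edge $\C_{\pi w} \klo{} \C_w$.

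Finally I would assemble these local statements: any relation $\delta \klo{\pH} \gamma$ is, by definition~(\ref{e preorder}), witnessed by some $h \in \pH$ with $\delta$ appearing in $h\gamma$; writing $h$ as a polynomial in $\pi$, $\pi^{-1}$, and the $\C_{s_i}$ and expanding, $h\gamma$ is a composition of the two elementary operations above, so by induction on the length of such a word in the generators, $\delta \klo{} \gamma$ lies in the transitive closure of $\klo{\Res_\H \pH}$-relations and corotation-edges. One subtlety to handle carefully is that $\pi^{-1}$ is used in writing a general element of $\pH$ in terms of the generators, yet $\pi^{-1}\C_w = \C_{\pi^{-1} w}$ need \emph{not} give a corotation-edge or a $\klo{\pH}$-relation at all (this is precisely the ``rotation-edge'' warned about at the end of \textsection\ref{ss cells of pH}); so I must instead observe that $\pH$ is generated as an algebra by $\pi$ and $\H$ \emph{without} inverting $\pi$ — which is exactly what Proposition-Definition~\ref{p pH exists}(iv) and Lemma~\ref{l pivpi} guarantee, since every $T_w$ with $w \in \pW$ is a non-negative-degree expression — so no $\pi^{-1}$ ever appears. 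I expect this point — making sure the generation of $\pH$ is by $\pi$ and $\H$ with no inverses, so that only corotation-edges and no rotation-edges are ever invoked — to be the main thing requiring care; everything else is a routine unwinding of the $W$-graph axioms and the fact that the canonical basis and its structure constants for $\pH$ are inherited from $\eH$.
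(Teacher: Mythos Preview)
Your proof is correct and follows exactly the approach the paper takes: the paper's entire proof is the one-line observation ``This is clear from the description Proposition-Definition~\ref{p pH exists} (iv) of $\pH$,'' and you have simply unpacked in detail why generation of $\pH$ by $\pi$ and $\H$ (with no $\pi^{-1}$ needed) forces the preorder to be generated by corotation-edges and $\H$-edges. Your care about excluding $\pi^{-1}$ is well placed and correctly resolved by the cited Proposition-Definition.
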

\begin{proof}
This is clear from the description Proposition-Definition \ref{p pH exists} (iv) of $\pH$.
\end{proof}

\begin{corollary}
\label{c restrict cells2}
For any $x \in \leftexp{S}{\pW}$, the set $\{ \C_{wx} : w\in W_f \}$ is a cellular subquotient of the $\pW$-graph $\pH$. This subquotient, restricted to be a $W_f$-graph, is isomorphic to the $W_f$-graph $\Gamma_{W_f}$. In particular,
\[ \Gamma_{\pW} = \bigsqcup_{Q \in PAT} \Gamma_Q \]
is the decomposition of $\pH$ into left cells.
\end{corollary}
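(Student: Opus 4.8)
The plan is to derive the corollary from Corollary~\ref{c restrict cells} for $\eH$, using the grading of $\pH$ together with Proposition~\ref{p preorder pH} to lift statements about $\Res_\H\pH$ to the $\pW$-graph $\pH$. First I would record the picture for $\Res_\H\pH$. By Proposition-Definition~\ref{p pH exists}(iii), $\pH=A\{\C_w:w\in\pW\}$ is spanned by a subset of the canonical basis of $\eH$, and it is a left $\H$-submodule of $\eH$ since it is a subalgebra containing $\H$; hence $\Res_\H\pH$ is a cellular submodule of $\Res_\H\eH$. Its left cells are then exactly those left cells of $\Res_\H\eH$ it contains, i.e.\ the $\Gamma_Q$ with $\C_w\in\pH$ whenever $P(w)=Q$, which by the definition of PAT are exactly the $\Gamma_Q$ with $Q\in PAT$. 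Moreover $\pW$ is a monoid containing $W_f$, so for $x\in\leftexp{S}{\pW}$ (note $\leftexp{S}{\pW}\subseteq\leftexp{S}{\eW}$) and $w\in W_f$ we have $wx\in\pW$ and $\{\C_{wx}:w\in W_f\}\subseteq\pH$; restricting Corollary~\ref{c restrict cells} to the cellular submodule $\Res_\H\pH$ shows this set is a cellular subquotient of $\Res_\H\pH$, isomorphic as a $W_f$-graph to $\Gamma_{W_f}$. It remains to promote these facts to the $\pW$-graph $\pH$, whose preorder $\klo{\pH}$ is, by Proposition~\ref{p preorder pH}, the transitive closure of $\klo{\Res_\H\pH}$ and the corotation-edges $\C_{\pi u}\klo{\pH}\C_u$.

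The crucial point is that corotation-edges interact cleanly with the grading. Since $\pH=\bigoplus_d(\pH)_d$ is a graded algebra with $(\pH)_0=\H$ (because $(\pW)_0=W_f$), each homogeneous piece $(\pH)_d$ is a left $\H$-submodule, so $\Res_\H\pH=\bigoplus_d(\pH)_d$ as $\H$-modules; in particular $\deg$, a monoid homomorphism $\pW\to\ZZ_{\geq0}$ with $\deg W_f=0$ and $\deg\pi=1$, is constant on each left cell of $\Res_\H\pH$. A corotation-edge $\C_{\pi u}\klo{\pH}\C_u$ satisfies $\deg(\pi u)=\deg(u)+1$, so it strictly raises the degree of the smaller vertex. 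Walking along a chain witnessing $\C_v\klo{\pH}\C_w$, and using that $\Res_\H$-edges preserve degree while corotation-edges strictly raise it on the smaller side, I conclude: $\C_v\klo{\pH}\C_w$ forces $\deg v\geq\deg w$, and when $\deg v=\deg w$ every edge in some witnessing chain must be an $\Res_\H$-edge, so in fact $\C_v\klo{\Res_\H\pH}\C_w$.

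Granting this, both assertions follow. For the cell decomposition: two canonical basis elements in a common left cell of $\pH$ have equal degree, hence by the previous paragraph lie in a common left cell $\Gamma_Q$ ($Q\in PAT$) of $\Res_\H\pH$; conversely $\klo{\Res_\H\pH}\subseteq\klo{\pH}$ keeps each $\Gamma_Q$ inside a single left cell of $\pH$. Thus $\Gamma_{\pW}=\bigsqcup_{Q\in PAT}\Gamma_Q$ is the left cell decomposition of $\pH$. For the cellular-subquotient claim, fix $x\in\leftexp{S}{\pW}$ with $d=\deg x$ and, using the first step, write $\{\C_{wx}:w\in W_f\}=M\setminus N$ with $N\subseteq M\subseteq(\pH)_d$ both down-closed for $\klo{\Res_\H\pH}$. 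By the degree inequality $(\pH)_{\geq d+1}$ is $\klo{\pH}$-down-closed, and so are $M\cup(\pH)_{\geq d+1}$ and $N\cup(\pH)_{\geq d+1}$, since a relation $\C_v\klo{\pH}\C_w$ with $\deg w=d$ either has $\deg v>d$ (so $\C_v\in(\pH)_{\geq d+1}$) or $\deg v=d$, in which case it already holds in $\klo{\Res_\H\pH}$. These two $\klo{\pH}$-down-closed sets differ exactly by $\{\C_{wx}:w\in W_f\}$, so the latter is a cellular subquotient of the $\pW$-graph $\pH$, carrying the same $W_f$-graph structure as over $\Res_\H\pH$, hence isomorphic to $\Gamma_{W_f}$.

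The step I expect to be the real obstacle is the compatibility of corotation-edges with the grading in the second paragraph: these edges genuinely enlarge $\klo{\pH}$ beyond $\klo{\Res_\H\pH}$, and without control of degrees one could not rule out that they merge two equal-degree cells $\Gamma_Q,\Gamma_{Q'}$ or destroy the locally closed structure of a column $\{\C_{wx}:w\in W_f\}$; the homomorphism property of $\deg$ and its values on $\pi$ and $W_f$ are exactly what make the argument work.
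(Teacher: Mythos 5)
Your proof is correct and follows essentially the same route as the paper: both rest on the observation that corotation-edges strictly raise $\deg$ while the $\Res_\H$-preorder preserves it, so on each degree slice $(\pH)_{\geq d}/(\pH)_{\geq d+1}$ the $\pW$-graph preorder coincides with the $\H$-graph preorder, and Corollary~\ref{c restrict cells} then applies. The paper states this compactly by noting directly that $(\pH)_{\geq d}$ is cellular and that the subquotient $(\pH)_{\geq d}/(\pH)_{\geq d+1}$ has the same preorder under $\pH$ and under $\Res_\H$; your version spells out the degree monotonicity and the down-closedness of $M\cup(\pH)_{\geq d+1}$, $N\cup(\pH)_{\geq d+1}$ explicitly, but the underlying idea is identical.
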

\begin{proof}
As is evident from (\ref{e pH graded}) (iii$'$), the submodule $(\pH)_{\geq d}$ of $\pH$ is cellular. Since corotation-edges increase degree by 1, Proposition \ref{p preorder pH} implies that the preorder for the cellular subquotient $(\pH)_{\geq d} / (\pH)_{\geq d+1}$ is the same as that of $\Res_\H (\pH)_{\geq d} / (\pH)_{\geq d+1}$. Thus since
$\{ \C_{wx} : w\in W_f \} \subseteq (\pH)_d$ for some $d$, the result is a special case of Corollary \ref{c restrict cells}.
\end{proof}

\begin{remark}
For the purposes of this paper, it makes little difference whether we work with $\pH$ as an $\pH$-module or  $\eH$ as an $\pH$-module. Any finite dimensional cellular subquotient of the $\pH$-module  $\eH$ is isomorphic to a cellular subquotient of $\pH$, the isomorphism given by multiplication by a suitable power of $\pi^n$. In this paper, we will almost always work with $\eH$ as an $\pH$-module, referring to the corresponding cells as  $\pW$-cells. In section \ref{s atoms}, we will briefly look at the cells of $\eH$ for the action of $\eH$ (i.e. the cells of $\Gamma_\eW$ as a $\eW$-graph) as worked out by Lusztig, Shi, and Xi (see \textsection\ref{ss cells in We}), and will refer to these cells as $\eW$-cells. We define $\pH$-cellular subquotients (resp. $\eH$-cellular subquotients) of $\eH$ to be cellular subquotients of $\eH$ for the $\pH$ action (resp. $\eH$ action). Cellular subquotients of $\eH$ will by default mean $\pH$-cellular subquotients.
\end{remark}

\section{Cocyclage, catabolism, and atoms}
\label{s cocyclage catabolism atoms}
Before going deeper into the study of the canonical basis of $\pH$, we need some intricate tableau combinatorics which will be used to describe cellular subquotients of $\pH$.  In this section we discuss cocyclage, define a variation of catabolizability for affine tableaux, and introduce a formalism for comparing cellular subquotients of  $\pH$ to certain subsets of tableaux defined in \cite{LLM} and \cite{SW}.  Such subsets of tableaux were referred to as super atoms in \cite{LLM}; here we refer to these subsets and their variations as atoms.  This section is long and heavy in definitions, so the reader may wish to skim it and refer back to it as needed; the material here is used most extensively in \textsection \ref{s atoms}.

\subsection{}
\label{ss tableau basics}
Let $\Theta, \nu$ be partitions with  $\nu \subseteq \Theta$.
The \emph{diagram} of a (skew) shape $\theta = \Theta / \nu$ is the subset
\[ \{(r,c) \in \ZZ_{\geq 1} \times \ZZ_{\geq 1}: c \in [\nu_r + 1, \Theta_r]\} \]
of the array $\ZZ_{\geq 1} \times \ZZ_{\geq 1}$. Diagrams are drawn in English notation so that rows (resp. columns) are labeled starting with 1 and increasing from north to south (resp. west to east). We often refer to the diagram of $\theta$ simply by $\theta$.

The conjugate partition $\lambda'$ of a partition $\lambda$ is the partition whose diagram is the transpose of that of $\lambda$.

A tableau $T$ of shape $\lambda$ is a filling of $\lambda$ with entries in $\ZZ$ so that entries strictly increase from north to south along columns and weakly increase from west to east along rows. We write $\sh(T)$ for the shape of $T$.
\subsection{}
\label{ss cocyclage poset}
Let us review the definitions of cocyclage poset and related combinatorics originating in \cite{La, LS} (see also \cite{SW}).

The \emph{cocharge labeling} of a word $v$, denoted $\cl{v}$, is a (non-standard) word of the same length as $v$, and its numbers are thought of as labels of the numbers of $v$. It is obtained from $v$ by reading the numbers of $v$ in increasing order, labeling the 1 of $v$ with a 0, and if the $i$ of $v$ is labeled by $k$, then labeling the $i+1$ of $v$ with a $k$ (resp. $k+1$) if the $i+1$ in $v$ appears to the right (resp. left) of the $i$ in $v$. For example, the cocharge labeling of $614352$ is $302120$; also see Example \ref{ex cocharge}.

Write $\reading(T)$ and $\creading(T)$ for the row and column reading words of a tableau $T$.  Define the cocharge labeling $\cl{T}$ of a tableau $T$ to be $P(\cl{\reading(T)})$, where numbers are inserted as for semistandard tableaux -- if two numbers are the same, then the one on the right is considered slightly bigger. The tableau $\cl{T}$ is also $P(\cl{w})$ for any $w$ inserting to $T$. This follows from the fact that Knuth transformations do not change left descent sets.

The sum of the numbers in the cocharge labeling of a standard word $v$ (resp. standard tableau $T$) is the \emph{cocharge} of $v$ (resp.  $T$) or $\ccharge(v)$ (resp. $\ccharge(T)$). Cocharge of semistandard words and tableaux are more subtle notions, which we do not define in the usual way here. We will come across another way of understanding this statistic in \textsection\ref{ss SSYT in PAT}.

For a composition $\eta$ of $n$, let  $\word(\eta)$ and $\Tab(\eta)$ be the sets of semistandard words and semistandard tableaux of content $\eta$, respectively.

For a semistandard word $w$ and number $a \neq 1$, $aw$ (resp. $wa$) is a \emph{corotation} (resp. \emph{rotation}) of $wa$ (resp. of $aw$). There is a \emph{cocyclage} from the tableau $T$ to the tableau $T'$, written $T \cc T'$, if there exist words $u, v$ such that $v$ is the corotation of $u$ and $P(u) = T$ and $P(v) = T'$. Rephrasing this condition solely in terms of tableaux, $T \cc T'$ if there exists a corner square $(r,c)$ of $T$ and uninserting the square $(r,c)$ from $T$ yields a tableau $Q$ and number $a$ such that $T'$ is the result of column-inserting $a$ into $Q$.

If $\eta$ is a partition, then the \emph{cocyclage poset} $\ccp(\Tab(\eta))$ is the poset on the set $\Tab(\eta)$ generated by the relation \cc.  For $\eta$ not a partition, the cocyclage poset $\ccp(\Tab(\eta))$ is defined in terms of  $\ccp(\Tab(\eta_+))$ using reflection operators (see \cite{SW}), where $\eta_+$ denotes the partition obtained from $\eta$ by sorting its parts in decreasing order.
The \emph{cyclage poset} on  $\Tab(\eta)$ is the dual of the poset $\ccp(\Tab(\eta))$, i.e. the poset obtained by reversing all relations. With our convention from Remark \ref{r edge direction}, we have

\begin{theorem}[\cite{LS}]The cyclage poset on $\Tab(\eta)$ is graded, with rank function given by cocharge.
\end{theorem}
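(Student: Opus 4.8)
The plan is to reduce the theorem to a single local statement about one cocyclage step, and then to a short computation on words.

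\emph{Step 1 (reduction).} It suffices to show that whenever $T \cc T'$ is a single cocyclage, one has $\ccharge(T') = \ccharge(T) + 1$; I first check this implies the theorem for $\eta$ a partition, where $\ccp(\Tab(\eta))$ is generated by $\cc$. Granting the local statement, $\ccharge$ changes (by one) along every generating relation $\cc$, so the transitive closure of $\cc$ has no directed cycles: $\ccp(\Tab(\eta))$, and hence its dual the cyclage poset, is a genuine poset, and $\ccharge$ is a $\ZZ$-valued function on it that is strictly monotone, increasing by one along each cocyclage viewed in the cyclage poset. A pair $T < T'$ in the cyclage poset is then a covering relation exactly when it is realized by one cocyclage: any chain of two or more cocyclages joining them passes through a tableau of strictly intermediate cocharge, hence strictly between them, so a cover must be a single cocyclage; conversely two tableaux joined by one cocyclage have cocharges differing by one, so nothing can lie strictly between. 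Therefore every cover of the cyclage poset raises $\ccharge$ by one, i.e. the poset is graded with rank function $\ccharge$.

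\emph{Step 2 (passage to words; standard case).} Cocharge is an invariant of the plactic class: $\ccharge(T)=\ccharge(\reading(T))$, and, by the Knuth-invariance of the cocharge labeling recalled in \textsection\ref{ss cocyclage poset}, $\ccharge(w)$ depends only on $P(w)$, so it is well defined on $\Tab(\eta)$. If $T \cc T'$ is witnessed by words $u = u'\cdot a$ and $v = a\cdot u'$ with $a\neq 1$, it is then enough to prove $\ccharge(a u') = \ccharge(u'a) + 1$. When the words are standard this is immediate from the labeling recipe: passing from $u'a$ to $au'$ leaves the relative order of every pair of letters of $u'$ unchanged and only moves $a$, from the far right to the far left. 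Hence the labels of $1,\dots,a-1$ are unaffected (they depend only on relative positions of letters $<a$); the label of $a$ changes from $\mathrm{lab}(a-1)$ (in $u'a$, where $a$ lies to the right of $a-1$) to $\mathrm{lab}(a-1)+1$ (in $au'$, where $a$ lies to the left of $a-1$); and the label of $a+1$ equals $\mathrm{lab}(a-1)+1$ in \emph{both} words, because $a$ is rightmost in the first and leftmost in the second, so the increments determining the labels of $a+1,a+2,\dots$ are identical in the two words and those labels agree. The net change is exactly one extra unit on the label of $a$, giving $\ccharge(au') - \ccharge(u'a) = 1$.

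\emph{Step 3 (semistandard words and general content).} For $\eta$ a partition the witnessing words $u,v$ need only be semistandard, and one must rerun Step 2 with the Lascoux--Schützenberger cocharge in place of the naive labeling --- defined via standardization, or via the decomposition of a semistandard word into standard subwords with $\ccharge$ additive over the pieces. The point requiring care is that a single corotation turning $u'a$ into $au'$ can change which standard subword contains the moved letter $a$; checking that, after this reshuffling, the total still increases by exactly one is the genuinely combinatorial heart of the matter, and is where I expect the real work (and the content of \cite{LS}) to lie. For $\eta$ not a partition, $\ccp(\Tab(\eta))$ is by definition transported from $\ccp(\Tab(\eta_+))$ along the reflection operators of \cite{LS, SW}, and since those operators preserve cocharge the rank function and hence the gradedness descend, so this case follows from the partition case with no extra work. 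The main obstacle, then, is purely bookkeeping --- controlling the standard-subword decomposition under one corotation --- while the rest is the formal poset argument of Step 1 together with the short standard-word computation of Step 2.
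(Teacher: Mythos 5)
This theorem is quoted from Lascoux--Sch\"utzenberger \cite{LS}; the paper offers no proof of its own, so the benchmark is what a complete proof would require. Your reduction in Step~1 is sound: if a single cocyclage always raises $\ccharge$ by exactly one, then $\cc$ generates an acyclic relation on the finite set $\Tab(\eta)$, its covers are precisely the cocyclages, and $\ccharge$ is a rank function for the cyclage poset (the dual of $\ccp(\Tab(\eta))$). Your standard-word computation in Step~2 is also correct: in passing from $u'a$ to $au'$ with $a \neq 1$, the labels of $1,\dots,a-1$ and of $a+1,\dots,n$ are untouched, while the label assigned to $a$ rises by one, so $\ccharge$ increases by exactly one.

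The genuine gap is where you yourself place it: Step~3, the semistandard case, which is the actual content of the cited theorem. The standard case is the much easier one, and this paper effectively recovers it through the embedding $\ccp(SYT)\hookrightarrow\ccp(PAT)$ of Proposition~\ref{p ccp syt equals ccp cinv}; what \cite{LS} proves, and what the statement asserts, concerns $\Tab(\eta)$ for arbitrary content. There, cocharge is computed by decomposing a semistandard word into standard subwords, and a single corotation from $u'a$ to $au'$ can reassign which standard subword the moved letter $a$ belongs to. Showing that the total cocharge summed over the pieces still changes by exactly one across this reshuffling is a genuine combinatorial argument about how the subword decomposition interacts with the corotation, and you have not carried it out --- you have identified the difficulty accurately, which is commendable, but as written the proposal proves only the standard instance and sketches the rest. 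The closing appeal to reflection operators for non-partition $\eta$ is likewise asserted rather than verified (one needs that these operators preserve cocharge), though that is minor next to the missing semistandard argument.
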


Similarly, define $\ccp(PAT)$ (resp. $\ccp(AT)$) to be the poset on the set of PAT (resp. AT) generated by cocyclage-edges (see \textsection\ref{ss cells of pH}). The poset $\ccp(PAT)$ inherits a grading from that of $\pW$ (see (\ref{e degree d of pW})). The poset $\ccp(AT)$ also inherits a grading from that of $\eW$.

The covering relations of $\ccp(\Tab(\eta))$ (resp. $\ccp(PAT)$ or $\ccp(AT)$) are exactly cocyclages (resp. cocyclage-edges). We consider the covering relation $T \cc T'$ to be colored by the following additional datum: the set of outer corners of $T$ that result in a cocyclage to $T'$. Note that this set can only have more than one element if $\sh(T) = \sh(T')$.

In preparation for the formalism of \textsection\ref{ss atom categories}, we define the category \emph{Cocyclage Posets (CCP)} as follows.
\begin{definition}
\label{d cocyclage posets}
An object of Cocyclage Posets, called a cocyclage poset (ccp), is allowed to be either of the following:
\begin{itemize}
\item A subset $X$ of $\Tab(\eta)$ with a poset structure generated by the cocyclages with both tableaux in $X$.
\item A subset $X$ of AT with the poset structure generated by the cocyclage-edges with both ends in $X$.
\end{itemize}
A morphism  $f$ from $X_1$ to $X_2$ is a color-preserving map (that is, if $T \cc T'$ with $T, T' \in X_1$ then $f(T) \cc f(T')$ and these relations have the same color) from $X_1 \cup \{0\}$ to $X_2 \cup \{0\}$ such that $\sh(f(T)) = \sh(T)$ for all  $T \in X_1$ and  $f(0) = 0$, where $0$ is the bottom element of $X_i \cup \{0\}$.  We take the convention that for each minimal element $T$ of  $X_i$ and outer corner $(r,c)$ of  $\sh(T)$, there is a cocyclage from $T$ to  $0$ with color  $(r,c)$, and  $0$ is considered to have any shape.  Thus for a minimal $T \in X_1$,  $f(T) =0$ or $f(T)$ is minimal in $X_2$.
\end{definition}
Note that with this definition, a morphism  $f: X_1 \to X_2$ is automatically order preserving, i.e.  $T \leq T'$ implies $f(T) \leq f(T')$.

\begin{definition}
\label{d strongly isomorphic}
Two cocyclage posets $X_1,X_2$ are \emph{strongly isomorphic} if there exists an isomorphism $f:X_1 \to X_2$ in Cocyclage Posets such that the uninsertion path and insertion path corresponding to the cocyclage $T \cc T'$ are the same as those for $f(T) \cc f(T')$, for all $T \cc T'$ in  $X_1$.
\end{definition}

See Example \ref{ex ssyt in PAT} for an example of three isomorphic cocyclage posets, two of which are strongly isomorphic to each other, but not to the third.
\subsection{}
\label{ss catabolizability definition}
Here we consider an adaptation of catabolizability to affine tableaux.

For a tableau $T$ and index $r$ (resp. index $c$), let $T_{r,\text{north}}$ and $T_{r, \text{south}}$ (resp. $T_{c,\text{east}}$ and $T_{c,\text{west}}$)  be the north and south (resp. east and west) subtableaux obtained by slicing $T$ horizontally (resp. vertically) between its $r$-th and $(r+1)$-st rows (resp. $c$-th and $(c+1)$-st columns). For a tableau $T$ and partition $\lambda \subseteq \sh(T)$, let $T_\lambda$ be the subtableau of $T$ obtained by restricting  $T$ to the diagram of $\lambda$. For a tableau $T$ and $a \in \ZZ$, let $a+T$ denote the tableau obtained by adding $a$ to all entries of $T$.

Let $Q$ be a tableau of shape $\lambda = (\lambda_1,\dots,\lambda_r)$ and $\eta = (\eta_1, \dots, \eta_k)$ a composition of $r$. Let $R_1$ be the partition $(\lambda_1, \dots, \lambda_{\eta_1})$. If $R_1 \subseteq \sh(T)$, then define the $R_1$\emph{-row catabolism} of $T$, notated $\cat_{R_1}(T)$, to be \[(n+T^*_{\eta_1,\text{north}})T^*_{\eta_1,\text{south}},\] where $T^*$ is the skew subtableau of $T$ obtained by removing $T_{R_1}$.

For $Q, \eta, R$ as above, $(Q,\eta)$\emph{-row catabolizability} is defined inductively as follows:  $\emptyset$ is the unique $(\emptyset,())$-row catabolizable tableau; otherwise set $\eta = (\eta_1, \widehat{\eta})$ and define $T$ to be $(Q,\eta)$-row catabolizable if $T_{R_1} = Q_{R_1}$ and $\cat_{R_1}(T)$ is $(Q_{\eta_1,south},\widehat{\eta})$-row catabolizable.

Column catabolizability is defined similarly: let $Q$ be a tableau of shape $\lambda$ and $\lambda' = (\lambda'_1, \dots, \lambda'_c)$ and $\eta = (\eta_1, \dots, \eta_k)$ a composition of $c$. Let $C_1$ be the partition $(\lambda'_1, \dots, \lambda'_{\eta_1})'$. If $C_1 \subseteq \sh(T)$, the $C_1$\emph{-column catabolism} of $T$, notated $\ccat_{C_1}(T)$, is the tableau \[T^*_{\eta_1, \text{east}} (\ng n + T^*_{\eta_1, \text{west}}),\] where $T^*$ is the skew subtableau of $T$ obtained by removing $T_{C_1}$.

For $Q, \eta, R$ as above, $(Q,\eta)$\emph{-column catabolizability} is defined inductively as follows: $\emptyset$ is the unique $(\emptyset,())$-column catabolizable tableau; otherwise set $\eta = (\eta_1, \widehat{\eta})$ and define $T$ to be $(Q,\eta)$-column catabolizable if $T_{C_1} = Q_{C_1}$ and $\ccat_{C_1}(T)$ is $(Q_{\eta_1,east}, \widehat{\eta})$-column catabolizable.

If $(n+T^*_{\eta_1,\text{north}})$ is replaced by $T^*_{\eta_1,\text{north}}$ in the definition of row-catabolizability above and $Q$ is a superstandard tableau, then we recover the definition of catabolizability in \cite{SW}.

We will see in \textsection\ref{ss sw atoms} that for certain $Q$ of shape $\lambda$, the set of $(Q,\eta)$-column catabolizable tableaux is strongly isomorphic to a dual version of the original set of tableaux $CT(\lambda; R)$ defined in \cite{SW}.

\begin{example}
Let $A, B, C, D, E$ denote the integers 10, 11, 12, 13, 14, and maintain the convention of \textsection\ref{ss words of eW} of writing $ab$ for $na + b$, $a,b \in \ZZ$. Let
\setlength{\cellsize}{13pt}
\be
Q = {\tiny \tableau{1&2&3\\14&15&16\\27&28&29\\3A&3B&3C\\4D&4E}} \ \  \text{ and } \ \ T = {\tiny \tableau{1&2&3&17&2B\\14&15&16&2A\\28&29&3E\\3C&3D}}.  \ee
The tableau $T$ is $(Q,\eta)$-row catabolizable for the following $\eta$: $(2, 2, 1)$, $(2,1,2)$, $(1,2,2)$ and all refinements of these compositions. The following computation shows $T$ to be $(Q,(1,2,2))$-row catabolizable:
We have $T_{(3)} = Q_{(3)}$ and the tableaux $n+T^*_{1,\text{north}}$ and $T^*_{1,\text{south}}$ are the left-hand side of
\be {\tiny \tableau{27&3B}\quad\tableau{14&15&16&2A\\28&29&3E\\3C&3D}} \equiv {\tiny \tableau{14&15&16&2A&2E\\27&28&29&{3D}\\{3B}&{3C}}}. \ee
Letting  $P$ be the tableau on the right, then  $P$ is $(Q_{1,\text{south}}, (2,2))$-row catabolizable as $P_{(3,3)} = (Q_{1,\text{south}})_{(3,3)}$ and the computation
\be {\tiny \tableau{{3A}&{4E}\\{4D}}\quad \tableau{{3B}&{3C}}} \equiv {\tiny \tableau{{3A}&{3B}&{3C}\\{4D}&{4E}}}\ . \ee

The tableau on the right is $(Q_{3,\text{south}},(2))$-row catabolizable.

Figure \ref{f column catabolizable} depicts the set of $(Q,(1,2,1))$-column catabolizable tableaux for $Q$ the tableau in the top row of the figure. The $(Q,(3,1))$-column catabolizable tableaux are $Q$ and the first two tableaux in the second row. The $(Q, (1,3))$-column catabolizable tableaux are $Q$, the last two tableaux on the second row, and the last tableau on the third row.

\begin{figure}
\begin{pspicture}(0pt,0pt)(500pt,250pt){\tiny
\hoogte=9pt
\breedte=9pt
\dikte=0.2pt

\newdimen\horizcent
\newdimen\ycor
\newdimen\xcor
\newdimen\horizspace
\newdimen\temp

\horizcent=175pt
\ycor=240pt
\advance\ycor by 0pt
\horizspace=50pt
\xcor=\horizcent
\temp=50pt
\multiply \temp by 0 \divide \temp by 2
\advance\xcor by -\temp
\rput(\xcor,\ycor){\rnode{v36h1}{\begin{Young}
1&4&6&8\cr
2&5&7\cr
3\cr
\end{Young}}}

\advance\xcor by \horizspace
\advance\ycor by -55pt
\horizspace=50pt
\xcor=\horizcent
\temp=50pt
\multiply \temp by 2\divide \temp by 2
\advance\xcor by -\temp
\rput(\xcor,\ycor){\rnode{v46h2}{\begin{Young}
1&4&6\cr
2&5&7\cr
3&18\cr
\end{Young}}}
\advance\xcor by \horizspace
\rput(\xcor,\ycor){\rnode{v46h3}{\begin{Young}
1&4&6\cr
2&5&7\cr
3\cr
18\cr
\end{Young}}}
\advance\xcor by \horizspace
\rput(\xcor,\ycor){\rnode{v46h5}{\begin{Young}
1&4&7&8\cr
2&5\cr
3\cr
16\cr
\end{Young}}}
\advance\xcor by \horizspace
\advance\ycor by -55pt
\horizspace=50pt
\xcor=\horizcent
\temp=50pt
\multiply \temp by 2 \divide \temp by 2
\advance\xcor by -\temp
\rput(\xcor,\ycor){\rnode{v56h7}{\begin{Young}
1&4&7\cr
2&5&18\cr
3\cr
16\cr
\end{Young}}}
\advance\xcor by \horizspace
\rput(\xcor,\ycor){\rnode{v56h8}{\begin{Young}
1&4&7\cr
2&5\cr
3&18\cr
16\cr
\end{Young}}}
\advance\xcor by \horizspace
\rput(\xcor,\ycor){\rnode{v56h9}{\begin{Young}
1&4&7\cr
2&5\cr
3\cr
16\cr
18\cr
\end{Young}}}
\advance\xcor by \horizspace
\advance\ycor by -55pt
\horizspace=50pt
\xcor=\horizcent
\temp=50pt
\multiply \temp by 1 \divide \temp by 2
\advance\xcor by -\temp
\rput(\xcor,\ycor){\rnode{v66h14}{\begin{Young}
1&4&18\cr
2&5\cr
3\cr
16\cr
17\cr
\end{Young}}}
\advance\xcor by \horizspace
\rput(\xcor,\ycor){\rnode{v66h15}{\begin{Young}
1&4\cr
2&5\cr
3&18\cr
16\cr
17\cr
\end{Young}}}
\advance\xcor by \horizspace
\rput(\xcor,\ycor){\rnode{v66h16}{}}
\advance\xcor by \horizspace
\rput(\xcor,\ycor){\rnode{v66h17}{}}
\advance\xcor by \horizspace
\rput(\xcor,\ycor){\rnode{v66h18}{}}
\advance\xcor by \horizspace
\rput(\xcor,\ycor){\rnode{v66h19}{}}
\advance\xcor by \horizspace
\advance\ycor by -55pt
\horizspace=50pt
\xcor=\horizcent
\temp=50pt
\multiply \temp by 0 \divide \temp by 2
\advance\xcor by -\temp

\rput(\xcor,\ycor){\rnode{v76h21}{\begin{Young}
1&4\cr
2&5\cr
3\cr
16\cr
17\cr
28\cr
\end{Young}}}
}
\end{pspicture}
\caption{The  $(Q,(1,2,1))$-column catabolizable tableaux with  $Q$ the tableau on the top row.}
\label{f column catabolizable}
\end{figure}
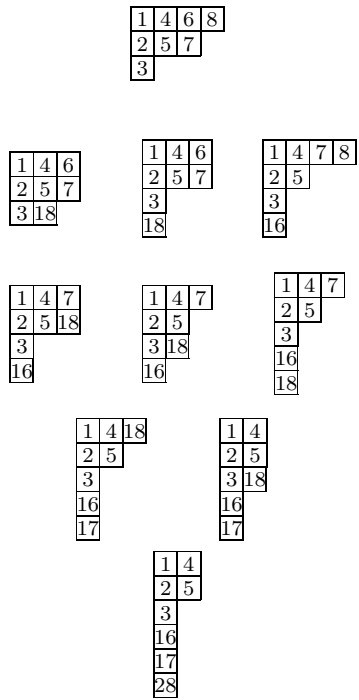
\end{example}

\subsection{}
\label{ss catabolizability 2}
For this subsection, we will use Propositions \ref{p bijectionwfdsw} and \ref{p ccp syt equals ccp cinv}, which show that $\ccp(SYT)$ is isomorphic to a sub-cocyclage poset of $\ccp(PAT)$. The embedding is given on the level of words by $w \mapsto n \cl{w} + w$, where the sum is taken entry-wise.

Let $Z^*_\lambda$ be the standard tableau with $l_r+1,l_r+2, \dots, l_{r+1}$ in the $r$-th row, where $l_r = \sum_{i=1}^{r-1} \lambda_i$ are the partial sums of $\lambda$ (the empty sum is understood to be 0).  The tableau $G_\lambda$ mentioned in the introduction is $n \cl{Z^*_\lambda} + Z^*_\lambda$, where the sum is taken entry-wise (see Proposition \ref{p ccp syt equals ccp cinv}). The \emph{dual Garnir tableau of shape} $\lambda$ is the highest degree occurrence of a PAT of shape $\lambda$ in $\cinv$, denoted $\dual{G}_\lambda$.

Let us briefly introduce a certain duality in $\cinv$, which will be discussed more thoroughly in \textsection\ref{s flip}. For a standard word $x = x_1\cdots x_n$, let $x^\dagger$ denote the word $x_n x_{n-1} \dots x_1$. Then for any $w$ with $\C_w \in \cinv$, $w = n \cl{x} + x$ for some standard word $x$ (see \textsection\ref{ss factorization theorem}). Define the \emph{dual element} $\dual{w}$ by $\dual{w} = n \cl{(x^\dagger)} + x^\dagger$. Extend this notation to tableaux by defining $\dual{T}$ to be $P(\dual{w})$ for any (every) $w$ inserting to $T$.

Note that $G_{(n)} = \dual{G}_{(n)}$ is the single row tableau
\renewcommand{\duma}{\ \cdots}
${\footnotesize \Yboxdim14pt \young(12\duma n)}$
and $G_{1^n} = \dual{G}_{1^n}$  is the single column tableau with the entry $(r-1).r$ in the $r$-th row.

\begin{proposition}
\label{p catabolizable basics}
The following are equivalent for a tableau $T$ in the image of the embedding $\ccp(SYT) \hookrightarrow \ccp(PAT)$ of Proposition \ref{p ccp syt equals ccp cinv}:
\begin{list}{\emph{(\alph{ctr})}} {\usecounter{ctr} \setlength{\itemsep}{1pt} \setlength{\topsep}{2pt}}
\item $T$ is $(G_{(n)},\lambda)$-column catabolizable,
\item $T$ is $(G_\lambda,1^{\ell(\lambda)})$-row catabolizable,
\item $\dual{T}$ is $(\dual{G}_{1^n}, \lambda)$-row catabolizable,
\item $\dual{T}$ is $(\dual{G}_{\lambda'}, 1^{\ell(\lambda)})$-column catabolizable.
\end{list}
\end{proposition}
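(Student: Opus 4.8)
The statement is a cycle of four equivalences among catabolizability conditions, and the natural strategy is to prove it as a cycle: $(a)\Rightarrow(b)\Rightarrow(c)\Rightarrow(d)\Rightarrow(a)$, exploiting the symmetries built into the definitions in \textsection\ref{ss catabolizability definition}. The plan is to set up two elementary ``transfer'' principles and then chain them. The first is a \emph{row/column transpose principle}: conjugating shapes (replacing $\lambda$ by $\lambda'$) and transposing tableaux interchanges row-catabolism with column-catabolism, because $\cat_{R_1}$ was defined via horizontal slicing and the north block shifted up by $n$, while $\ccat_{C_1}$ was defined via vertical slicing and the west block shifted down by $n$; one must check that transposing a semistandard tableau does not quite give a tableau (entries repeat in rows), so the correct ``transpose'' here is the operation on words induced by $x\mapsto x^\dagger$ together with replacing each entry $m$ by a suitable reflected entry, which is exactly how $\dual{T}$ was defined via $\dual{w}=n\cl{(x^\dagger)}+x^\dagger$. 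The second is a \emph{duality principle}: the map $T\mapsto\dual{T}$ is an anti-automorphism of $\ccp(SYT)\hookrightarrow\ccp(PAT)$ compatible (up to reversing/transposing) with catabolism, which should follow from the discussion in \textsection\ref{s flip} (used here as an assumed forward reference, exactly as the excerpt does for Propositions \ref{p bijectionwfdsw} and \ref{p ccp syt equals ccp cinv}) together with the observation that $\Psi$ reverses words.

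\smallskip

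First I would prove $(a)\Leftrightarrow(b)$ directly by induction on $\ell(\lambda)$, the number of catabolism steps. The key computation is that one $(G_{(n)},\lambda)$-column catabolism step -- peeling off the column-strip $C_1$ determined by the first part of $\lambda$ (read as a column-count) and shifting the west part down by $n$ -- is the ``same move, seen sideways'' as peeling a single row in the $(G_\lambda,1^{\ell(\lambda)})$-row catabolism of $T$, with the shift-up-by-$n$ of the north block matching the shift-down-by-$n$ of the west block once one tracks how $G_\lambda=n\cl{Z^*_\lambda}+Z^*_\lambda$ is assembled from $G_{(n)}$. Concretely: $G_\lambda$'s rows are exactly the successive ``columns'' of $G_{(n)}$ with cocharge offsets, so requiring $T_{R_1}=(G_\lambda)_{R_1}$ at each row-step is the same constraint as requiring agreement with $G_{(n)}$ along the corresponding column-strip, and the recursion on $Q_{\eta_1,\mathrm{south}}$ versus $Q_{\eta_1,\mathrm{east}}$ matches up. This is the computational heart and I expect it to be the main obstacle: one must be careful that the affine shift by $n$ in both catabolism flavors is the \emph{same} $n$ (it is, since all tableaux here have $n$ boxes and live in $\ccp(PAT)$), and that uninsertion/insertion paths behave well -- but since we are only claiming equivalence of membership in a set, not an isomorphism of cocyclage posets, we only need the boolean recursion to match, which is more forgiving.

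\smallskip

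For $(b)\Leftrightarrow(c)$ I would apply the duality principle: $\dual{(\cdot)}$ sends $G_\lambda$ to $\dual{G}_{1^n}$ (both are single-row/single-column extremal tableaux whose images under $\dual{(\cdot)}$ are pinned down by the formulas already given in the excerpt: $G_{1^n}=\dual{G}_{1^n}$ is the single column with entry $(r-1).r$ in row $r$, and $\dual{(\cdot)}$ swaps the ``top-degree row-shaped'' and ``column-shaped'' extremes), and it converts $\lambda$-column-catabolizability with starting datum $G_\lambda$... -- more precisely, I would show $\dual{(\cdot)}$ intertwines ``$(G_\lambda,1^{\ell(\lambda)})$-row catabolizable'' with ``$(\dual{G}_{1^n},\lambda)$-row catabolizable'' by checking it on one catabolism step. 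Since $\dual{w}=n\cl{(x^\dagger)}+x^\dagger$ reverses the underlying standard word, a north/south horizontal slice of $T$ corresponds to a south/north slice of $\dual{T}$ after the reflection, and the shift by $n$ is preserved because $\cl{\cdot}$ is compatible with the reversal up to a global constant (this is where I would invoke the subtle duality conjectured/established in \textsection\ref{s flip}). Finally $(c)\Leftrightarrow(d)$ is just the transpose principle applied to $\dual{T}$: conjugating the shape of $\dual{T}$ and passing from row- to column-catabolism sends $(\dual{G}_{1^n},\lambda)$-row to $(\dual{G}_{1^n}',\lambda')$-column $=(\dual{G}_{\lambda'},1^{\ell(\lambda)})$-column once one identifies $(\dual{G}_{1^n})'$ with the appropriate column-shaped tableau $\dual{G}_{\lambda'}$ -- wait, the shapes don't match a single column unless $\ell(\lambda)=1$, so more carefully: $(d)$ should be deduced from $(a)$ by the transpose principle, $(G_{(n)},\lambda)$-column $\leftrightarrow$ $(G_{(n)}',\lambda')$-row on transposed tableaux, and then from $(a)\Leftrightarrow(b)\Leftrightarrow(c)$ run on $\lambda'$. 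I would organize the final writeup as $(a)\Leftrightarrow(b)$ by the explicit step-matching induction, then $(a)\Leftrightarrow(d)$ and $(b)\Leftrightarrow(c)$ by applying $(a)\Leftrightarrow(b)$ after transpose resp.\ after $\dual{(\cdot)}$, using the two principles as lemmas. The genuinely delicate point, and the one deserving the most care, remains verifying that the affine $n$-shifts in the row and column catabolism operations are genuinely interchanged by transposition and by $\dual{(\cdot)}$ -- everything else is bookkeeping on reading words and Knuth equivalence.
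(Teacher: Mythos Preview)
Your overall architecture --- prove (a)$\Leftrightarrow$(b) as the substantive step, then transport to the dual side --- matches the paper's, but you are working at the wrong level and this causes the tangles you yourself flag. The paper's proof is two sentences: (a)$\Leftrightarrow$(b) is exactly the row/column catabolizability equivalence of Shimozono--Weyman \cite{SW} (cited, not reproved; \cite{B1} gives a clean argument), and then (a)$\Leftrightarrow$(c), (b)$\Leftrightarrow$(d) follow because, under the embedding of Proposition~\ref{p ccp syt equals ccp cinv}, the map $T\mapsto\dual{T}$ corresponds on the standard-tableau side to ordinary \emph{transposition} (since $P(x^\dagger)$ is the transpose of $P(x)$). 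Once you descend to SYT, the $\pm n$ shifts in \textsection\ref{ss catabolizability definition} disappear and the definitions reduce to the usual SW ones, for which transposing a tableau manifestly swaps row- and column-catabolizability with conjugated parameters.

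Your attempt to verify $\dual{(\cdot)}$ step-by-step at the PAT level is where the plan goes wrong. The cocharge labeling of $x^\dagger$ is not obtained from that of $x$ by any simple rule compatible with a horizontal slice, so the claim that ``a north/south slice of $T$ corresponds to a south/north slice of $\dual{T}$'' does not hold on the nose in PAT; this is why you hit ``wait, the shapes don't match.'' You also write that $\dual{(\cdot)}$ sends $G_\lambda$ to $\dual{G}_{1^n}$, which is false unless $\lambda=(n)$: $\dual{G_\lambda}$ has shape $\lambda'$. The correct move is to pass to SYT first and use transpose there, not to chase the affine catabolism operators through $\dual{(\cdot)}$ directly.

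Two smaller cautions. First, do not invoke \textsection\ref{s flip} for anything beyond the elementary RSK fact $P(x^\dagger)=P(x)'$; the main content of that section (Conjectures~\ref{cj flip} and~\ref{cj pairing}) is conjectural and cannot be used here. Second, your inductive sketch for (a)$\Leftrightarrow$(b) is vague at the key point (``the same move, seen sideways''); if you want to reprove \cite{SW} rather than cite it, you should consult \cite{B1} for what the actual matching of catabolism steps looks like --- it is not a single-step bijection but goes through an insertion-type algorithm.
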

\begin{proof}
In view of Proposition \ref{p ccp syt equals ccp cinv}, the equivalence of (a) and (b) is the equivalence of row and column catabolizability established in \cite{SW} (see \cite{B1} for a nice proof). Since the SYT corresponding to $\dual{T}$ is just the transpose of the SYT corresponding to $T$, it is easy to see that (a) and (c) are equivalent and (b) and (d) are equivalent.
\end{proof}

From a well-known result about catabolizability of standard tableaux, a tableau $T$ labeling a left cell of $\cinv$ is $(G_{(n)},\lambda)$-column catabolizable for a unique maximal in dominance order partition $\lambda$.  We write $\ctype(T)$ for this partition and also use this notation for the usual notion of catabolizability if  $T$ is a standard tableaux (see \cite{SW}). These definitions of $\ctype$ coincide under the embedding of Proposition \ref{p ccp syt equals ccp cinv}.

\subsection{}
\label{ss atom categories}
At the risk of being overly formal, we will define several categories which are generalizations or variations of the cocyclage posets of Lascoux and Sch\"utzenberger and the super atoms of Lascoux, Lapointe, and Morse \cite{LLM}.  We will primarily be concerned with the underlying sets of objects of these categories and isomorphism in these categories.

For a ring $k$ and $k$-algebra $H$, the category of $H$-modules with basis has objects that are pairs $(E,\Gamma)$, where $E$ is a free $k$-module and an $H$-module (the action of $H$ extends that of $k$) with $k$-basis $\Gamma$. A morphism $(E,\Gamma) \to (E',\Gamma')$ is an $H$-module morphism $\theta: E \to E'$ such that $\theta(\gamma) \in \Gamma' \cup \{0\}$ for all $\gamma \in \Gamma$.

Let $P$ and $P'$ be AT and $\Gamma_{P}, \Gamma_{P'}$ the corresponding left  $\pW$-cells of $\Gamma_{\eW}$. By Proposition \ref{p restrict Wgraph}  together with the facts that a left cell of $\Gamma_{W_f}$ is irreducible at $\u = 1$ and the left cells corresponding to the same shape are isomorphic as $W_f$-graphs \cite[Theorem 1.4]{KL}, we have

\refstepcounter{equation} \label{e cell isomorphisms}
\begin{enumerate}[label={(\theequation.\roman{*})}]
\item If $\sh(P) = \sh(P')$, then there are exactly two $\pH$-morphisms from $A \Gamma_P$ to $A \Gamma_{P'}$: the 0 map and the map taking $C'_w$ to $C'_{w'}$ for $w\xrightarrow{RSK} (P,Q), w'\xrightarrow{RSK} (P',Q)$ for all SYT $Q$ of shape $\sh(P)$.
\item If $\sh(P) \neq \sh(P')$, then the 0 map is the only $\pH$-morphism from $A \Gamma_P$ to $A \Gamma_{P'}$.
\end{enumerate}

The following categories will be denoted by the plural form of an object in the category, i.e., a cocyclage poset is an object in the category Cocyclage Posets.  We refer to these categories as Atom Categories and their objects as atoms.
\begin{itemize}
\setlength{\itemsep}{1pt}\setlength{\topsep}{2pt}
\item \emph{$\pH$-Cellular Subquotients of} $\eH$ (\cs): The full subcategory of $\pH$-modules with basis whose objects are $\pH$-cellular subquotients of $\eH$ with the canonical basis.
\item \emph{Convex Cocyclage Posets (XCCP)}: Let $\pPi$ be the submonoid $\Pi \cap \pW = \langle \pi \rangle$ of $\pW$, where $\Pi$ is as in \textsection\ref{ss two presentations of eH}. Write $A \pPi$ for the corresponding subalgebra of $\pH$. A convex cocyclage poset (xccp) is a union $E$ of left $\pW$-cells of $\eH$ such that $\Res_{A\pPi} E$ is an $\pH$-cellular subquotient of $\Res_{A\pPi} \eH$. A morphism $\alpha: E \to E'$ is a morphism in the category of $A\pPi$-modules with basis (the basis for an object being the canonical basis) such that the composition $A\Gamma \hookrightarrow E \xrightarrow{\alpha} E' \twoheadrightarrow A\Gamma'$ is of the form (\ref{e cell isomorphisms}.i) or (\ref{e cell isomorphisms}.ii) for $\Gamma, \Gamma'$ left cells of $E, E'$.
    Equivalently, a convex cocyclage poset is a convex induced subposet of $\ccp(AT)$.  A morphism is the same as a morphism in Cocyclage Posets (Definition \ref{d cocyclage posets}).
\item Cocyclage Posets (CCP) as in Definition \ref{d cocyclage posets}.
\item $R\star W_f$-\Mod: objects are $R\star W_f$ modules equipped with a grading compatible with that of $R\star W_f$.
\item $\CC[t] \tsr \Lambda$: objects are symmetric functions with coefficients in $\CC[t]$. There is a unique morphism between each ordered pair of objects.
\end{itemize}

The remainder of our list consists of certain full subcategories of these categories. Before defining these, we establish some basic properties of the above categories. We have the following diagram of functors:
\be \label{e diagram atom categories}
{\footnotesize \xymatrix@R=.7cm@C=1.6cm{
{\cs}\ar[rr]_{\fxccp}\ar[d]_{\fmod}\ar[rrd]_{\fsp} & & \text{Convex Cocyclage Posets} \ar[d]\\
R\star W_f\text{-}\Mod \ar[rd]_{\mathscr{F}} & & \text{Cocyclage Posets} \ar[ld]_{\mathscr{F}}\\
& \CC[t] \tsr \Lambda &
}}
\ee


The functor $\fxccp$ just restricts an $\pH$-module with basis to an $A\pPi$-module with basis.
The functors $\fsp$ and and the vertical arrow on the right just forget about  $\pH$ or  $A \pPi$-module structures and retain the underlying set of tableaux corresponding to left cell labels;
the poset structure on this set is defined to be that generated by the cocyclage-edges with both ends in the set (see \textsection\ref{ss cells of pH}).
The functor $\fmod$ takes $E$ to $\CC\tsr_A E$ and forgets about the canonical basis, where $A \to \CC$ is given by $\u \mapsto 1$. Thus  $\cs$ connects the algebraic $R \star W_f$-$\Mod$ and combinatorial Convex Cocyclage Posets in the sense that a cellular subquotient of $\eH$ gives rise to both an $R \star W_f$-module and a convex cocyclage poset. The functor $\mathscr{F}$ on the left takes a module to its Frobenius series, with the exponent of $t$ keeping track of the grading. The functor $\mathscr{F}$ on the right takes a ccp $X$ to $\sum_{T \in X} t^{\deg(T)} s_\sh(T)$.

We record the fact, immediate from Proposition \ref{p preorder pH}, that
\begin{proposition}
\label{p preorder cells vs cocyclage poset}
For any $E \in \cs$, the partial order $\klo{E}$ on cells is the transitive closure of $\klo{\Res_\H E}$ and cocyclage-edges.
\end{proposition}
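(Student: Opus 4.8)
The plan is to reduce the statement to Proposition~\ref{p preorder pH}, which was itself deduced purely from the fact that $\pH$ is generated as an $A$-algebra by $\pi$ and $\H$ (Proposition-Definition~\ref{p pH exists}(iv)). The same input is available for any $E\in\cs$, so the first step will be to prove the basis-level analogue: for $E\in\cs$, the preorder $\klo{E}$ on the canonical basis of $E$ is the transitive closure of the generating edges of $\klo{E}$ arising from the action of $\H$ together with the corotation-edges $\C_{\pi w}\klo{E}\C_w$.

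To establish this, I would argue as follows. Because $E$ is a $\pH$-cellular subquotient of $\eH$, its canonical basis is (the image of) a subset of the canonical basis of $\eH$, and for $h\in\pH$ and a basis element $\gamma=\C_w$ of $E$, the product $h\gamma$ is obtained by computing in $\eH$ and then projecting to $E$. Hence every generating edge of $\klo{E}$ is produced by applying to a basis element either an element of $\H$ or the element $\pi$. Edges of the first kind are exactly the generating edges of $\klo{\Res_\H E}$. For the generator $\pi$ one has $\pi\C_w=\C_{\pi w}$ in $\eH$, which projects in $E$ to $\C_{\pi w}$ when that element lies in the canonical basis of $E$ and to $0$ otherwise; in either case the only edge produced is the corotation-edge $\C_{\pi w}\klo{E}\C_w$. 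Taking transitive closures yields the basis-level statement, which is the precise analogue of Proposition~\ref{p preorder pH}.

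The second step is to pass to left cells. For this I would first observe that the partition of the canonical basis of $E$ into left $\pW$-cells coincides with its partition into left $\H$-cells of $\Res_\H E$: by Corollaries~\ref{c restrict cells} and~\ref{c restrict cells2} every left $\pW$-cell is one of the sets $\Gamma_Q$, and each such $\Gamma_Q$ is a single left $\H$-cell; restricting this to $E$ gives the claim, and in particular makes the expression ``$\klo{\Res_\H E}$ on cells'' unambiguous as a relation on the left cells of $E$. The conclusion then follows from the elementary fact that if a preorder is the transitive closure of a relation $R$, then the partial order it induces on the quotient by the cell equivalence is the transitive closure of the relation induced by $R$ on that quotient. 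Applying this to the relation $R$ from Step~1 — whose $\H$-part induces, after transitive closure, $\klo{\Res_\H E}$ on cells, and whose corotation-edges induce precisely the cocyclage-edges (\textsection\ref{ss cells of pH}) — gives the proposition.

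I expect the only genuinely delicate point to be the cell identification in Step~2 — that restriction to $\H$ neither coarsens nor refines the $\pW$-cells of $E$ — which is exactly the content packaged in Corollaries~\ref{c restrict cells} and~\ref{c restrict cells2}. Everything else is a formal manipulation of preorders and their quotients by cell equivalence, which is why the proposition can reasonably be called immediate.
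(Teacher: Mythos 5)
Your proof is correct and follows the paper's own route: the paper simply asserts that the proposition is ``immediate from Proposition~\ref{p preorder pH},'' and what you have written is precisely the two-step unpacking of that claim — first transport the basis-level statement of Proposition~\ref{p preorder pH} to a general cellular subquotient $E$ (using that $\pH$ is generated by $\pi$ and $\H$, and that $\pi$ acts on the canonical basis by $\C_w\mapsto\C_{\pi w}$), and then descend to cells via the standard fact about quotients of preorders by the associated equivalence, with the cell identification supplied by Corollaries~\ref{c restrict cells} and~\ref{c restrict cells2}. You correctly flagged the cell identification as the only nontrivial input; the rest is the formal manipulation the paper treats as self-evident.
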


\begin{definition}
A cocyclage poset or convex cocyclage poset is \emph{connected} if its poset is connected as an undirected graph.
\end{definition}

\begin{definition}
For $Q,P \in \text{AT}$, the \emph{cellular subquotient}  $\csqa{Q}{P}$ is the minimal $\pH$-cellular subquotient of $\eH$ containing $\Gamma_Q$ and $\Gamma_P$.
\end{definition}

A \emph{copy}  $X'$ of an atom  $X$ is an object isomorphic to  $X$.  For example, we say that an object in $\cs$ isomorphic to a GP csq (defined below) is a \emph{GP csq copy}.
\begin{itemize}
\setlength{\itemsep}{1pt}\setlength{\topsep}{2pt}
\item \emph{Garsia-Procesi Cellular Subquotients} (GP CSQ). We say that the element $\csqa{G_{(n)}}{G_\lambda}$ of $\cs$ is the GP csq of shape $\lambda$.  This category is the full subcategory of $\cs$ with objects the GP csq of shape $\lambda$ for all $\lambda \vdash n$ and their copies. In \textsection\ref{s extending garsia-procesi}, we will show that $\fmod(\csqa{G_{(n)}}{G_\lambda})$ equals $R / I_\lambda$, the Garsia-Procesi module of shape $\lambda$.
\item \emph{Garsia-Procesi Cocyclage Posets} (GP CCP). Define $\gpa{G_{(n)}}{G_\lambda}$ to be the ccp on the set of tableaux given by the catabolizability conditions (a) and (b) of Proposition \ref{p catabolizable basics}. This is the full subcategory of CCP with objects $\{\fsp(X) : X \in \text{GP CSQ}\}$. In \textsection\ref{s extending garsia-procesi}, we will see that $\gpa{G_{(n)}}{G_\lambda} = \fsp(\csqa{G_{(n)}}{G_\lambda})$.
\item \emph{Dual Garsia-Procesi Cellular Subquotients} (dual GP CSQ): We say that the element $\csqa{\dual{G}_\lambda}{\dual{G}_{1^n}}$ of $\cs$ is the dual GP csq of shape $\lambda$. This category is the full subcategory of $\cs$ with objects the dual GP csq of shape $\lambda$ for all $\lambda \vdash n$ and their copies.
\item \emph{Dual Garsia-Procesi Cocyclage Posets} (DGP CCP). Define $\gpda{\dual{G}_\lambda}{\dual{G}_{1^n}}$ to be the sub-cocyclage poset of $\ccp(PAT)$ consisting of the tableaux given by conditions (c) and (d) of Proposition \ref{p catabolizable basics} (i.e.  $T$ such that  $T$ is $(\dual{G}_{1^n}, \lambda')$-row catabolizable). This is the full subcategory of CCP with objects $\gpda{\dual{G}_\lambda}{\dual{G}_{1^n}}$ and their copies. We conjecture that $\fsp(\csqa{\dual{G}_\lambda}{\dual{G}_{1^n}})$ equals $\gpda{\dual{G}_\lambda}{\dual{G}_{1^n}}$.
\item \emph{Shimozono-Weyman Cocyclage Posets} (SW CCP): The SW ccp $\swra{G_\lambda}{\eta}$ (resp. $\swca{\dual{G}_\lambda}{\eta}$) is the cocyclage poset consisting of the $(G_\lambda,\eta)$-row (resp.  $(\dual{G}_\lambda,\eta)$-column) catabolizable tableaux. This category is the full subcategory of CCP consisting of these cocyclage posets and their copies. Its objects are conjecturally in the image of $\fxccp$ and, stronger, in the image of $\fsp$.
\item \emph{Lascoux-Lapointe-Morse Cocyclage Posets} (LLM CCP): An LLM ccp will be defined in \textsection\ref{ss LLM atom} as the intersection of certain SW ccp. Again, these are conjecturally in the image of $\fxccp$ and $\fsp$.
\item \emph{Li-Chung Chen Cocyclage Posets} (Chen CCP): Chen ccp are a generalization of LLM ccp, also defined as the intersection of certain SW ccp; see \textsection\ref{ss chen atom}. Again, these are conjecturally in the image of $\fxccp$ and $\fsp$.
\end{itemize}
We have the following diagram of functors, which are all inclusions of full subcategories.  The ccp  $\ccp(\dual{\Tab(\eta)})$ will be defined in \textsection\ref{ss SSYT in PAT}.  They are dual GP ccp copies and this gives rise to the inclusion into DGP CCP.
\be
{\footnotesize \xymatrix@R=.5cm@C=1.5cm{
\text{GP CCP}  \ar[r]  \ar[rd]  & \text{SW CCP} \ar[rd]  & \                                  \\
\text{DGP CCP} \ar[ru] \ar[r]   & \text{Chen CCP} \ar[r] &  \text{CCP} \\
\{\ccp(\dual{\Tab(\eta)})\}_{\eta} \ar[u] &\text{LLM CCP} \ar[u]          &  \ }}
\ee

Note that if there is a ccp $X$ and an object $K$ of $\cs$ such that $\fsp(K) = X$, then $K$ is unique, and we may write $(\fsp)^{-1}(X)$ in place of $K$.  We conjecture the existence of categories  SW CSQ,  LLM CSQ,  Chen CSQ that map to SW CCP,  LLM CCP,  Chen CCP under  $\fsp$.  We have similar weaker conjectures for subcategories of XCCP.
There are similar conjectural diagrams for inclusions of full subcategories of convex cocyclage posets and full subcategories of $\cs$ that would map to these full subcategories under $\fxccp$ and $\fsp$. There is also a similar conjectural diagram for full subcategories of $R\star W_f$-$\Mod$ that would be the image of the diagram of full subcategories of $\cs$ under $\fmod$.

\section{A $\pW$-graph version of the coinvariants}
\label{s pW-graph version of R_(1^n)}
We exhibit a cellular subquotient $\cinv$ of $\pH$ which is a $\pW$-graph version of the ring of coinvariants $R_{1^n}$. We show that under a natural identification of the left cells of $\cinv$ with SYT, the subposet of $\klo{\cinv}$ consisting of the cocyclage-edges is exactly the cocyclage poset on SYT.
\subsection{}
\label{ss factorization theorem}
There are two important theorems that give the canonical basis of $\eH$ a more explicit description. These theorems hold in arbitrary type, but we state them in type $A$ to simplify notation.

Recall that $Y_+ \subseteq Y$ is the set of dominant weights, which in type $A_{n-1}$ are weakly decreasing $n$-tuples of integers; put  $\pY_+ = \pY \cap Y_+$.  As is customary, let $w_0$ denote the longest element of $W_f$.
If $\lambda \in Y_+$, then $w_0 y^\lambda$ is maximal in its double coset $W_f y^\lambda W_f$. For $\lambda \in \pY_+$, let $s_\lambda(\y) \in \eH$ denote the Schur function of shape $\lambda$ in the Bernstein generators $\y_i$.

\begin{theorem}[Lusztig {\cite[Proposition 8.6]{L}}]\label{t Lusztig}
For any $\lambda \in \pY_+$, the canonical basis element $\C_{w_0 y^\lambda}$ can be expressed in terms of the Bernstein generators as
\[\C_{w_0 y^\lambda}=s_\lambda(\y)\C_{w_0}=\C_{w_0}s_\lambda(\y).\]
\end{theorem}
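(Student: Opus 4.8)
The plan is to apply the uniqueness in Theorem~\ref{t kl canonical basis}: I will show that $X:=s_\lambda(\y)\,\C_{w_0}$ is $\br{\cdot}$-invariant and congruent to $T_{w_0y^\lambda}$ modulo $\ui\L$, which then forces $X=\C_{w_0y^\lambda}$. The remaining equality $s_\lambda(\y)\,\C_{w_0}=\C_{w_0}\,s_\lambda(\y)$ comes for free from Bernstein's theorem that the centre of $\eH$ is the ring of $W_f$-symmetric Laurent polynomials in $\y_1,\dots,\y_n$, so that the Schur polynomial $s_\lambda(\y)$ is central and commutes with $\C_{w_0}$. As a consistency check, $s_\lambda(\y)\in(\pH)_{|\lambda|}$ and $\C_{w_0}\in(\pH)_0$, so $X$ is homogeneous of degree $|\lambda|=\deg(w_0y^\lambda)$, the correct degree for $\C_{w_0y^\lambda}$.

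Bar-invariance of $X$ is the easy part. Since $\br{\cdot}$ is a ring homomorphism and $\br{\C_{w_0}}=\C_{w_0}$, we have $\br{X}=\br{s_\lambda(\y)}\,\C_{w_0}$, so it suffices to see that $s_\lambda(\y)$ is $\br{\cdot}$-invariant. Writing $s_\lambda$ as an integer polynomial in the elementary symmetric functions $e_1(\y),\dots,e_{n-1}(\y)$ and $e_n(\y)^{\pm1}$, this reduces to bar-invariance of each $e_k(\y)$: for $k=n$ one has $e_n(\y)=\y_1\cdots\y_n=\pi^n$ (compare~\eqref{e wordmult7}), invariant because $\pi$ has length $0$; for $k<n$ it follows by induction from the commutation relation~\eqref{e TY}, and is the standard fact that the centre of $\eH$ is fixed pointwise by $\br{\cdot}$.

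The congruence $X\equiv T_{w_0y^\lambda}\pmod{\ui\L}$ is the heart of the matter, and the main obstacle. I would use the classical formula $\C_{w_0}=\sum_{v\le w_0}\u^{\ell(v)-\ell(w_0)}T_v$ (all $P'_{v,w_0}=\u^{\ell(v)-\ell(w_0)}$ for the longest element of $W_f$) together with $\y^\mu=T_{y^\mu}$ for dominant $\mu$ and the reducedness $\ell(vy^\mu)=\ell(v)+\ell(y^\mu)$ for $v\in W_f$ (valid because a dominant $y^\mu$ is the minimal-length element of $W_fy^\mu$, by Proposition~\ref{p affinewordlessthan}), so that $T_v\y^\mu=T_{vy^\mu}$. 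Expanding $s_\lambda(\y)=\sum_U\y^{\mathrm{wt}(U)}$ over semistandard tableaux $U$ of shape $\lambda$ with entries in $[n]$, the diagonal tableau contributes $T_{w_0}\y^\lambda=T_{w_0y^\lambda}$, which is the unique top-length term because $w_0y^\lambda$ is maximal in the double coset $W_fy^\lambda W_f$ (\textsection\ref{ss factorization theorem}); thus $X=T_{w_0y^\lambda}+(\text{strictly shorter terms})$. The subtlety is that the remaining tableaux (including those of dominant weight $\ldneq\lambda$) a priori contribute shorter terms whose standard-basis coefficients are not in $\ui A^-$, and one must show these organise into $\ui\L$. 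I expect this ``straightening'' step — rewriting $\y^\mu$ for non-dominant $\mu$ in the standard basis via the intertwiner form of~\eqref{e TY} and tracking coefficients — to be the genuinely computational point, and it is where Lusztig's Proposition~8.6 does its work. Two cleaner routes to the same end: (i) work inside $\C_{w_0}\eH=\C_{w_0}\,A[\y_1^{\pm1},\dots,\y_n^{\pm1}]$ (using that $\C_{w_0}$ absorbs $\H$ on the right, $\C_{w_0}T_i=\u\C_{w_0}$), which one recognises as a based form of the spherical module induced from the trivial representation of $\H$, and quote the known description of its canonical basis; or (ii) use the geometric realisation of the affine Kazhdan--Lusztig basis by intersection cohomology sheaves, under which $\C_{w_0y^\lambda}$ corresponds to an IC sheaf on a Schubert variety fibred over an affine-Grassmannian Schubert variety with cohomologically trivial fibre $G/B$, producing the Schur polynomial by a Satake-type computation. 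Once the congruence is in hand, Theorem~\ref{t kl canonical basis} gives $X=\C_{w_0y^\lambda}$, which together with the centrality of $s_\lambda(\y)$ is the full statement.
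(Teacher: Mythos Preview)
The paper does not prove this theorem at all: it is stated with attribution to Lusztig \cite[Proposition 8.6]{L} and no argument is given. So there is no in-paper proof to compare against; the result is simply quoted as input to the later factorization theorem (Theorem~\ref{t factoriz}) and to Theorem~\ref{t elemsymkl}.

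Your outline is the standard strategy and is sound as far as it goes. Bar-invariance via centrality of $s_\lambda(\y)$ is correct (the centre of $\eH$ is the ring of $W_f$-symmetric Laurent polynomials in the $\y_i$, and is fixed pointwise by $\br{\cdot}$), and the commutation $s_\lambda(\y)\C_{w_0}=\C_{w_0}s_\lambda(\y)$ is immediate from the same centrality. You also correctly isolate the lattice condition $s_\lambda(\y)\C_{w_0}\equiv T_{w_0y^\lambda}\pmod{\ui\L}$ as the real content, and you are honest that you have not carried it out. The two routes you name --- passing to the spherical module $\C_{w_0}\eH$ and identifying its canonical basis, or the geometric/perverse-sheaf argument on the affine Grassmannian --- are exactly the standard ways this is established; Lusztig's original proof in \cite{L} is the geometric one. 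The bare-hands ``straightening'' you sketch (expanding over semistandard tableaux and rewriting non-dominant $\y^\mu$ via \eqref{e TY}) is in principle possible but unpleasant, and is not how the literature does it.

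In short: your proposal is a correct proof \emph{plan} with the decisive step deferred to known machinery, which is exactly what the paper itself does by citing Lusztig.
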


Recall from \textsection\ref{ss cocyclage poset} that for a standard word $v \in W_f$, $\cl{v}$ denotes the cocharge labeling of $v$, which is a sequence of $n$ nonnegative integers. Thinking of $\cl{v}$ as an element of $Y^{+}$, let $D \subset Y^{+}$ denote the set of cocharge labelings, which is in bijection with $W_f$. The set $\{y^{\beta} : \beta \in D \}$ are the \emph{descent monomials}. Next, put
\be \begin{array}{rl}
\ds := &\{\lJ{(y^\beta)}{S} : \beta \in D\}, \\
\dsw := & \{\lJ{(y^\beta)}{S}w_0 : \beta \in D\},
\end{array} \ee
which are the minimal and maximal coset representatives corresponding to descent monomials. The set $\dsw$ will index a canonical basis of the coinvariants.

\begin{proposition} \label{p bijectionwfdsw} There is a bijection $W_f \to \dsw$, $v \mapsto w$, defined by setting the word of $w$ to be $w_i = n \cl{v}_i + v_i$. Its inverse has the two descriptions
{
\renewcommand{\minalignsep}{0pt}
\begin{flalign}
\label{e inverselj} \lj{w}{S} \mapsfrom w \\
\label{e inversersd} \rsd{w_1} \rsd{w_2} \dots \rsd{w_n} \mapsfrom w
\end{flalign}}
where \rsd{w_i} is the residue of $w_i$ as defined in \textsection\ref{ss words of eW}.
\end{proposition}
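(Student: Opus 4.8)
The plan is to make the map explicit and then to recognize $\dsw$ as the set of longest representatives of the cosets $y^{\beta}W_f$, $\beta\in D$.  First, by formula~(\ref{e wordmult6}) the element of $\eW$ whose word is $w_i=n\cl v_i+v_i$ is precisely $w=y^{\cl v}v$; since cocharge labels are nonnegative we have $\cl v\in\pY$, so $w\in\pW$.  The residues $\rsd{w_i}=\rsd{\,n\cl v_i+v_i\,}=v_i$ (as $v_i\in[n]$) recover $v$ from $w$, which is both description~(\ref{e inversersd}) of the inverse and shows $v\mapsto w$ is injective.  For~(\ref{e inverselj}), Proposition~\ref{p wordlj} says the word of $\lj{w}{S}$ is obtained by standardizing $n\cl v_1+v_1,\dots,n\cl v_n+v_n$, so it remains to show this standardization is $v$, i.e.\ that $n\cl v_i+v_i<n\cl v_j+v_j\iff v_i<v_j$.

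The crux, and the step I expect to be the main obstacle, is a compatibility between the cocharge labeling and the order of a standard word $v$: \textup{(I)} $v_a<v_b$ implies $\cl v_a\le\cl v_b$; and \textup{(II)} if $i<j$ and $v_i>v_j$, then $\cl v_i>\cl v_j$.  I would deduce both from the recursion defining $\cl v$: letting $p_m$ be the position of the entry $m$ of $v$, the label of $m{+}1$ minus the label of $m$ equals $1$ if $p_{m+1}<p_m$ and $0$ otherwise.  Summing this increment for $m$ from the smaller of $v_i,v_j$ up to the larger gives a nonnegative total, which is~(I); in case~(II) the same sum equals $\cl v_i-\cl v_j$, while its associated telescoping sum $\sum(p_{m+1}-p_m)$ equals $p_{v_i}-p_{v_j}=i-j<0$, so at least one increment is $1$ and $\cl v_i\ge\cl v_j+1$.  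Granting~(I), the order of the integers $n\cl v_i+v_i$ is lexicographic on the pairs $(\cl v_i,v_i)$, and~(I) makes this coincide with the order of the $v_i$ (the difference of two distinct label levels is at least $1$, outweighing the spread $<n$ of the $v$'s); hence $\lj{w}{S}=v$, which is~(\ref{e inverselj}).

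It remains to see that the image of $v\mapsto w$ is all of $\dsw$.  Combining Proposition~\ref{p affinewordlength} with~(\ref{e wordmult6}) and the fact that $\tfrac{u_i-u_j}{n}\in(-1,1)\setminus\{0\}$ for a permutation word $u_1\cdots u_n$, one gets
\[
\ell(y^{\beta}u)=\sum_{1\le i<j\le n}\bigl|\,\beta_i-\beta_j-[\,u_i<u_j\,]\,\bigr|\qquad(\beta\in\pY,\ u\in W_f),
\]
with $[\,u_i<u_j\,]$ equal to $1$ if $u_i<u_j$ and $0$ otherwise.  For fixed $\beta$ each summand depends only on whether $u_i<u_j$, the two values it can take are distinct integers, and the larger is attained exactly when $u_i>u_j\iff\beta_i>\beta_j$; thus $\ell(y^{\beta}u)$ is maximized over $u\in W_f$ by a unique permutation, and by~(I)--(II) that permutation is $v$ precisely when $\beta=\cl v$.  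Hence $y^{\cl v}v$ is the longest element of the coset $y^{\cl v}W_f$; as $\lJ{(y^{\cl v})}{S}$ is the shortest element of that coset, the longest one is $\lJ{(y^{\cl v})}{S}w_0$, so $w=\lJ{(y^{\cl v})}{S}w_0\in\dsw$ (using $\cl v\in D$).  Surjectivity then follows since $v\mapsto\cl v$ is a bijection $W_f\to D$, so every element $\lJ{(y^{\beta})}{S}w_0$ of $\dsw$ is $w$ for the unique $v$ with $\cl v=\beta$; combined with injectivity, $v\mapsto w$ is a bijection.
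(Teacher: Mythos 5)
Your proof is correct, and it takes a genuinely different route to the key maximality step. The paper shows $n\cl{v}+v$ is the longest element of its coset $y^{\cl{v}}W_f$ by applying Proposition~\ref{p wordrj}: the word of $\rj{w}{S}$ is the concatenation $w^0(w^1)^*(w^2)^*\cdots$, which for $w=n\cl{v}+v$ equals $1\,2\cdots n=w_0$ because cocharge labels are weakly increasing in the value $m$ and, within each label level, the values appear left-to-right in increasing order. You instead compute $\ell(y^\beta u)$ directly from Proposition~\ref{p affinewordlength}, observe that each summand $|\beta_i-\beta_j-[u_i<u_j]|$ is independently maximized by a unique relative order of $u_i,u_j$ determined by the sign of $\beta_i-\beta_j$, and then use your (I)--(II) to recognize that $v$ is exactly the permutation realizing all these maxima when $\beta=\cl{v}$. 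For (\ref{e inverselj}) you also argue via standardization (Proposition~\ref{p wordlj}) and property~(I), which is essentially the tie-breaking fact the paper invokes without proof (``a permutation can be recovered from its cocharge labeling by breaking ties with the rule that cocharge labels increase from left to right''). The trade-off: the paper's argument is shorter and leans on the already-proven combinatorial description of $\rj{w}{S}$, while yours is more self-contained, makes the underlying monotonicity of $\cl{v}$ explicit as lemmas (I) and (II), and derives maximality from first principles via the length formula rather than from coset-representative combinatorics. Both correctly treat injectivity (via residues) and surjectivity (via the bijection $W_f\to D$). One small bookkeeping point worth noting: with the paper's stated convention that $\lfloor x\rfloor$ is the greatest integer \emph{strictly} less than $x$, the case analysis in your length computation is unaffected because $(w_i-w_j)/n$ is never an integer (the $w_i$ have distinct residues mod $n$).
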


\begin{proof}
We know that $W_f \to D$, $v \mapsto \cl{v}$ is a bijection, so we need to show that $\lJ{({y}^{\cl{v}})}{S} w_0$ has word $n \cl{v} + v$. This holds because $n \cl{v} + v$ belongs to the coset ${{y}^{\cl{v}}} W_f$ by (\ref{e wordmult6}) and is maximal in this coset by Proposition \ref{p wordrj}.
Equation (\ref{e inverselj}) follows from Proposition \ref{p wordlj} and the fact that a permutation can be recovered from its cocharge labeling by breaking ties with the rule that cocharge labels increase from left to right.
\end{proof}

\begin{example}
\label{ex cocharge}
For the $v \in \S_9$ given by its word below, the corresponding $\cl{v}$ and $w$ follow.
\[   \begin{array}{rcccccccccccc}
v &= &1&6&8&4&2&9&5&7&3, \\
\cl{v} &= &0&2&3&1&0&3&1&2&0, \\
w = n \cl{v} + v &= &1&26&38&14&2&39&15&27&3. \\
\end{array}\]
\end{example}

The \emph{lowest two-sided $\eW$-cell} of $W_e$ is the set $\{w \in W_e: w = x \cdot w_0 \cdot z, \text{ for some } x,z \in W_e\}$, denoted $\lrcelllong{(n)}$ (see \textsection\ref{ss cells in We} for more on these two-sided cells). As preparation for the next theorem, we have a proposition giving the factorization of any $w \in \lrcelllong{(n)} \cap \pW$ in terms of descent monomials. This is not too hard to see from the combinatorial description, however it is more easily proved with the help of a geometric description of $\ds$ in terms of alcoves, which we omit here.

\begin{proposition}[{\cite[Proposition 3.7]{B2}}]
\label{p two-sided primitive}
For any $w \in \lrcelllong{(n)} \cap \pW$, there is a unique expression for $w$ of the form
\be w = u_1 \cdot w_0 y^\lambda \cdot u_2 \ee
where $u_1, \invlr(u_2) \in \ds$ and $\lambda \in \pY_+$ ($\invlr$ is defined in \textsection\ref{ss invlr}).
\end{proposition}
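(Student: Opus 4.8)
The statement is, up to restricting to $\pW$, a form of Xi's parametrization of the lowest two-sided cell of an affine Weyl group (compare \cite{X2}), so the plan is to establish existence of the factorization and then its uniqueness, keeping the hypothesis $w \in \pW$ in play throughout. First I would record the facts about $w_0 y^\lambda$ that I need. By (\ref{e wordmult6}) the word of $y^\lambda$ is $(n\lambda_1{+}n,\; n\lambda_2{+}n{-}1,\; \dots,\; n\lambda_n{+}1)$, so Proposition~\ref{p affinewordlessthan} shows that $y^\lambda \in \eW^S$ for every $\lambda$ and that $y^\lambda \in \leftexp{S}{\eW}$ exactly when $\lambda$ is dominant; hence for $\lambda \in Y_+$ the product $w_0 y^\lambda = w_0 \cdot y^\lambda$ is a reduced factorization, and $w_0 y^\lambda$ is the longest element of its double coset $W_f y^\lambda W_f$ (any $v_1 y^\lambda v_2 = (v_1 v_2)\, y^{v_2^{-1}\lambda}$ has length $\le \ell(w_0) + \ell(y^\lambda)$, with equality forcing $v_1 v_2 = w_0$). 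I would also use that $\ds \subseteq \eW^S$ consists of the minimal-length representatives of the pairwise distinct cosets $y^\beta W_f$, $\beta \in D$, so that $\ds \hookrightarrow \eW/W_f$ is injective; that $\invlr$ (which permutes $S$) carries $\ds$ into $\leftexp{S}{\eW}$ with $\invlr(\ds) \hookrightarrow W_f\backslash\eW$ injective; and that for $u \in \ds$ the product $u \cdot w_0$ is reduced, equal to the longest element of $u W_f$, and lies in $\dsw$.

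\emph{Existence.} Given $w \in \lrcelllong{(n)} \cap \pW$, the defining property of $\lrcelllong{(n)}$ supplies a reduced factorization $w = a \cdot w_0 \cdot b$, and by Lemma~\ref{l pivpi} every $v \le w$ — in particular $a$ and $b$ — lies in $\pW$. I would then normalize: transport simple reflections of $W_f$ from the inner ends of $a$ and $b$ across $w_0$ (using $s_i w_0 = w_0 s_{n-i}$) and across the translation part of $b$ (using the commutations (\ref{e TY})), absorbing them until the left factor lands in $\dsw$, the right factor in $\invlr(\ds)$, and the middle is a dominant translation $y^\lambda$, with reducedness maintained at each step. This bookkeeping is cleanest in the alcove picture: $\ds$ has a geometric description as the set of $x \in \eW^S$ whose alcove lies in a fixed fundamental domain for the $Q'_f$-translation action, and $\lrcelllong{(n)}$ has the alcove description of Shi, so reading the alcove of $w$ against these two descriptions exhibits directly the data $u_1 \in \ds$, $\lambda \in Y_+$, $\invlr(u_2) \in \ds$ together with $w = u_1 \cdot w_0 y^\lambda \cdot u_2$ reduced, the length additivity being checked via Proposition~\ref{p affinewordlength}. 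Finally, $w \in \pW$ makes its translation part nonnegative, so in fact $\lambda \in \pY_+$.

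\emph{Uniqueness.} If $w = u_1 w_0 y^\lambda u_2 = u_1' w_0 y^{\lambda'} u_2'$ are two factorizations of the required form, then the position of the alcove of $w$ relative to the two fundamental domains above — equivalently, the unique way in which $\ell(w)$ splits as $\ell(u_1) + \ell(w_0) + \ell(y^\lambda) + \ell(u_2)$ once the constraints $u_1 \in \ds$, $\invlr(u_2) \in \ds$ are imposed — forces $\lambda = \lambda'$, and then $u_1 = u_1'$ and $u_2 = u_2'$ follow from the injectivity of $\ds \hookrightarrow \eW/W_f$ and of $\invlr(\ds) \hookrightarrow W_f\backslash\eW$.

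The step I expect to be the main obstacle is existence: showing that the part of the alcove of $w$ lying outside a chosen fundamental domain, on each side of the central dominant translation, is organized by genuine elements of $\ds$ and not merely of the larger set $\eW^S$, and that the three resulting factors multiply with additive lengths. This is precisely the point at which both the defining property of $\lrcelllong{(n)}$ and the combinatorics of descent monomials enter, which is why the geometric description of $\ds$ is the natural tool — a purely combinatorial proof is possible but, as the author notes, more delicate.
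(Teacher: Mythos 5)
The paper's own ``proof'' is simply the citation to \cite[Proposition~3.7]{B2} (for $G=SL_n$), together with the remark preceding the proposition that the cleanest argument uses the alcove description of $\ds$. Your sketch reconstructs essentially that approach: you identify the right ingredients (maximality of $w_0 y^\lambda$ in $W_f y^\lambda W_f$, injectivity of $\ds \hookrightarrow \eW/W_f$ and $\invlr(\ds) \hookrightarrow W_f\backslash\eW$, and the alcove pictures of $\ds$ and of $\lrcelllong{(n)}$), and you honestly flag existence --- the normalization of $w = a\cdot w_0 \cdot b$ to land $u_1$ in $\ds$ and $\invlr(u_2)$ in $\ds$ rather than merely in $\eW^S$ --- as the step needing the real work. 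So the route matches the paper's indicated one, though of course neither you nor the paper itself actually carries the argument out here.

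A few local slips to correct. First, the claim that $y^\lambda \in \eW^S$ for every $\lambda$ is false: using the infinite-word convention $w_{i+n}=w_i-n$ and Proposition~\ref{p affinewordlessthan}, one finds $y^\lambda\in\eW^S$ if and only if $\lambda$ is \emph{anti}dominant. Luckily you never use this; the fact you actually need, $y^\lambda \in \leftexp{S}{\eW}$ for $\lambda\in Y_+$ (so that $w_0\cdot y^\lambda$ is reduced and $w_0 y^\lambda$ is the top of its double coset), is stated correctly. Second, the relation you invoke to move simple reflections across the translation part is the group-level relation $s_i\, y^\mu = y^{s_i(\mu)}\, s_i$ in $\eW$, not the Hecke-algebra commutation~(\ref{e TY}); the latter has the extra rational term and is not what you want here. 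Third, the final step deducing $\lambda\in\pY_+$ from ``$w\in\pW$ makes its translation part nonnegative'' is a little hasty, since the translation part of $w$ is not $\lambda$; the clean argument is the one you used earlier for $a$ and $b$: once $w = u_1\cdot w_0 y^\lambda\cdot u_2$ is reduced, each factor is $\le w$, hence in $\pW$ by Lemma~\ref{l pivpi}, and $w_0 y^\lambda\in\pW$ with $\lambda$ dominant forces $\lambda_n\ge 0$, i.e.\ $\lambda\in\pY_+$.
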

\begin{proof}
This follows easily from the corresponding \cite[Proposition 3.7]{B2} for $G = SL_n$.
\end{proof}


The next powerful theorem simplifying the canonical basis of $\pH$ is due to Xi (\cite[Corollary 2.11]{X}), also found independently by the author. In the language of \cite{X}, the condition $w \in \ds$ is written $wA_{v'} \subseteq \Pi_{v'}$, where $v'$ is a special point, $A_{v'}$ is an alcove, and $\Pi_{v'}$ is a box. We state here a combination of Lusztig's theorem (Theorem \ref{t Lusztig}) and Xi's theorem.

For $v$ such that $v$ minimal in $v W_f$ (resp. $W_f v$), define $\lC_v$ (resp. $\rC_v$) by $\C_{v w_0} = \lC_v \C_{w_0}$ (resp. $\C_{w_0 v} = \C_{w_0} \rC_v $).

\begin{theorem}
\label{t factoriz}
For $w \in \lrcelllong{(n)} \cap \pW$ and with  $w = u_1 \cdot w_0 y^\lambda \cdot u_2$ as in Proposition \ref{p two-sided primitive}, we have the factorization
\[ \C_{w} = s_\lambda({\y}) \lC_{u_1} \C_{w_0} \rC_{u_2}. \]
\end{theorem}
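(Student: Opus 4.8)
The strategy is to combine the two already-quoted structural results, Lusztig's Theorem \ref{t Lusztig} and Xi's theorem (stated in the text as the companion of Lusztig's), and to interpolate them using the factorization of the group element $w = u_1 \cdot w_0 y^\lambda \cdot u_2$ from Proposition \ref{p two-sided primitive}. First I would record the two multiplicativity facts that make the argument go: (1) by Theorem \ref{t Lusztig}, $\C_{w_0 y^\lambda} = s_\lambda(\y)\,\C_{w_0} = \C_{w_0}\,s_\lambda(\y)$, so the central piece $w_0 y^\lambda$ already has a canonical basis element that factors through $\C_{w_0}$; and (2) the definitions of $\lC_{u_1}$ and $\rC_{u_2}$ immediately give $\C_{u_1 w_0} = \lC_{u_1}\C_{w_0}$ and $\C_{w_0 u_2} = \C_{w_0}\rC_{u_2}$ whenever $u_1$ is minimal in $u_1 W_f$ and $u_2$ is minimal in $W_f u_2$ (which holds here since $u_1 \in \ds$ and $\invlr(u_2) \in \ds$, i.e.\ $u_2 \in \leftexp{S}{\pW}$). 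So the content to be proved is that these three "one-sided" factorizations can be concatenated around the single element $\C_{w_0}$, i.e.\ that
\[
\C_{w} = \lC_{u_1}\,\C_{w_0 y^\lambda}\,\rC_{u_2} = s_\lambda(\y)\,\lC_{u_1}\,\C_{w_0}\,\rC_{u_2},
\]
the last equality being Theorem \ref{t Lusztig} together with the fact that $s_\lambda(\y)$ is central enough to be moved to the front (it commutes with $\lC_{u_1}$ since $\lC_{u_1}$ is built from the $T_i$, $i \in [n-1]$, and $s_\lambda(\y)$ is a symmetric function in $\y_1,\dots,\y_n$, which is central in $\H$; alternatively use $\C_{w_0 y^\lambda} = s_\lambda(\y)\C_{w_0}$ and move $s_\lambda(\y)$ left past $\lC_{u_1}$ using that symmetric functions in all the $\y_i$ are central in $\pH$).

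The heart of the matter, and what I expect to be the main obstacle, is justifying the concatenation $\C_w = \lC_{u_1}\,\C_{w_0 y^\lambda}\,\rC_{u_2}$. Here is where Xi's theorem does the real work: it asserts precisely (in the form stated just before this theorem) that for $w$ in the lowest two-sided cell with the alcove/box condition $u_1 \in \ds$ — equivalently $u_1 W_f$ has its minimal representative acting suitably on alcoves — one has $\C_{u_1 \cdot w_0 y^\lambda} = \lC_{u_1}\,\C_{w_0 y^\lambda}$, and the mirror statement on the right for $u_2$. The plan is to apply Xi's result once on the left (to the element $u_1 \cdot (w_0 y^\lambda u_2)$, noting $w_0 y^\lambda u_2$ is still in $\lrcelllong{(n)}\cap \pW$ and that the reduced factorization $w = u_1 \cdot (w_0 y^\lambda \cdot u_2)$ from Proposition \ref{p two-sided primitive} makes the hypothesis applicable) and once on the right (to $w_0 y^\lambda \cdot u_2$, using $\invlr(u_2) \in \ds$ and the anti-automorphism $\invlr$ of \textsection\ref{ss invlr} to transport the left-hand version of Xi's theorem to a right-hand version). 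Concretely: $\C_w = \lC_{u_1}\,\C_{w_0 y^\lambda u_2}$ by the left application, and $\C_{w_0 y^\lambda u_2} = \C_{w_0 y^\lambda}\,\rC_{u_2}$ by the right application; compose.

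Finally I would assemble the pieces: starting from $\C_w = \lC_{u_1}\,\C_{w_0 y^\lambda}\,\rC_{u_2}$, substitute $\C_{w_0 y^\lambda} = s_\lambda(\y)\,\C_{w_0}$ from Theorem \ref{t Lusztig}, and slide $s_\lambda(\y)$ to the far left past $\lC_{u_1}$ using centrality of symmetric functions in $\y_1,\dots,\y_n$ within $\pH$ (which is where I'd cite the commutation relations (\ref{e TY}) / the fact that $\lC_{u_1}$ lies in the finite Hecke subalgebra $\H$, in which $s_\lambda(\y)$ is central), obtaining $\C_w = s_\lambda(\y)\,\lC_{u_1}\,\C_{w_0}\,\rC_{u_2}$, as desired. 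The only subtlety to watch is that the reduced-factorization hypotheses needed to invoke $\C_{a\cdot b} = \C_a \cdot$-type statements are exactly guaranteed by Proposition \ref{p two-sided primitive} (reducedness of $u_1 \cdot w_0 y^\lambda \cdot u_2$) and by $u_1, \invlr(u_2) \in \ds$, so no extra case analysis is required; the rest is bookkeeping with the anti-automorphism $\invlr$ to pass between the left and right forms of Xi's theorem.
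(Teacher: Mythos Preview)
The paper does not give a proof of this theorem at all: it presents Theorem \ref{t factoriz} as ``a combination of Lusztig's theorem (Theorem \ref{t Lusztig}) and Xi's theorem'' and cites \cite[Corollary 2.11]{X}, so your proposal is filling in what the paper treats as a citation. Your outline---peel off $u_1$ on the left and $u_2$ on the right via Xi, then apply Lusztig's identity $\C_{w_0 y^\lambda} = s_\lambda(\y)\C_{w_0}$ in the middle, then slide $s_\lambda(\y)$ out using centrality---is exactly the intended reading of ``combination,'' and is correct.

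One point does need fixing: the claim that ``$\lC_{u_1}$ is built from the $T_i$, $i \in [n-1]$'' and ``lies in the finite Hecke subalgebra $\H$'' is false. The element $u_1$ lies in $\ds \subseteq \pW$ and is typically not in $W_f$, so $\lC_{u_1} = \sum_{x \leq u_1} \lP_{x,u_1} T_x$ is a genuine affine object (e.g.\ compare Lemma \ref{l elemsymkl} and Theorem \ref{t elemsymkl}, where $\lC_{u_1}$ turns out to be a polynomial in the $\y_i$). Your alternative justification is the right one: $s_\lambda(\y)$ is a symmetric polynomial in the Bernstein generators and hence central in $\eH$, so it commutes with $\lC_{u_1}$ regardless. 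You should also be a bit careful about exactly what Xi's Corollary 2.11 asserts before invoking it twice in the ``one side at a time'' form $\C_{u_1 \cdot v} = \lC_{u_1}\C_v$; depending on how Xi states it, you may need the anti-automorphism $\invlr$ exactly as you say to get the right-hand version, but you should verify that $v = w_0 y^\lambda u_2$ (not just $w_0$) satisfies the hypotheses needed for the left-hand peel.
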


\begin{remark}
The generalization of this theorem to arbitrary types is most natural for root systems associated to simply connected Lie groups $G$ (in particular, more natural for $G = SL_n$ than $G = GL_n$) because, in the simply connected case, the set playing the role of $\ds$ is naturally in bijection with $W_f$. The result Xi proves is for the simply connected case. However, if we work with the positive part $\pW$ of $\eW$, the $G = GL_n$ case is just as nice or nicer than the $SL_n$ case.
\end{remark}

\subsection{}

%

Let $e^+=\C_{w_0}$. Then $Ae^+$ is the one-dimensional trivial left-module of $\H$ in which the $T_i$ act by $\u$ for $i \in [n-1]$. The $\pH$-module $\pH e^+ = \pH \tsr_\H e^+$ is a $\u$-analogue of the polynomial ring $R$; more precisely, $\pH e^+$ is a $\u$-analogue of the left $R\star W_f$-module $Re^+$. Without saying so explicitly, we will identify the $\pH$-module $\pH e^+$ with the cellular submodule of $\pH$ spanned by $\{\C_w:w \text{ maximal in } wW_f\}$ as modules with basis. (It is easy to see directly that this is possible; it is also a special case of general results about inducing $W$-graphs \cite[Proposition 2.6]{B0}.)

Let $\R$ denote the subalgebra of $\pH$ generated by the Bernstein generators $\y_i$. Thus $\R \cong R$ as algebras. Write $(\pY)_{\geq d}^{W_f} \subseteq \R$ for the set of $W_f$-invariant polynomials of degree at least $d$. Now Theorem \ref{t factoriz} applied to the canonical basis of $\pH e^+$ yields the following corollary, which gives a $\u$-analogue of the ring of coinvariants.  Later, in \textsection\ref{ss atoms 1}, we will prove a more general result (Theorem \ref{t coinvariant copies}) that uses the full power of Theorem \ref{t factoriz}.

\begin{corollary}
\label{c cinv}
The $\pH_n$-module $\pH_n e^+$ has a cellular quotient equal to
\[ \cinv := \pH_n e^+/\pH_n (\pY)_{\geq 1}^{\S_n}e^+ \]
with canonical basis $\{\C_w : w \in \dsw\}$.
\end{corollary}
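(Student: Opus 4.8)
The plan is to exhibit $\pH_n e^+$ as a free module over the algebra of symmetric polynomials in the Bernstein generators and then pass to the quotient. Write $Z\subseteq\R$ for the $A$-span of the Schur functions $s_\lambda(\y)$, $\lambda\in\pY_+$; this is the algebra of $\S_n$-invariant polynomials in $\y_1,\dots,\y_n$, and by Bernstein's theorem (the center of $\eH_n$ is the algebra of symmetric Laurent polynomials in the $\y_i$) it is central in $\eH_n$, hence in $\pH_n$. Its positive-degree part is $Z_{>0}=(\pY)^{\S_n}_{\geq1}$, and since $Z_{>0}$ is central we have $\pH_n(\pY)^{\S_n}_{\geq1}e^+ = Z_{>0}\cdot(\pH_n e^+)$.

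The key step is to show that $\pH_n e^+$ is free over $Z$ with basis $\{\C_w : w\in\dsw\}$. By the identification of $\pH_n e^+$ with the cellular submodule of $\pH_n$ spanned by $\{\C_w : w\in\pW\text{ maximal in }wW_f\}$, this set is the canonical basis of $\pH_n e^+$. Any such $w$ admits the length-additive factorization $w=(ww_0)\cdot w_0$, so $w\in\lrcelllong{(n)}\cap\pW$ and Proposition \ref{p two-sided primitive} gives $w=u_1\cdot w_0 y^\lambda\cdot u_2$ with $u_1,\invlr(u_2)\in\ds$ and $\lambda\in\pY_+$. One checks that maximality of $w$ in $wW_f$ forces $u_2=e$ (this is the one point needing real work — see below); granting it, Theorem \ref{t factoriz} gives $\C_w=s_\lambda(\y)\lC_{u_1}\C_{w_0}=s_\lambda(\y)\,\C_{u_1 w_0}$ with $u_1 w_0\in\dsw$. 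Now $(u_1,\lambda)\mapsto u_1\cdot w_0 y^\lambda$ is a bijection $\ds\times\pY_+\xrightarrow{\sim}\{w\in\pW:w\text{ maximal in }wW_f\}$ — injective by the uniqueness in Proposition \ref{p two-sided primitive}, surjective by the preceding sentence — and $\{s_\lambda(\y):\lambda\in\pY_+\}$ is an $A$-basis of $Z$; hence the canonical basis of $\pH_n e^+$ is exactly $\{s_\lambda(\y)\,\C_v:\lambda\in\pY_+,\ v\in\dsw\}$, left multiplication by $s_\lambda(\y)$ carries $\C_{u_1 w_0}$ to $\C_{u_1 w_0 y^\lambda}$, and so $\pH_n e^+=\bigoplus_{v\in\dsw}Z\,\C_v$.

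The corollary then follows formally. We have $\pH_n(\pY)^{\S_n}_{\geq1}e^+=Z_{>0}\cdot(\pH_n e^+)=\bigoplus_{v\in\dsw}Z_{>0}\,\C_v$, which is $A$-spanned by the canonical basis elements $s_\lambda(\y)\C_v$ with $|\lambda|\ge1$; so it is a cellular submodule of $\pH_n e^+$ (hence of $\pH_n$), and
\[ \cinv \;=\; \pH_n e^+\big/\pH_n(\pY)^{\S_n}_{\geq1}e^+ \;\cong\; A\otimes_Z\pH_n e^+ \;=\; \bigoplus_{v\in\dsw}A\,\overline{\C_v}, \]
a cellular quotient of $\pH_n e^+$ with canonical basis $\{\C_w : w\in\dsw\}$. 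This is the canonical-basis lift of the classical statement that $R_{1^n}=R/(e_1,\dots,e_n)$ has the descent monomials as a basis, coming from the freeness of $R$ over $\CC[y_1,\dots,y_n]^{\S_n}$. (Cellularity of the submodule can also be seen without Theorem \ref{t factoriz}: by Theorem \ref{t Lusztig}, $(\pY)^{\S_n}_{\geq1}e^+=A\text{-span}\{\C_{w_0 y^\lambda}:\lambda\in\pY_+,\ |\lambda|\ge1\}$, so $\pH_n(\pY)^{\S_n}_{\geq1}e^+=\sum_{|\lambda|\ge1}\pH_n\C_{w_0 y^\lambda}$ is a sum of principal submodules generated by canonical basis elements.)

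The main obstacle is the assertion that $w$ maximal in $wW_f$ forces $u_2=e$ in Proposition \ref{p two-sided primitive}; equivalently, that the right $\H$-cell of such a $\C_w$ is a singleton. This is a statement about the right-cell decomposition of the lowest two-sided cell $\lrcelllong{(n)}$, and I would prove it from the alcove/box geometry underlying Proposition \ref{p two-sided primitive} (cf. \cite{B2, X}) or by computing $R(w)$ directly from the word of $w$ via Propositions \ref{p affinewordlessthan} and \ref{p wordrj}. It is exactly what guarantees that the factorization of Theorem \ref{t factoriz} lands inside the cellular submodule $\pH_n e^+$ and not merely inside $\pH_n$; everything else is bookkeeping given Theorems \ref{t Lusztig} and \ref{t factoriz} and the centrality of $Z$.
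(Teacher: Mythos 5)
Your proposal is correct and is essentially the paper's own argument: the paper also proves Corollary \ref{c cinv} via Theorem \ref{t factoriz}, identifying $\pH_n e^+$ with $\bigoplus_{\lambda\in\pY_+}\bigoplus_{u_1\in\ds}A\,s_\lambda(\y)\C_{u_1w_0}$ and then quotienting by the summands with $|\lambda|\geq 1$; this is exactly what the $N_{\supseteq\mathbf{0},\idelm}/N_{\supsetneq\mathbf{0},\idelm}$ bookkeeping in the proof of Theorem \ref{t coinvariant copies} amounts to, and your freeness-over-$Z$ phrasing is a clean repackaging of the same decomposition. The one step you flagged as needing work --- that $w\in\pW$ maximal in $wW_f$ forces $u_2=\idelm$ in the factorization of Proposition \ref{p two-sided primitive} --- is indeed the crux, and the paper glosses it too (it simply asserts $\pH e^+\cong N_{\supseteq\mathbf{0},\idelm}$); it is true, and there is a quick way to close it without alcove geometry: you already showed $u_2=\idelm$ is \emph{sufficient} for maximality, giving an injection $\ds\times\pY_+\hookrightarrow\{w\in\pW:\text{max in }wW_f\}$, and weighting by $q^{\deg}$ both sides have generating function $\bigl[\sum_{v\in\S_n}q^{\ccharge(v)}\bigr]\prod_{i=1}^n(1-q^i)^{-1}=(1-q)^{-n}=\sum_{\mu\in\pY}q^{|\mu|}$, so the injection is a bijection. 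One small caveat on your closing parenthetical: a principal submodule $\pH_n\C_{w_0y^\lambda}$ generated by a canonical basis element need not a priori be spanned by a subset of the canonical basis; you need positivity (Theorem \ref{t positive coefficients} and Corollary \ref{c positivity trick 1}) to promote it to a cellular submodule, so that alternative route is not really lighter than the one through Theorem \ref{t factoriz}.
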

A careful proof of this corollary is postponed to the proof of Theorem \ref{ss atoms 1}.

\begin{example}
The $\pW$-graph $\cinvn{3}$ is drawn in Figure \ref{f wgraph n=3} with the following conventions: basis elements of the same degree are drawn on the same horizontal level; the edges with a downward component are exactly the corotation-edges (these correspond to left-multiplication by $\pi$ and increase degree by 1); arrows indicate relations in the preorder $\klo{\cinv}$.

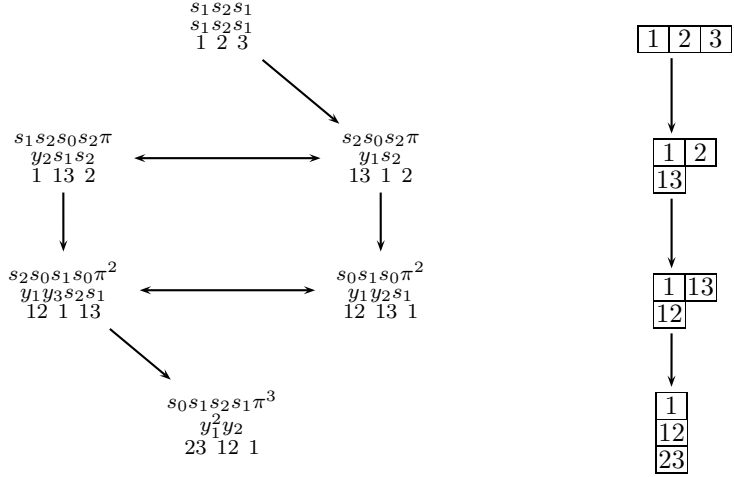
\begin{figure}
\begin{pspicture}(0,0)(7,6){\small
\newdimen\hcent
\newdimen\hspacedim

\hcent=30pt
\ycor=165pt
\advance\ycor by 0pt

\hspacedim=140pt
\xcor=\hcent

\temp=140pt
\multiply \temp by 0 \divide \temp by 2
\advance\xcor by -\temp
\rput(\xcor,\ycor){\rnode{a}{$\substack{s_1 s_2 s_1\\s_1 s_2 s_1\\1\ 2\ 3}$}}
\advance\xcor by \hspacedim
\advance\ycor by -50pt
\hspacedim=120pt

\xcor=\hcent

\temp=120pt
\multiply \temp by 1 \divide \temp by 2
\advance\xcor by -\temp
\rput(\xcor,\ycor){\rnode{b}{$\substack{s_1 s_2 s_0 s_2 \pi\\y_2 s_1 s_2\\1\ 13\ 2}$}}
\advance\xcor by \hspacedim
\rput(\xcor,\ycor){\rnode{c}{$\substack{s_2 s_0 s_2 \pi\\y_1 s_2\\13\ 1\ 2}$}}
\advance\xcor by \hspacedim
\advance\ycor by -50pt
\xcor=\hcent
\temp=120pt
\multiply \temp by 1 \divide \temp by 2
\advance\xcor by -\temp
\rput(\xcor,\ycor){\rnode{d}{$\substack{s_2 s_0 s_1 s_0 \pi^2\\y_1 y_3 s_2 s_1\\12\ 1\ 13}$}}
\advance\xcor by \hspacedim
\rput(\xcor,\ycor){\rnode{e}{$\substack{s_0 s_1 s_0 \pi^2\\y_1 y_2 s_1\\12\ 13\ 1}$}}
\advance\xcor by \hspacedim
\advance\ycor by -50pt
\xcor=\hcent
\temp=120pt
\multiply \temp by 0 \divide \temp by 2
\advance\xcor by -\temp
\rput(\xcor,\ycor){\rnode{f}{$\substack{s_0 s_1 s_2 s_1 \pi^3\\y_1^2 y_2\\23\ 12\ 1}$}}}
\ncline[nodesep=4pt]{->}{a}{c}
\ncline[nodesep=8pt]{<->}{b}{c}
\ncline[nodesep=8pt]{<->}{d}{e}
\ncline[nodesep=4pt]{->}{d}{f}
\ncline[nodesep=4pt]{->}{c}{e}
\ncline[nodesep=4pt]{->}{b}{d}
\footnotesize
\horizcent=200pt
\ycor=160pt
\advance\ycor by 0pt
\horizspace=100pt
\xcor=\horizcent
\temp=100pt
\multiply \temp by 0 \divide \temp by 2
\advance\xcor by -\temp
\rput(\xcor,\ycor){\rnode{v0h4}{\begin{Young}
1&2&3\cr
\end{Young}}}
\advance\xcor by \horizspace
\advance\ycor by -48pt
\horizspace=100pt
\xcor=\horizcent
\temp=100pt
\multiply \temp by 0 \divide \temp by 2
\advance\xcor by -\temp
\rput(\xcor,\ycor){\rnode{v1h3}{\begin{Young}
1&2\cr
13\cr
\end{Young}}}
\advance\xcor by \horizspace
\advance\ycor by -51pt
\horizspace=100pt
\xcor=\horizcent
\temp=100pt
\multiply \temp by 0 \divide \temp by 2
\advance\xcor by -\temp
\rput(\xcor,\ycor){\rnode{v2h2}{\begin{Young}
1&13\cr
12\cr
\end{Young}}}
\advance\xcor by \horizspace
\advance\ycor by -50pt
\horizspace=100pt
\xcor=\horizcent
\temp=100pt
\multiply \temp by 0 \divide \temp by 2
\advance\xcor by -\temp
\rput(\xcor,\ycor){\rnode{v3h1}{\begin{Young}
1\cr
12\cr
23\cr
\end{Young}\quad}}
\advance\xcor by \horizspace
\ncline[nodesep=2pt]{->}{v2h2}{v3h1}
\ncline[nodesep=2pt]{->}{v1h3}{v2h2}
\ncline[nodesep=2pt]{->}{v0h4}{v1h3}
\end{pspicture}
\caption{On the left is the $\pW$-graph of $\cinvn{3}$ with three labels for each canonical basis element. The bottom labels are inverted window words. On the right are the corresponding left cells and the covering relations of the partial order on left cells.}
\label{f wgraph n=3}
\end{figure}
\end{example}

\subsection{}
We now relate combinatorics of the cellular subquotient $\cinv$ to the cocyclage poset on standard tableaux. Let $\klo{\cinv}$ be the preorder of the $\pW$-graph $\cinv$, which is the restriction of the preorder $\klo{\pH}$ on $\pH$ to the subquotient $\cinv$.  We know from Proposition \ref{p preorder cells vs cocyclage poset} that the partial order $\klo{\cinv}$ on cells is the transitive closure of $\klo{\Res_\H \cinv}$ and cocyclage-edges (see \textsection\ref{ss cells of pH}). As the next proposition shows, cocyclage-edges in $\cinv$ are essentially cocyclages.

Let $T+T'$ denote the entry-wise sum of two tableau  $T,T'$ of the same shape.

\begin{proposition}
\label{p ccp syt equals ccp cinv}
The map
\[ \ccp(SYT) \to \fsp(\cinv), \\
T \mapsto n \cl{T}+T\]
is an isomorphism in Cocyclage Posets ($\cl{T}$ is defined in \textsection\ref{ss cocyclage poset}).
\end{proposition}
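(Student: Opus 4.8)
The plan is to show that $T \mapsto n\cl{T} + T$ is a well-defined bijection $SYT \to \fsp(\cinv)$ which is simultaneously a color-preserving poset isomorphism, where the posets in question are generated by cocyclages and cocyclage-edges respectively. First I would verify the map is well-defined with image exactly $\fsp(\cinv)$: by Proposition~\ref{p bijectionwfdsw}, the word-level map $v \mapsto n\cl{v}+v$ is a bijection $W_f \to \dsw$, and by Corollary~\ref{c cinv} the left cells of $\cinv$ are labeled by $\{P(w) : w \in \dsw\}$; since $\cl{\cdot}$ commutes with insertion (as noted in \textsection\ref{ss cocyclage poset}, because Knuth transformations preserve left descent sets, so $P(n\cl{v}+v) = nP(\cl{v}) + P(v) = n\cl{T} + T$ whenever $P(v)=T$), the tableau-level map is well-defined and its image is precisely the set of PAT labeling cells of $\cinv$. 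Injectivity is clear since $\cl{T}$ and $T$ can both be recovered from $n\cl{T}+T$ by taking residues and quotients mod $n$. This identifies the underlying sets; it remains to match the covering relations and their colors.

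Next I would translate cocyclage-edges in $\cinv$ into word operations. By \textsection\ref{ss cells of pH}, a cocyclage-edge is $\C_{\pi w} \klo{\pH} \C_w$, and by formula~(\ref{e wordmult7}) left-multiplication by $\pi$ sends the word $x_1 \cdots x_n$ to $1.x_n\ x_1 \cdots x_{n-1}$, i.e.\ prepends $n + x_n$ (a ``corotation'' in the sense of \textsection\ref{ss cocyclage poset}, after accounting for the shift by $n$). The key compatibility to check is that if $w = n\cl{v} + v$ with $P(v) = T$ and $\pi w$ still lies in $\dsw$, then $\pi w = n\cl{v'} + v'$ where $P(v') = T'$ and $T \cc T'$ is a cocyclage with the corresponding corner; conversely every cocyclage $T \cc T'$ in $SYT$ arises this way. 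This is essentially the content of the classical fact (Lascoux--Sch\"utzenberger, cf.\ \cite{LS, La}) that the cocyclage poset on $SYT$ is modeled by the rotation/corotation action on standard words together with the cocharge statistic; here the passage from $v$ to $n\cl{v}+v$ is exactly the device that records cocharge as the degree (indeed $\deg(n\cl{v}+v) = \ccharge(v)$ by the degree formula in~(\ref{e degree d of pW})), so that corotation-edges, which increase degree by~$1$, match cocyclages, which increase cocharge by~$1$. The color of a cocyclage-edge, namely the set of outer corners whose uninsertion realizes it, transfers directly because uninsertion and column-insertion of words commute with adding $n\cl{\cdot}$ entrywise.

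Finally I would check that no cocyclage-edges are "lost" in passing to the quotient $\cinv$ — that is, whenever $T, T' \in \fsp(\cinv)$ and $T \cc T'$ in $SYT$, the corresponding corotation-edge between their cells survives in $\cinv$ (rather than being killed by the relations $\pH (\pY)^{\S_n}_{\geq 1} e^+$). This follows because the degree of $n\cl{v}+v$ equals $\ccharge(v)$, the cellular structure of $\cinv$ respects the grading, and both endpoints already lie in $\cinv$; since corotation-edges go between consecutive degree pieces and $(\pH)_{\geq d}/(\pH)_{\geq d+1}$ computes the relevant preorder (as in the proof of Corollary~\ref{c restrict cells2}), the edge is present. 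I expect the main obstacle to be the precise bookkeeping in the middle step: verifying that the $\pi$-action on $\dsw$-words, when it lands back in $\dsw$, corresponds bijectively and color-faithfully to cocyclages on $SYT$ — in other words, reconciling the affine-word description of corotation with the tableau-theoretic (uninsert-a-corner-then-column-insert) description of cocyclage, including the coincidence of insertion and uninsertion paths. This is where one must lean most heavily on the classical combinatorics of cyclage and on Propositions~\ref{p wordlj}, \ref{p wordrj}, and~\ref{p bijectionwfdsw}.
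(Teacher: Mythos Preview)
Your proposal is correct and follows essentially the same approach as the paper: both reduce the tableau-level isomorphism to the word-level bijection $v \mapsto n\cl{v}+v$ of Proposition~\ref{p bijectionwfdsw} and then verify that corotation of standard words corresponds exactly to left-multiplication by $\pi$ on affine words (the paper isolates this last step as a separate proposition, proving it via the observation that corotating a standard word adds $1$ to the cocharge label of the corotated letter, and that $v_n = 1$ exactly when $\pi w \notin \dsw$). Your final paragraph about edges not being ``lost'' in the quotient is unnecessary: once the word-level correspondence of corotations is established, the ccp structure on $\fsp(\cinv)$ is by definition generated by the cocyclage-edges with both endpoints in the set, so there is nothing further to check.
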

\begin{proof}
Since under the bijection of Proposition \ref{p bijectionwfdsw} $\lj{w}{S} = v$ holds, $w = n\cl{v} + v$ implies $P(w) = P(n \cl{v} + v) = n \cl{P(v)} + P(v)$. The statement is then a consequence of the following proposition.
\end{proof}

\begin{proposition} Under the bijection of Proposition \ref{p bijectionwfdsw}, corotation of standard words corresponds exactly to corotation of affine words.
\end{proposition}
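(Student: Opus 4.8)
The plan is to unwind both notions of corotation at the level of words and reduce the statement to a single combinatorial lemma describing how the cocharge labeling of a standard word changes under corotation. Recall from Proposition~\ref{p bijectionwfdsw} that $v \in W_f$ corresponds to the $w \in \dsw$ whose inverted window word has $i$-th entry $w_i = n\cl{v}_i + v_i$, with $\cl{v}$ the cocharge labeling regarded as a sequence of nonnegative integers indexed by positions. On the standard-word side a corotation takes $v = v_1\cdots v_{n-1}v_n$ with $a := v_n \neq 1$ to $v' := a\,v_1\cdots v_{n-1}$; on the affine-word side a corotation-edge takes $w$ to $\pi w$, whose word by~(\ref{e wordmult7}) is $(w_n + n)\,w_1\cdots w_{n-1}$. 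So the content of the proposition is that these two partial operations match under $v \leftrightarrow w$.

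First I would prove the key lemma: if $a := v_n \neq 1$ and $v' = a\,v_1\cdots v_{n-1}$, then $\cl{v'}_1 = \cl{v}_n + 1$ and $\cl{v'}_i = \cl{v}_{i-1}$ for $2 \le i \le n$. The argument uses the reformulation of the cocharge labeling from~\textsection\ref{ss cocyclage poset} in which the label attached to the value $j$ is $c(j) = \#\{\,1 \le k \le j-1 : k+1 \text{ occurs left of } k \text{ in } v\,\}$ (immediate from the recursive definition, with $c(1)=0$). Moving the final letter $a$ to the front alters the left/right order only of the consecutive-value pairs $\{a-1,a\}$ and $\{a,a+1\}$, when these exist: in $v$ the letter $a$ is rightmost, so $a$ is right of $a-1$ and $a+1$ is left of $a$; in $v'$ it is leftmost, so both relations reverse. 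Tracking the effect on the counts gives $c'(j)=c(j)$ for $j<a$, then $c'(a)=c(a)+1$ (the pair $\{a-1,a\}$ newly contributes), and $c'(j)=c(j)$ for $j>a$ (the $+1$ from $\{a-1,a\}$ cancels the $-1$ from $\{a,a+1\}$); when $a=n$ there is no pair $\{a,a+1\}$ and no $j>a$. Hence $c'(j) = c(j) + \delta_{j,a}$ for all $j$. Since $a$ sits at position $n$ in $v$ and at position $1$ in $v'$ while every other value is shifted one position to the right, this translates to $\cl{v'}_1 = c(a)+1 = \cl{v}_n + 1$ and $\cl{v'}_i = c(v_{i-1}) = \cl{v}_{i-1}$.

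Granting the lemma, the correspondence is a short computation: $n\cl{v'}+v'$ has first entry $n(\cl{v}_n+1)+a = w_n + n$ and $i$-th entry $n\cl{v}_{i-1}+v_{i-1} = w_{i-1}$ for $i \ge 2$, which is exactly the word of $\pi w$ by~(\ref{e wordmult7}); so by Proposition~\ref{p bijectionwfdsw}, $\pi w$ lies in $\dsw$ and corresponds to $v'$. For the converse direction --- corotation of $v$ is undefined precisely when $\pi w \notin \dsw$ --- observe that if $v_n = 1$ then $\cl{v}_n = 0$ (the value $1$ always gets label $0$), so $w_n = 1$ and $\pi w$ has first entry $n+1 \equiv 1 \bmod n$; were $\pi w = n\cl{u}+u$ for a standard word $u$ we would need $u_1 = 1$, forcing first entry $1$, a contradiction. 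Thus the two notions of corotation match exactly, with the same domain of definition.

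The main obstacle is the lemma itself: identifying exactly which consecutive-value adjacencies flip under the corotation and verifying that their combined effect on the running counts $c(j)$ is the single correction $+\delta_{j,a}$, including the boundary cases $a=n$ and the value $1$. Everything after that is routine bookkeeping with the formula $w_i = n\cl{v}_i + v_i$ and the word-multiplication rule~(\ref{e wordmult7}).
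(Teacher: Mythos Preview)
Your proof is correct and follows essentially the same approach as the paper: the key observation in both is that corotating a standard word increments the cocharge label of the corotated letter by one while leaving all other labels fixed, and that the boundary case $v_n = 1$ exactly matches $\pi w \notin \dsw$. The paper states this observation tersely and invokes the residue description~(\ref{e inversersd}) of the inverse bijection, whereas you supply a careful proof of the cocharge-label lemma by tracking which consecutive-value adjacencies flip; your version is more self-contained but the underlying idea is identical.
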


\begin{proof}
To see that a corotation of a standard word maps to a corotation of an affine word, observe that corotating a standard word adds 1 to the cocharge label of the corotated number. To go the other way, use (\ref{e inversersd}): the inverse $\dsw \to W_f$ can be computed from $v_i = \rsd{w_i}$. Finally, observe that the last number of $v$ is 1 exactly when $\pi(n\cl{v}+v) \notin \dsw$.
\end{proof}

Figure \ref{f affine5} depicts the cells of the $\pW$-graph on $\cinvn{5}$ and the partial order $\klo{\cinvn{5}}$ on cells.

\begin{figure}
\begin{pspicture}(0pt,-25pt)(300pt,545pt){\footnotesize
\hoogte=10pt
\breedte=13pt
\dikte=0.2pt

\hcent=150pt
\ycor=530pt
\advance\ycor by 0pt
\newdimen\hspacedim
\hspacedim=100pt
\xcor=\hcent
\temp=100pt
\newdimen\ytmp
\ytmp=10pt
\multiply \temp by 0 \divide \temp by 2
\advance\xcor by -\temp
\rput(\xcor,\ycor){\rnode{v0h26}{\begin{Young}
1&2&3&4&5\cr
\end{Young}}}
\advance\xcor by \hspacedim
\advance\ycor by -40pt
\advance\ycor by \ytmp
\hspacedim=100pt
\xcor=\hcent
\temp=100pt
\multiply \temp by 0 \divide \temp by 2
\advance\xcor by -\temp
\rput(\xcor,\ycor){\rnode{v1h25}{\begin{Young}
1&2&3&4\cr
15\cr
\end{Young}}}
\advance\xcor by \hspacedim
\advance\ycor by -50pt
\advance\ycor by \ytmp
\hspacedim=100pt
\xcor=\hcent
\temp=100pt
\multiply \temp by 1 \divide \temp by 2
\advance\xcor by -\temp
\rput(\xcor,\ycor){\rnode{v2h23}{\begin{Young}
1&2&3&15\cr
14\cr
\end{Young}}}
\advance\xcor by \hspacedim
\rput(\xcor,\ycor){\rnode{v2h24}{\begin{Young}
1&2&3\cr
14&15\cr
\end{Young}}}
\advance\xcor by \hspacedim
\advance\ycor by -55pt
\advance\ycor by \ytmp
\hspacedim=100pt
\xcor=\hcent
\temp=100pt
\multiply \temp by 2 \divide \temp by 2
\advance\xcor by -\temp
\rput(\xcor,\ycor){\rnode{v3h19}{\begin{Young}
1&2&14&15\cr
13\cr
\end{Young}}}
\advance\xcor by \hspacedim
\rput(\xcor,\ycor){\rnode{v3h21}{\begin{Young}
1&2&15\cr
13&14\cr
\end{Young}}}
\advance\xcor by \hspacedim
\rput(\xcor,\ycor){\rnode{v3h22}{\begin{Young}
1&2&3\cr
14\cr
25\cr
\end{Young}}}
\advance\xcor by \hspacedim
\advance\ycor by -60pt
\advance\ycor by \ytmp
\newdimen\hspacedim
\hspacedim=105pt
\newdimen\xcor
\xcor=\hcent
\newdimen\temp
\temp=105pt
\multiply \temp by 3 \divide \temp by 2
\advance\xcor by -\temp
\rput(\xcor,\ycor){\rnode{v4h13}{\begin{Young}
1&13&14&15\cr
12\cr
\end{Young}}}
\advance\xcor by \hspacedim
\rput(\xcor,\ycor){\rnode{v4h17}{\begin{Young}
1&2&14\cr
13&25\cr
\end{Young}}}
\advance\xcor by \hspacedim
\rput(\xcor,\ycor){\rnode{v4h18}{\begin{Young}
1&2&14\cr
13\cr
25\cr
\end{Young}}}
\advance\xcor by \hspacedim
\rput(\xcor,\ycor){\rnode{v4h20}{\begin{Young}
1&2\cr
13&14\cr
25\cr
\end{Young}}}
\advance\xcor by \hspacedim
\advance\ycor by -70pt
\advance\ycor by \ytmp
\newdimen\hspacedim
\hspacedim=115pt
\newdimen\xcor
\xcor=\hcent
\newdimen\temp
\temp=115pt
\multiply \temp by 3 \divide \temp by 2
\advance\xcor by -\temp
\rput(\xcor,\ycor){\rnode{v5h11}{\begin{Young}
1&13&14\cr
12&25\cr
\end{Young}}}
\advance\xcor by \hspacedim
\rput(\xcor,\ycor){\rnode{v5h12}{\begin{Young}
1&13&14\cr
12\cr
25\cr
\end{Young}}}
\advance\xcor by \hspacedim
\rput(\xcor,\ycor){\rnode{v5h15}{\begin{Young}
1&2&25\cr
13\cr
24\cr
\end{Young}}}
\advance\xcor by \hspacedim
\rput(\xcor,\ycor){\rnode{v5h16}{\begin{Young}
1&2\cr
13&25\cr
24\cr
\end{Young}}}
\advance\xcor by \hspacedim
\advance\ycor by -78pt
\advance\ycor by \ytmp
\newdimen\hspacedim
\hspacedim=105pt
\newdimen\xcor
\xcor=\hcent
\newdimen\temp
\temp=105pt
\multiply \temp by 3 \divide \temp by 2
\advance\xcor by -\temp
\rput(\xcor,\ycor){\rnode{v6h7}{\begin{Young}
1&13&25\cr
12&24\cr
\end{Young}}}
\advance\xcor by \hspacedim
\rput(\xcor,\ycor){\rnode{v6h9}{\begin{Young}
1&13&25\cr
12\cr
24\cr
\end{Young}}}
\advance\xcor by \hspacedim
\rput(\xcor,\ycor){\rnode{v6h10}{\begin{Young}
1&13\cr
12&25\cr
24\cr
\end{Young}}}
\advance\xcor by \hspacedim
\rput(\xcor,\ycor){\rnode{v6h14}{\begin{Young}
1&2\cr
13\cr
24\cr
35\cr
\end{Young}}}
\advance\xcor by \hspacedim
\advance\ycor by -70pt
\advance\ycor by \ytmp
\newdimen\hspacedim
\hspacedim=100pt
\newdimen\xcor
\xcor=\hcent
\newdimen\temp
\temp=100pt
\multiply \temp by 2 \divide \temp by 2
\advance\xcor by -\temp
\rput(\xcor,\ycor){\rnode{v7h5}{\begin{Young}
1&24&25\cr
12\cr
23\cr
\end{Young}}}
\advance\xcor by \hspacedim
\rput(\xcor,\ycor){\rnode{v7h6}{\begin{Young}
1&13\cr
12&24\cr
35\cr
\end{Young}}}
\advance\xcor by \hspacedim
\rput(\xcor,\ycor){\rnode{v7h8}{\begin{Young}
1&13\cr
12\cr
24\cr
35\cr
\end{Young}}}
\advance\xcor by \hspacedim
\advance\ycor by -70pt
\advance\ycor by \ytmp
\newdimen\hspacedim
\hspacedim=100pt
\newdimen\xcor
\xcor=\hcent
\newdimen\temp
\temp=100pt
\multiply \temp by 1 \divide \temp by 2
\advance\xcor by -\temp
\rput(\xcor,\ycor){\rnode{v8h3}{\begin{Young}
1&24\cr
12&35\cr
23\cr
\end{Young}}}
\advance\xcor by \hspacedim
\rput(\xcor,\ycor){\rnode{v8h4}{\begin{Young}
1&24\cr
12\cr
23\cr
35\cr
\end{Young}}}
\advance\xcor by \hspacedim
\advance\ycor by -70pt
\advance\ycor by 20pt
\newdimen\hspacedim
\hspacedim=100pt
\newdimen\xcor
\xcor=\hcent
\newdimen\temp
\temp=100pt
\multiply \temp by 0 \divide \temp by 2
\advance\xcor by -\temp
\rput(\xcor,\ycor){\rnode{v9h2}{\begin{Young}
1&35\cr
12\cr
23\cr
34\cr
\end{Young}}}
\advance\xcor by \hspacedim
\advance\ycor by -82pt
\advance\ycor by 20pt
\newdimen\hspacedim
\hspacedim=100pt
\newdimen\xcor
\xcor=\hcent
\newdimen\temp
\temp=100pt
\multiply \temp by 0 \divide \temp by 2
\advance\xcor by -\temp
\rput(\xcor,\ycor){\rnode{v10h1}{\begin{Young}
1\cr
12\cr
23\cr
34\cr
45\cr
\end{Young}}}
\advance\xcor by \hspacedim
\ncarc[nodesep=1pt]{->}{v8h4}{v8h3}
\ncarc[nodesep=1pt]{->}{v7h6}{v7h5}
\ncarc[nodesep=1pt]{->}{v7h8}{v7h6}
\ncarc[nodesep=1pt]{->}{v6h9}{v6h7}
\ncarc[nodesep=1pt]{->}{v6h10}{v6h9}
\ncarc[nodesep=1pt]{->}{v5h12}{v5h11}
\ncarc[nodesep=1pt]{->}{v6h14}{v6h10}
\ncarc[nodesep=1pt,arcangle=15]{->}{v5h15}{v5h11}
\ncarc[nodesep=1pt,arcangle=15]{->}{v5h16}{v5h12}
\ncarc[nodesep=1pt]{->}{v5h16}{v5h15}
\ncarc[nodesep=1pt]{->}{v4h17}{v4h13}
\ncarc[nodesep=1pt]{->}{v4h18}{v4h17}
\ncarc[nodesep=1pt]{->}{v4h20}{v4h18}
\ncarc[nodesep=1pt]{->}{v3h21}{v3h19}
\ncarc[nodesep=1pt]{->}{v3h22}{v3h21}
\ncarc[nodesep=1pt]{->}{v2h24}{v2h23}
\ncline[nodesep=1pt]{->}{v9h2}{v10h1}
\ncline[nodesep=1pt]{->}{v8h3}{v9h2}
\ncline[nodesep=1pt]{->}{v7h5}{v8h4}
\ncline[nodesep=1pt]{->}{v6h7}{v7h6}
\ncline[nodesep=1pt]{->}{v6h9}{v7h8}
\ncline[nodesep=1pt]{->}{v5h11}{v6h10}
\ncline[nodesep=1pt]{->}{v4h13}{v5h12}
\ncline[nodesep=1pt]{->}{v5h15}{v6h14}
\ncline[nodesep=1pt]{->}{v4h17}{v5h16}
\ncline[nodesep=1pt]{->}{v3h19}{v4h18}
\ncline[nodesep=1pt]{->}{v3h21}{v4h20}
\ncline[nodesep=1pt]{->}{v2h23}{v3h22}
\ncline[nodesep=1pt]{->}{v1h25}{v2h24}
\ncline[nodesep=1pt]{->}{v0h26}{v1h25}
}
\end{pspicture}
\caption{The cells of the $\pW$-graph on $\mathscr{R}_{1^5}$. Edges are the covering relations of the partial order on cells.}
\label{f affine5}
\end{figure}

\section{A $\pW$-graph version of the Garsia-Procesi modules}
\label{s extending garsia-procesi}

The Garsia-Procesi approach to understanding the $R_\lambda = R / I_\lambda$ realizes $I_\lambda$ as the ideal of leading forms of functions vanishing on an orbit $\S_n \textbf{a}$, for certain $\textbf{a} \in \CC^n = \spec R$. We adapt this approach to the Hecke algebra setting using certain representations of $\eH$ studied by Bernstein and Zelevinsky in order to prove our main result, Theorem \ref{t main theorem gp}, which shows that the  $\u$-analogues  $\R_\lambda$ of the $R_\lambda$ are actually cellular.

Let $\mathcal{C}^\ZZ_n$ (resp. $\mathcal{C}^{+\ZZ}_n$) be the category of finite-dimensional $\eH_n$-modules (resp.  $\pH_n$-modules) in which the $\y_i$'s have their eigenvalues in $\u^{2\ZZ}$. In the next subsection, we review the needed results about the category $\mathcal{C}^\ZZ_n$, referring the reader to \cite{LNT, V} for a more thorough treatment.

\subsection{}
\label{ss BZ category}
For $\eta = (\eta_1, \eta_2, \dots, \eta_r)$ an $r$-composition of $n$, write $l_j = \sum_{i=1}^{j-1} \eta_i$, $j \in [r+1]$ for the partial sums of $\eta$ (where the empty sum is defined to be 0). Let $B_j$ be the interval $[l_j+1,l_{j+1}]$, $j \in [r]$, and define
\be J_\eta = \{s_i:\{i, i +1\} \subseteq B_j \text{ for some } j\} \ee
so that ${\S_n}_{J_\eta} \cong \S_{\eta_1} \times \dots \times \S_{\eta_r}$.

Let $\eH_\eta$ be the subalgebra of $\eH$ generated by $\H_{J_\eta}$ and $\y_i^{\pm 1}$,  $i \in [n]$. The algebra $\eH_\eta$ is isomorphic to $\eH_{\eta_1} \times \dots \times \eH_{\eta_r}$. Similarly, let $\pH_\eta$ be the subalgebra of $\pH$ generated by $\H_{J_\eta}$ and $\y_i$,  $i \in [n]$. For $\textbf{a} = (a_1, \dots, a_r) \in \ZZ^r$, let $\textbf{C}_{\eta,\textbf{a}}$ be the 1-dimensional representation of $\pH_\eta$ on which $\H_{J_\eta} \subseteq \pH_\eta$ acts trivially ($T_i$ acts by  $\u$ for  $s_i \in J_\eta$) and $\y_{l_i+1}$ acts by $\u^{2a_i}$, $i \in [r]$. The relations in $\pH_\eta$ demand that $\y_{l_i+k}$ acts by $\u^{2(a_i-k+1)}$ for $l_i + k \in B_i$. Note that our conventions differ from those in \cite{LNT} since we use the right affine Hecke algebra while they use the left.

Next define $M_{\eta,\textbf{a}}$ to be the induced module
\be M_{\eta,\textbf{a}} = \pH_n \tsr_{\pH_\eta} \textbf{C}_{\eta,\textbf{a}}. \ee
%

For $M$ in $\mathcal{C}^\ZZ_n$ or $\mathcal{C}^{+\ZZ}_n$, the \emph{points} of $M$ are the joint generalized eigenspaces for the action of the $\y_i$. The \emph{coordinates} of a point $v$ of $M$ is the tuple $(c_1, \dots, c_n)$ of generalized eigenvalues, i.e. $(\y_i -c_i)^{k_i} v = 0$ for some $k_i$ and all  $i \in [n]$.  The tuple $(c_1, \dots, c_n)$ is also identified with the word  $c_1 \ c_2 \cdots c_n$.

We are interested in the case where the points of $M_{\eta,\textbf{a}}$ are 1-dimensional.
\begin{proposition}
\label{p eigenspaces}
If the intervals $[a_i - \eta_i,a_i]$ are disjoint, then the points of $M_{\eta,\textbf{a}}$ are 1-dimensional with coordinates
\[{\S_n}^{J_\eta} (u^{2a_1}, u^{2(a_1-1)}, \dots, u^{2(a_1-\eta_1)}, u^{2a_2}, \dots, u^{2(a_2-\eta_2)}, \dots, u^{2a_r}, u^{2(a_r-1)}, \dots, u^{2(a_r-\eta_r)} ), \]
where $s_i$ acts on an $n$-tuple by swapping its $i$-th and $(i+1)$-st entries. Equivalently, the coordinates of the points of $M_{\eta,\textbf{a}}$ are shuffles of the words
\[ u^{2a_1}\ u^{2(a_1-1)} \cdots u^{2(a_1-\eta_1)},\ \  u^{2a_2}\cdots u^{2(a_2-\eta_2)},\ \ \ldots, u^{2a_r}\ u^{2(a_r-1)} \cdots u^{2(a_r-\eta_r)}. \]
\end{proposition}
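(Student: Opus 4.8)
The plan is to analyze the induced module $M_{\eta,\textbf{a}} = \pH_n \tsr_{\pH_\eta} \textbf{C}_{\eta,\textbf{a}}$ via a Bernstein-type basis and the commutation relation (\ref{e TY}). First I would record that, as a module over the commutative subalgebra $\R$ generated by the $\y_i$, the module $\pH_n$ is free on the Bernstein basis elements $T_w$, $w \in W_f$ (the $\y^\lambda$ part is absorbed into $\textbf{C}_{\eta,\textbf{a}}$ upon inducing). Hence $M_{\eta,\textbf{a}}$ has a basis $\{T_w \tsr v_0 : w \in W_f^{J_\eta}\}$, where $v_0$ spans $\textbf{C}_{\eta,\textbf{a}}$ and $W_f^{J_\eta}$ is the set of minimal coset representatives; this already shows $\dim M_{\eta,\textbf{a}} = |W_f / W_{f,J_\eta}|$, which matches the number of shuffles in the claimed list.

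The key computation is to determine the $\y_i$-action on this basis. Working one simple reflection at a time, I would use (\ref{e TY}) in the form $T_i \y^\lambda = \y^{s_i(\lambda)} T_i + \frac{(\u-\ui)(\y^{\alpha'_i})}{\y^{\alpha'_i}-1}(\y^\lambda - \y^{s_i(\lambda)})$ to push $\y_i$'s past $T_w$. The crucial point is the disjointness hypothesis: the starting point $v_0$ has $\y$-coordinates
\[ \textbf{c}^0 = (\u^{2a_1}, \u^{2(a_1-1)}, \dots, \u^{2(a_1-\eta_1)}, \u^{2a_2}, \dots, \u^{2(a_r-\eta_r)}), \]
and when the intervals $[a_i-\eta_i, a_i]$ are disjoint, consecutive coordinates of any shuffle $w \cdot \textbf{c}^0$ are never equal — in particular $\y^{\alpha'_i}$ never takes the value $1$ on the relevant eigenspace, so the correction term in (\ref{e TY}) is a well-defined operator (no pole) and, more importantly, the operator $T_i$ restricted to the $2$-dimensional span of a point and its $s_i$-image is regular semisimple. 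From this one shows by induction on $\ell(w)$ that each $T_w \tsr v_0$ is (up to lower-length terms) a generalized eigenvector, and then that the generalized eigenspaces are genuinely $1$-dimensional with the coordinate tuple $w \cdot \textbf{c}^0$; distinctness of the $\dim M$ shuffles (again from disjointness) forces each point to be exactly $1$-dimensional. I would present this as: the points of $M_{\eta,\textbf{a}}$ biject with $W_f / W_{f,J_\eta}$ via $wW_{f,J_\eta} \mapsto$ the point with coordinates $w \cdot \textbf{c}^0$, and each is a line.

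The main obstacle I anticipate is the induction step showing $1$-dimensionality rather than just an upper bound: one must rule out Jordan blocks, i.e. show the $\y_i$ act semisimply on $M_{\eta,\textbf{a}}$. The clean way is to observe that the number of distinct coordinate tuples among the shuffles equals $\dim M_{\eta,\textbf{a}}$ (this is exactly where disjointness of the intervals is used — without it two shuffles could collide and a Jordan block could appear), so the generalized eigenspace decomposition has at least $\dim M_{\eta,\textbf{a}}$ summands, hence each is exactly one-dimensional and the action is automatically semisimple. Alternatively, and perhaps more in the spirit of the cited references \cite{LNT, V}, I would invoke the standard fact that such induced modules in the category $\mathcal{C}^\ZZ_n$ are calibrated (have a basis of simultaneous $\y$-eigenvectors) precisely under a regularity condition on $\textbf{a}$, which the disjointness of $[a_i-\eta_i,a_i]$ guarantees; then the intertwiner formulas of Bernstein–Zelevinsky give the eigenvalue tuples directly as the $W_f$-shuffles of $\textbf{c}^0$. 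Either route gives both the $1$-dimensionality and the explicit coordinates asserted in the proposition.
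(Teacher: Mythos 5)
The paper's own ``proof'' is a one-line citation to the treatment of induced modules in the category $\mathcal{C}^\ZZ_n$ in the literature (it points to \cite[\textsection 5]{V}), so your argument is genuinely doing more: it reconstructs the actual computation that backs up that citation. Your first route (the length filtration) is the standard one and is correct. To tighten the narrative: the statement you want to start from is that $\pH_n$ is free as a \emph{right} $\pH_\eta$-module on $\{T_w : w \in W_f^{J_\eta}\}$ (your phrasing ``free over $\R$ on $T_w$, $w \in W_f$'' is a related but different assertion, and the absorption remark is informal -- one needs the commutation relation $\y^{s_i(\lambda)}T_i = T_i \y^\lambda - (\u-\ui)\y^{\alpha'_i}\frac{\y^\lambda - \y^{s_i(\lambda)}}{\y^{\alpha'_i}-1}$, whose second term is a polynomial in $\y$, to push $\R$ across the $T_w$'s and verify freeness and the coset basis simultaneously). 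With that basis in hand, the length filtration is $\R$-stable, each $\y_i$ is triangular with the shuffle coordinates on the diagonal, and your counting argument finishes the $1$-dimensionality: the joint generalized eigenvalues with multiplicity are exactly the shuffles, there are $\dim M_{\eta,\textbf{a}}$ of them, and disjointness of the intervals makes them pairwise distinct (in fact all $n$ entries of $\textbf{c}^0$ are distinct), so each generalized eigenspace has dimension $1$ and the $\y_i$-action is automatically semisimple. Your remarks about $T_i$ being ``regular semisimple'' on $2$-dimensional eigenspace pairs and about the correction term having no poles are not actually needed for $1$-dimensionality and amount to extra observations about calibration/intertwiners; that second route is valid too and is essentially what the cited reference develops. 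One cosmetic remark, inherited from the statement itself: the exhibited coordinate list $u^{2a_i}, \dots, u^{2(a_i-\eta_i)}$ has $\eta_i + 1$ entries whereas the block $B_i$ has $\eta_i$, so it should terminate at $u^{2(a_i-\eta_i+1)}$; this off-by-one in the display does not affect the argument.
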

\begin{proof}
 This is a special case of well-known results about inducing modules in  $\mathcal{C}^\ZZ_n$ (see \cite[\textsection 5]{V}).
\end{proof}

\begin{remark}
Since $\u$ is invertible in $A$, the $\y_i$ act invertibly on any module in $\mathcal{C}^{+\ZZ}_n$. Therefore $\mathcal{C}^\ZZ_n \to \mathcal{C}^{+\ZZ}_n$, given by $M \mapsto \Res_{\pH} M$ is an isomorphism of categories, so the known results about $\mathcal{C}^\ZZ_n$ carry over to $\mathcal{C}^{+\ZZ}_n$.
\end{remark}

\begin{remark}
There is not a significant difference between  $\mathcal{C}^\ZZ_n$ and the category of finite-dimensional $\eH$-modules, so it is common to only focus on  $\mathcal{C}^\ZZ_n$.  For our purposes, we only need objects in $\mathcal{C}^{+\ZZ}_n$ of the form $\pH_n \tsr_{\pH_\eta} N$, where the eigenvalues of the $\y_{l_i+1}$ on $N$ are generic. However, this can be equivalently achieved in the category $\mathcal{C}^{+\ZZ}_n$ by taking $N = \textbf{C}_{\eta,\textbf{a}}$ with $\textbf{a}$ generic. Misleadingly, the corresponding $\u=1$ modules $\CC \tsr_A N$ do not have generic eigenvalues.
\end{remark}

We complete this section with a couple more algebraic generalities, further preparing us for our main result Theorem \ref{t main theorem gp}.
Given any left $\pH$-module $M$, the annihilator $\ann M = \{h \in \pH : hM = 0\}$ is a 2-sided ideal of $\pH$.

For any two-sided ideal  $N$ of  $\pH$,  $N$ has a filtration
\be 0 \subseteq N_{\leq 0} \subseteq \dots \subseteq N_{\leq d} \subseteq \dots, \ee
where  $N_{\leq d} = (\pH)_{\leq d} \cap N$.
We can form the associated graded \[\gr(N) := \bigoplus_{d \geq 0} N_d / N_{d-1} \subseteq \bigoplus_{d \geq 0} (\pH)_{\leq d} / (\pH)_{<d} = \pH.\]
It is an ideal of $\pH$, isomorphic to $N$ as an $\H$-module. We also have that $\pH / N$ is isomorphic to $\pH / \gr(N)$ as an $\H$-module. For $h \in N$, define $\grin(h)$ to be the leading homogeneous component of $h$, i.e., the image of $h$ in $N_d / N_{d-1} = \gr(N)_d$, where $d$ is the smallest integer so that $h \in (\pH)_{\leq d}$.

\begin{proposition}
\label{p irreducibility needed}
Let $M_{\eta,\textbf{a}}$ be as above. If $M_{\eta,\textbf{a}}$ is irreducible, then it contains an element $v^+$ such that, setting $N = \ann v^+ $,  $\pH e^+/N e^+ \cong M_{\eta,\textbf{a}}$ as  $\pH$-modules. Thus by the discussion above, $\pH e^+/\gr(N) e^+ \cong M_{\eta,\textbf{a}}$ as  $\H$-modules.
\end{proposition}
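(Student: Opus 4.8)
The plan is to produce a distinguished vector $v^+\in M_{\eta,\textbf{a}}$ on which $\H=\H_n$ acts as the trivial module, and then to read off the desired isomorphism from the universal property of the functor $\pH\otimes_{\H}(-)$, after identifying $\pH e^+$ with $\pH/\pH I$ for the appropriate left ideal $I\subseteq\H$.

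First I would record the elementary fact that $T_i\C_{w_0}=\u\C_{w_0}$ for all $i\in[n-1]$: this is the $W$-graph relation \eqref{Wgrapheq}, since $s_i\in L(w_0)$. Consequently $Ae^+=A\C_{w_0}$ is the trivial $\H$-module, $I:=\ker\bigl(\H\twoheadrightarrow Ae^+\bigr)=\sum_{i\in[n-1]}\H(T_i-\u)$, and right-exactness of $\pH\otimes_{\H}(-)$ gives an identification of left $\pH$-modules $\pH e^+=\pH\otimes_{\H}Ae^+\cong\pH/\pH I$. Next, let $c$ span $\textbf{C}_{\eta,\textbf{a}}$ and set $v^+:=\C_{w_0}\cdot(1\otimes c)\in M_{\eta,\textbf{a}}$, where $\C_{w_0}$ is viewed inside $\H_n\subseteq\pH_n$. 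From $T_i\C_{w_0}=\u\C_{w_0}$ we get $T_iv^+=\u v^+$ for all $i\in[n-1]$, so $Av^+$ is a copy of the trivial $\H$-module (or $v^+=0$).

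The step that requires a real argument is $v^+\neq 0$. Here I would use that, forgetting the $\y_i$-action, $\Res_{\H_n}M_{\eta,\textbf{a}}$ is the induced module $\H_n\otimes_{\H_{J_\eta}}\mathbf{1}_{J_\eta}$: the Bernstein-basis description shows $\pH_n=\bigoplus_{x\in W_f^{J_\eta}}T_x\,\pH_\eta$ is free as a right $\pH_\eta$-module, so $M_{\eta,\textbf{a}}$ is free over $A$ with basis $\{T_x\otimes c:x\in W_f^{J_\eta}\}$. Expanding $\C_{w_0}=\sum_{x\in W_f}\u^{\ell(x)-\ell(w_0)}T_x$ and collecting over the cosets $W_f=\bigsqcup_{x\in W_f^{J_\eta}}x\,W_{J_\eta}$ (so $T_{xv}\otimes c=\u^{\ell(v)}T_x\otimes c$ for $v\in W_{J_\eta}$) yields $v^+=c_\eta\sum_{x\in W_f^{J_\eta}}\u^{\ell(x)}\,T_x\otimes c$ with $c_\eta=\u^{-\ell(w_0)}\sum_{v\in W_{J_\eta}}\u^{2\ell(v)}$. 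Since $A$ is a domain, $c_\eta\neq 0$, and since the $T_x\otimes c$ are $A$-independent, $v^+\neq 0$. (Alternatively one may simply invoke Frobenius reciprocity, $\hom_{\H_n}(\mathbf{1},\H_n\otimes_{\H_{J_\eta}}\mathbf{1}_{J_\eta})=\hom_{\H_{J_\eta}}(\mathbf{1}_{J_\eta},\mathbf{1}_{J_\eta})\ni\mathrm{id}$, to produce such a nonzero $v^+$ without the computation.)

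Now consider the $\pH$-module map $\psi:\pH\to M_{\eta,\textbf{a}}$, $h\mapsto hv^+$, whose kernel is $N=\ann v^+$ by definition. Because $I$ kills $v^+$ we have $\pH I v^+=0$, hence $\pH I\subseteq N$, so $\psi$ descends to $\phi:\pH/\pH I=\pH e^+\to M_{\eta,\textbf{a}}$, $he^+\mapsto hv^+$, with $\ker\phi=N/\pH I$, which is precisely $Ne^+\subseteq\pH e^+$. Since $M_{\eta,\textbf{a}}$ is irreducible and $v^+\neq 0$, $\phi$ is surjective, giving $\pH e^+/Ne^+\xrightarrow{\ \sim\ }M_{\eta,\textbf{a}}$ as $\pH$-modules. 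For the final clause, $e^+$ is homogeneous of degree $0$, so the degree filtration of $\pH$ restricts to one on $\pH e^+$ with $\gr(Ne^+)=\gr(N)e^+$; then exactly as in the discussion preceding the proposition, $\pH e^+/Ne^+\cong\pH e^+/\gr(N)e^+$ as $\H$-modules, and combining with the isomorphism just established gives $\pH e^+/\gr(N)e^+\cong M_{\eta,\textbf{a}}$ as $\H$-modules. I expect the main obstacle to be the non-vanishing $v^+\neq 0$ — i.e., confirming that $\Res_{\H_n}M_{\eta,\textbf{a}}$ genuinely is the parabolically induced trivial module with $\C_{w_0}(1\otimes c)$ a nonzero generator of its trivial submodule; a secondary point to pin down is the freeness of $\pH_n$ over $\pH_\eta$, which is where the Bernstein basis does the work. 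Everything else is formal manipulation with the functors $\pH\otimes_{\H}(-)$ and $\pH_n\otimes_{\pH_\eta}(-)$ and with the degree filtration.
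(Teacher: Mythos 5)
Your proof is correct and follows essentially the same route as the paper's: both produce $v^+$ spanning the trivial $\H$-submodule of $\Res_{\H}M_{\eta,\mathbf{a}}\cong\H\otimes_{\H_{J_\eta}}\mathbf{1}$, use it to define a $\pH$-map $\pH e^+\to M_{\eta,\mathbf{a}}$ via $e^+\mapsto v^+$, invoke irreducibility for surjectivity, and identify the kernel with $Ne^+$. The paper's proof is terser, leaving the existence of the nonzero trivial vector and the $\gr$-passage to the surrounding discussion; your expansion of $\C_{w_0}(1\otimes c)$ and the Frobenius-reciprocity aside are the same content made explicit.
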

\begin{proof}
As an $\H$-module, $M_{\eta,\textbf{a}}$ is the induced module $\H \tsr_{\H_{J_\lambda}} \lj{e^+}{J_\lambda}$, and this contains a copy of the trivial $\H$-module as a submodule. Let $v^+$ span this submodule. Thus there is an $\pH$-morphism $\pH e^+ \to M_{\eta,\textbf{a}}$, defined by $e^+ \mapsto v^+$. It is surjective by the irreducibility assumption and has kernel $N e^+$, hence the proposition.
%
\end{proof}

\begin{remark}
The assumption that $M_{\eta,\textbf{a}}$ is irreducible cannot be dropped.
\end{remark}

\subsection{}
The ideals $I_\lambda$ are generated by certain elementary symmetric functions in subsets variables, also known as Tanisaki generators (see \cite{GP, H3}). We show that certain $\C_w \in \cinv$ are essentially these generators. This will relate the ideals $\gr(\ann M_{\eta,\textbf{a}}) e^+$ to the canonical basis of $\pH e^+$. Indeed, this was our original motivation for applying the Garsia-Procesi approach to understand cellular submodules of $\cinv$.

Let us make the inclusion $\eH_{n-1} \hookrightarrow \eH_{(n-1,1)} \hookrightarrow \eH_n$ of \textsection\ref{ss BZ category} completely explicit. Recall that $S = \{s_1, s_2, \dots, s_{n-1}\}$ and let $S'$ be the subset $\{s_1, s_2, \dots, s_{n-2}\}$ of simple reflections of $W_f$. On the level of groups,
\be \iota_n:\ \eS_{n-1}\hookrightarrow \eS_n \ee
is given on generators by
\be
\label{e iota on s_n-1}
\begin{array}{ll} \iota_n(y_i) = y_i, &  i\in [n-1], \\
\iota_n(s_i) = s_i, & s_i \in S', \end{array}
\ee
from which it follows
\be
\iota_n(s_0) = s_{n-1} s_0 s_{n-1}, \\
\iota_n(\pi) = \pi s_{n-1}.
\ee
Since $\iota_n(s_0) \notin K$, this is not a morphism of Coxeter groups. This inclusion of groups restricts to an inclusion of monoids $\iota_n:\ \pS_{n-1}\hookrightarrow \pS_n$.

It is immediate from (\ref{e wordmult3}) and (\ref{e wordmult6}) that $\iota_n$ is given in terms of words by
\be \label{e wordiota}
\begin{array}{cccccl}
\lambda_1.x_1&\lambda_2.x_2& \cdots& \lambda_{n-1}.x_{n-1} &&\mapsto \\ \lambda_1.(x_1 + 1)& \lambda_2.(x_2 + 1)& \cdots& \lambda_{n-1}.(x_{n-1} + 1)&1, \end{array}
\ee
where $x_i \in [n-1]$ and $\lambda_i \in \ZZ$ (where, with the convention of \textsection\ref{ss words of eW}, $a . b = a(n-1) + b$ in the top line and $a . b = an + b$ in the bottom line).

The corresponding morphism of algebras $\iota_n:\ \eH_{n-1}\to \eH_n$ is given by
\be
\begin{array}{ll} \iota_n(Y_i) = Y_i, &  i\in [n-1], \\
\iota_n(T_i) = T_i, & s_i\in S'. \end{array}
\ee
from which it follows
\be \label{e iotapi} \begin{array}{rlll}\iota_n(\pi) &= \iota_n(\y_1T_1^{-1}T_2^{-1}\ldots T_{n-2}^{-1}) &= \y_1T_1^{-1}T_2^{-1}\ldots T_{n-2}^{-1} &= \pi T_{n-1}, \\
\iota_n(T_0) &= \iota_n(\pi^{-1} T_1\pi) &= T_{n-1}^{-1}\pi^{-1}T_1\pi T_{n-1} &= T_{n-1}^{-1}T_0T_{n-1}. \end{array}\ee

This map restricts to a map $\iota_n:\ \pH_{n-1}\to \pH_n$.


\begin{lemma} \label{l elemsymkl}
For $k, d \in [n]$ such that $d \leq k$, let $\lambda = \epsilon_{k-d+1} + \ldots + \epsilon_k$. Then $\lJ{(y^\lambda)}{S} = v \pi^d$ for some $v \in \S_n$.
\end{lemma}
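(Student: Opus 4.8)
The plan is to reduce the general $\lambda$ to the weight $\mu := \epsilon_1 + \dots + \epsilon_d$ by conjugating with a permutation, and then to use that $\pi^d$ has length zero and conjugates $W_f$ onto a standard parabolic of $\eW$.

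First I would record $\pi^d = y^\mu c^d$, where $c := s_1 s_2 \cdots s_{n-1}$: since $\pi = y^{\epsilon_1} c$ and $c\, y^{\epsilon_i} c^{-1} = y^{\epsilon_{i+1}}$ for $i < n$, this follows by induction on $d \le n$. In particular $\ell(\pi^d) = 0$ (as $\pi^d \in \Pi$) and $\pi^d \in y^\mu W_f$, so $y^\mu W_f = \pi^d W_f$. Choosing any $\tau \in W_f$ with $\tau(\mu) = \lambda$ (such $\tau$ exists, since $\mu$ and $\lambda$ are $0$--$1$ vectors each with exactly $d$ ones and $W_f = \S_n$ acts transitively on these), we get $y^\lambda = \tau\, y^\mu\, \tau^{-1}$, hence $y^\lambda W_f = \tau\, \pi^d W_f$. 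Because $\pi s_i \pi^{-1} = s_{i+1}$ (indices mod $n$), conjugation by $\pi^d$ carries $W_f = W_{K\setminus\{s_0\}}$ onto the standard parabolic $W_f' := W_{K \setminus \{s_d\}}$ of $(\eW, K)$, so $\pi^d W_f = W_f'\, \pi^d$ and therefore $y^\lambda W_f = \tau W_f'\, \pi^d$.

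Next I would observe that right multiplication by $\pi^d$ preserves length on $\eW$: letting $\phi$ be the automorphism $g \mapsto \pi g \pi^{-1}$ of $(\eW, K)$ (it permutes $K$ and fixes $\Pi$ pointwise, hence preserves $\ell$), one has $g\pi^d = \pi^d \phi^{-d}(g)$, so $\ell(g\pi^d) = \ell(\phi^{-d}(g)) = \ell(g)$. Thus $w \mapsto w\pi^d$ is a length-preserving bijection from the coset $\tau W_f'$ of $W_f'$ in $\eW$ onto $y^\lambda W_f$, and so the unique shortest element $\lJ{(y^\lambda)}{S}$ of $y^\lambda W_f$ equals $w^\ast \pi^d$, where $w^\ast$ is the unique shortest element of $\tau W_f'$.

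It remains to show $w^\ast \in W_f$, which is the one step needing a computation. Let $u \in W_f$ be the minimal-length representative of the coset $\tau\,(W_f \cap W_f') = \tau\, W_{K \setminus \{s_0, s_d\}}$ inside $W_f$. Then $u W_f' = \tau W_f'$, and $u$ has no right descent among the generators $\{s_j : j \in [n-1],\ j \neq d\}$ of $W_f \cap W_f'$; moreover $u s_0 > u$ for every $u \in W_f$, which by Proposition~\ref{p affinewordlessthan} applied with $i = n$ is just the remark that in the infinite word of $u$ the value $n$ lies in a position in $[n]$ whereas the value $n+1$ lies in a position $\le 0$. Hence $u$ has no right descent among any generator of $W_f'$, so $u$ is the shortest element of $u W_f' = \tau W_f'$, forcing $w^\ast = u \in W_f = \S_n$. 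Therefore $\lJ{(y^\lambda)}{S} = u\, \pi^d$ with $u \in \S_n$, as claimed; the degenerate case $d = n$, where $W_f' = W_f$ and $\lambda = \mu$, is the same argument with $u = \idelm$. The main obstacle is exactly this last step --- controlling the single extra simple reflection $s_0$ so that the shortest coset representative does not leave the finite Weyl group --- and Proposition~\ref{p affinewordlessthan} together with the inverted-window-word bookkeeping is what makes it routine.
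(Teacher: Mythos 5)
Your proof is correct and takes a genuinely different route from the paper's. The paper's argument is a direct computation: it writes out the inverted window word of $\pi^d = \lJ{(y_1\cdots y_d)}{S}$ and of $\lJ{(y^\lambda)}{S}$, and observes by inspection that the latter arises from the former by a sequence of length-increasing left multiplications by $s_i\in S$, which directly supplies $v$. Your argument avoids writing down the target word at all: by factoring $\pi^d = y^\mu c^d$ with $\mu = \epsilon_1+\cdots+\epsilon_d$, reducing $\lambda$ to $\mu$ by a $W_f$-conjugation, and using that right multiplication by $\pi^d$ is a length-preserving bijection from $\tau W_f'$ onto $y^\lambda W_f$ (where $W_f' = \pi^d W_f \pi^{-d} = W_{K\setminus\{s_d\}}$), you reduce the lemma to the assertion that the minimal element of $\tau W_f'$ lies in $W_f$. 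The clean observation driving this last step --- for every $u\in \S_n$ the value $n$ occupies a window position while $n+1$ occupies a nonpositive index, so $us_0 > u$, and therefore a minimal coset representative chosen within $W_f$ never acquires an $s_0$-descent --- is the one place where you invoke the word machinery, and it is a general fact the paper's computational proof never isolates. The paper buys brevity: once the two explicit words are on the page, the claim is read off. Yours buys conceptual content: it localizes the role of $\pi^d$ to conjugating the $S$-parabolic $W_f$ to a $K$-parabolic and makes visible why the answer $v$ has to live in $\S_n$.
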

\begin{proof}
The word of $\lJ{(y_1y_2 \ldots y_d)}{S}$ is
{\footnotesize \be \pi^d = 1.d\ \ 1.(d-1) \cdots 1.1\ \ n\ \ n-1 \cdots d+1, \ee}
and the word of $\lJ{(y^\lambda)}{S}$ is
{\footnotesize \be n\ \ n-1\cdots n-(k-d)+1\ \ 1.d\ \ 1.(d-1) \cdots 1.1\ \ n-(k-d)\ \ n-(k-d)-1\cdots d+1. \ee}
This word is obtained from the word of $\pi^d$ by a sequence of left-multiplications by $s_i \in S$ that increase length by 1. This sequence yields the desired $v \in \S_n$.
\end{proof}

Recall that for any  $w$ maximal in its coset  $w W_f$, we can write  $\C_w = \lC_z \C_{w_0}$, where  $w = z \cdot w_0$.  We have $\lC_z = \sum_x \lP_{x,z} T_x$, where the sum is over $x \leq z$ such that $x \cdot w_0$ is reduced and $\lP_{x,z} := P'_{x w_0, z w_0}$ (see \cite{B0} for the more general construction of which this is a special case).

\begin{theorem} \label{t elemsymkl}
For $k, d \in [n]$ such that $d \leq k$, put $\lambda = \epsilon_{k-d+1} + \ldots + \epsilon_k$ and $w = \lJ{(y^{\lambda})}{S}w_0 \in \dsw$. Then
\be \label{e elemsymkl} \C_{w} = \u^{d(k-n)}s_{1^d}(\y_1,\ldots,\y_k) \C_{w_0}.\ee
\end{theorem}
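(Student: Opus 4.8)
The strategy is to reduce to Lusztig's theorem (Theorem~\ref{t Lusztig}) via the Lusztig--Xi factorization (Theorem~\ref{t factoriz}) and the combinatorics of Lemma~\ref{l elemsymkl}, treating the cases $k=n$ and $k<n$ separately. When $k=n$ the weight $\lambda=\epsilon_{n-d+1}+\cdots+\epsilon_{n}=(0^{\,n-d},1^{\,d})$ is antidominant, so $y^{\lambda}$ is already the minimal element of $y^{\lambda}W_{f}$; hence $\lJ{(y^{\lambda})}{S}=y^{\lambda}$ and $w=y^{\lambda}w_{0}=w_{0}\,y^{w_{0}\lambda}=w_{0}\,y^{(1^{\,d},0^{\,n-d})}$. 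Since $(1^{\,d},0^{\,n-d})$ is dominant, Theorem~\ref{t Lusztig} gives $\C_{w}=s_{1^{\,d}}(\y_{1},\dots,\y_{n})\,\C_{w_{0}}$, which is \eqref{e elemsymkl} because $\u^{d(k-n)}=1$.

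For $k<n$ I would first check that $\lambda=(0^{\,k-d},1^{\,d},0^{\,n-k})$ is a cocharge labeling: explicitly $\lambda=\cl{v}$ for the permutation $v$ with word $1\,2\cdots(k{-}d)\ (n{-}d{+}1)\,(n{-}d{+}2)\cdots n\ (k{-}d{+}1)\cdots(n{-}d)$, which is immediate from the recipe for $\cl{\cdot}$ in \textsection\ref{ss cocyclage poset}. Thus $\lJ{(y^{\lambda})}{S}\in\ds$, and since $\lJ{(y^{\lambda})}{S}\in\eW^{S}$ the product $\lJ{(y^{\lambda})}{S}\cdot w_{0}$ is reduced. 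By the uniqueness clause of Proposition~\ref{p two-sided primitive}, the Lusztig--Xi factorization of $w\in\lrcelllong{(n)}\cap\pW$ must then be $w=\lJ{(y^{\lambda})}{S}\cdot w_{0}\cdot\idelm$, i.e.\ with $u_{1}=\lJ{(y^{\lambda})}{S}$, $u_{2}=\idelm$ and $\mu=0$; feeding this into Theorem~\ref{t factoriz} (and using $s_{0}(\y)=1$, $\rC_{\idelm}=1$) yields $\C_{w}=\lC_{\lJ{(y^{\lambda})}{S}}\,\C_{w_{0}}$.

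It then remains to show $\lC_{\lJ{(y^{\lambda})}{S}}\,\C_{w_{0}}=\u^{d(k-n)}s_{1^{\,d}}(\y_{1},\dots,\y_{k})\,\C_{w_{0}}$. Since the left side equals $\C_{w}$, by the uniqueness in Theorem~\ref{t kl canonical basis} it suffices to verify that $h:=\u^{d(k-n)}s_{1^{\,d}}(\y_{1},\dots,\y_{k})\,\C_{w_{0}}$ is bar-invariant and satisfies $h\equiv T_{w}\pmod{\ui\L}$. For the congruence I would expand $s_{1^{\,d}}(\y_{1},\dots,\y_{k})\,\C_{w_{0}}$ in the $T$-basis using $\y_{i}=T_{i-1}^{-1}\cdots T_{1}^{-1}\,\y_{1}\,T_{1}^{-1}\cdots T_{i-1}^{-1}$, $\y_{1}=\pi\,T_{n-1}\cdots T_{1}$, together with $T_{i}\,\C_{w_{0}}=\u\,\C_{w_{0}}$ and $T_{i}^{-1}\C_{w_{0}}=\ui\,\C_{w_{0}}$; Lemma~\ref{l elemsymkl} (which locates $w=\lJ{(y^{\lambda})}{S}w_{0}$) identifies the unique Bruhat-maximal term as $\u^{-d(k-n)}T_{w}$, the remaining terms landing in $\ui\L$. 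For bar-invariance, since $\overline{\cdot}$ is a ring homomorphism fixing $\C_{w_{0}}$, one must show $\overline{s_{1^{\,d}}(\y_{1},\dots,\y_{k})}\,\C_{w_{0}}=\u^{-2d(k-n)}s_{1^{\,d}}(\y_{1},\dots,\y_{k})\,\C_{w_{0}}$; I would prove this by induction on $n-k$ (the base $k=n$ being the case above, where $s_{1^{\,d}}(\y_{1},\dots,\y_{n})$ is central and hence bar-invariant), using $s_{1^{\,d}}(\y_{1},\dots,\y_{k})=s_{1^{\,d}}(\y_{1},\dots,\y_{k-1})+\y_{k}\,s_{1^{\,d-1}}(\y_{1},\dots,\y_{k-1})$ and the fact that symmetric functions in $\y_{1},\dots,\y_{k}$ commute with the Hecke subalgebra generated by $T_{1},\dots,T_{k-1}$.

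The hard part will be this last step: controlling how $\overline{\cdot}$ acts on $s_{1^{\,d}}(\y_{1},\dots,\y_{k})\,\C_{w_{0}}$ and extracting precisely the normalization $\u^{d(k-n)}$ — equivalently, measuring the failure of $s_{1^{\,d}}(\y_{1},\dots,\y_{k})$ to be central when $k<n$. Everything else follows directly from Theorems~\ref{t Lusztig} and \ref{t factoriz} and Lemma~\ref{l elemsymkl}. An alternative route, avoiding the bar-invariance computation, would be an induction on $n-k$ through the inclusion $\iota_{n}\colon\eH_{n-1}\hookrightarrow\eH_{n}$ of \textsection\ref{ss BZ category}, which fixes the $\y_{i}$ and sends the longest-element basis element of $\eH_{n-1}$ to that of a parabolic in $\eH_{n}$; but reconciling this parabolic element with $\C_{w_{0}}$ and with $s_{1^{\,d}}(\y_{1},\dots,\y_{k})$ (which does not commute with the needed correction factor) leads to essentially the same $\u$-power bookkeeping.
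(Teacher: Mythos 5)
Your base case ($k=n$, via Theorem \ref{t Lusztig}) and your observation that $\lambda=(0^{k-d},1^d,0^{n-k})$ is the cocharge labeling of the indicated permutation are both correct, and your reduction to verifying bar-invariance and the $T$-basis congruence for $h:=\u^{d(k-n)}s_{1^d}(\y_1,\dots,\y_k)\C_{w_0}$ is a sound strategy. However, your invocation of Proposition \ref{p two-sided primitive} and Theorem \ref{t factoriz} is superfluous: the conclusion $\C_w = \lC_{\lJ{(y^\lambda)}{S}}\C_{w_0}$ is just the definition of $\lC$, not a consequence of those theorems.

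The real gap is in the bar-invariance step, which you yourself flag as "the hard part" but do not resolve. The Pieri-type recursion $s_{1^d}(\y_1,\dots,\y_k)=s_{1^d}(\y_1,\dots,\y_{k-1})+\y_k\,s_{1^{d-1}}(\y_1,\dots,\y_{k-1})$ runs in the wrong direction for a downward induction on $k$ (it does not express the $k$-variable term in terms of $(k+1)$-variable data), and $\y_k$ itself is not bar-invariant, so there is no obvious way to reduce to the inductive hypothesis. You dismiss the $\iota_n$-alternative as leading to "essentially the same $\u$-power bookkeeping," but in fact it is exactly what makes the argument close. The paper inducts on $n$ (with $k,d$ fixed) using $\iota_n:\pH_{n-1}\hookrightarrow\pH_n$: since $\iota_n$ fixes the $\y_i$ and sends $\C_{\lj{w_0}{S'}}$ of $\H_{n-1}$ to the parabolic $\C_{\lj{w_0}{S'}}$ of $\H_n$, the Schur-function side of the inductive hypothesis transports with no work, and after right-multiplying by $\rC_{s_{n-1}\cdots s_1}$ the extra factor $T_{n-d}\cdots T_{n-1}$ from $\iota_n(\pi^d)$ collapses against $\C_{w_0}$ to the scalar $\u^d$ — this is precisely where the normalization $\u^{d(k-n)}$ comes from, with no centrality estimate needed. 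For bar-invariance the crucial point you are missing is Lemma \ref{l elemsymkl}: it guarantees $z=\lJ{(y^\lambda)}{S'}=u\pi^d$ with $u\in\S_{n-1}$, so every $x\le z$ factors as $x=(x\pi^{-d})\pi^d$ with $x\pi^{-d}\in\S_{n-1}$. Since $\pi^d$ is bar-invariant and $\iota_n$ commutes with the bar-involution on the finite subalgebra $\H_{n-1}$ (whereas it does \emph{not} on all of $\eH_{n-1}$, because $\iota_n(\pi)=\pi T_{n-1}$ is not bar-invariant), one can push the bar-invariance of $\C_{w'}$ through $\iota_n$ term by term. This is the mechanism you would need to make your outline rigorous; as written, the bar-invariance claim is not established.
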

\begin{proof} We proceed by induction on $n$. If $k= n$, then this is a special case of Theorem \ref{t Lusztig}. Otherwise, by induction, the following holds in $\pH_{n-1}$:
\be \label{e inductionstep} \C_{w'} = \u^{d(k-n+1)}s_{1^d}(\y_1,\ldots,\y_k) \C_{\lj{w_0}{S'}}, \ee
where $w' = \lJ{(y^\lambda)}{S'}\lj{w_0}{S'} \in D^{S'} w_0$.
Putting $z = \lJ{(y^\lambda)}{S'}$, we have $\C_{w'} = \sum_{x\leq z}\lP_{x,z}T_x\C_{\lj{w_0}{S'}}$. Applying $\iota_n$ to both sides, using Lemma \ref{l elemsymkl} ($z = u\pi^d, u \in \S_{n-1}$ implies any $x \leq z$ has a similar form) and then (\ref{e iotapi}), we obtain
\be \iota_n(\C_{w'}) = \sum_{x\leq z}\lP_{x,z}\iota_n(T_x)\C_{\lj{w_0}{S'}} = \sum_{x\leq z}\lP_{x,z}T_xT_{n-d}\ldots T_{n-1}\C_{\lj{w_0}{S'}}. \ee
Multiplying on the right by $\rC_{s_{n-1} s_{n-2} \ldots s_1}$, we obtain
\be \u^d \sum_{x\leq z}\lP_{x,z}T_x \C_{w_0}. \ee

Next, we show that
\be \label{e xzlp} \sum_{x\leq z}\lP_{x,z}T_x \C_{w_0} = \C_{w} \ee
using the characterization of the canonical basis from Theorem \ref{t kl canonical basis}.
It is not hard to see that $w' s_{n-1} s_{n-2} \dots s_1 = w$ using (\ref{e wordiota}) and the affine word computation in the proof of Lemma \ref{l elemsymkl}. Then the left-hand side of (\ref{e xzlp}) is certainly in $T_{w} + \ui \L$ ($\L$ as in Theorem \ref{t kl canonical basis}). To see that it is $\br{\cdot}$-invariant, use that
\be \sum_{x\leq z}\lP_{x,z}T_{x\pi^{-d}}\pi^d\C_{\lj{w_0}{S'}} = \sum_{x\leq z}\br{\lP_{x,z}}\br{T_{x\pi^{-d}}}\pi^d\C_{\lj{w_0}{S'}} \ee
as an equation in $\pH_{n-1}$.  Since $x \pi^{-d} \in \S_{n-1}$,  $\br{\iota_n(x \pi^{-d})} = \iota_n(\br{x \pi^{-d}})$.  Hence applying  $\iota_n$ to this equation and then multiplying on the right by $\u^{-d} \rC_{s_{n-1} s_{n-2} \ldots s_1}$ yields $\br{\cdot}$-invariance for the left-hand side of (\ref{e xzlp}).

Finally, the theorem follows by applying $\iota_n$ to both sides of (\ref{e inductionstep}) and multiplying on the right by $\u^{-d}\rC_{s_{n-1} s_{n-2} \ldots s_1}$ to obtain (\ref{e elemsymkl}).
\end{proof}

\subsection{}
In the next proposition, we relate the descriptions of $\gr(\ann M_{\lambda,\textbf{a}})$ in terms of elementary symmetric polynomials in subsets of the variables to catabolizability. Let $T^{d,k} = P(y_{k-d+1} y_{k-d+2} \dots y_k)$. Under the isomorphism $\ccp(SYT) \cong \fsp(\cinv)$ of Proposition \ref{p ccp syt equals ccp cinv}, $T^{d,k} = n\cl{T'}+T'$ for some SYT  $T'$.  The tableau $\cl{T'}$ has at most two rows and is filled with 0's and 1's; it has $n-d$ 0's in the first row and $\min(d,n-k)$ 1's in the second row. Set $\mu = \ctype(T^{d,k})$ (see \textsection\ref{ss catabolizability definition} for the definition of  $\ctype$).

\begin{proposition}
\label{p catab elem sym}
With $\mu$ as defined above, $d,k \in [n]$, $d \leq k$, and $\lambda \vdash n$, the following are equivalent:
\begin{list}{\emph{(\alph{ctr})}} {\usecounter{ctr} \setlength{\itemsep}{1pt} \setlength{\topsep}{2pt}}
\item $d > k-n+\lambda'_1 + \dots + \lambda'_{n-k}$,
\item $d > k-\sum_i (\lambda_i - (n-k))_{\geq 0}$,
\item $\mu \not\gd \lambda$,
\item $T^{d,k}$ is not $(G_\lambda,1^{\ell(\lambda)})$-row catabolizable,
\end{list}
where for $c \in \ZZ$, $(c)_{\geq 0}$ denotes $c$ if $c \geq 0$ and 0 otherwise.
\end{proposition}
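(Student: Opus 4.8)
The plan is to prove (a)$\Leftrightarrow$(b), then (c)$\Leftrightarrow$(d), and finally the link (b)$\Leftrightarrow$(c).

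\textbf{(a)$\Leftrightarrow$(b).} This is a counting identity. The sum $\lambda'_1+\dots+\lambda'_{n-k}$ is the number of cells of $\lambda$ in its first $n-k$ columns, so $n-(\lambda'_1+\dots+\lambda'_{n-k})$ counts the cells of $\lambda$ in the later columns; counting those row by row gives $\sum_i(\lambda_i-(n-k))_{\geq 0}$. Hence $k-n+\lambda'_1+\dots+\lambda'_{n-k}=k-\sum_i(\lambda_i-(n-k))_{\geq 0}$, and the inequalities in (a) and (b) are literally the same.

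\textbf{(c)$\Leftrightarrow$(d).} By the well-known facts about catabolizability recalled in \textsection\ref{ss catabolizability 2}, the partitions $\nu$ for which $T^{d,k}$ is $(G_{(n)},\nu)$-column catabolizable form a dominance order ideal whose unique maximum is $\ctype(T^{d,k})=\mu$; thus $T^{d,k}$ is $(G_{(n)},\nu)$-column catabolizable iff $\nu\ld\mu$, and by the equivalence of (a) and (b) of Proposition \ref{p catabolizable basics} iff $T^{d,k}$ is $(G_\nu,1^{\ell(\nu)})$-row catabolizable. Taking $\nu=\lambda$ and negating gives (d)$\Leftrightarrow(\mu\not\gd\lambda)\Leftrightarrow$(c).

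\textbf{(b)$\Leftrightarrow$(c): the main point.} What remains is to show that $T^{d,k}$ is \emph{not} $\lambda$-catabolizable exactly when the inequality of (a)/(b) holds; equivalently, that $T^{d,k}$ is $\nu$-catabolizable iff $d\le k-n+\nu'_1+\dots+\nu'_{n-k}$, so that $\mu=\ctype(T^{d,k})$ is the dominance-maximum of this ideal of $\nu$'s. The strategy is to run the catabolism procedure directly on the explicit two-row tableau $T^{d,k}=n\cl{T'}+T'$ described before the proposition — shape $(n-d,d)$ when $d+k\le n$, shape $(k,n-k)$ when $d+k>n$. Because there are only two rows, the $(G_{(n)},\nu)$-column catabolism is short, and one checks by induction on $\ell(\nu)$ that the whole chain of matching conditions $T_{C_1}=Q_{C_1}$ survives precisely as long as the single inequality $d > k-n+\nu'_1+\dots+\nu'_{n-k}$ fails. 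Concretely, one finds $\mu=(n-d,d)$ when $d+k\le n$ (in which case $T^{d,k}$ is the Garnir tableau $G_{(n-d,d)}$ and $\ctype G_{(n-d,d)}=(n-d,d)$) and $\mu=(k,n-k)'$ when $d+k>n$; in either case $\mu\gd\lambda$ unwinds to the negation of (a). Combining with the first step yields (b)$\Leftrightarrow$(c).

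I expect that last computation to be the main obstacle: carrying the reading words correctly through the shifts by $\ng n$ and the residue constraints in the two-row catabolism requires some care. A reassuring sanity check, and a possible shortcut, is that the inequality in (a) is exactly the classical Tanisaki condition for $e_d(y_1,\dots,y_k)$ to lie in $I_\lambda$ (see \cite{GP, H3}); by Theorem \ref{t elemsymkl} the canonical basis element of $\cinv$ indexed by $T^{d,k}$ is a unit times $e_d(\y_1,\dots,\y_k)\C_{w_0}$, so at $\u=1$, via the Hotta--Springer/Lascoux description of $R/I_\lambda$ around (\ref{e frobenius}) — a descent-basis vector of $R_{1^n}$ dies in $R/I_\lambda$ iff its standard tableau is not $\lambda$-catabolizable — this again identifies (a) with (c).
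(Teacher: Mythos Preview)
Your (a)$\Leftrightarrow$(b) and (c)$\Leftrightarrow$(d) match the paper's proof exactly. The difficulty, as you anticipate, is (b)$\Leftrightarrow$(c), and here your computation of $\mu=\ctype(T^{d,k})$ is incorrect in the case $d+k>n$.

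You claim that $\mu=(k,n-k)'$ when $d+k>n$. But $(k,n-k)'=(2^{n-k},1^{2k-n})$ does not depend on $d$ at all, while condition (a) certainly does; so ``$\mu\gd\lambda$'' cannot unwind to the negation of (a). Concretely, take $n=5$, $d=2$, $k=4$: the associated standard tableau is $T'=[1\,2\,3\,5\,/\,4]$, and one checks directly that $\ctype(T')=(3,1,1)$, not $(4,1)'=(2,1,1,1)$. (The tableau $T'$ is \emph{not} $(3,2)$-catabolizable: after stripping $[1\,2\,3]$ one gets $(T^*_{\text{north}})(T^*_{\text{south}})=5\,4$, which inserts to $[4/5]$, whose first row is not $[4\,5]$.) The paper's proof instead computes $\mu$ correctly as
\[
\mu=(n-d,\,\mu_2,\,\mu_2,\ldots,\mu_2,\,r),\qquad \mu_2=\min(d,n-k),
\]
with $r\le\mu_2$ chosen so that $\mu\vdash n$, and then verifies (b)$\Leftrightarrow$(c) by a direct dominance argument: letting $l$ be the number of parts of $\lambda$ exceeding $n-k$, one rewrites (b) as $\sum_{i=1}^l\lambda_i>n-d+(n-k)(l-1)\ge\sum_{i=1}^l\mu_i$, and conversely shows that any failure of dominance can be witnessed at this particular $l$.

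Your proposed shortcut via the Tanisaki condition and the Hotta--Springer/Lascoux description is not safe here either: the claim that a specific descent-basis vector dies in $R/I_\lambda$ exactly when its tableau is not $\lambda$-catabolizable is essentially what Theorem~\ref{t main theorem gp} establishes, and Proposition~\ref{p catab elem sym} is an ingredient in that proof. Invoking it would be circular within the paper's logic.
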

\begin{proof}
The equivalence of (a) and (b) comes from counting the number of boxes in the first $n-k$ columns of the diagram of $\lambda$ in two different ways. The equivalence of (c) and (d) is well-known (see \cite{SW}). It is easy to see that the catabolizability of $T^{d,k}$ is $\mu = (n-d, \mu_2, \mu_2, \dots, \mu_2, r)$, where $\mu_2 = \min(d,n-k)$ and $r$  is the unique integer such that $r \leq \mu_2$ and $\mu \vdash n$.

Next, let $l$ be the number of parts of $\lambda$ that are greater than $n-k$. Rewriting condition (b), and using the computation of $\mu$, we have
\be \sum_{i=1}^l \lambda_i > k-d + l (n-k) = n-d+ (n-k) (l-1)  \geq \mu_1 + \sum_{i=2}^l \mu_i. \ee
This implies (c). To see that (c) implies (b), suppose  $\sum_{i=1}^l \lambda_i > \sum_{i=1}^l \mu_i$ for some $l$. Then as $\mu_2 = \min(d,n-k)$, this inequality also holds for $l$ equal to the number of parts of $\lambda$ that are greater than $n-k$.  If $\mu_2 = n-k$, then $\sum_{i=1}^l \mu_i = n-d+ (n-k) (l-1)$; this also holds if $\mu_2 = d$ because this implies $\mu = (n-d, d)$ which implies  $l=1$.  Hence (b) follows.
\end{proof}

A result of Garsia-Procesi (\cite[Proposition 3.1]{GP}) carries over to this setting virtually unchanged. For a composition $\eta$, let $\eta_+$ denote the partition obtained from $\eta$ by sorting its parts in decreasing order.
\begin{proposition}
\label{p garcia-procesi prop 3.1}
Suppose $\eta$ is an $r$-composition of $n$ with $\lambda := \eta_+$, and $k, d \in [n]$, $d \leq k$, such that any (all) of the conditions in Proposition \ref{p catab elem sym} are satisfied. If $M_{\eta, \textbf{a}}$ satisfies the hypotheses of Proposition \ref{p eigenspaces}, then
\[ s_{1^d}(\y_1,\dots,\y_k) \in \gr(\ann M_{\eta,\textbf{a}}). \]
\end{proposition}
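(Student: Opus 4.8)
The plan is to reduce the claim to the corresponding statement at $\u = 1$, which is exactly \cite[Proposition 3.1]{GP}, using the dictionary provided by Theorem~\ref{t elemsymkl}. The key point is that $s_{1^d}(\y_1,\dots,\y_k)$ is, up to the unit $\u^{d(k-n)}$, the canonical basis element $\C_w$ with $w = \lJ{(y^\lambda)}{S}w_0$ acting on $e^+$; by Corollary~\ref{c cinv} this descends to a canonical basis element of $\cinv$, which corresponds under Proposition~\ref{p ccp syt equals ccp cinv} to the tableau $T^{d,k} = n\cl{T'}+T'$. So membership of $s_{1^d}(\y_1,\dots,\y_k)e^+$ in $\gr(\ann M_{\eta,\textbf{a}})e^+$ should be detectable combinatorially, and the hypotheses in Proposition~\ref{p catab elem sym} are precisely the combinatorial condition one wants.

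First I would set $N = \ann v^+$ for the element $v^+ \in M_{\eta,\textbf{a}}$ provided by Proposition~\ref{p irreducibility needed} (here we use that $M_{\eta,\textbf{a}}$ is irreducible, which is part of the hypothesis that it satisfies the hypotheses of Proposition~\ref{p eigenspaces} together with the standing assumption on the intervals $[a_i-\eta_i,a_i]$ being disjoint — one should double-check that irreducibility is indeed part of what is being assumed, or invoke it separately). By that proposition, $\pH e^+ / \gr(N)e^+ \cong M_{\eta,\textbf{a}}$ as $\H$-modules, and $\gr(N) = \gr(\ann M_{\eta,\textbf{a}})$. Next, I would recall that the classical Tanisaki ideal $I_\lambda$ is generated by the elementary symmetric functions $e_d(y_1,\dots,y_k)$ for the pairs $(d,k)$ satisfying condition (a) of Proposition~\ref{p catab elem sym}, and that Garsia--Procesi show $e_d(y_1,\dots,y_k) \in \gr(\mathcal{I}(\S_n\textbf{a}))$ for exactly those $(d,k)$, where $\mathcal{I}(\S_n\textbf{a})$ is the ideal of functions vanishing on the orbit. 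The $\u$-analogue is obtained by transporting this along the category isomorphism and the module isomorphism above.

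The main step — and the expected obstacle — is verifying that the $\u=1$ specialization of $M_{\eta,\textbf{a}}$ is $\CC\tsr_A \pH e^+ / \gr(\ann M_{\eta,\textbf{a}})e^+$ with the grading matching, so that $\gr(\ann M_{\eta,\textbf{a}})$ specializes correctly to $\gr(\mathcal{I}(\S_n\textbf{a}))$ and the Bernstein generators $\y_i$ specialize to the coordinate functions $y_i$ on $\CC^n$. Once one has that, the calculation of Garsia--Procesi applies verbatim: for $(d,k)$ satisfying Proposition~\ref{p catab elem sym}(a), the leading form $\grin(e_d(y_1,\dots,y_k) \cdot (\text{something vanishing on the orbit}))$ equals $e_d(y_1,\dots,y_k)$ itself, so $e_d(y_1,\dots,y_k) \in \gr(\mathcal{I}(\S_n\textbf{a}))$, hence $s_{1^d}(\y_1,\dots,\y_k) = e_d(\y_1,\dots,\y_k) \in \gr(\ann M_{\eta,\textbf{a}})$. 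The subtlety is that Garsia--Procesi's argument at $\u=1$ uses that the orbit $\S_n\textbf{a}$ has the specific coordinates described in Proposition~\ref{p eigenspaces}; one must check those coordinates are the $\u=1$ shadow of the generalized $\y$-eigenvalues of $M_{\eta,\textbf{a}}$, i.e. that the words listed in Proposition~\ref{p eigenspaces} specialize at $\u=1$ to the multiset $\S_n^{J_\eta}(a_1,\dots,a_1-\eta_1,a_2,\dots)$ up to the reparametrization $\u^{2c}\mapsto c$ (or whatever normalization \cite{GP} uses). Granting that bookkeeping, the proposition follows immediately; I expect the author's proof to simply cite \cite[Proposition 3.1]{GP} with the remark that the argument "carries over virtually unchanged," as already announced in the text preceding the statement.
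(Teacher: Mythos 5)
Your proposal has a genuine gap: you try to deduce the statement at $\u$-level by specializing to $\u=1$ and invoking the classical Garsia--Procesi result, but this direction of inference does not work. If $N$ is an $A$-submodule of $\pH$ (here $\gr(\ann M_{\eta,\textbf{a}})$) and $\bar{N}$ its image after setting $\u=1$, then $h\in N$ implies $\bar{h}\in\bar N$ but not conversely; so knowing that $e_d(y_1,\dots,y_k)\in\gr(\mathcal I(\S_n\textbf{a}))$ at $\u=1$ does not by itself put $s_{1^d}(\y_1,\dots,\y_k)$ into $\gr(\ann M_{\eta,\textbf{a}})$ over $A=\ZZ[\u,\ui]$. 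Your "transport along the category isomorphism" also misidentifies what that isomorphism does: it is the restriction $\mathcal C^\ZZ_n\to\mathcal C^{+\ZZ}_n$ between $\eH$-modules and $\pH$-modules, not a bridge between the $\u$-setting and the $\u=1$ setting. The way the author actually proceeds is to rerun the Garsia--Procesi division-algorithm argument entirely inside the Hecke-algebra picture: set $p_k^\y(t)=\prod_{i=1}^k(t+\y_i)$, use Proposition~\ref{p eigenspaces} to see that on every point of $M_{\eta,\textbf{a}}$ the polynomial $p_k^\y(t)$ is divisible by an explicit product $g(t)$ of degree $m=\sum_i(\lambda_i-(n-k))_{\geq 0}$ built from the shuffle structure, then divide $p_k^\y(t)$ by a generic monic degree-$m$ polynomial $p_m^z(t)$ to get a remainder $r(t)=\sum_{i<m}c_i t^i$; condition (b) of Proposition~\ref{p catab elem sym} guarantees $k-d<m$ so $c_{k-d}$ exists, and choosing $\textbf{z}$ so that $p_m^z=g$ shows $c_{k-d}\in\ann M_{\eta,\textbf{a}}$, while setting $\textbf{z}=\textbf{0}$ reveals the leading form $\grin(c_{k-d})=s_{1^d}(\y_1,\dots,\y_k)$. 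This produces the needed element of $\ann M_{\eta,\textbf{a}}$ directly over $A$ rather than arguing through the $\u=1$ specialization. (Also note that the paper's hypothesis is only that $M_{\eta,\textbf{a}}$ satisfies Proposition~\ref{p eigenspaces}, i.e.\ the intervals $[a_i-\eta_i,a_i]$ are disjoint; irreducibility and Proposition~\ref{p irreducibility needed} are not invoked here, and neither is Theorem~\ref{t elemsymkl} or Corollary~\ref{c cinv}.)
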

\begin{proof}
Let $p_k^\y(t) = \prod_{i=1}^k (t+\y_i)$, thought of as a univariate polynomial in the indeterminate $t$. By Proposition \ref{p eigenspaces}, the points of $M_{\eta,\textbf{a}}$ are shuffles of words of length $\eta_1, \eta_2, \dots, \eta_r$. Thus the word of length $\eta_i$ must intersect the first $k$ letters of the shuffle in size at least $(\eta_i - (n-k))_{\geq 0}$. Therefore the value of $p_k^\y(t)$ on any point of $M_{\eta,\textbf{a}}$ is divisible by
\be g(t) := [t+u^{2a_1}]_{(\eta_1-(n-k))_{\geq 0}} [t+u^{2a_2}]_{(\eta_2-(n-k))_{\geq 0}} \ldots [t+u^{2a_r}]_{(\eta_r-(n-k))_{\geq 0}}, \ee
where
\be [t+u^a]_c := (t + u^a)(t + u^{a-2}) \cdots (t + u^{a-2(c-1)}). \ee

Put $m = \sum_i (\lambda_i - (n-k))_{\geq 0}$ and define $p_m^z(t) = \prod_{i=1}^m (t+z_i)$, thought of as a polynomial in $t$ with $\textbf{z}:=(z_1,\ldots,z_m) \in A^m$. Divide $p_k^\y(t)$ by $p_m^z(t)$ to obtain
\be \label{e division}
p_k^\y(t) = q(t)p_m^z(t) + r(t),
\ee
where $r(t) = \sum_{i = 0}^{m-1} c_i t^i$ is a polynomial in $t$ of degree less than $m$ with coefficients $c_i \in A[\y_1, \dots, \y_k]$. We will make use of the fact that equation (\ref{e division}) is homogeneous of degree $k$ if $t$, the $\y_i$'s, and the $z_i$'s have degree 1.

The coefficient $c_{k-d}$ exists as Proposition \ref{p catab elem sym} (b) is equivalent to $k-d < m$ and, for a certain $\textbf{z}$, it is the element of $\ann M_{\eta,\textbf{a}}$ we are looking for: on the one hand, if $\textbf{z}$ is chosen so that $g(t) = p_m^z(t)$, then $c_{k-d}$ evaluates to 0 on every point of $M_{\eta,\textbf{a}}$ as $p_k^\y(t)$ evaluated at any point of $M_{\eta,\textbf{a}}$ is divisible by $p_m^z(t)$. On the other hand, the leading component $\grin(c_{k-d})$ of  $c_{k-d}$ is obtained from $c_{k-d}$ by setting $\textbf{z}  = \textbf{0}$. Then since setting $\textbf{z} =\textbf{0}$ results in $p_m^z(t) = t^m$, (\ref{e division}) shows that $\grin(c_{k-d}) = s_{1^d}(\y_1, \dots, \y_k)$.
\end{proof}

\subsection{}
For $h \in \pH$, write $[\C_w]h$ for the coefficient of $\C_w$ of $h$ written as an $A$-linear combination of $\{\C_w: w \in \pW\}$. Define $\langle , \rangle_\lambda: \pH \times \pH e^+ \to A$ by
\be \langle h_1, h_2 \rangle_\lambda = [\C_{g_\lambda}]h_1 h_2, \ee
where $g_\lambda = \reading(G_\lambda)$.

We now come to our main result.
\begin{theorem}
\label{t main theorem gp}
Suppose $M_{\eta,\textbf{a}}$ satisfies the hypotheses of Propositions \ref{p eigenspaces} and \ref{p irreducibility needed} and maintain the notation of Proposition \ref{p irreducibility needed}. Then the following submodules of $\pH e^+$ are equal.
\begin{list}{\emph{(\roman{ctr})}} {\usecounter{ctr} \setlength{\itemsep}{1pt} \setlength{\topsep}{2pt}}
\item $\Il{o} := \gr(\ann v^+)e^+$,
\item $\Il{T} := \pH \{s_{1^d}(\y_1, \dots, \y_k) : d, k \text{ satisfy (a)-(d) of Proposition \ref{p catab elem sym}}\} e^+,$
\item $\Il{pair} := \{v \in \pH e^+ : \langle \pH, v \rangle_\lambda = 0\}$,
\item $\Il{cell}$ := The maximal cellular submodule of $\pH e^+$ not containing $\Gamma_{G_\lambda}$ ($\Gamma_{G_\lambda}$ is the cell labeled by $G_\lambda$),
\item $\Il{cat} := A \{ \C_w : P(w) \text{ is not $(G_\lambda, 1^{\ell(\lambda)})$-row catabolizable}\}$.
\end{list}
\end{theorem}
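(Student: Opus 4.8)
The plan is to prove all five submodules coincide, working inside $\cinv$. First, $(d,k)=(j,n)$ satisfies condition (a) of Proposition~\ref{p catab elem sym} for every $j\geq1$ (the sum there is empty), so $\Il{T}$ contains all $s_{1^j}(\y_1,\dots,\y_n)e^+$, hence contains $\pH(\pY)_{\geq1}^{\S_n}e^+=\ker(\pH e^+\twoheadrightarrow\cinv)$; since $\Il{T}$ will be seen to lie in each of $\Il{o},\Il{pair},\Il{cell},\Il{cat}$ — for $\Il{cat}$ one also uses that every $(G_\lambda,1^{\ell(\lambda)})$-row catabolizable PAT is of the form $n\cl{T'}+T'$ with $T'$ standard, so $\Il{cat}$ too contains the kernel — it suffices to prove equality of the images in $\cinv$, where the relevant quotients have finite $A$-rank.

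The equality $\Il{pair}=\Il{cell}$ comes from the positivity of the structure coefficients of the canonical basis of $\pH$: with positivity there is no cancellation in iterated canonical-basis products, so for $w\in\dsw$ one has $[\C_{g_\lambda}](h\C_w)\neq0$ for some $h\in\pH$ if and only if $g_\lambda\klo{\cinv}w$, i.e.\ if and only if $\Gamma_{G_\lambda}\klo{\cinv}\Gamma_{P(w)}$ ($g_\lambda=\reading(G_\lambda)$ being maximal in its $W_f$-coset and lying in $\Gamma_{G_\lambda}$). Hence $\Il{pair}=A\{\C_w:w\in\dsw,\ \Gamma_{G_\lambda}\not\klo{\cinv}\Gamma_{P(w)}\}$, which is exactly the largest cellular submodule of $\cinv$ omitting $\Gamma_{G_\lambda}$, namely $\Il{cell}$.

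Next, $\Il{cat}=\Il{cell}$. By Proposition~\ref{p preorder cells vs cocyclage poset} the partial order $\klo{\cinv}$ on cells is generated by $\klo{\Res_\H\cinv}$ and the cocyclage-edges, and under the embedding $\ccp(SYT)\hookrightarrow\ccp(PAT)$ of Proposition~\ref{p ccp syt equals ccp cinv} the $(G_\lambda,1^{\ell(\lambda)})$-row catabolizable tableaux form precisely the up-set of $\klo{\cinv}$ generated by $\Gamma_{G_\lambda}$: that this set is an up-set is weak monotonicity of $\ctype$ along those edges, and that it is generated by $\Gamma_{G_\lambda}$ is the ``cogeneration'' of catabolizable tableaux — both being part of the combinatorics of catabolizability of \cite{SW} (cf.\ \cite{LS}). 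Comparing with the previous paragraph gives $\Il{cat}=\Il{cell}$, and in particular $\Il{cat}$ is cellular, so $\cinv/\Il{cat}$ is $A$-free of rank $\#\{v\in\S_n:\ctype(P(v))\gd\lambda\}=\dim_\CC R_\lambda=\binom{n}{\eta}$ (by (\ref{e frobenius}), with $\lambda=\eta_+$).

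It remains to bring in $\Il{T}$ and $\Il{o}$. By Proposition~\ref{p garcia-procesi prop 3.1} and the irreducibility hypothesis (so $N=\ann v^+=\ann M_{\eta,\textbf{a}}\ni s_{1^d}(\y_1,\dots,\y_k)$ for the relevant $(d,k)$), the generators of $\Il{T}$ lie in $\gr(N)e^+=\Il{o}$, so $\Il{T}\subseteq\Il{o}$; by Theorem~\ref{t elemsymkl} each of these generators times $e^+$ equals, up to a power of $\u$, a single $\C_w$ with $P(w)=T^{d,k}$ not $(G_\lambda,1^{\ell(\lambda)})$-row catabolizable (Proposition~\ref{p catab elem sym}(d)), hence $\C_w\in\Il{cat}$ and, $\Il{cat}$ being cellular, $\Il{T}\subseteq\Il{cat}$. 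Now $\cinv/\Il{o}\cong\pH e^+/\gr(N)e^+\cong M_{\eta,\textbf{a}}$ (Proposition~\ref{p irreducibility needed}), which is $A$-free of rank $\binom{n}{\eta}$, while $\CC\tsr_A(\cinv/\Il{T})=R/I_\lambda$ has dimension $\binom{n}{\eta}$ (Bergeron--Garsia: the Tanisaki generators generate $I_\lambda$; Garsia--Procesi: $\dim R/I_\lambda=\binom{n}{\eta}$), so $\rank_A(\cinv/\Il{T})\le\binom{n}{\eta}$. Since $\Il{T}\subseteq\Il{o}$ and $\Il{T}\subseteq\Il{cat}$ with all three quotient ranks equal to $\binom{n}{\eta}$, and $\cinv/\Il{cat}$ is free, the surjections $\cinv/\Il{T}\twoheadrightarrow\cinv/\Il{cat}$ and $\cinv/\Il{T}\twoheadrightarrow\cinv/\Il{o}$ are forced to be isomorphisms (a surjection onto a finitely generated free module of the rank of the source is an isomorphism once no torsion intervenes, which here follows from the freeness of $\cinv/\Il{cat}$ together with $\dim_\CC\CC\tsr_A(\cinv/\Il{T})=\binom{n}{\eta}$). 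Therefore $\Il{o}=\Il{T}=\Il{cat}=\Il{cell}=\Il{pair}$. I expect the main obstacle to be the combinatorial input of the third paragraph — the precise description of the $(G_\lambda,1^{\ell(\lambda)})$-row catabolizable tableaux as the up-set of $\klo{\cinv}$ generated by $\Gamma_{G_\lambda}$ (equivalently, monotonicity of $\ctype$ along \emph{all} covering relations of $\klo{\cinv}$ together with cogeneration by $\Gamma_{G_\lambda}$) — everything else being short given positivity, Theorem~\ref{t elemsymkl}, and the $\u=1$ results.
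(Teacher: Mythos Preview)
Your argument has a genuine circularity in the third paragraph. You assert that the $(G_\lambda,1^{\ell(\lambda)})$-row catabolizable tableaux form exactly the up-set of $\klo{\cinv}$ generated by $\Gamma_{G_\lambda}$, splitting this into ``cogeneration'' (catabolizable $\Rightarrow$ above $G_\lambda$) and ``monotonicity of $\ctype$'' (above $G_\lambda$ $\Rightarrow$ catabolizable). Cogeneration is fine and is what the paper uses. But monotonicity along \emph{all} edges of $\klo{\cinv}$ --- in particular along the $\klo{\Res_\H\cinv}$-edges, i.e.\ along the Kazhdan--Lusztig cell order on $\S_n$ within each degree --- is not available as independent combinatorial input: the full KL cell order on $\S_n$ is itself not known (cf.\ \textsection\ref{ss easyedges}), and nothing in \cite{SW} or \cite{LS} supplies monotonicity of $\ctype$ along it. That statement is in fact one of the nontrivial \emph{consequences} of the theorem, not an ingredient. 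Your later steps then lean on it: you invoke ``$\Il{cat}$ being cellular'' to deduce $\Il{T}\subseteq\Il{cat}$, and your rank comparison uses freeness of $\cinv/\Il{cat}$, so the gap propagates to the end.

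The paper sidesteps this by inserting one extra step you do not have: it shows $\Il{o}\subseteq\Il{pair}$ directly, via an explicit family $f_\mu(\y)=\prod_i\prod_j(\y_i-\u^{2a_j})$ in $\ann M_{\eta,\mathbf{a}}$ that forces $(\pH)_{>n(\lambda)}e^+\subseteq\Il{o}$, whence $\Gamma_{G_\lambda}\not\subseteq\Il{o}$ and so $\Il{o}\subseteq\Il{pair}$. With $\Il{T}\subseteq\Il{pair}$ in hand, positivity (Corollary~\ref{c positivity trick 2}) shows the cellular submodule generated by the canonical basis elements $s_{1^d}(\y_1,\dots,\y_k)e^+$ stays inside $\Il{pair}$; hence $\Il{T}$ is cellular and omits $\Gamma_{G_\lambda}$, giving $\Il{T}\subseteq\Il{cell}$. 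Cogeneration \emph{alone} then yields $\Il{cell}\subseteq\Il{cat}$ (no monotonicity needed, and no need to know a priori that $\Il{cat}$ is a submodule). The rank count finishes exactly as you wrote. Your identification $\Il{pair}=\Il{cell}$ via positivity and your final rank bookkeeping are correct; what is missing is the $\Il{o}\subseteq\Il{pair}$ step (or any substitute) that places $\Il{T}$ inside something known \emph{in advance} not to contain $\Gamma_{G_\lambda}$.
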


Note that $\Il{cat}$ is not obviously a submodule but will be shown to be one. The abbreviations o, T, pair, are shorthand for orbit, Tanisaki, and pairing.  Also note that modules $M_{\eta,\textbf{a}}$ satisfying the hypotheses of Propositions \ref{p eigenspaces} and \ref{p irreducibility needed} exist by the general theory. For instance, if $|a_i - a_j| >> 0$ for all $i \neq j$, then these hypotheses are satisfied.

Given the theorem, define $\R_\lambda$ to be  $\pH e^+/\Il{}$ for  $\Il{}$ equal to any (all) of the submodules above.

\begin{corollary}
For Garsia-Procesi atoms, we have the following diagram corresponding to the diagram in (\ref{e diagram atom categories})
\[
{\footnotesize \xymatrix@R=.5cm@C=1.4cm{
\R_\lambda = \csqa{G_{(n)}}{G_\lambda}  \ar@{{|}->}[rr]_{\fxccp} \ar@{{|}->}[d]_{\fmod} \ar@{{|}->}[rrd]_{\fsp} & & \fxccp(\R_\lambda) \ar@{{|}->}[d]\\
R_\lambda \ar@{{|}->}[rd]_{\mathscr{F}} & & \gpa{G_{(n)}}{G_\lambda} \ar@{{|}->}[ld]_{\mathscr{F}}\\
& \tilde{H}_\lambda(t) &
}}
\]
where $\tilde{H}_\lambda(t)$ are the cocharge variant transformed Hall-Littlewood polynomials (see \cite{H3}).

\end{corollary}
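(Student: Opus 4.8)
\emph{Reduction to Theorem~\ref{t main theorem gp}.}
The plan is to obtain the whole diagram from Theorem~\ref{t main theorem gp}, which supplies the equalities $\Il{o}=\Il{T}=\Il{pair}=\Il{cell}=\Il{cat}$ of submodules of $\pH e^+$ and hence the single object $\R_\lambda=\pH e^+/\Il{}$; each arrow is then read off from the appropriate description of $\Il{}$ together with facts already in hand. To identify $\R_\lambda$ with $\csqa{G_{(n)}}{G_\lambda}$ I would use the description $\Il{cell}$: since $\Il{}$ is the maximal cellular submodule of $\pH e^+$ not containing $\Gamma_{G_\lambda}$, the quotient $\R_\lambda=\pH e^+/\Il{}$ is cellular and has $\Gamma_{G_\lambda}$ among its left cells. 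The element $e^+=\C_{w_0}$ inserts to $G_{(n)}$ and generates $\pH e^+$, so $\Gamma_{G_{(n)}}$ is the unique maximal left cell of $\pH e^+$; hence any cellular submodule of $\pH e^+$ containing $\Gamma_{G_{(n)}}$ is all of $\pH e^+$. Combining these, $\R_\lambda$ is the smallest cellular subquotient of $\pH e^+$ (equivalently, by the Remark following Corollary~\ref{c restrict cells2}, of $\eH$) containing both $\Gamma_{G_{(n)}}$ and $\Gamma_{G_\lambda}$, i.e.\ $\csqa{G_{(n)}}{G_\lambda}$. I would also note that, via Theorem~\ref{t elemsymkl} with $k=n$ (where Proposition~\ref{p catab elem sym}(a) reads just $d>0$), the description $\Il{T}$ contains $e_d(\y_1,\dots,\y_n)e^+$ for all $d\geq1$, so $\pH(\pY)^{\S_n}_{\geq1}e^+\subseteq\Il{}$ and $\R_\lambda$ is in fact a cellular quotient of $\cinv$.

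\emph{The functors $\fxccp$ and $\fsp$.}
The functor $\fxccp$ only restricts scalars to $A\pPi$, so one merely checks membership in XCCP, automatic once $\R_\lambda\in\cs$. For $\fsp$ I would use the description $\Il{cat}$: the canonical basis of $\R_\lambda$ is indexed by the $w$ with $P(w)$ a $(G_\lambda,1^{\ell(\lambda)})$-row catabolizable PAT. Because $\R_\lambda$ is a cellular quotient of $\cinv$, Proposition~\ref{p ccp syt equals ccp cinv} identifies these $P(w)$ with the PAT $n\cl{T'}+T'$ for $T'$ a standard tableau satisfying that condition, and by Proposition~\ref{p catabolizable basics}(a)$\Leftrightarrow$(b) this is precisely the underlying set of $\gpa{G_{(n)}}{G_\lambda}$; the cocyclage-edge poset structures agree by Proposition~\ref{p preorder cells vs cocyclage poset}. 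Hence $\fsp(\R_\lambda)=\gpa{G_{(n)}}{G_\lambda}$, and $\fsp$ factors as $\fxccp$ followed by the forgetful functor to CCP.

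\emph{The functor $\fmod$.}
Here I would invoke the description $\Il{T}=\pH\{s_{1^d}(\y_1,\dots,\y_k):(d,k)\text{ as in Proposition~\ref{p catab elem sym}}\}e^+$. Under $\u\mapsto1$ the algebra $\pH$ becomes $R\star\S_n$ and $\pH e^+$ becomes $R$, and by Theorem~\ref{t elemsymkl} the generator $s_{1^d}(\y_1,\dots,\y_k)\C_{w_0}$ specializes, up to the unit $\u^{d(k-n)}$, to $e_d(y_1,\dots,y_k)$. Since $\Il{}$ is a $\pH$-submodule, $\CC\tsr_A\Il{}$ is an $\S_n$-stable ideal of $R$ and therefore contains $e_d(y_S)$ for every $k$-subset $S$ of the variables; by Proposition~\ref{p catab elem sym}(a) the admissible pairs $(d,k)$ are exactly those indexing the Tanisaki generators of $I_\lambda$, so $\CC\tsr_A\Il{}=I_\lambda$ and $\fmod(\R_\lambda)=R/I_\lambda=R_\lambda$ (cf.~\cite{GP,H3}).

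\emph{The bottom triangles.}
That $\mathscr{F}(R_\lambda)=\tilde H_\lambda(t)$ is the classical computation of Hotta--Springer and Garsia--Procesi, i.e.\ equation~(\ref{e frobenius}). On the combinatorial side, $\mathscr{F}(\gpa{G_{(n)}}{G_\lambda})=\sum_T t^{\deg(T)}s_{\sh(T)}$ over the set identified above; since $\deg(n\cl{T'}+T')=\ccharge(T')$ and $\sh(n\cl{T'}+T')=\sh(T')$, and since (by the discussion following Proposition~\ref{p catabolizable basics}) this set is exactly the standard tableaux $T'$ with $\ctype(T')\gd\lambda$, the sum coincides with the right-hand side of~(\ref{e frobenius}), hence with $\tilde H_\lambda(t)$; both triangles commute. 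The corollary is thus essentially a repackaging of Theorem~\ref{t main theorem gp}, and I do not expect a genuine obstacle; the closest thing to one is the bookkeeping of the $\u=1$ specialization in the $\fmod$ step — matching the inequality in Proposition~\ref{p catab elem sym}(a) with the standard indexing of the Tanisaki generators of $I_\lambda$ — which is a finite combinatorial verification.
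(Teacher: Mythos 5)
The paper states this corollary without a separate proof, treating it as an immediate repackaging of Theorem~\ref{t main theorem gp}, and your unpacking follows exactly the route that theorem invites: $\Il{cell}$ for the identification $\R_\lambda = \csqa{G_{(n)}}{G_\lambda}$, $\Il{cat}$ for $\fsp$, $\Il{T}$ together with the Garsia--Procesi/Tanisaki description for $\fmod$, and equation~(\ref{e frobenius}) for the bottom triangle. Your argument is essentially correct and matches the paper's intent; the only cosmetic point is that the $\u=1$ specialization of $s_{1^d}(\y_1,\dots,\y_k)e^+$ to $e_d(y_1,\dots,y_k)$ is immediate from the definition of the Bernstein generators and does not actually require Theorem~\ref{t elemsymkl} (which is used in the proof of Theorem~\ref{t main theorem gp} to show $\Il{T}$ is cellular, not to compute the specialization).
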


Write $\nnA$ for the semiring $\ZZ_{\geq 0}[\u,\ui] \subseteq A$ and $\posA$ for the subset $\ZZ_{>0}[\u,\ui] \subseteq \nnA$. Through the work of Kazhdan-Lusztig and Beilinson-Bernstein-Deligne-Gabber we have (see, for instance, \cite{L2})
\begin{theorem}
\label{t positive coefficients}
If $(W,S)$ is crystallographic, then the structure coefficients $\beta_{x,y,z} = [\C_z]\C_x\C_y$ belong to $\nnA$.
\end{theorem}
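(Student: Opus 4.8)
The plan is the classical geometric argument, due in essence to Kazhdan--Lusztig and made unconditional in affine (indeed Kac--Moody) type by the Beilinson--Bernstein--Deligne--Gabber decomposition theorem (see \cite{KL, L2}): realize the canonical basis by intersection cohomology complexes on a flag variety and read off positivity of the $\beta_{x,y,z}$ from the semisimplicity of a convolution. First, attach to the crystallographic $(W,S)$ a flag variety. If $W$ is finite, let $G$ be the associated reductive group over an algebraically closed field, $B$ a Borel, and $\mathscr{B}=G/B$; if $W$ is infinite (affine, or more generally Kac--Moody), take for $\mathscr{B}$ the corresponding Kac--Moody flag ind-scheme and for $B$ the Iwahori subgroup. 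In either case $\mathscr{B}$ is covered by Bruhat cells $\mathscr{B}_w\cong\AA^{\ell(w)}$ indexed by $w\in W$, with projective Schubert closures $X_w=\overline{\mathscr{B}_w}$; let $IC_w$ be the ($\ell$-adic, or mixed Hodge) intersection cohomology complex of $X_w$, normalized to be self-dual. The Kazhdan--Lusztig theorem computes the stalk cohomology of $IC_w$ in terms of the coefficients of $P_{x,w}$, and hence identifies the class of $IC_w$ (shifted by $\ell(w)$) with $\C_w$ in the appropriate Grothendieck group, the cohomological shift corresponding to the variable $\u$ (recall $q^{1/2}=\u$).

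Next I would use the categorification of $\H(W)$ by the monoidal category of $B$- (equivalently Iwahori-) biequivariant complexes on $G$ (resp.\ on the relevant ind-group) with product the convolution $*$: the class map sends $[IC_x]\cdot[IC_y]\mapsto[IC_x*IC_y]$, and $\{[IC_w]\}_{w\in W}$ maps to the canonical basis. Now $IC_x*IC_y$ is, by construction, the proper pushforward of a pure complex, so the decomposition theorem applies: it is again pure and semisimple, hence
\[ IC_x*IC_y\;\cong\;\bigoplus_{z\in W,\ k\in\ZZ} IC_z[k]^{\oplus m_{x,y,z,k}},\qquad m_{x,y,z,k}\in\ZZ_{\geq 0}, \]
with only finitely many $m_{x,y,z,k}$ nonzero. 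Passing to Grothendieck classes and translating the shift $[k]$ into a power of $\u$ according to the fixed normalization gives $\beta_{x,y,z}=\sum_k m_{x,y,z,k}\,\u^{c_k}$ for integers $c_k$, which manifestly lies in $\nnA=\ZZ_{\geq 0}[\u,\ui]$.

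The main obstacle is not the formal part but the geometric input in the infinite-type case: one needs the Kac--Moody Schubert variety formalism --- $X_w$ projective, the decomposition theorem valid for maps between such ind-schemes, and the Kazhdan--Lusztig stalk computation --- together with careful bookkeeping of shifts, Tate twists, and self-duality so that ``multiplicities are nonnegative integers'' becomes precisely ``coefficients in $\ZZ_{\geq 0}[\u,\ui]$'' rather than some twisted version. Once that dictionary is pinned down the rest is routine. Finally, to apply the result in the extended setting of this paper one combines it with the remark that, writing $\eW=\Pi\ltimes\aW$ with $\Pi$ the length-zero subgroup, $\C_\pi=T_\pi$ for $\pi\in\Pi$ and both left multiplication ($\C_{\pi w}=T_\pi\C_w$) and conjugation by $\pi$ permute the canonical basis; hence every structure coefficient for $\eW$ coincides with one for $\aW$. (Alternatively, for an arbitrary Coxeter group the decomposition theorem can be replaced by the theory of Soergel bimodules together with Elias--Williamson's proof of Soergel's conjecture, which yields the same positivity; but for crystallographic $(W,S)$, which is all that is needed here, the geometric argument above is the classical one.)
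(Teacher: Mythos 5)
The paper does not actually prove this theorem; it simply cites it as a known consequence of the work of Kazhdan--Lusztig and Beilinson--Bernstein--Deligne--Gabber, pointing to \cite{L2}. Your sketch is precisely the standard geometric argument underlying that citation (IC sheaves on the Kac--Moody flag variety, convolution, the decomposition theorem), correctly stated, and your closing remark about passing from $\aW$ to $\eW$ via $\Pi$ and $\C_{\pi w}=T_\pi\C_w$ fills in a small reduction that the paper uses implicitly but never spells out.
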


The next two corollaries could be phrased as general facts about any algebra with basis in which the structure coefficients are positive, however, we state them for the special cases that we need. Recall the notation of \textsection\ref{ss cells} and the definition (\ref{e preorder}) of $\delta \klocov{\Gamma} \gamma$. Note that
\be \label{e klocov fact}
\delta \klocov{\Gamma} \gamma \quad \Leftrightarrow \quad \beta_{x, \gamma, \delta} \neq 0 \text{ for some $x \in W$} \ee
as $[\delta] h\gamma \neq 0$ for some $h = \sum_{x \in W} a_x \C_x \in \H$, $a_x \in A$ implies $[\delta] \C_x\gamma = \beta_{x, \gamma, \delta} \neq 0$ for some $x$.

\begin{corollary}
\label{c positivity trick 1}
For any $\pW$-graph $\Gamma \subseteq \Gamma_{\pW}$ (i.e., the $\pW$-graph of some cellular subquotient of $\pH$), $\delta \klo{\Gamma} \gamma$ if and only if $\delta  \klocov{\Gamma} \gamma$.
\end{corollary}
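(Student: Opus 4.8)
The plan is to show that the generating relation $\klocov{\Gamma}$ is itself already a preorder, so that passing to its reflexive--transitive closure changes nothing. Reflexivity is free, since $\C_{\idelm}\gamma=\gamma$ for every $\gamma\in\Gamma$, and the inclusion $\klocov{\Gamma}\,\subseteq\,\klo{\Gamma}$ holds by definition; so everything comes down to transitivity of $\klocov{\Gamma}$.

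First I would fix the bookkeeping. For a cellular subquotient $\Gamma$ of $\Gamma_{\pW}$, the $\pH$-action on $A\Gamma$ in the canonical basis is $\C_x\gamma=\sum_{\delta\in\Gamma}\beta_{x,\gamma,\delta}\,\delta$, where $\beta_{x,\gamma,\delta}=[\C_\delta]\C_x\C_\gamma$ are structure coefficients of $\pH$. Since $\pW$ is a monoid and $v\leq w\in\pW$ forces $v\in\pW$ (Proposition-Definition \ref{p pH exists}), these agree with the corresponding structure coefficients of $\eH$, hence lie in $\nnA$ by Theorem \ref{t positive coefficients}. I would also invoke the reduction $(\ref{e klocov fact})$, which holds verbatim for the $\pW$-graph $\Gamma$: one has $\delta\klocov{\Gamma}\gamma$ if and only if $\beta_{x,\gamma,\delta}\neq 0$ for some $x\in\pW$.

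Then I would run the positivity argument. Given $\delta\klocov{\Gamma}\gamma$ and $\gamma\klocov{\Gamma}\epsilon$, choose $x,y\in\pW$ with $\beta_{x,\gamma,\delta}\neq 0$ and $\beta_{y,\epsilon,\gamma}\neq 0$, and expand
\[ (\C_x\C_y)\,\epsilon=\C_x\!\Big(\sum_{\gamma'\in\Gamma}\beta_{y,\epsilon,\gamma'}\,\gamma'\Big)=\sum_{\delta'\in\Gamma}\Big(\sum_{\gamma'\in\Gamma}\beta_{y,\epsilon,\gamma'}\,\beta_{x,\gamma',\delta'}\Big)\delta'. \]
The coefficient of $\delta$ is $\sum_{\gamma'\in\Gamma}\beta_{y,\epsilon,\gamma'}\,\beta_{x,\gamma',\delta}$, a sum of elements of $\nnA$ in which the single term $\gamma'=\gamma$ equals $\beta_{y,\epsilon,\gamma}\,\beta_{x,\gamma,\delta}$. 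Since $\nnA$ is a subsemiring of the integral domain $A$, this product of nonzero elements is nonzero, and a sum of nonnegative terms with a nonzero term cannot vanish; hence $[\delta]\big((\C_x\C_y)\epsilon\big)\neq 0$, so $\delta\klocov{\Gamma}\epsilon$. This establishes transitivity, and therefore $\klo{\Gamma}=\klocov{\Gamma}$, as claimed.

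The one point I would be careful about --- and essentially the only obstacle --- is the positivity input: Theorem \ref{t positive coefficients} is phrased for crystallographic Coxeter systems, so one must first observe that it carries over to the extended affine Hecke algebra $\eH$ (the $\Pi$-twist is harmless: $\C_{\pi^k}\C_w=\C_{\pi^k w}$), then restrict to $\pH$ and pass to the cellular subquotient $\Gamma$. Once nonnegativity of the $\beta$'s is secured, the fact that $\nnA$ has no zero divisors is exactly what rules out cancellation, and the remainder of the argument is routine.
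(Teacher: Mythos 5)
Your proof is correct and takes essentially the same approach as the paper's: both establish that the relation $\klocov{\Gamma}$ is already transitive, using positivity of structure coefficients (Theorem \ref{t positive coefficients} together with (\ref{e klocov fact})) to rule out cancellation when composing $\C_x$ and $\C_y$. Your phrasing in terms of the coefficient $\sum_{\gamma'}\beta_{y,\epsilon,\gamma'}\beta_{x,\gamma',\delta}$ is a minor repackaging of the paper's $x_2 x_1 \gamma_1 \in \posA\gamma_3 + \nnA\Gamma$, and your closing observation that positivity must first be seen to carry over from the Coxeter group $(W_a,K)$ to the extended $\eH$ via the harmless $\Pi$-twist is a point the paper leaves implicit.
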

\begin{proof}
The ``if'' direction is part of the definition of $\klo{\Gamma}$. For the ``only if'' direction, suppose $\gamma_3 \klocov{\Gamma} \gamma_2 \klocov{\Gamma} \gamma_1$. Then by (\ref{e klocov fact}) there exist $x_1, x_2 \in \pW$ such that $\beta_{x_1, \gamma_1, \gamma_2} \neq 0$ and $\beta_{x_2, \gamma_2, \gamma_3} \neq 0$. Applying Theorem \ref{t positive coefficients} yields
\begin{align}
x_1 \gamma_1 \in \posA \gamma_2 + \nnA \Gamma \text{ and}\\
x_2 \gamma_2 \in \posA \gamma_3 + \nnA \Gamma,
\end{align}
which imply
\be
x_2 x_1 \gamma_1 \in \posA \gamma_3 + \nnA \Gamma.
\ee
Thus $\gamma_3 \klocov{\Gamma} \gamma_1$.

The general case then follows by induction as $\delta \klo{\Gamma} \gamma$ means there exists $\delta = \gamma_n, \gamma_{n-1}, \dots, \gamma_1 = \gamma$ such that $\gamma_{i + 1} \klocov{\Gamma} \gamma_i$.
\end{proof}

\begin{corollary}
\label{c positivity trick 2}
If $\gamma \in \Il{pair}$, $\gamma \in \Gamma_{\pW}$, then $\delta \klo{\pH} \gamma$ $(\delta \in \Gamma_{\pW})$ implies $\delta \in \Il{pair}$, i.e., the cellular submodule generated by $\gamma$ is contained in $\Il{pair}$.
\end{corollary}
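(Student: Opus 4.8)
The plan is to argue by contradiction, leaning on the positivity of the canonical-basis structure constants (Theorem \ref{t positive coefficients}), which gets invoked twice.

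First I would dispose of the reductions. Since $\Il{pair}\subseteq\pH e^+$ and $\pH e^+$ is a cellular submodule of $\pH$, hence $\klo{\pH}$-downward closed, both $\gamma$ and $\delta$ lie in $\pH e^+$, so the pairings $\langle\,\cdot\,,\gamma\rangle_\lambda$ and $\langle\,\cdot\,,\delta\rangle_\lambda$ are defined. By Corollary \ref{c positivity trick 1} together with (\ref{e klocov fact}), the hypothesis $\delta\klo{\pH}\gamma$ yields $\delta\klocov{\pH}\gamma$, hence an $x\in\pW$ with $\beta_{x,\gamma,\delta}\neq 0$; by Theorem \ref{t positive coefficients} this forces $\beta_{x,\gamma,\delta}\in\posA$.

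Next, assuming for contradiction that $\delta\notin\Il{pair}$, I would choose $h\in\pH$ with $\langle h,\delta\rangle_\lambda\neq 0$ and expand $h$ in the canonical basis to extract a $y\in\pW$ with $\langle\C_y,\delta\rangle_\lambda=[\C_{g_\lambda}]\C_y\delta=\beta_{y,\delta,g_\lambda}\neq 0$, hence again in $\posA$. Then, expanding $\C_x\gamma=\sum_{\delta'\in\Gamma_{\pW}}\beta_{x,\gamma,\delta'}\,\delta'$, I would compute
\[ \langle\C_y\C_x,\gamma\rangle_\lambda=[\C_{g_\lambda}]\C_y\C_x\gamma=\sum_{\delta'\in\Gamma_{\pW}}\beta_{x,\gamma,\delta'}\,\beta_{y,\delta',g_\lambda}. \]
By Theorem \ref{t positive coefficients} every summand lies in $\nnA$, and the $\delta'=\delta$ summand equals $\beta_{x,\gamma,\delta}\,\beta_{y,\delta,g_\lambda}\in\posA$, so the whole sum lies in $\posA$ and is in particular nonzero. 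Since $\C_y\C_x\in\pH$, this contradicts $\gamma\in\Il{pair}$; hence $\delta\in\Il{pair}$. The last assertion then follows: the cellular submodule generated by $\gamma$ is the order ideal $A\{\delta\in\Gamma_{\pW}:\delta\klo{\pH}\gamma\}$, and each such $\delta$ has just been shown to lie in $\Il{pair}$.

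The substance of the argument — and essentially the only place any ingredient is genuinely used — is the avoidance of cancellation, which is why positivity is invoked twice. Passing $h$ into the canonical basis is what converts the a priori signed quantity $\langle h,\delta\rangle_\lambda$ into an honest nonnegative integer $\beta_{y,\delta,g_\lambda}$ for at least one $y$, while Theorem \ref{t positive coefficients} is what guarantees the coefficients $\beta_{x,\gamma,\delta'}$ appearing in $\C_x\gamma$ are nonnegative, so that the distinguished term $\delta'=\delta$ in the final sum cannot be annihilated by the rest. Beyond correctly bookkeeping these two applications of positivity there is no serious obstacle.
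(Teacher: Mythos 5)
Your argument is correct and is substantively the same as the paper's: both proofs argue by contradiction, both use Corollary \ref{c positivity trick 1} to replace $\klo{\pH}$ by $\klocov{\pH}$, and both rely on the positivity of the structure constants (Theorem \ref{t positive coefficients}) to rule out cancellation when composing the two nonzero pairings. The only difference is presentational — the paper cites Corollary \ref{c positivity trick 1} a second time as a black box to conclude $g_\lambda \klocov{\pH} \gamma$ (the paper's statement "$g_\lambda \klocov{\pH} \delta$" appears to be a typo for $\gamma$), whereas you unpack that step by explicitly expanding $\C_y\C_x\gamma$ and pointing to the uncancellable $\delta'=\delta$ summand.
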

\begin{proof}
Suppose for a contradiction that $\delta \notin \Il{pair}$. Then by definition of $\Il{pair}$, $g_\lambda \klocov{\Gamma} \delta$. Applying Corollary \ref{c positivity trick 1} to this and the assumption $\delta \klo{\pH} \gamma$ implies $g_\lambda \klocov{\pH} \delta$, contradicting $\gamma \in \Il{pair}$.
\end{proof}

\begin{proof}[Proof of Theorem \ref{t main theorem gp}]
First we have $\Il{T} \subseteq \Il{o}$ by Proposition \ref{p garcia-procesi prop 3.1} and the inclusion $\ann M_{\eta,\textbf{a}} \subseteq \ann v^+$. We know by Proposition \ref{p irreducibility needed} that $\Res_\H \pH e^+ / \Il{o}$ affords the representation $\H_n \tsr_{\H_{J_\lambda}} \lj{e^+}{J_\lambda}$. Next, an argument of the same flavor as Proposition \ref{p garcia-procesi prop 3.1} yields $\Il{o} \subseteq \Il{pair}$: for $\mu \in \pY_+$, define
\[ f_\mu (\y_1, \dots, \y_n) = \prod_{i=1}^n \prod_{j = 1}^{\mu_i} (\y_i - \u^{2a_j}). \]
Assume that $\eta = \eta_+$; if not, the following argument works with the indices of the $a_j$ in the above expression permuted.  If $f_\mu \notin \ann M_{\eta,\textbf{a}}$, then $w^1 \subseteq [\mu'_1 + 1, n]$, $w^2 \subseteq [\mu'_2 + 1, n], \dots, w^r \subseteq [\mu'_r +1,n]$, where $w^1 \sqcup ,\ldots,\sqcup w^r = [n]$, $|w^i| = \eta_i$ determine the coordinates of a point in
$M_{\eta,\textbf{a}}$ by specifying the positions of the shuffled words of Proposition \ref{p eigenspaces}. Thus for $l \in [r]$, $w^1 \cup w^2 \cup \dots \cup w^l \subseteq [\mu'_l + 1, n]$ implying $\lambda_1 + \cdots + \lambda_l \leq n-\mu'_l$, or equivalently, $\mu'_l \leq \lambda_{l+1} + \dots + \lambda_r$. In particular, adding up these inequalities yields $|\mu| \leq n(\lambda)$. Thus $\{\y^\mu : \mu \in \pY_+, |\mu| > n(\lambda) \} \subseteq \gr(\ann M_{\eta,\textbf{a}}) \subseteq \gr(\ann v^+)$.

By specializing to $\u = 1$, it is easy to see that $\{ \H_n \y^\mu \H_n : \mu \in \pY_+, |\mu| = d\} = (\pH)_d$. Therefore, $(\pH)_{>n(\lambda)} \subseteq \Il{o}$. Since $\pH e^+ / \Il{o}$ contains a single copy of the representation of shape $\lambda$, we must have $\Gamma_{G_\lambda} \not\subseteq \Il{o}$. Note that $\Il{pair}$ is also the maximal submodule of $\pH e^+$ not containing $\Gamma_{G_\lambda}$.  Hence we have $\Il{o} \subseteq \Il{pair}$.

Now that the inclusion $\Il{T} \subseteq \Il{pair}$ is established, Theorem \ref{t elemsymkl} and Corollary \ref{c positivity trick 2} show that $\Il{T}$ is cellular. So $\Il{T}$ is a cellular submodule not containing $\Gamma_{G_\lambda}$, and hence $\Il{T} \subseteq \Il{cell}$.

Next, it follows from the algorithm for catabolizability in \cite{B1}, or alternatively, as a special case of Proposition \ref{p cat equivalences} (or rather, its dual version, which is just as good by Proposition \ref{p catabolizable basics}), that there is a sequence of ascent-edges and corotation-edges from $w$ to  $g_\lambda$ for any  $w$ with $P(w)$  $(G_\lambda, 1^{\ell(\lambda)})$-row catabolizable. Thus a cellular submodule of $\pH e^+$ containing $w$ contains $\Gamma_{G_\lambda}$, implying $\Il{cell} \subseteq \Il{cat}$.

We have shown that $\Il{T} \subseteq \Il{cell} \subseteq \Il{cat}$ and $\Il{T} \subseteq \Il{o} \subseteq \Il{pair}$.  The  $\u =1$ results of Garsia-Procesi and Bergeron-Garsia (see \cite{H3}) establish that $\rank_A (\pH e^+ / \Il{T}) = \rank_A (\pH e^+/\Il{pair}) = \binom{n}{\lambda_1, \dots, \lambda_r}$. The standardization map of Lascoux (see \cite{SW} and \textsection\ref{ss atoms 1}) shows that $\rank_A (\pH e^+ / \Il{cat}) = \binom{n}{\lambda_1, \dots, \lambda_r}$. Thus we have equalities \[\Il{T} = \Il{cell} = \Il{cat} = \Il{o} = \Il{pair}.\]
\end{proof}

\section{A duality in $\cinv$}
\label{s flip}
It is well-known that there is a perfect pairing $\langle , \rangle: R_{1^n} \times R_{1^n} \to \CC$ given by $\langle f_1, f_2 \rangle$ equal to the projection of $f_1 f_2$ onto the sign representation of $R_{1^n}$. With this, it is easy to show that an irreducible $V_\lambda \subseteq R_{1^n}$ in degree $d$ is dual to an irreducible $V_{\lambda'}\subseteq R_{1^n}$ in degree $\binom{n}{2} - d$. This duality on the character of $R_{1^n}$ is also easy to see from the cellular picture, as we will now show. However, there appears to be a stronger duality in the $\pW$-graph $\cinv$ which is surprisingly subtle.

\subsection{}
For a standard word $x = x_1\cdots x_n$, let $x^\dagger$ denote the word $x_n x_{n-1} \dots x_1$. For any $w \in \dsw$, let $x$ be the corresponding element of $W_f$ under the bijection $\dsw \cong W_f$ of Proposition \ref{p bijectionwfdsw}, i.e., $w_i = n \cl{x}_i + x_i$ for all  $i \in [n]$. Define the \emph{dual element} $\dual{w}$ to be the element of $\dsw$ that corresponds to $x^\dagger$ under the bijection $\dsw \cong W_f$, i.e., $\dual{w}_i = n \cl{(x^\dagger)}_i + x^\dagger_i$,  $i\in [n]$. Extend this notation to tableaux by defining $\dual{T}$ to be $P(\dual{w})$ for any (every) $w$ inserting to $T$.

From well-known properties of the insertion algorithm, the tableaux  $P(x)$ and $P(x^\dagger)$ are transposes of each other for any standard word $x$. Therefore, if $T$ is a PAT labeling a cell of $\cinv$, then  $T$ and $\dual{T}$ have shapes that are transposes of each other.  Let $\rho \in \pY_+$ be half the sum of the positive roots, i.e., $\rho = (n-1,n-2,\dots,0)$; note that $P(y^\rho) = G_{1^n}$. Then $\invlr(\dual{w}) w = w_0 y^\rho$ for any $w \in \dsw$. In particular, the sum of the degrees of $T$ and  $\dual{T}$ is  $\binom{n}{2}$.  Thus we have shown that  $\dual{}$ corresponds to a duality on the character of $\cinv$.

We have the following conjectural duality for the $\pW$-graph $\cinv$. The first part of this conjecture is proved below.
\begin{conjecture}
\label{cj flip}
For any $x,w \in \dsw$,
\begin{list}{\emph{(\alph{ctr})}} {\usecounter{ctr} \setlength{\itemsep}{1pt} \setlength{\topsep}{2pt}}
\item if $x = \pi w$, then $\dual{x} = \pi^{-1}\dual{w}$.
\item $\mu(x,w) = \mu(\dual{w},\dual{x})$ whenever $L(x)\cap S \not\subseteq L(w) \cap S$.
\end{list}
\end{conjecture}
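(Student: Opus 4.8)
\medskip
\noindent\textbf{Proof strategy for Conjecture~\ref{cj flip}(a).} The plan is to deduce $\dual{x} = \pi^{-1}\dual{w}$ formally from the relation $\invlr(\dual{w})\,w = w_0 y^\rho$, valid for every $w\in\dsw$ and recorded just above the conjecture, together with two elementary observations about $\invlr$.

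First I would record that $\invlr$ is an \emph{involutive} anti-automorphism of $\eW$: it is an anti-automorphism by definition, and $\invlr(\invlr(w)) = \Delta\bigl((\Delta(w^{-1}))^{-1}\bigr) = \Delta(\Delta(w)) = w$ because $\Delta$ is an involutive automorphism. I would also note that $\invlr(\pi) = \Delta(\pi^{-1}) = \Delta(\pi)^{-1} = \pi$. Next, since both $x$ and $w$ lie in $\dsw$, applying the displayed relation to each gives
\[ \invlr(\dual{x})\,x \;=\; w_0 y^\rho \;=\; \invlr(\dual{w})\,w. \]
Substituting $x=\pi w$ and cancelling $w$ on the right (legitimate, as we work in the group $\eW$) yields $\invlr(\dual{x})\,\pi = \invlr(\dual{w})$. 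Applying the anti-automorphism $\invlr$ to both sides and using that it is involutive with $\invlr(\pi)=\pi$, the left side becomes $\pi\,\dual{x}$ and the right side becomes $\dual{w}$; hence $\pi\,\dual{x} = \dual{w}$, which is exactly part~(a).

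For readers preferring an argument internal to the combinatorics of $\dsw$ rather than one invoking $\invlr(\dual{w})w = w_0 y^\rho$, there is an equally short alternative. Writing $w\leftrightarrow v\in W_f$ under the bijection $\dsw\cong W_f$ of Proposition~\ref{p bijectionwfdsw}, the word formula (\ref{e wordmult7}) together with the residue description (\ref{e inversersd}) shows that $x=\pi w\in\dsw$ corresponds to the corotation $\br{v} := v_n\,v_1\cdots v_{n-1}$ of $v$, and (as in the proof that corotation of standard words matches corotation of affine words) this forces $v_n\neq 1$. Since the duality is defined to correspond to $v\mapsto v^\dagger$ on $W_f$, we get $\dual{w}\leftrightarrow v^\dagger = v_n v_{n-1}\cdots v_1$ and $\dual{x}\leftrightarrow(\br{v})^\dagger = v_{n-1}\cdots v_1 v_n$. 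The last letter of $(\br{v})^\dagger$ is $v_n\neq 1$, so $\pi\,\dual{x}\in\dsw$ and, by the same corotation correspondence, it corresponds to the corotation of $(\br{v})^\dagger$, namely $v_n v_{n-1}\cdots v_1 = v^\dagger$. As $\dual{w}$ corresponds to the same word, $\pi\,\dual{x}=\dual{w}$.

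The content here is entirely bookkeeping, so I anticipate no genuine obstacle; the only point needing a little care --- and my reason for leading with the $\invlr$-based argument --- is keeping track of which elements remain inside $\dsw$ (equivalently, which standard words avoid a terminal~$1$) in the combinatorial route, a nuisance the group-theoretic argument sidesteps by computing in the ambient $\eW$. Part~(b) of the conjecture, concerning the edge weights $\mu$, is a separate and evidently much harder matter, left open here.
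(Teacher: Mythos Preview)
Your proof of part~(a) is correct. Your \emph{second} argument --- tracing the bijection $\dsw\cong W_f$, observing that corotation followed by $\dagger$ equals $\dagger$ followed by rotation on standard words, and checking the $v_n\neq 1$ condition --- is precisely the paper's own proof, stated there in one sentence inside the proof of Proposition~\ref{p pairing}.

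Your \emph{first} argument, by contrast, is genuinely different and rather elegant: you exploit the identity $\invlr(\dual{w})\,w = w_0 y^\rho$ (recorded just before the conjecture) together with $\invlr(\pi)=\pi$ and the involutivity of $\invlr$ to derive $\pi\,\dual{x}=\dual{w}$ by pure group manipulation in $\eW$. This route avoids the combinatorial bookkeeping entirely --- no need to verify that intermediate elements stay in $\dsw$ --- since the computation lives in the ambient group and the hypothesis $x,w\in\dsw$ is only used to invoke the identity for both. The paper's combinatorial argument, on the other hand, makes the statement ``corotation commutes with $\dagger$ up to direction'' explicit, which is perhaps more illuminating for the reader tracking the cocyclage picture. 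Either way, part~(a) is routine; as you note, part~(b) is the substantive open claim, and the paper does not prove it either (it is checked by computer for $n\le 6$ and would follow from the separate Conjecture~\ref{cj pairing}).
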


Recall that $L(x)\cap S \not\subseteq L(w) \cap S$ if and only if the edge weight  $\mu(x,w)$ matters for the structure of $\cinv$ as an $\pH$-module, and therefore the main case we are interested in. This conjecture has been checked up to $n = 6$.

\begin{corollary} [of Conjecture \ref{cj flip}]
The csq $\csqa{\dual{G}_\lambda}{\dual{G}_{1^n}}$ is equal to
\begin{itemize}
\setlength{\itemsep}{1pt}\setlength{\topsep}{2pt}
\item $\{\Gamma_{\dual{T}} : \Gamma_T \in \R_\lambda \}$,
\item the minimal submodule of $\cinv$ containing $\Gamma_{\dual{G}_\lambda}$,
\item $(\fsp)^{-1}(\gpda{\dual{G}_\lambda}{\dual{G}_{1^n}})$, where $\gpda{\dual{G}_\lambda}{\dual{G}_{1^n}}$ is defined in terms of catabolizability in \textsection\ref{ss atom categories}.
\end{itemize}
\end{corollary}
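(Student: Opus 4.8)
The plan is to deduce from Conjecture \ref{cj flip} that the duality $w\mapsto\dual{w}$ induces an order\nobreakdash-reversing automorphism of the $\pW$-graph $\cinv$, permuting left cells by $\Gamma_T\mapsto\Gamma_{\dual{T}}$, and then to read off all three descriptions of $\csqa{\dual{G}_\lambda}{\dual{G}_{1^n}}$ by applying this automorphism to the interval $\R_{\lambda'}=\csqa{G_{(n)}}{G_{\lambda'}}$ and using Proposition \ref{p catabolizable basics} and Theorem \ref{t main theorem gp} to recognise the image.

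The first task is to show that, granting Conjecture \ref{cj flip}, the $A$-linear bijection $\C_w\mapsto\C_{\dual{w}}$ of $\cinv$ reverses the preorder $\klo{\cinv}$. By Corollary \ref{c positivity trick 1}, together with the fact that $\pH$ is generated by $\H$ and $\pi$, the preorder $\klo{\cinv}$ on the canonical basis is generated by the edges $\delta\klo{\cinv}\gamma$ with $\delta$ occurring in $\C_{s_i}\gamma$ for some $s_i\in S$ (an \emph{ascent} edge, for which necessarily $L(\delta)\cap S\not\subseteq L(\gamma)\cap S$) and by those with $\delta$ occurring in $\pi\gamma$ (a \emph{corotation} edge, where $\pi\C_w=\C_{\pi w}$). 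One then checks that the duality carries each generating edge to a reversed one of the same kind: using the elementary fact $L(\dual{w})\cap S=S\setminus(L(w)\cap S)$ (which holds because $P(\dual{w})=P(w)^{t}$), Conjecture \ref{cj flip}(b) sends an ascent edge $\delta\klo{\cinv}\gamma$ to an ascent edge $\dual{\gamma}\klo{\cinv}\dual{\delta}$, while Conjecture \ref{cj flip}(a) sends a corotation edge $\C_{\pi w}\klo{\cinv}\C_w$ to the corotation edge $\C_{\pi\dual{w}}\klo{\cinv}\C_{\dual{w}}$. Passing to transitive closures gives $\delta\klo{\cinv}\gamma\iff\dual{\gamma}\klo{\cinv}\dual{\delta}$, so the duality is an order-reversing automorphism of $\cinv$ permuting cells by $\Gamma_T\mapsto\Gamma_{\dual{T}}$; formally it interchanges cellular submodules with cellular quotients and sends order-convex sets of cells to order-convex sets, hence carries the csq $\csqa{Q}{P}$, computed inside $\cinv$, to $\csqa{\dual{Q}}{\dual{P}}$.

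Next I would identify $\R_{\lambda'}$ as an interval and push it through. Since $\cinv$ is itself a cellular subquotient of $\eH$ containing $\Gamma_{G_{(n)}}$ and $\Gamma_{G_{\lambda'}}$, the minimal such subquotient $\R_{\lambda'}$ lies inside $\cinv$, where it is the order-convex hull of $\{\Gamma_{G_{(n)}},\Gamma_{G_{\lambda'}}\}$; because $G_{(n)}$ (the single row, in degree $0$) is the unique maximum and $G_{1^n}=\dual{G}_{1^n}$ (the single column) the unique minimum of $\klo{\cinv}$, this hull is the up-set generated by $\Gamma_{G_{\lambda'}}$, which by Theorem \ref{t main theorem gp} consists of the cells labelled by $(G_{\lambda'},1^{\ell(\lambda')})$-row catabolizable tableaux. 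Applying the anti-automorphism, and using that $\dual{G_{(n)}}=\dual{G}_{1^n}$ and that $\dual{G_{\lambda'}}$ is the dual Garnir tableau $\dual{G}_\lambda$ (it has transpose shape $\lambda$ and is the top-degree occurrence of that shape, by the $Z^*$ computation of \textsection\ref{ss catabolizability 2}), one gets that $\{\Gamma_{\dual{T}}:\Gamma_T\in\R_{\lambda'}\}$ is the order-convex hull of $\{\Gamma_{\dual{G}_{1^n}},\Gamma_{\dual{G}_\lambda}\}$; as $\dual{G}_{1^n}$ is the minimum, this is the down-set generated by $\Gamma_{\dual{G}_\lambda}$, i.e.\ the minimal submodule of $\cinv$ containing that cell, and it equals $\csqa{\dual{G}_\lambda}{\dual{G}_{1^n}}$. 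For the remaining description, apply $\fsp$: Theorem \ref{t main theorem gp} (with $\fsp(\R_{\lambda'})=\gpa{G_{(n)}}{G_{\lambda'}}$) identifies the cells of $\R_{\lambda'}$ with the $(G_{\lambda'},1^{\ell(\lambda')})$-row catabolizable tableaux, and applying the duality together with Proposition \ref{p catabolizable basics}((b)$\Leftrightarrow$(c), with $\lambda$ replaced by $\lambda'$) turns this into the tableaux $T$ that are $(\dual{G}_{1^n},\lambda')$-row catabolizable, which is exactly the defining set of $\gpda{\dual{G}_\lambda}{\dual{G}_{1^n}}$ by the parenthetical in \textsection\ref{ss atom categories}. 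Since $\fsp$ intertwines the duality on $\cs$ with the word-reversal duality on $\ccp(PAT)$, this yields $\fsp\bigl(\csqa{\dual{G}_\lambda}{\dual{G}_{1^n}}\bigr)=\gpda{\dual{G}_\lambda}{\dual{G}_{1^n}}$; in particular $\gpda{\dual{G}_\lambda}{\dual{G}_{1^n}}$ lies in the image of $\fsp$, and by the uniqueness of $\fsp$-preimages recorded in \textsection\ref{ss atom categories} one concludes $\csqa{\dual{G}_\lambda}{\dual{G}_{1^n}}=(\fsp)^{-1}\bigl(\gpda{\dual{G}_\lambda}{\dual{G}_{1^n}}\bigr)$.

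The main obstacle is the first task. Conjecture \ref{cj flip} only asserts something about individual $\mu$-coefficients, and only on the range $L(x)\cap S\not\subseteq L(w)\cap S$ that matters for the $\pH$-module structure, together with a single corotation step; the real work is to show this forces the entire preorder $\klo{\cinv}$ to be reversed and the cell partition respected — in particular, that the descent-set behaviour of the duality is precisely the transpose/sign-twist one, and that it meshes with the grading so that corotation- and ascent-edges recombine correctly. Once that is in place, the rest is convex-hull bookkeeping in $\klo{\cinv}$ together with the catabolizability translations of Proposition \ref{p catabolizable basics}, all routine.
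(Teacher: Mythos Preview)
The paper states this corollary without proof, and your argument is exactly the intended deduction: Conjecture \ref{cj flip} forces the bijection $\C_w\mapsto\C_{\dual{w}}$ on $\cinv$ to reverse the preorder $\klo{\cinv}$ (since that preorder is generated by ascent-edges and corotation-edges, handled by parts (b) and (a) respectively), and the three descriptions then follow by dualizing $\R_{\lambda'}$ and invoking Theorem \ref{t main theorem gp} and Proposition \ref{p catabolizable basics}.

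Two minor corrections are worth recording. First, you are right to work with $\R_{\lambda'}$ rather than $\R_\lambda$: since $\dual{(G_{\lambda'})}=\dual{G}_\lambda$, it is the dual of $\R_{\lambda'}$ that has $\dual{G}_\lambda$ as its top cell, and the paper's first bullet appears to contain a typo. Second, your descent-set claim needs the $\Delta$-twist of (\ref{e leftright descents}): one has $L(\dual{w})\cap S=\{\Delta(s):s\in S\setminus L(w)\}$, not literally $S\setminus(L(w)\cap S)$, and the justification ``$P(\dual{w})=P(w)^t$'' is false as stated (only the shapes are transposes; the entries change). This does not affect the argument, since $\Delta$ is a bijection of $S$ and what you need is that $L(\delta)\cap S\not\subseteq L(\gamma)\cap S$ implies $L(\dual{\gamma})\cap S\not\subseteq L(\dual{\delta})\cap S$, which still follows. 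Likewise your image of a corotation edge should read $\C_{\dual{w}}\klo{\cinv}\C_{\dual{\pi w}}$ (with $\dual{w}=\pi\,\dual{\pi w}$ by part (a)), not $\C_{\pi\dual{w}}\klo{\cinv}\C_{\dual{w}}$.
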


One route to proving this conjecture is to exhibit a perfect pairing on $\cinv$ that respects canonical bases. This does not seem to work in a straightforward way, however the following approach seems promising.

For $h \in \pH$ write $[\C_w]h$ for the coefficient of $\C_w$ of $h$ written as an $A$-linear combination of $\{\C_w: w \in \pW\}$.  Define $\langle , \rangle: \pH \times \pH \to A$ by
\[ \langle h_1, h_2 \rangle = [\C_{w_0 y^\rho}]h_1 h_2. \]
Let $j: A \to A$ be the ring automorphism determined by $j(\u) = -\ui$, and also denote by $j$ the involution of $\eH$ given by $j(\sum_x a_x T_x) = \sum_x j(a_x) T_x$. The unprimed canonical basis element $C_w$, $w \in \eW$, is related to the primed $\C_w$ by $j(\C_w) = C_w$.

\begin{conjecture}
\label{cj pairing}
For $x \in \pW^S w_0$ and $w \in \invlr(\pW^S w_0)$,
\[ \langle C_x, \C_w \rangle = \begin{cases} 1 & \text{if } w \in \dsw,\ x \in \invlr(\dsw), \text{ and } \invlr(x) = \dual{w}, \\
0 & \text{otherwise.} \end{cases} \]
\end{conjecture}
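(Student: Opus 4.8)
\emph{Reducing to a single graded component.}
The first move is to observe that $C_w=j(\C_w)$ is homogeneous of degree $\deg(w)$ for the grading of Proposition-Definition \ref{p pH exists}, exactly as $\C_w$ is: $j$ fixes each Bernstein generator $\y_i$, hence fixes every symmetric function $s_\lambda(\y)$. Since $\rho\in\pY_+$, the element $w_0y^\rho$ is maximal in the double coset $W_f y^\rho W_f$, so Theorem \ref{t Lusztig} gives $\C_{w_0y^\rho}=s_\rho(\y)\C_{w_0}$, a homogeneous element of degree $|\rho|=\binom n2$. Thus $\langle C_x,\C_w\rangle=[\C_{w_0y^\rho}]\,C_x\C_w$ vanishes unless $\deg(x)+\deg(w)=\binom n2$; and since inversion and the diagram automorphism $\Delta$ each negate degree, $\deg(\invlr(x))=\deg(x)$, while $\deg(\dual w)=\binom n2-\deg(w)$ by \textsection\ref{s flip}, so the equality $\invlr(x)=\dual w$ already forces $\deg(x)+\deg(w)=\binom n2$. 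Only the degree-$\binom n2$ part is therefore at issue.

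\emph{The case $\invlr(x)=\dual w$.}
By the identity $\invlr(\dual w)\,w=w_0y^\rho$ of \textsection\ref{s flip} and $\invlr^2=\id$, this case is precisely $x\cdot w=w_0y^\rho$. I would first show that this is a \emph{reduced} factorization, $\ell(x)+\ell(w)=\ell(w_0y^\rho)$; this is a length count carried out with Propositions \ref{p affinewordlength} and \ref{p bijectionwfdsw}, amounting to the fact that the lengths of the coset representatives attached to a descent monomial and to its dual fit together correctly. Granting that, $C_x=T_x+\sum_{v<x}(\,\u\,\ZZ[\u])\,T_v$ and $\C_w=T_w+\ui\L$, so the unique $T$-basis element of length $\ell(x)+\ell(w)$ occurring in $C_x\C_w$ is $T_xT_w=T_{w_0y^\rho}$, with coefficient $1$. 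Re-expanding in the canonical basis (each $\C_z$ being $T_z$ plus strictly shorter terms) then forces $[\C_{w_0y^\rho}]\,C_x\C_w=1$.

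\emph{The case $\invlr(x)\neq\dual w$ in degree $\binom n2$.}
This is the heart of the matter. The one-sided coset hypotheses on $x$ and $w$ arrange that $C_x$ is a (left or right) multiple of the sign idempotent $C_{w_0}=j(e^+)$ and $\C_w$ the opposite-sided multiple of $e^+=\C_{w_0}$, so that $C_x\C_w$ lies in the ``corner'' $C_{w_0}\,\eH\,\C_{w_0}$. (If instead the two idempotents land adjacent inside $C_x\C_w$, the product is simply $0$, since $C_{w_0}\C_{w_0}=0$: this follows from $C_{w_0}T_v=(-\ui)^{\ell(v)}C_{w_0}$ together with $\sum_{v\in W_f}(-1)^{\ell(v)}=0$.) Since $x,w$ lie in the lowest two-sided cell $\lrcelllong{(n)}$, Theorem \ref{t factoriz} together with the centrality of the $s_\lambda(\y)$ lets one pull those factors out front and reduce $C_x\C_w$ to a symmetric-function multiple of a corner product $C_{w_0}\,h\,\C_{w_0}$, with $h$ built from the combinatorial data of $x$ and $w$. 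My plan for the corner is to show it is as small as the classical picture predicts: at $\u=1$ it is isomorphic to $\bigl(\prod_{i<j}(y_i-y_j)\bigr)\cdot\CC[y_1,\dots,y_n]^{W_f}$, so its degree-$\binom n2$ part is free of rank $1$ over $A$, generated by any $C_x\C_w$ coming from the previous case. Then $[\C_{w_0y^\rho}]$ is an $A$-linear functional on this rank-$1$ module with value $1$ on the generator, and what remains is to prove that $C_x\C_w$ is the \emph{zero} multiple of the generator whenever $x\cdot w\neq w_0y^\rho$. The mechanism visible for small $n$ is that $h$ can be transported to one side of the corner, where it either acquires a central symmetric factor (so $C_{w_0}h\C_{w_0}=(\,\cdot\,)\,C_{w_0}\C_{w_0}=0$) or degenerates in a way that forces $x\cdot w=w_0y^\rho$; making this work uniformly is the substance of the proof.

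\emph{Main obstacle, and an alternative.}
The hard part is the last step: controlling the corner bimodule $C_{w_0}\eH\C_{w_0}$ \emph{integrally} (over $A$, not merely at $\u=1$) and pinning the detected coefficient to exactly $0$ or $1$. A promising alternative avoids the corner altogether: $\langle C_x,\C_w\rangle$ is automatically $\br{\,\cdot\,}$-invariant (both $C_x\C_w$ and the $\C_z$ are), hence lies in $\ZZ[\u+\ui]$; a $T$-support estimate --- lengths bounded above by $\ell(x)+\ell(w)$ together with homogeneity --- should cut this down to $\ZZ$; and one then reads the value off the specialization at $\u=1$, where the pairing becomes a sign-twisted form of the classical projection onto the sign line of the coinvariants $R_{1^n}$ applied to the $\u=1$ images of the two canonical bases. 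Either way, the genuinely new input over the classical perfect pairing is the \emph{monomiality} of the resulting duality --- equivalently the $\mu$-symmetry asserted in Conjecture \ref{cj flip} --- and that is exactly what no soft argument provides.
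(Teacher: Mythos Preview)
The statement you are attempting to prove is labeled \textbf{Conjecture} in the paper, and the paper does \emph{not} prove it. The paper only establishes (Proposition~\ref{p pairing}) that Conjecture~\ref{cj pairing} implies Conjecture~\ref{cj flip}, and remarks that the latter has been checked by computer for $n\le 6$. There is no ``paper's own proof'' to compare against; you are attempting an open problem.

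Your write-up is candid about this: you call the vanishing case ``the heart of the matter,'' identify the ``main obstacle,'' and close by saying that the required monomiality ``is exactly what no soft argument provides.'' That is an accurate self-assessment. What you have written is a plan of attack, not a proof. A few specific issues:

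\begin{itemize}
\item The reduced-factorization claim $\ell(x)+\ell(w)=\ell(w_0y^\rho)$ when $xw=w_0y^\rho$ is asserted but not proved; you wave at Propositions~\ref{p affinewordlength} and~\ref{p bijectionwfdsw} without carrying out the count.
\item Your ``corner'' discussion is inconsistent with the coset hypotheses as stated. If $x\in\pW^S w_0$ then $C_x=j(\lC_z)\,C_{w_0}$ (with $C_{w_0}$ on the \emph{right}), and if $w\in\invlr(\pW^S w_0)=w_0\,\leftexp{S}{\pW}$ then $\C_w=\C_{w_0}\,\rC_{z'}$ (with $\C_{w_0}$ on the \emph{left}), so the product is $j(\lC_z)\,C_{w_0}\C_{w_0}\,\rC_{z'}$, which by your own computation is zero. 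This either means the hypotheses force the ``adjacent'' configuration (so the pairing is identically zero, making the ``$1$'' case vacuous), or---more likely---the intended domains are the opposite ones, $x\in\invlr(\pW^S w_0)$ and $w\in\pW^S w_0$, which is what the proof of Proposition~\ref{p pairing} actually uses. You should resolve this before building the corner argument on it.
\item The $\u\to 1$ specialization argument in your ``alternative'' is attractive but does not obviously yield integrality: knowing a $\br{\,\cdot\,}$-invariant element lies in $\ZZ[\u+\ui]$ and has bounded $T$-support does not by itself force it into $\ZZ$, and the $\u=1$ picture only gives the classical pairing, not the canonical-basis orthogonality.
\end{itemize}

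In short: the paper offers no proof, and neither does your proposal. Your outline identifies the right structural ingredients (grading, the identity $\invlr(\dual w)\,w=w_0y^\rho$, the corner bimodule), but the decisive step---why the off-diagonal pairings vanish exactly---remains open in your write-up just as it does in the paper.
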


As introduced in \textsection\ref{ss invlr}, there is an automorphism $\Delta$ of $\eW$ given on generators by $s_i \mapsto s_{n-i}$, $\pi \mapsto \pi^{-1}$.
\begin{proposition}
\label{p pairing}
Conjecture \ref{cj pairing} implies Conjecture \ref{cj flip}.
\end{proposition}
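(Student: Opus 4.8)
The plan is to reduce Conjecture \ref{cj flip} to its part (b): part (a) is purely combinatorial — it records that corotating a standard word $x$ increments the cocharge label of the corotated entry, which for $x^\dagger$ reverses to a decrement — and it is to be established independently of any pairing. So I shall assume Conjecture \ref{cj pairing} and derive (b).

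The first real step is to extract a symmetry of the trace functional $\tau := [\,\C_{w_0 y^\rho}\,](-)$ on $\pH$. Applying the anti-automorphism $\invlr$ to the identity $\invlr(\dual{w})\,w = w_0 y^\rho$ (valid for all $w\in\dsw$) gives $\invlr(w)\,\dual{w} = \invlr(w_0 y^\rho)$; substituting $\dual{w}$ for $w$ and using that $\dual{}$ is an involution of $\dsw$ then forces $\invlr(w_0 y^\rho) = w_0 y^\rho$. Since $\invlr$ permutes the canonical basis, $\invlr(\C_v) = \C_{\invlr(v)}$, the functional $\tau$ is $\invlr$-invariant, so the form $\langle h_1, h_2\rangle = \tau(h_1 h_2)$ is contravariant:
\[ \langle h_1, h_2\rangle \;=\; \tau\bigl(\invlr(h_2)\invlr(h_1)\bigr) \;=\; \langle \invlr(h_2),\, \invlr(h_1)\rangle. \]
Moreover $\invlr$ commutes with the sign automorphism $j$ (given by $j(\u)=-\ui$), so $\invlr(C_v) = C_{\invlr(v)}$ for the unprimed basis as well.

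The second step is to read Conjecture \ref{cj pairing} as the assertion that $\langle\,,\rangle$ restricts to a perfect pairing between $\cinv = A\{\C_w : w\in\dsw\}$ and the $A$-span $M := A\{C_x : x\in\invlr(\dsw)\}$ of unprimed canonical basis elements on the flipped cells, with Gram matrix the permutation matrix $\langle C_x, \C_w\rangle = \delta_{x,\,\invlr(\dual w)}$. Thus $M$ realizes the $A$-dual of $\cinv$, with $C_x$ the dual basis vector of $\C_{\dual{\invlr(x)}}$. Independently, $M = \invlr(j(\cinv))$: its basis $\{\invlr(C_w):w\in\dsw\} = \{C_{\invlr w}\}$ carries a (sign-twisted) $W$-graph whose edge weights, after the relabeling $w\leftrightarrow\invlr w$, are exactly those of $\cinv$ — the two sign twists, from passing to the unprimed basis and from the shape transpose built into $\dual{}$, cancel, and $j$ only affects the $[2]$-eigenvalue terms. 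Comparing these two descriptions of $M$ through the contravariance of Step~2, and checking that $\langle\,,\rangle$ intertwines the module structures, identifies the dual $W$-graph of $\cinv$ with $\cinv$'s own $W$-graph via $\invlr\circ\dual{}$. Unwinding this on the generators $\C_{s_i}$, $s_i\in S$ — the only relations of $\pH$ whose $\mu$-coefficients influence the $\pH$-module structure of $\cinv$, which is precisely why the hypothesis reads $L(x)\cap S\not\subseteq L(w)\cap S$ — yields $\mu(x,w)=\mu(\dual w,\dual x)$, the reversal of arguments being the effect of passing to the contragredient, and the diagram automorphism $s_i\mapsto s_{n-i}$ hidden in $\invlr$ (recall $\invlr(w)=\Delta(w^{-1})$) being absorbed by the transpose inside $\dual{}$.

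The step I expect to be the main obstacle is the bookkeeping that makes this last comparison rigorous: (i) controlling the index sets $\dsw$, $\pW^S w_0$ and $\invlr(\pW^S w_0)$ so that Conjecture \ref{cj pairing} can be applied term by term to the canonical-basis expansion of $\C_{s_i}\C_w$ inside $\pH$ — in particular checking that the terms dying in the quotient $\cinv$ contribute nothing to $\tau$ (for degree reasons, since $\tau$ detects only degree $\binom n2 = \deg(w_0 y^\rho)$) and that the surviving terms fall in the conjecture's domain; (ii) verifying at the level of descent data that the diagram automorphism introduced by $\invlr$ cancels exactly against the shape transpose in $\dual{}$; and (iii) tracking the harmless sign on the $[2]$-eigenvalue terms coming from the $j$-twist. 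Part (a) is used at the very outset, to obtain $\invlr(w_0 y^\rho) = w_0 y^\rho$, so it must be in place before the argument for (b) can run.
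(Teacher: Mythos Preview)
Your proposal identifies the right ingredients but defers the actual proof to what you call ``bookkeeping.'' The paper's argument for (b) is a direct six-term chain of equalities, and the three obstacles you list are exactly where its content lies.

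Concretely: for $x,w\in\dsw$ with $s\in S\cap L(x)\setminus L(w)$, one first checks the elementary descent identities
\[
R(\invlr(y)) = \Delta(L(y)),\qquad L(\dual y)\cap S = \Delta(S\setminus L(y)),\qquad R(\invlr(\dual y))\cap S = S\setminus L(y),
\]
from which $s\in R(\invlr(\dual w))\setminus R(\invlr(\dual x))$. Then
\begin{multline*}
\mu(\dual w,\dual x)=\mu(\invlr(\dual w),\invlr(\dual x))=[C_{\invlr(\dual w)}]\,C_{\invlr(\dual x)}T_s\\
=\langle C_{\invlr(\dual x)}T_s,\C_w\rangle=\langle C_{\invlr(\dual x)},T_s\C_w\rangle=[\C_x]\,T_s\C_w=\mu(x,w),
\end{multline*}
where the first step uses that $\invlr$ is an anti-automorphism of $\eW$ (hence preserves $\mu$), the second and last are the $W$-graph rule read on the right and left respectively, the middle step is associativity of the product in $\pH$, and the third and fifth apply Conjecture~\ref{cj pairing} term by term. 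That is the whole proof.

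Your contravariance identity $\langle h_1,h_2\rangle=\langle\invlr(h_2),\invlr(h_1)\rangle$ is correct, but it does not shortcut this: applying it swaps the primed and unprimed slots and introduces $T_{\Delta(s)}$, so Conjecture~\ref{cj pairing} (which is asymmetric in the two arguments) cannot be re-invoked. If your dual-module framing were made precise it would reduce to exactly the chain above. The sign cancellation you worry about is a non-issue: the integers $\mu(x,w)$ are the same for the primed and unprimed bases, and the paper computes with $T_s$ rather than $\C_s$ or $C_s$, so no sign ever arises.

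One small correction: part (a) is not needed to obtain $\invlr(w_0 y^\rho)=w_0 y^\rho$; that identity follows directly from the unconditional fact $\invlr(\dual w)\,w = w_0 y^\rho$ by applying $\invlr$ and substituting $w\mapsto\dual w$. The paper proves (a) by a one-line combinatorial observation (corotating then reversing a standard word equals reversing then rotating), independently of the pairing.
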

\begin{proof}[Proof of Proposition \ref{p pairing}]
It is easy to see that corotating and then applying $\dagger$ to a standard word is the same as applying $\dagger$ and then rotating. Part (a) then follows from the bijection $W_f \cong \dsw$ and the fact that this takes corotations to corotation-edges (see Proposition \ref{p ccp syt equals ccp cinv} and its proof).

For any $x \in \dsw$, the following are straightforward from the definitions of $\invlr$ and $\dual{}$:
\be \label{e leftright descents}
\begin{array}{rcl}
R(\invlr(x)) &=& \Delta(L(x)), \\
L(\dual{x}) \cap S &=& \{\Delta(s) : s \in S \backslash L(x)\}, \\
R(\invlr(\dual{x})) \cap S &=& \{s : s \in S \backslash L(x) \}.
\end{array} \ee
Suppose $x, w \in \dsw$, $L(x)\cap S \not\subseteq L(w) \cap S$, and let $s$ be any element of $S \cap L(x) \backslash L(w)$. We have
\begin{multline} \mu(\dual{w},\dual{x}) = \mu(\invlr(\dual{w}),\invlr(\dual{x})) = [C_{\invlr(\dual{w})}] C_{\invlr(\dual{x})} T_s \\=\langle C_{\invlr(\dual{x})} T_s, \C_w \rangle = \langle C_{\invlr(\dual{x})}, T_s \C_w \rangle = [\C_x] T_s \C_w = \mu(x,w).\end{multline}
The first equality holds because $\invlr$ is an anti-automorphism of the extended Coxeter group $\eW$. The third and fifth equalities use Conjecture \ref{cj pairing}. The last equality follows from $s \in S \cap L(x) \backslash L(w)$ and the definition of a $W$-graph (\ref{Wgrapheq}). Noting that (\ref{e leftright descents}) implies $s \in S \cap R(\invlr(\dual{w})) \backslash R(\invlr(\dual{x}))$, the second equality follows for the same reason.
\end{proof}

\section{Atoms}
\label{s atoms}

We are primarily interested in subquotients of the coinvariants $\cinv$, however it appears that there are many other copies of these subquotients in $\eH$, outside of $\cinv$.
The realization of an  $R \star W_f$-module $E$ as a cellular subquotient has genuinely different combinatorics depending on which element of ${(\fmod)}^{-1}(E)$ is chosen. It is reasonable to guess that two objects in $\cs$ are isomorphic if their images in Cocyclage Posets are connected and isomorphic, and this has been our empirical way of identifying copies of atoms.

One fundamental problem which we hope to make some steps towards in this section is to find an algorithm that takes a word  $w \in \eW$ as input and outputs two datum: one describing which atom copy  $w$ belongs to, and the other describing where $w$ sits inside this copy.  Our model for such an algorithm is the RSK algorithm which takes a word $w \in W_f$ and outputs  $P(w)$, which encodes which left  $W_f$-cell  $w$ belongs to, and  $Q(w)$, which encodes where $w$ sits inside this cell. The sign insertion algorithm we present in \textsection\ref{ss sign insertion} is our best attempt towards this goal, but has the serious flaw described in Remark \ref{r sign insertion fail}.  Also see Remark \ref{r sign insertion fail} for why such an algorithm would be so useful.

Despite the flaw of the sign insertion algorithm, it is good enough to allow us to state a conjecture about how $\eH$ decomposes into cellular subquotients isomorphic to the dual GP csq  $\gpda{\dual{G}_\lambda}{\dual{G}_{1^n}}$.  We discuss in \textsection\ref{ss cells in We} how this is closely related to the combinatorics of the $\eW$-cells of $\eW$ worked out by Shi, Lusztig, and Xi \cite{Shi, L two-sided An, X2}. Also of interest in this section is a new interpretation of charge for semistandard tableaux (proven for partition content, conjectural in general); see \textsection\ref{ss SSYT in PAT}.
\subsection{}
\label{ss atoms 1}
There are several examples in the literature of identifying cocyclage posets of different sets of tableaux. One example is Lascoux's \emph{standardization map} from tableaux of content $\lambda \vdash n$ to standard tableaux, which has image $\{T: \ctype(T) \gd \lambda\}$ \cite{La} (see also \cite[\textsection{4}]{SW}).  Since $\ccp(\Tab(\lambda))$ and $\ccp(SYT)$ are connected and contain a unique tableau of shape $(n)$, an embedding of cocyclage posets  $\ccp(\Tab(\lambda)) \to \ccp(SYT)$ is unique if it exists.  That is, the standardization map can be computed by taking any subposet of $\ccp(\Tab(\lambda))$ whose underlying undirected graph is a tree. The map then takes  the single-row tableau to the single-row tableau and
is computed on the other tableaux by forcing it to be a color preserving embedding of cocyclage posets.  Miraculously, the map is shape-preserving and preserves all cocyclages, not only those in the subposet.

Another example of this are the copies of super atoms of Lascoux, Lapointe, and Morse \cite{LLM}. A super atom $\AA_\lambda^{(k)}$ and some connected subposet of its cocyclage poset is given.  A selected tableau of the super atom (usually the largest cocharge) is mapped to some other tableau of the same shape, and one tries to extend this map to the entire super atom as above. Then, again, miraculously, it appears that if the map extends, then it is an isomorphism in Cocyclage Posets.

There appear to be many more instances of this, and this has been our empirical way of finding atoms in $\eH$ that might be isomorphic to those that occur in the coinvariants. We will see that both of the examples above are special cases of the atom copies studied in this section.

\subsection{}
Before stating our conjectures about atom copies, we recall some basic facts about star operations and apply them to the study of $\cs$.  An easy way to show that two  $\pH$-cellular subquotients of $\eH$ are isomorphic is to show that they correspond under some sequence of right star operations.

For the following definitions, let $(W,S)$ be an (extended) Coxeter group. Let $s$ and $t$ be in $S$ such that $st$ has order 3. Define
\[ D_L(s,t) = \{ w \in W : |L(w) \cap \{s, t\}| = 1 \}, \]
\[ D_R(s,t) = \{ w \in W : |R(w) \cap \{s, t\}| = 1 \}. \]
The \emph{left star operation with respect to }$\{s,t\}$ is the involution $D_L(s,t) \to D_L(s,t)$, $w \mapsto \leftexp{*}w$, where $\leftexp{*}w$ is the single element of $D_L(s,t) \cap \{sw,tw\}$. Similarly, the \emph{right star operation with respect to} $\{s,t\}$ is the involution $D_R(s,t) \to D_R(s,t)$, $w \mapsto w^*$, where $w^*$ is the single element of $D_R(s,t) \cap \{ws,wt\}$. We use the convention of writing $* = \{s,t\}$ to signify that the star operation is with respect to $\{s,t\}$. We will need the following results from the original Kazhdan-Lusztig paper, the second of which is quite crucial to the theory that originated there.

\begin{proposition}[{\cite[Proposition 2.4]{KL}}]
\label{p right descents constant}
Suppose $x, w \in \Gamma_W$.
\begin{list}{\emph{(\roman{ctr})}} {\usecounter{ctr} \setlength{\itemsep}{1pt} \setlength{\topsep}{2pt}}
\item If $x$ and $w$ belong to the same left cell, then $R(x) = R(w)$.
\item If $x$ and $w$ belong to the same right cell, then $L(x) = L(w)$.
\end{list}
\end{proposition}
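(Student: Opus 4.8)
The plan is to deduce both parts from a single monotonicity statement: if $\C_\delta$ occurs with nonzero coefficient in $\C_s\C_\gamma$ for some $s\in S$, then $R(\gamma)\subseteq R(\delta)$. Granting this, part (i) is immediate. Since $\H(W)$ is generated as an $A$-algebra by the $\C_s$, the preorder $\klo{\Gamma_W}$ is the transitive closure of the covering relations $\delta\klocov{\Gamma_W}\gamma$ witnessed by the products $\C_s\C_\gamma$; hence $\delta\klo{\Gamma_W}\gamma$ forces $R(\gamma)\subseteq R(\delta)$, and if $x,w$ lie in a common left cell we obtain both $R(w)\subseteq R(x)$ and $R(x)\subseteq R(w)$, i.e.\ $R(x)=R(w)$. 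For part (ii) I would pass to the anti-automorphism $\flat$ of $\H(W)$ determined by $T_w\mapsto T_{w^{-1}}$: it fixes $A$ and every $T_s$, commutes with the bar involution, preserves the lattice $\L$, and therefore, by the uniqueness in Theorem~\ref{t kl canonical basis}, sends $\C_w\mapsto\C_{w^{-1}}$. Consequently $\flat$ interchanges left cells with right cells and, via $L(v)=R(v^{-1})$, exchanges $L$ with $R$, so (ii) is exactly (i) transported through $\flat$.

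The real content is the monotonicity statement, which I would prove by an ``eigenspace'' characterization of right multiplication. Applying $\flat$ to the $W$-graph relations~(\ref{Wgrapheq}) for $\{\C_w\}$ yields their right-handed counterparts: for $t\in S$, $\C_w\C_t=[2]\C_w$ if $t\in R(w)$, and $\C_w\C_t$ is an $A$-linear combination of basis elements $\C_v$ with $t\in R(v)$ otherwise. Writing $V_t:=A\{\C_v: t\in R(v)\}$ and $V_t':=A\{\C_v: t\notin R(v)\}$, this says precisely that $\H(W)\,\C_t\subseteq V_t$ and that right multiplication by $\C_t$ acts as the scalar $[2]$ on $V_t$. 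Combining this with the direct sum decomposition $\H(W)=V_t\oplus V_t'$ and the fact that $[2]=\u+\ui$ is a non-zero-divisor in $A$ (with $\H(W)$ free over $A$), one gets the clean characterization
\[
h\,\C_t=[2]\,h\quad\Longleftrightarrow\quad h\in V_t,
\]
since for $h=h_1+h_2$ with $h_i$ in $V_t, V_t'$ the equation forces $h_2\C_t=[2]h_2$, whose left side lies in $V_t$ and whose right side lies in $V_t'$, hence both vanish. Now fix any $t\in R(\gamma)$ and set $h=\C_s\C_\gamma$; by associativity and $\C_\gamma\C_t=[2]\C_\gamma$ we get $h\,\C_t=\C_s(\C_\gamma\C_t)=[2]h$, so $h\in V_t$, meaning every $\C_\delta$ occurring in $\C_s\C_\gamma$ satisfies $t\in R(\delta)$. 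Letting $t$ range over $R(\gamma)$ gives $R(\gamma)\subseteq R(\delta)$, as required.

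The only genuinely non-formal ingredient is the passage to the right-handed $W$-graph relations, i.e.\ that the Kazhdan--Lusztig basis is simultaneously a left and a right $W$-graph on $\H(W)$; this I expect to follow with no real obstacle from the compatibility of $\flat$ with the bar involution and with the lattice $\L$, so the main cost is the bookkeeping in checking those compatibilities. Everything downstream — the eigenspace characterization, the associativity trick, the transitive-closure reduction, and the reduction of (ii) to (i) via $\flat$ — is routine and purely formal.
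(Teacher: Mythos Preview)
The paper does not supply its own proof of this proposition; it is simply quoted from \cite[Proposition~2.4]{KL}.  Your argument is correct and is essentially the original Kazhdan--Lusztig one: the key point is exactly the eigenspace characterization $h\C_t=[2]h\iff h\in V_t$, together with associativity, and the reduction of (ii) to (i) via $T_w\mapsto T_{w^{-1}}$.

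Two small comments.  First, your reduction of the preorder to products $\C_s\C_\gamma$ is unnecessary: the associativity step $(h\C_\gamma)\C_t=h(\C_\gamma\C_t)=[2]\,h\C_\gamma$ already shows $h\C_\gamma\in V_t$ for \emph{every} $h\in\H(W)$, so $R(\gamma)\subseteq R(\delta)$ follows for any $\delta$ with $\delta\klocov{\Gamma_W}\gamma$, with no transitive-closure argument needed.  Second, in the extended case the sentence ``$\H(W)$ is generated as an $A$-algebra by the $\C_s$'' is not literally true (one must also throw in $\Pi$), but this is moot once you use the previous observation; alternatively one checks directly that $R(\pi\gamma)=R(\gamma)$ for $\pi\in\Pi$.
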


\begin{theorem}[{\cite[Theorem 4.2]{KL}}]
\label{t star operations kl}
With the convention of Remark \ref{r wgraph symmetric}, $* = \{s,t\} \subseteq S$, and $st$ of order 3,
\begin{list}{\emph{(\roman{ctr})}} {\usecounter{ctr} \setlength{\itemsep}{1pt} \setlength{\topsep}{2pt}}
\item if $x, w \in D_L(s,t)$, then $\mu(x,w) = \mu(\leftexp{*}{x},\leftexp{*}{w})$,
\item if $x, w \in D_R(s,t)$, then $\mu(x,w) = \mu(x^*,w^*)$.
\end{list}
\end{theorem}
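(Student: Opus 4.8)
The result is \cite[Theorem 4.2]{KL}; the plan is to reconstruct the original Kazhdan--Lusztig argument.

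\emph{Step 1: reductions.} Since $\Pi$ acts on $W_a$ by automorphisms permuting $K$, since $\C_{\pi w} = T_\pi \C_w$ with $T_\pi$ a bar-invariant unit (so that $P'_{\pi x,\pi w} = P'_{x,w}$ and $L(\pi w) = \pi L(w)\pi^{-1}$), and since $\pi w \le \pi' w'$ holds only when $\pi = \pi'$, the $\eW$-graph $\Gamma_{\eW}$ is, as far as the edge weights $\mu$ and descent sets are concerned, a disjoint union of copies of $\Gamma_{W_a}$ (with $S$ relabeled by the $\Pi$-action on each copy). So it suffices to prove the statement for an ordinary Coxeter group $(W,S)$ and an arbitrary pair $\{s,t\}\subseteq S$ with $m(s,t)=3$. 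Next, the $A$-linear anti-automorphism $\iota$ of $\H(W)$ with $\iota(T_w)=T_{w^{-1}}$ commutes with the bar involution, hence $\iota(\C_w)=\C_{w^{-1}}$; it interchanges left and right multiplication, carries $L(\cdot)$ to $R(\cdot^{-1})$ and thus $D_L(s,t)$ to $D_R(s,t)$ and the left star operation to the right one, and satisfies $\mu(x,w)=\mu(x^{-1},w^{-1})$. So (i) and (ii) are equivalent, and I would prove (i).

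\emph{Step 2: the core induction.} Fix $x,w\in D_L(s,t)$ and, using $\mu(x,w)=\mu(w,x)$, assume $x\le w$. The main tool is the $W$-graph recursion (\ref{Wgrapheq}) applied to $E=\H(W)$: for $r\in S$ and $v\in W$, $\C_r\C_v=[2]\,\C_v$ when $rv<v$, and $\C_r\C_v=\C_{rv}+\sum_{z:\,rz<z,\ z\ne rv}\mu(z,v)\,\C_z$ when $rv>v$. In the left coset $W_{\{s,t\}}w$ there are exactly four elements of $D_L(s,t)$, and since $m(s,t)=3$ these form two $*$-strings of length two, a string being a pair $\{z,\leftexp{*}z\}$ whose members differ by left multiplication by $s$ or by $t$. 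For each of $x$ and $w$ one of these identities expresses $\C_{\leftexp{*}x}$ as $\C_r\C_x$ minus a $\ZZ$-combination of $\C_z$'s with lower $z$ and coefficients among the $\mu(z,x)$ (when $\leftexp{*}x=rx$ with $rx>x$), or else expresses $\C_x$ similarly in terms of $\C_{\leftexp{*}x}$ (when $\leftexp{*}x=rx$ with $rx<x$); likewise for $w$. Substituting these into the products used to read off $\mu(\leftexp{*}x,\leftexp{*}w)$ and clearing the correction terms by a downward induction on $\ell(w)-\ell(x)$ via the inductive hypothesis then forces $\mu(\leftexp{*}x,\leftexp{*}w)=\mu(x,w)$.

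\emph{Expected obstacle.} The real content is the case analysis and bookkeeping in Step 2. One must check, in each configuration of $\bigl(L(x)\cap\{s,t\},\,L(w)\cap\{s,t\}\bigr)$ and of the positions of $x$ and $w$ within their $*$-strings, that every $\mu$-value fed to the inductive hypothesis is genuinely of the form $\mu(z,\leftexp{*}z)$ with $z\in D_L(s,t)$ --- not merely $\mu$ of two elements of $D_L(s,t)$ --- and with strictly smaller length difference; and one must correctly absorb the $[2]$ factors that appear whenever $x$ or $w$ already has the relevant simple reflection as a left descent. Proposition~\ref{p right descents constant} is the tool that constrains which $\C_z$ can occur with nonzero coefficient. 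I do not expect to be able to bypass this case analysis --- it is exactly the technical heart of \cite{KL} --- though Step 1 at least halves the labour by yielding (ii) from (i).
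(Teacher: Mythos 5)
The paper does not supply a proof of this statement; it is quoted as \cite[Theorem 4.2]{KL} and used as a black box, so there is no in-paper argument to compare against. What you have written is a reconstruction of the original Kazhdan--Lusztig proof, and I will assess it on those terms.

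Your Step 1 reductions are both correct. Since $T_\pi$ is a bar-invariant unit of length zero, $\C_{\pi w} = T_\pi \C_w$ does give $P'_{\pi x,\pi w} = P'_{x,w}$ and $L(\pi w) = \pi L(w)\pi^{-1}$, so the extended case reduces to a disjoint family of copies of $\Gamma_{W_a}$ with $K$ relabeled; and the anti-automorphism $\iota(T_w)=T_{w^{-1}}$ does commute with the bar involution, hence $\iota(\C_w)=\C_{w^{-1}}$, which carries (i) to (ii). These reductions are standard and cleanly stated.

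Step 2 is the honest but incomplete part, and you say as much. Two points that you should tighten if you intend to carry the induction through. First, your phrase that the correction terms must be ``of the form $\mu(z,\leftexp{*}{z})$ with $z\in D_L(s,t)$'' is a misstatement of the inductive hypothesis: what must be checked is that the correction terms arising from $\C_r\C_x$ and from $\C_r\C_{\leftexp{*}{x}}$ (and likewise for $w$) pair up as $\mu(z,x)$ against $\mu(\leftexp{*}{z},\leftexp{*}{x})$ for $z\in D_L(s,t)$ with strictly smaller length gap, which is what the induction needs; $\mu(z,\leftexp{*}{z})$ is only the base case. Second, the sums in $\C_r\C_w = \C_{rw}+\sum_{rz<z,\,z<w}\mu(z,w)\C_z$ are over \emph{all} $z<w$ with $rz<z$, not only $z\in D_L(s,t)$, and to control the contribution of the stray $z$'s one needs the standard KL identity $P'_{x,w}=P'_{rx,w}$ when $rx>x$, $rw<w$ (this is how \cite{KL} kills the off-$D_L$ terms). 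With those two clarifications the plan is sound, but, as you yourself flag, the case-by-case verification over the four configurations of $(L(x)\cap\{s,t\},\,L(w)\cap\{s,t\})$ and the two positions within a $*$-string is exactly the content of \cite{KL} and has not been carried out here.
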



For $(W,S) = (\eW, K)$ and elements identified with affine words, left and right star operations are Knuth transformations and dual Knuth transformations ($\tto_{KT}$ and $\tto_{DKT}$). See \cite[A1]{St} for an introduction to this combinatorics in the case $W = \S_n$. Knuth transformations look like those for standard words. A Knuth transformation of an affine word $w \in \eW$ is a transformation of one of the following forms:
\be \cdots 1.b\ 1.a\ 1.c \cdots b\ a\ c \cdots \tleftright_{KT} \cdots 1.b\ 1.c\ 1.a \cdots b\ c\ a \cdots, \ee
\be \cdots 1.a\ 1.c\ 1.b\cdots a\ c\ b\cdots\tleftright_{KT} \cdots 1.c\ 1.a \ 1.b \cdots c\ a\ b\cdots, \ee
for $a, b, c\in \ZZ$, $a < b < c$. These pictures are to be interpreted to mean that for every $k \in \ZZ$, the adjacent numbers $k.a$ and $k.c$ are transposed. These Knuth transformations correspond to the left star operation with respect to $\{s_i,s_{i+1}\}$ (subscripts taken mod $n$), where, for the first line, $i$ is the position of $b$, and, for the second line, $i$ is the position of the $a$ on the left-hand side.

To see a dual Knuth transformation of an affine word $w$, it is not enough to only examine the window $w_1 \dots w_n$. A dual Knuth transformation of an affine word $w \in \eW$ is a transformation of one of the following forms:
\be \cdots i \cdots i+2 \cdots i+1 \cdots \tleftright_{DKT} \cdots i+1 \cdots i+2 \cdots i \cdots, \ee
\be \cdots i+1 \cdots i \cdots i+2 \cdots \tleftright_{DKT} \cdots i+2 \cdots i \cdots i+1 \cdots. \ee
These pictures are to be interpreted to mean that a similar transformation is performed on the numbers $k.i$, $k.i+1$, $k.i+2$ for every $k \in \ZZ$. These dual Knuth transformations correspond to the right star operation with respect to $\{s_{n-i}, s_{n-(i+1)}\}$ (recall the unusual convention of (\ref{e wordmult5})).

For the remainder of this paper, we will understand Knuth transformations (resp. dual Knuth transformations) to be left (resp. right) star operations for $* \subseteq S$ rather than $* \subseteq K$. This is more natural given our focus on $\pW$-cells rather than $\eW$-cells, and it is not a significant change because any star operation is equivalent to one with $* \subseteq S$ via conjugation by some power of $\pi$.
\begin{example}
For $n = 5$, the following are examples of a Knuth transformation corresponding to the left star operation with respect to $\{s_1, s_2\}$ and a dual Knuth transformation corresponding to the right star operation with respect to $\{s_1,s_2\}$ (recall the unusual convention of (\ref{e wordmult5})).
\[\begin{array}{ccccccccccc}
13& 1& 42&14&5 &\tto_{KT}& 13&42&1&14&5 \\
13&1&42&14&5 & \tto_{DKT}& 13&1&42&15&4 \end{array} \]
\end{example}

The next proposition relates connectivity of ccp to the left $\eW$-cells of $\eW$. See \textsection\ref{ss cells in We} for more on the $\eW$-cells of $\eW$. We remark that the left $\eW$-cells of $\eW$ are typically infinite in contrast to the $\pW$-cells of $\eW$ and the $W_f$-cells of $W_f$. For a subset $\Gamma$ of  $\eW$, let $\widehat{\Gamma}$ be the minimal element of  $\cs$ containing $\Gamma$.
\begin{proposition}
\label{p connectivity left cell eW}
If $\Gamma$ is a union of left $\pW$-cells of $\eW$ such that the undirected graph on  $\Gamma$ consisting of cocyclage-edges is connected, then  $\Gamma$ and, stronger, $\widehat{\Gamma}$ are contained in a left $\eW$-cell of  $\eW$.
\end{proposition}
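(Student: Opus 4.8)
The plan is to reduce everything to the observation that the $\pH$-module structure on $\eH$ is the restriction of the $\eH$-module structure, so the cell preorder $\klo{\pH}$ on $\Gamma_\eW$ refines $\klo{\eH}$: if $\C_x$ occurs in $h\,\C_y$ for some $h\in\pH$, then it does so for some $h\in\eH$. First I would record the two consequences I need. (i) Every left $\pW$-cell, and more generally every $\klo{\pH}$-equivalence class of left $\pW$-cells, is contained in a single left $\eW$-cell; this is because $x\klo{\pH}y$ forces $x\klo{\eH}y$, so the assignment of each left $\pW$-cell to the left $\eW$-cell containing it is well defined and monotone for $\klo{\pH}$, hence constant on $\klo{\pH}$-equivalence classes. (ii) A corotation-edge $\C_{\pi w}\klo{\pH}\C_w$ joins two canonical basis elements lying in the \emph{same} left $\eW$-cell.

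For (ii) I would first check the identity $\pi\,\C_w=\C_{\pi w}$: this is immediate from $\overline{T_\pi}=T_\pi$ (valid since $\ell(\pi)=0$), from the compatibility $x\le w\Leftrightarrow\pi x\le\pi w$ of the extended Bruhat order with left multiplication by $\pi$, and from the uniqueness characterization of Theorem \ref{t kl canonical basis}. Hence $\pi^{-1}\C_{\pi w}=\C_w$, and since $\pi^{-1}\in\eH$ this gives $\C_w\klo{\eH}\C_{\pi w}$; combined with the corotation-edge $\C_{\pi w}\klo{\eH}\C_w$ it puts $\C_w$ and $\C_{\pi w}$ in the same left $\eW$-cell. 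In particular a cocyclage-edge joins two left $\pW$-cells sitting in one left $\eW$-cell. (This also re-explains why left $\eW$-cells are infinite while left $\pW$-cells are not: they absorb entire $\pi$-orbits.)

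Granting (i) and (ii), the first conclusion follows at once: fix a left $\eW$-cell $\mathfrak{d}$ meeting $\Gamma$, and observe that by the connectivity hypothesis any left $\pW$-cell of $\Gamma$ is joined to $\mathfrak{d}$ by a finite path of cocyclage-edges; applying (ii) and then (i) step by step along such a path shows that cell is still contained in $\mathfrak{d}$, so $\Gamma\subseteq\mathfrak{d}$. For the stronger assertion I would use the description of $\widehat{\Gamma}$, the minimal object of $\cs$ containing $\Gamma$: since cellular submodules of $\eH$ are down-sets for $\klo{\pH}$, the minimal cellular subquotient containing $\Gamma$ has as its cells exactly the left $\pW$-cells $\mathcal{T}$ squeezed between two cells of $\Gamma$, i.e. with $\mathcal{Q}\klo{\pH}\mathcal{T}\klo{\pH}\mathcal{P}$ for some cells $\mathcal{P},\mathcal{Q}$ of $\Gamma$. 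Refining to $\klo{\eH}$ gives $\mathcal{Q}\klo{\eH}\mathcal{T}\klo{\eH}\mathcal{P}$, and because $\mathcal{P},\mathcal{Q}\subseteq\mathfrak{d}$ we have $\mathcal{P}\klo{\eH}\mathcal{Q}$; hence $\mathcal{T}\klo{\eH}\mathcal{P}\klo{\eH}\mathcal{Q}\klo{\eH}\mathcal{T}$, so $\mathcal{T}$ is $\klo{\eH}$-equivalent to $\mathcal{P}$ and therefore lies in $\mathfrak{d}$. Thus $\widehat{\Gamma}\subseteq\mathfrak{d}$.

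I do not anticipate a real obstacle: the proposition is a formal consequence of the inclusion $\pH\subseteq\eH$ together with the invertibility of $\pi$ in $\eH$. The two points that genuinely require a (short) verification are the identity $\pi\,\C_w=\C_{\pi w}$, and with it the fact that corotation-edges become two-sided relations for the $\eH$-action, and the combinatorial identification of which left $\pW$-cells occur in the minimal cellular subquotient $\widehat{\Gamma}$.
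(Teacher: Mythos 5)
Your proof is correct and, at its core, identical to the paper's: both arguments reduce to the facts that $\klo{\pH}$ refines $\klo{\eH}$, that left $\pW$-cells are linked within a left $\eW$-cell (Knuth moves/star operations), and that $\pi w$ and $w$ lie in the same left $\eW$-cell, so the connectivity hypothesis pushes all of $\Gamma$ into one $\eW$-cell $\Lambda$. The only stylistic difference is in the stronger assertion: the paper dispatches it in one line by noting that a left $\eW$-cell is an $\eH$-cellular (hence $\pH$-cellular) subquotient, so $\widehat{\Gamma}\subseteq\Lambda$ by minimality, whereas you unpack the same fact by characterizing $\widehat{\Gamma}$ as the $\klo{\pH}$-convex hull of $\Gamma$ and refining the squeeze to $\klo{\eH}$ — equivalent, just more explicit.
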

\begin{proof}
 The connectivity assumption and our knowledge of the left $\pW$-cells of $\eH$ (Corollary \ref{c restrict cells2} and \cite[Theorem 1.4]{KL}) show that the undirected graph on $\Gamma$ consisting of Knuth transformations and corotation-edges is connected. Thus $\Gamma$ is contained in a left $\eW$-cell $\Lambda$ of $\eW$. Here we are using that $\pi w$ and $w$ are contained in the same left $\eW$-cell of  $\eW$ for any  $w \in \eW$. Since left $\eW$-cells are $\eH$-cellular subquotients, they are also $\pH$-cellular subquotients. Thus $\widehat{\Gamma}$ is contained in $\Lambda$.
\end{proof}

\begin{proposition}
\label{p star ops give isomorphism}
Suppose $A \Gamma \in \cs$ and  $\Gamma$ is contained in a left  $\eW$-cell  $\Lambda$ of  $\eW$ (which holds, for instance, if $\fxccp(A \Gamma)$ is connected). Let $* = \{s,t\} \subseteq S$ with $st$ of order 3. Then
\begin{list}{\emph{(\roman{ctr})}} {\usecounter{ctr} \setlength{\itemsep}{1pt} \setlength{\topsep}{2pt}}
\item if $\gamma \in D_R(s,t)$ for some $\gamma \in \Gamma$, then $\Gamma \subset D_R(s,t)$,
\item If $\Gamma \subseteq D_R(s,t)$, then  $A \Gamma^* \in \cs$ and $\Gamma \cong \Gamma^* := \{\gamma^*: \gamma \in \Gamma\}$,
\item $A \Gamma \pi \in \cs$ and $\Gamma \cong \Gamma \pi := \{\gamma \pi : \gamma \in \Gamma\}$.
\end{list}
\end{proposition}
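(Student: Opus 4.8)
The plan is to take the three parts in order; parts (i) and (iii) are essentially formal, and part (ii) carries the real content.

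\emph{For (i).} Let $\Lambda$ be a left $\eW$-cell of $\eW$ containing $\Gamma$ (such a $\Lambda$ exists by hypothesis, and by Proposition~\ref{p connectivity left cell eW} whenever $\fxccp(A\Gamma)$ is connected). Applying Proposition~\ref{p right descents constant}(i) to the $\eW$-graph $\Gamma_\eW$, all elements of $\Lambda$ share a common right descent set in $K$; hence $|R(w)\cap\{s,t\}|$ is independent of $w\in\Lambda$, so $\Lambda$ is either contained in $D_R(s,t)$ or disjoint from it. If some $\gamma\in\Gamma$ lies in $D_R(s,t)$ then, as $\gamma\in\Lambda$, the first alternative holds and $\Gamma\subseteq\Lambda\subseteq D_R(s,t)$.

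\emph{For (ii).} Assume $\Gamma\subseteq D_R(s,t)$; by (i) the ambient cell $\Lambda$ also lies in $D_R(s,t)$, so the right star operation $w\mapsto w^*$ is defined on all of $\Lambda$. I would invoke the standard behaviour of star operations that flows from Theorem~\ref{t star operations kl}(ii) (cf.\ \cite[Theorem 4.2]{KL} and its consequences): $\Lambda^*:=\{w^*:\C_w\in\Lambda\}$ is again a left $\eW$-cell, and $\C_w\mapsto\C_{w^*}$ is an isomorphism $A\Lambda\xrightarrow{\cong}A\Lambda^*$ of $\eH$-modules with basis. One verifies the latter directly: edge weights are preserved by Theorem~\ref{t star operations kl}(ii), and left descent sets are preserved because $w$ and $w^*$ are joined within a single right $\eW$-cell, so that Proposition~\ref{p right descents constant}(ii) gives $L(w)=L(w^*)$; hence the action of each $\C_{s_i}$, $i\in[0,n-1]$, is intertwined via~(\ref{Wgrapheq}). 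Restricting scalars along $\pH\subseteq\eH$, this is a fortiori an isomorphism of $\pH$-modules with basis, and as such it carries the preorder $\klo{\pH}$, the left $\pW$-cells, and $\pH$-cellular subquotients bijectively. Since a left $\eW$-cell is an $\eH$-cellular subquotient of $\eH$ and hence a $\pH$-cellular subquotient, both $A\Lambda$ and $A\Lambda^*$ lie in $\cs$; and $A\Gamma\in\cs$ has all of its cells inside $\Lambda$, so it is a $\pH$-cellular subquotient of $A\Lambda$. Transporting along the isomorphism, $A\Gamma^*=A\{\C_{w^*}:\C_w\in\Gamma\}$ is a $\pH$-cellular subquotient of $A\Lambda^*$, hence of $\eH$, so $A\Gamma^*\in\cs$, and the isomorphism restricts to $A\Gamma\cong A\Gamma^*$.

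\emph{For (iii).} Right multiplication by $\pi$, i.e.\ by $T_\pi$ (which equals $\C_\pi$ since $\ell(\pi)=0$), is an automorphism $\rho_\pi$ of $\eH$ as a left $\eH$-module, hence as a left $\pH$-module, because $\pi$ is a unit. It commutes with the bar involution, since $\overline{T_\pi}=T_{\pi^{-1}}^{-1}=T_\pi$, and it sends $T_w\mapsto T_{w\pi}$, so it preserves the lattice $\L$; the uniqueness in Theorem~\ref{t kl canonical basis} then forces $\rho_\pi(\C_w)=\C_{w\pi}$. Thus $\rho_\pi$ is an automorphism of $\eH$ as a $\pH$-module with basis, so it carries cellular subquotients to cellular subquotients and preserves isomorphism type; in particular $A\Gamma\pi=\rho_\pi(A\Gamma)\in\cs$ and $\rho_\pi$ restricts to an isomorphism $A\Gamma\cong A\Gamma\pi$ in $\cs$.

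The main obstacle is the input cited in (ii): that $\Lambda^*$ is again a left $\eW$-cell and that $w$ and $w^*$ stay within one right $\eW$-cell (so that left descent sets, and hence the $\C_{s_i}$-action, are preserved by the star bijection). In the finite case this is \cite[Theorem 4.2]{KL} together with its corollaries; in the extended affine setting it is available through the work of Shi, Lusztig, and Xi referenced in the introduction, but it is the point where one must be careful that the relevant descent sets and cells behave as in the finite case. Everything else reduces to the formal fact that an isomorphism of modules with basis preserves preorders, cells, and cellular subquotients.
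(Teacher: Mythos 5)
Your parts (i) and (iii) track the paper's proof closely: (i) is the same application of Proposition~\ref{p right descents constant}(i) to the ambient left $\eW$-cell, and (iii) makes explicit the observation the paper compresses into the identity $\mu(x,w)=\mu(x\pi,w\pi)$ (your derivation of $\C_w T_\pi = \C_{w\pi}$ via bar-invariance and the lattice is a good way to justify that identity).

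Part (ii) is where you diverge. The paper does \emph{not} invoke that $\Lambda^*$ is again a left $\eW$-cell or that $\C_w\mapsto\C_{w^*}$ is an isomorphism $A\Lambda\to A\Lambda^*$ of $\eH$-modules with basis. Instead it works locally: it shows that the $W$-graph data ($\mu$ and descent sets) agree between $\Gamma$ and $\Gamma^*$ (via Theorem~\ref{t star operations kl}(ii) and Prop.~\ref{p right descents constant}(ii)), and then proves directly that $\widehat{\Gamma^*}=\Gamma^*$ by contradiction. Concretely: if some left $\pW$-cell $\Upsilon_2\in\widehat{\Gamma^*}\setminus\Gamma^*$ sits between $\Upsilon_1,\Upsilon_3\in\Gamma^*$ in $\klo{\pH}$, then applying the right star to the entire $\widehat{\Gamma^*}$ (which lies in $D_R(s,t)$) produces $\Upsilon_2^*$ sitting between cells of $\Gamma$, contradicting $A\Gamma\in\cs$. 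Your route buys a cleaner conceptual picture (transport a cellular subquotient along an isomorphism of ambient left cells) at the cost of importing a stronger input from the literature; the paper's route is more self-contained, hinging on the notion $\widehat{\cdot}$ and a local preorder argument. You correctly flag the needed external input at the end — that flag is appropriate, since neither the ``$\Lambda^*$ is a left cell'' statement nor the ``$w\sim_R w^*$'' statement is literally contained in Theorem~\ref{t star operations kl}(ii) as stated in this paper; in the extended affine setting these are consequences of the Lusztig/Shi/Xi theory, and it is reasonable (but should be said) that you are relying on them. In short: the proposal is correct, and part (ii) is a genuinely different but valid argument.
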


\begin{proof}
The assumption $\Gamma \subseteq \Lambda$ and \ref{p right descents constant}(i) imply $R(\gamma) = R(\gamma')$ for all $\gamma, \gamma' \in \Gamma$, hence (i).

For (ii), note that by Theorem \ref{t star operations kl} (ii) and Proposition \ref{p right descents constant} (ii) the edges  $\mu$ with both ends in  $\Gamma$ are the same as those of $\Gamma^*$, so it remains to show $A \Gamma^* \in \cs$. The previous sentence also shows that the edges  $\mu$ in $\widehat{\Gamma^*}$ are the same as those of $\widehat{\Gamma^*}^*$ since $\widehat{\Gamma^*} \subseteq D_R(s,t)$ by the assumption $\Gamma \subseteq \Lambda$ and \ref{p right descents constant}(i).  Then if  $\Upsilon_3 \klo{\pH} \Upsilon_2 \klo{\pH} \Upsilon_1$ for $\Upsilon_2 \subseteq \widehat{\Gamma^*} \backslash \Gamma^*$,  $\Upsilon_1, \Upsilon_3 \in \Gamma^*$ for some left cells $\Upsilon_i$ of  $\pH$, we would have $\Upsilon^*_3 \klo{\pH} \Upsilon^*_2 \klo{\pH} \Upsilon^*_1$ for $\Upsilon^*_2 \subseteq \widehat{\Gamma^*}^* \backslash \Gamma$,  $\Upsilon^*_1, \Upsilon^*_3 \in \Gamma$, contradiction.  Thus  $\widehat{\Gamma^*} = \Gamma^*$.

Statement (iii) is immediate from the identity $\mu(x,w) = \mu(x\pi,w\pi)$ for all $x, \pi \in \eW$.
\end{proof}

\begin{example}
\label{ex dke}
Let $\Gamma$ be the cellular subquotient on the left-hand side of the bottom row of Figure \ref{f atomcopies} (see \textsection\ref{ss chen atom}).
The cellular subquotient on the right of the bottom row is equal to $(\Gamma \pi)^{*_1*_2}$, where $*_1 = \{s_{n-1},s_{n-2}\}$, $*_2 =\{s_{n-2},s_{n-3}\}$ ($n = 6$).
\end{example}

\subsection{}
\label{ss sign insertion}
From any word $w \in \dsw$ there is a  path of Knuth transformations and rotation-edges to the word $w_0 = \reading(G_{(n)})$, which is a left cell corresponding to the trivial $\H$-representation. Also, there is a path consisting of Knuth transformations and corotation-edges from any $w \in  \dsw$ to the word $y^\rho = \reading(G_{1^n})$, which is a left cell corresponding to the sign $\H$-representation.  For an arbitrary $w \in \eW$, there does not exist such a path to a trivial representation, however there is always a path to a sign representation.  This leads to the following algorithm,
which enables us to state conjectures about how $\eH$ decomposes into cellular subquotients that are isomorphic to $\csqa{\dual{G}_\lambda}{\dual{G}_{1^n}}$ and how this relates to the $\eW$-cells of $\eW$.  This algorithm is an adaptation of a result of Shi on iterated star operations \cite[Lemma 9.2.1]{Shi}.

\begin{algorithm}\label{a sign insertion}
The \emph{sign insertion algorithm} takes as input an affine word $w \in \eS_n$ and outputs a pair $(\sgnp(w), \sgnq(w))$ of tableaux of shape $1^{n}$. The tableau $\sgnp(w)$ is an affine tableau, and $\sgnq(w)$ is a semistandard tableau with distinct integer entries; these entries are thought of as ``ordinary integers'' and will not be written in our base-n notation $a.b$.

We package the algorithm as a function $f$ which takes a 4-tuple  $(c, x, P, Q)$ to another such 4-tuple, where  $c$ is an integer counter,  $x$ is part of an affine word, and  $P$ and $Q$ are single-column tableaux of the same shape.   Write $x = za$ for $z$ a word and $a$ a number. Let $P' := a \to P$ be the result of column-inserting $a$ into $P$. If $a \to P$ has more than one column, let $a'$ be the entry in the second column of $P'$. Recall from \textsection \ref{ss catabolizability definition} that for a tableau $T$, $T_\lambda$ is the subtableau of $T$ obtained by restricting  $T$ to the diagram of $\lambda$. Let $Q'$ be the tableau obtained by appending the entry $c$ to the bottom of $Q$. Then
\be \label{e f}
f(c-1, x, P, Q) = \begin{cases}
(c, z, P', Q') &\text{if $P'$ has one column},\\
(c, 1.a' z, P'_{1^{|P|}}, Q) &\text{otherwise}.
\end{cases}
\ee

The sign insertion algorithm repeatedly applies $f$ to the tuple $(0, w, \emptyset, \emptyset)$. It terminates when the word of the tuple is empty and outputs the pair of tableaux.

The transition from $(c, x, P, Q) := f^{(c-1)}(0, w, \emptyset, \emptyset)$ to $f(c, x, P, Q)$ is the $c$-th \emph{step} of the algorithm.
A step of the algorithm in the top case of (\ref{e f}) is an \emph{insertion step}, and in the bottom a  \emph{corotation step}.
\end{algorithm}

\begin{example}
\label{ex sign insertion}
The sign insertion algorithm applied to $w = 36\ 13\ 32\ 4\ 21\ 25$ produces the tuples shown in Figure \ref{f sign insert}. For any $v \in \dsw$, $\sgnp(v) = P(y^\rho) = \dual{G}_{1^n}$. The element $v := 36\ 12\ 24\ 1\ 13\ 25$ is the unique element of $\dsw$ such that $\sgnq(v) = \sgnq(w)$ for $w$ as above.
\begin{figure}
\setlength{\cellsize}{12pt}
\[  \Yboxdim9pt \small\begin{array}{rrcccrrcc}
\multicolumn{4}{c}{f^{(c)}(0,w,\emptyset,\emptyset)} & \qquad\qquad& \multicolumn{4}{c}{f^{(c)}(0,w,\emptyset,\emptyset)} \\ \\
0\quad & 36\ 13\ 32\ 4\ 21\ 25 & {\tiny\emptyset} & {\tiny\emptyset} &\qquad\qquad&7\quad& 46\ 42\ 31 & \tiny\tableau{4 \\ 13 \\ 35}  & \tiny\tableau{1 \\ 4 \\ 6} \\ \\
1\quad  & 36\ 13\ 32\ 4\ 21 & \tiny\tableau{25}  & \tiny\tableau{1} &\qquad\qquad&8\quad& 45\ 46\ 42 & \tiny\tableau{4 \\ 13 \\ 31} & \tiny\tableau{1 \\ 4 \\ 6} \\ \\
2\quad & 35\ 36\ 13\ 32\ 4\ & \tiny\tableau{21}  & \tiny\tableau{1} &\qquad\qquad&9\quad& 45\ 46 & \tiny\tableau{4 \\ 13 \\ 31 \\ 42} & \tiny\tableau{1 \\ 4 \\ 6 \\ 9}\\ \\
3\quad & 31\ 35\ 36\ 13\ 32\ & \tiny\tableau{4}  & \tiny\tableau{1} &\qquad\qquad&10\quad& 45 & \tiny\tableau{4 \\ 13 \\ 31 \\ 42 \\ 46} & \tiny\tableau{1 \\ 4 \\ 6 \\ 9 \\ 10} \\ \\
4\quad & 31\ 35\ 36\ 13\ & \tiny\tableau{4 \\ 32}  & \tiny\tableau{1 \\ 4} &\qquad\qquad&11\quad& 56 & \tiny\tableau{4 \\ 13 \\ 31 \\ 42 \\ 45} & \tiny\tableau{1 \\ 4 \\ 6 \\ 9 \\ 10}\\ \\
5\quad & 42\ 31\ 35\ 36\ & \tiny\tableau{4 \\ 13}  & \tiny\tableau{1 \\ 4} &\qquad\qquad&12\quad& \emptyset & \tiny\tableau{4 \\ 13 \\ 31 \\ 42 \\ 45 \\ 56} & \tiny\tableau{1 \\ 4 \\ 6 \\ 9 \\ 10 \\ 12}\\ \\
6\quad & 42\ 31\ 35\ & \tiny\tableau{4 \\ 13 \\ 36}  & \tiny\tableau{1 \\ 4 \\ 6} \\
\end{array} \]
\caption{The sign insertion algorithm applied to $w = 36\ 13\ 32\ 4\ 21\ 25$.}
\label{f sign insert}
\end{figure}
\end{example}

To ease notation, we will write $P$ in place of $\reading(P)$ for the tableaux in this algorithm. This compromises little as these are all single-column tableaux. Thus $P_i$ will denote the entry in row $|P|+1-i$ of $P$.   In the next proposition and its proof, we will write $(c-1, x, P, Q)$ and $(c, x', P',Q') := f(c-1, x, P, Q)$ for the  $4$-tuple before and after the $c$-th step.

\begin{proposition}\label{p sign insertion algorithm facts}
The sign insertion algorithm with input $w \in \eS_n$ satisfies:
\begin{list}{\emph{(\alph{ctr})}} {\usecounter{ctr} \setlength{\itemsep}{1pt} \setlength{\topsep}{2pt}}
\item after every step, the pair $(x,P)$ is such that $x P$ is an affine word,
\item terminates,
\item  $\sgnp(w) = \rJ{(y^\lambda w)}{S}$ for some $\lambda \in \pY$,
\item  If $k \in [n]$ is maximal such that $w_k < w_{k+1}$ and $w'$ is the word $x P$ after the first corotation step, then  $\sgnp(w)=\sgnp(w')$ and
    \[\sgnq(w') = \sgnq(w)_1 -1 \ \sgnq(w)_2 -1 \ldots \sgnq(w)_{k} -1\ n-k\ n-k-1 \ldots\ 1,\]
\item  $\sgnp(w) = \sgnp(w')$ if  $w' = \pi w$ and $w_{n-1} < w_n$,
\item  $\sgnp(w) = \sgnp(\leftexp{*}{w})$ if $w \tto \leftexp{*}{w}$ is a Knuth transformation,
\item If $w \tto \leftexp{*}{w}$ is a Knuth transformation,
then the tuple $(|\sgnq(w)_{i} - \sgnq(\leftexp{*}{w})_{i}|)_{i \in [n]}$ consists of $n-1$ $0$'s and one $1$,
\item If $w \tto w^*$ is a dual Knuth transformation, then $\sgnp(w^*) = (\sgnp(w))^*$ and $\sgnq(w) = \sgnq(w^*)$,
\item If  $w' = w \pi$, then  $\sgnp(w') = \sgnp(w) \pi$ and  $\sgnq(w) = \sgnq(w')$,
\item Suppose $P_{1^k}$ consists of the $k$ smallest numbers of $\sgnp(w)$. Then  $(\sgnp(w),\sgnq(w))$ can be computed by removing $P_{1^k}$ and $Q_{1^k}$ from $P$ and $Q$, running the algorithm with its usual rules, and then adding $P_{1^k}$ and $Q_{1^k}$ back to the bottom of the final insertion and recording tableaux.
\end{list}
\end{proposition}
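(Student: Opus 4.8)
The plan is to verify the items (a)--(j) roughly in the order listed, leaning on two structural facts: column insertion affects only the relative order and the multiset of the numbers it touches, and a corotation step is the unique ``output-neutral'' move, replacing some entry $a'$ by $a'+n$. For (a), induct on steps: each case of $f$ keeps $|x|+|P|=n$, an insertion step preserves the multiset of numbers of $xP$, and a corotation step replaces $a'$ by $a'+n$, which has the same residue mod $n$; so $xP$ remains a length-$n$ word with $n$ distinct residues, i.e.\ an affine word. For (b), note each insertion step appends one box to $Q$ and raises the height of the column $P$ by one, while a corotation step does neither; since $\sgnq(w)$ ends with shape $1^{n}$ there are exactly $n$ insertion steps, and the algorithm halts as soon as $P$ has height $n$ (then $|x|=0$). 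It remains to bound the corotations: in a maximal run of consecutive corotation steps the successive bumped entries satisfy $a_1'<a_1'+n<a_2'<a_2'+n<\cdots$, and an easy induction shows they all belong to the column $P$ present at the start of the run (each reinserted value $a_i'+n$ lies strictly below every entry bumped later in the run), so such a run has length $\le|P|\le n-1$; hence there are at most $n(n-1)$ corotation steps and at most $n^{2}$ steps in all.

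Continuing the multiset bookkeeping of (a) to the end gives (c): the entries of $\sgnp(w)$ are obtained from $w_1,\dots,w_n$ by repeatedly adding $n$, so matching up residue classes the entry of $\sgnp(w)$ in the class of $w_i$ equals $w_i+n\lambda_i$ with $\lambda_i\ge 0$, $\lambda\in\pY$; since $\sgnp(w)$ has shape $1^{n}$ it is the element of $\leftexp{S}{\eW}$ whose word is its entries in decreasing order, which by (\ref{e wordmult6}) and Proposition~\ref{p affinewordlessthan} is exactly $\rJ{(y^\lambda w)}{S}$. Part (d) follows by tracing the opening phase: processing $w$ from the right merely appends $w_n,w_{n-1},\dots$ to a single column until a letter $w_k$ smaller than the current (hence bottom) maximum $w_{k+1}$ is encountered, which happens precisely at the largest $k$ with $w_k<w_{k+1}$; the first corotation bumps $w_{k+1}$, so $\sgnp$ is unchanged, and comparing the labels recorded for $w_n,\dots,w_{k+1}$ before and after that corotation against a fresh run on $w'$ yields the displayed formula for $\sgnq(w')$. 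For (e), if $w_{n-1}<w_n$ then on $w$ the second move is forced to be a corotation bumping $w_n$ out as $w_n+n$, and the resulting tuple $(x,P)$ coincides with the one produced from $\pi w$ (word $1.w_n\,w_1\cdots w_{n-1}$) after a single insertion step, so the runs agree thereafter. Items (i) and (j) are essentially formal: the whole procedure is equivariant under adding $1$ to every number (insertion and bumping use only relative order, and ``$1.a'$'' goes to ``$1.(a'+1)$''), so by (\ref{e wordmult8}) the $w\pi$-run is the $w$-run with all numbers shifted and the same step types, giving $\sgnp(w\pi)=\sgnp(w)\pi$ and $\sgnq(w\pi)=\sgnq(w)$; and for (j) the $k$ smallest entries of $\sgnp(w)$ are $\le$ every number occurring in the run (numbers only grow away from the $w_i$), so once each enters $P$ it stays at the top and is never bumped, whence the steps creating them decouple from the rest and may be excised and later reattached without changing anything.

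The substantive case, and the expected main obstacle, is (f)--(h): invariance under a Knuth transformation $w\tto\leftexp{*}w$. I would run both inputs in parallel and prove by induction on steps that the runs stay \emph{synchronized} --- the tableaux $P$ agree at every step, the unprocessed words $x$ differ by at most one Knuth transformation of a triple not yet reached by the insertion pointer, and the recording tableaux $Q$ differ in at most one box, by $\pm1$. Insertion steps preserve this because column insertion is unchanged by a Knuth move in the unread portion of the word; the delicate point, and the reason the statement is subtle, is that the two runs must perform corotation steps at the same moments, i.e.\ the Knuth discrepancy must never straddle the insertion pointer in a way that makes one run bump while the other does not. I expect to handle this exactly as in Shi's analysis of iterated star operations \cite[Lemma~9.2.1]{Shi}: follow the transposed pair as it travels leftward while letters are consumed, show it can be pushed past a corotation step (which reorganizes only the shared rightmost part of $x$ together with the shared $P$), and conclude the discrepancy survives as a single Knuth move until it is absorbed into $P$. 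Granting the synchronization, (f) is its $P$-component and (g)--(h) its $Q$-component; the final item then follows since a dual Knuth transformation of $w$ is a right star operation, which commutes with the left-to-right insertion and with corotation steps, so $\sgnp(w^*)=\sgnp(w)^*$ and $\sgnq(w)=\sgnq(w^*)$.
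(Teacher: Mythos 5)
Your outline for (a), (c), (d), (e), (i), (j) is sound and tracks the paper's reasoning---the paper argues (a) via the Knuth equivalence of $a\,\reading(T)$ with $\reading(a\to T)$, but your residue/multiset bookkeeping is equivalent, and your direct shift-equivariance argument for (i) is arguably cleaner than the paper's appeal to Proposition~\ref{p star ops give isomorphism}. The two substantive parts, however, have concrete flaws. For (b), the claimed chain $a_1'<a_1'+n<a_2'<\cdots$ and the resulting bound $|P|$ on the length of a corotation run are both false. Take $n=5$ with $P$ the column $\{4,6\}$ and $x=2\ 3\ 5$: inserting $5$ bumps $a'_1=6$, then $3$ bumps $a'_2=4$, then $2$ bumps $a'_3=3$---three consecutive corotations with $|P|=2$, and $a'_1+n=11>a'_2$. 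The trouble is that each corotation deposits the freshly inserted letter into $P$, where it is then eligible to be bumped by the next corotation, so the bumped entries need not come from the $P$ present at the start of the run. The paper instead bounds things by the uniform growth of the maximum of the working word over $|x|$ consecutive corotations, played against the non-increase of $\max P$.

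For (f)--(h), the synchronization invariant you propose---$P$ agrees at every step, corotations happen at the same moments, and $Q$ differs in at most one box---simply does not hold. For $n=3$ with $x=3\,1\,2$ and $\leftexp{*}{x}=1\,3\,2$ (a Knuth move), the $x$-run corotates at step~2 (giving $P=\{1\}$) while the $\leftexp{*}{x}$-run inserts at step~2 (giving $P=\{2,3\}$); they only rejoin after step~3, and meanwhile $Q_x$ has size~1 while $Q_{\leftexp{*}{x}}$ has size~2. The paper handles exactly this asynchrony by comparing $f^{(3)}$ on the two inputs (three steps at a time), showing the insertion tableaux always rejoin after such a window and the remaining words again differ by at most one Knuth move; (g) is then a direct case analysis of the recording tableaux produced inside those windows. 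You correctly flag (f)--(h) as the crux and correctly gesture at Shi's star-operation lemma, but the step-by-step invariant you propose is too strong to serve as the inductive hypothesis---the two runs must be allowed to drift out of lockstep before rejoining, which is precisely what the paper's three-step bookkeeping is designed to accommodate.
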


\begin{proof}
For a tableau $T$ and number $a$,  $a\ \reading(T)$ is Knuth equivalent to  $\reading(a \to T)$.  Thus in a step of the algorithm, $x' P'$ is obtained from $x P$ by a sequence of Knuth transformations followed by, in the case of a corotation step, left multiplication by $\pi$. This proves (a).

In each insertion step of the algorithm, $|P|$ is increased by one. The algorithm terminates after $n$ insertion steps, so to prove (b) it suffices to show that there cannot be infinitely many corotation steps in succession.  This is so because if there are $|x|$ corotation steps in succession after the $c-1$-th step, the largest value of the word increases by at least $n+1$, and the largest entry of $P$ decreases or stays the same.

Each entry $e = a.\rsd{e}$ of $\sgnp(w)$  $(a \in \ZZ)$ is congruent to some entry $a'.\rsd{e}$ of $w$. Since $\sgnp(w)$ is obtained from $w$ by a sequence of left-multiplications by $s \in S$ and left-multiplications by $\pi$, $a \geq a'$.  This is a rephrasing of statement (c).

Let $k$ be as in (d). The first $n-k$ steps of the algorithm are insertion steps. The next step is a corotation step.  The algorithm run on  $w'$ is nearly identical to the algorithm run on input $w$: the first $n-k$ steps are insertion steps, and for  $c > n-k$, the  $c$-th step is the same as the $c+1$-th step of the algorithm run on  $w$. This proves (d).

Statement (e) is a special case of (d).

To show (f), we prove the slightly stronger statement that, after any step of the algorithm, the word $x$ can be replaced by $\leftexp{*}{x}$ with $x \tto \leftexp{*}{x}$ a Knuth transformation without changing the final insertion tableau. We may assume that $x \tto \leftexp{*}{x}$ is a Knuth transformation in the last three numbers of $x$ and is written as
\be \cdots edf \tto \cdots efd \ee
with $d < e < f$ (we must also check the case $\cdots dfe \tto \cdots fde$, but this is similar).  Let $f'$ (resp. $d'$) be the smallest entry of $P$ greater than  $f$ (resp. $d$) if such an entry exists.  One checks that the insertion tableaux of $f^{(3)}(c-1,x,P,Q)$ and $f^{(3)}(c-1,\leftexp{*}{x},P,Q)$ are always the same. Now if $f'$ is defined (and therefore so is $d'$) and $d' < f$, then the result follows by induction from the Knuth transformation
\be e'+n\ d'+n\ f'+n\cdots\tto e'+n\ f'+n\ d'+n \cdots, \ee
where $d'$, $e'$, and $f'$ are the entries kicked out in the column-insertions of $d$, $e$, and $f$. On the other hand, if $f'$ is defined, $d' = f'$, and there is an entry $f''$ of  $P$ in the row below $f'$, then the result follows by induction from the Knuth transformation \be f''+n\ f+n\ f'+n\cdots\tto f+n\ f''+n\ f'+n \cdots. \ee  Otherwise, one checks that  the words of $f^{(3)}(c-1,x,P,Q)$ and $f^{(3)}(c-1,\leftexp{*}{x},P,Q)$ are the same.

Statement (g) is proved in a similar way to (f).  If $f'$ is defined and either  ($d' < f$) or ($d'=f'$ and  $f''$ is defined), then the $c,c+1$, and $c+2$-th steps are corotation steps and do not change the recording tableau, so the result follows by induction.  Otherwise let $U$,  $U'$ be the recording tableaux of $f^{(3)}(c-1,x,P,Q)$ and $f^{(3)}(c-1,\leftexp{*}{x},P,Q)$ respectively.
Then we have
\[ \begin{array}{ll}
|U| = |U'| = |Q| +1 \text{, }& \\
\quad U_1=c \text{, and } U'_1=c+1 &\text{if $d'$ is defined and $f'$ is not,}\\
|U| = |U'| = |Q| +2 \text{, }& \\
\quad U_1=c+2, U_2=c \text{, and } U'_1=c+1, U'_2=c &\text{if $f'$ and $d'$ are not defined,}\\
|U| = |U'| = |Q| +1 \text{, }& \\
\quad U_1=c+2 \text{, and } U'_1=c+1 &\text{if $d'=f'$ are defined and $f''$ is not}.
\end{array} \]
By the end of the previous paragraph, the entries added to the recording tableau $U$ in the remainder of the algorithm are the same as those added to $U'$.

For statements (h) and (i), we want to think of the sign insertion algorithm as producing the sequence of words  (as in (a)) obtained by concatenating the word and insertion tableau of $f^{(c-1)}(0,w,\emptyset,\emptyset)$ for each $c$.  The word after step $c$ is obtained from the word before step $c$ by some Knuth transformations and possibly a corotation, and whether or not the step is a corotation depends only on  the left descent set of the word. Then let $\Gamma$ be the subset of $\eW$ consisting of these words and $\widehat{\Gamma}$ be the minimal element of $\cs$ containing $\Gamma$. By Proposition \ref{p star ops give isomorphism}, $\widehat{\Gamma} \cong \widehat{\Gamma}^*$ and $\widehat{\Gamma} \cong \widehat{\Gamma} \pi$. Thus the sequence of words for the algorithm run on $w^*$ is obtained from the sequence of words for the algorithm run on $w$ by applying the right star operation with respect to $*$.  Statement (h) then follows and (i) follows in a similar way.

For statement (j), note that by (c),  $P_{1^k} = P'_{1^k} = \sgnp(w)_{1^k}$ for  $P'$ the insertion tableau after the $c'$-th step for any $c' \geq c$. The result is then clear because the entries of $P_{1^k}$ are never bumped out in a column-insertion, so have no effect on the algorithm.
\end{proof}

Part (f) of this proposition allows us to define, for an AT $T$, $\sgnp(T)$ to be $\sgnp(w)$ for any (every) $w$ inserting to $T$.

\begin{definition}
For an AT  $P$ of shape $1^n$, define $\sgnpa{P}$ to be the ccp on the set of tableaux $\{ T \in \text{ AT } : \sgnp(T) = P\}$.
\end{definition}
For example, $\sgnpa{G_{1^n}} = \fsp(\cinv)$.

It is not hard to see that the word $w$ can be recovered from the pair $(\sgnp(w), \sgnq(w))$ by running the sign insertion algorithm in reverse. In more detail, this is done by keeping track of a 4-tuple $(c, x', P', Q')$ as in the sign insertion algorithm. Let $x' = a' z'$, with $a'$ a number and $z'$ a word.  The function $f^{-1}$ is given by
\be \label{e f inverse}
f^{-1}(c, x', P', Q') = \begin{cases}
(c-1, x' P'_1, P'_{1^{|P'|-1}}, Q'_{1^{|Q'|-1}}) &\text{if $c = Q'_1$},\\
(c-1, z' a, P, Q') &\text{otherwise},
\end{cases}
\ee
where  $P$ is obtained by adding  $\ng 1.a'$ to $P'$ in the first row and second column and then column-uninserting it and $a$ is the number bumped out in this column-uninsertion.

Given an AT $P$ and a tableau $Q$ of shape $1^n$ with distinct entries in the positive integers, it is difficult to determine whether the reverse sign insertion algorithm can be run on this pair (adding $\ng 1.a'$ to  $P'$ might not result in a tableau). In the next subsection we give a conjectural description, though not a completely explicit one, of such pairs.

\begin{definition}
Define $\sgnrec{\lambda}$ to be the set $\{\sgnq(w) : w \in \gpda{\dual{G}_\lambda}{\dual{G}_{1^n}}\}$.
\end{definition}

\begin{example}
 The set $\sgnrec{( 2, 2 )}$ is equal to
\[ \left\{ {\Yboxdim10pt \tiny \young(1,3,5,6)}\ , {\Yboxdim10pt \tiny \young(1,2,5,6)}\ , {\Yboxdim10pt \tiny \young(1,3,4,5)}\ , {\Yboxdim10pt \tiny \young(1,2,3,5)}\ ,{\Yboxdim10pt \tiny \young(1,2,4,5)}\ ,{\Yboxdim10pt \tiny \young(1,2,3,4)} \right\} \ .\]
\end{example}

\begin{remark}
\label{r sign insertion fail}
The sign insertion algorithm is our best attempt at an algorithm meeting the requirements described in the introduction to this section. However, its main shortcoming is that it seems remarkably difficult to give a reasonable description of the sets $\sgnrec{\lambda}$ even for $\lambda = (n)$. We spent some time just working on the case where $\lambda$ has two columns, without much success.
\end{remark}

\subsection{} \label{ss cells in We}
Using the sign insertion algorithm, we are able to give conjectural descriptions of how $\eH$ decomposes into  $\pH$-cellular subquotients that are isomorphic to $\csqa{\dual{G}_\lambda}{\dual{G}_{1^n}}$. This combinatorics is closely related to that needed to understand the $\eW$-cells of $\eW$. A description of the $\eW$-cells of $\eW$ was conjectured by Lusztig and proved by Shi \cite{L two-sided An, Shi}, and further properties were worked out by Xi \cite{X2}. We now recall the description of the two-sided cells, using \cite{X2} as our main reference.

Given an element $w$ of $\eW$, define $\wposet(w)$ to be the poset on $[n]$ with relations $i \prec j$ if ($i < j$ and $w_i < w_{j}$) or ($i > j$ and $w_i < w_{j+n}$). Greene's theorem associates to any poset $\wposet$ a partition $\nu \vdash n$, denoted $\partition(\wposet)$.
\begin{theorem}[Shi \cite{Shi}, Lusztig \cite{L two-sided An}]
The sets \[\lrcell := \{w \in \eW : \partition(\wposet(w)) = \nu\}\] are the two-sided $\eW$-cells of $\eW$.
\end{theorem}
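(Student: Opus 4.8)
This is Shi's theorem \cite{Shi}, in the framework set up by Lusztig \cite{L two-sided An}; a complete proof is long, and the plan is to follow Shi's strategy while exploiting the word combinatorics of Propositions \ref{p affinewordlessthan}--\ref{p wordrj} and the star-operation dictionary recorded above. I would split the argument into two parts: \textbf{(A)} the function $w \mapsto \partition(\wposet(w))$ is constant on each two-sided $\eW$-cell of $\eW$; and \textbf{(B)} for every composition $\eta$ of $n$, the longest element $w_{J_\eta}$ of the parabolic $W_{J_\eta}$ satisfies $\partition(\wposet(w_{J_\eta})) = \eta_+$ and lies in $\lrcelllong{\eta_+}$. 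Granting Lusztig's general count of exactly $p(n) = |\{\nu \vdash n\}|$ two-sided cells in $\eW$ \cite{L two-sided An, Shi}, (A) and (B) together force $w \mapsto \partition(\wposet(w))$ to induce a bijection between two-sided cells and partitions of $n$; and since the cell mapping to $\nu$ contains $w_{J_\nu} \in \lrcelllong{\nu}$, that cell must be $\lrcelllong{\nu}$, which is the assertion.

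For (B) I would compute $\wposet(w_{J_\eta})$ directly. The window word of $w_{J_\eta}$ splits into $r$ increasing runs of lengths $\eta_1, \dots, \eta_r$ occupying the successive blocks $B_1, \dots, B_r$, with the run on $B_j$ consisting of strictly larger values than the run on $B_{j+1}$; reading off the two clauses defining $\wposet(w)$ --- the second never fires, since $w_{j+n} = w_j - n \leq 0 < w_i$ for $i, j \in [n]$ --- one checks that two indices of $[n]$ are $\prec$-comparable precisely when they lie in the same block. Thus $\wposet(w_{J_\eta})$ is a disjoint union of chains of sizes $\eta_1, \dots, \eta_r$, and Greene's theorem gives $\partition(\wposet(w_{J_\eta})) = \eta_+$; letting $\eta$ range over partitions yields surjectivity onto $\{\nu \vdash n\}$. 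The membership $w_{J_\nu} \in \lrcelllong{\nu}$ is standard in Lusztig's theory of two-sided cells of affine Weyl groups, and for $\nu = (n)$ it specializes to $w_{J_{(n)}} = w_0 \in \lrcelllong{(n)}$, consistent with Proposition \ref{p two-sided primitive}.

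For (A), recall that the two-sided preorder $\leq_{LR}$ on $\eW$ is generated by the relations $x \leq_{LR} y$ arising from left or right multiplication of $\C_y$ by some $\C_s$ ($s \in K$), together with left and right multiplication by $\pi^{\pm 1}$. Since any two elements $x, y$ of a common two-sided cell satisfy both $x \leq_{LR} y$ and $y \leq_{LR} x$, it is enough to prove the single monotonicity statement: \emph{if $x \leq_{LR} y$, then $\partition(\wposet(x)) \gd \partition(\wposet(y))$}. I would check this on generators. For $\pi^{\pm 1}$-multiplication it is an equality, because multiplication by $\pi$ cyclically rotates the index set $[n]$ and carries the two clauses defining $\wposet$ to themselves, so $\wposet(\pi w) \cong \wposet(w)$ as posets (and similarly $\wposet(w\pi) \cong \wposet(w)$). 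For a step that corresponds to a left or right star operation --- i.e. to an affine Knuth or dual Knuth transformation of words --- it is again an equality, because such a transformation alters $\wposet(w)$ only by a local change that preserves every Greene invariant $\max\{|C_1 \cup \dots \cup C_k|\}$ over unions of $k$ chains (and dually over $k$ antichains), exactly as Knuth equivalence preserves insertion shape in the finite symmetric group.

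The remaining case --- a $\C_s$-multiplication step that is not accounted for by a star operation --- is the technical heart, and the step I expect to be the main obstacle; there appears to be no shortcut around Shi's analysis of sign types. The natural approach, following \cite[Ch.~9~ff.]{Shi} (and Lusztig \cite{L two-sided An}), is to encode $w$ by the integers $\lfloor (w_i - w_j)/n \rfloor$ --- the very data that controls $\ell(w)$ in Proposition \ref{p affinewordlength} and that determines $\wposet(w)$ --- that is, by the admissible sign type of $w$, to describe how a $\C_s$-multiplication (and, more delicately, an iterated sequence of such multiplications, in the spirit of the lemma on iterated star operations already invoked in Algorithm \ref{a sign insertion}) perturbs this sign type, and to verify that the associated Greene partition cannot strictly increase in dominance. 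Once this monotonicity is established, (A) follows, and together with (B) and Lusztig's count of two-sided cells the theorem is proved --- after transporting everything from the affine Weyl group to the \emph{extended} group $\eW$ and to the inverted-window-word conventions, which is the translation work done by Propositions \ref{p affinewordlessthan}--\ref{p wordrj} and the conjugation-by-$\pi$ remarks of \textsection\ref{ss sign insertion}.
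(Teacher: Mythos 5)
The paper does not prove this theorem; it is stated as a cited result of Shi \cite{Shi}, with the description originally conjectured by Lusztig \cite{L two-sided An}. There is therefore no paper proof to compare against, and the right expectation is that nothing beyond the citation is required in this manuscript.

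As a sketch of the proof from \cite{Shi}, your outline has the right overall shape but defers the essential content. Part (B) is correct and you carry it out cleanly: for $w \in W_f$ the window word takes values in $[n]$ while $w_{j+n} = w_j - n \leq 0$, so the second clause in the definition of $\wposet(w)$ is vacuous, and $\wposet(w_{J_\eta})$ is a disjoint union of chains of sizes $\eta_1, \ldots, \eta_r$, giving $\partition(\wposet(w_{J_\eta})) = \eta_+$. Your observations that $\wposet(\pi w) \cong \wposet(w) \cong \wposet(w\pi)$, and that affine Knuth and dual Knuth transformations preserve the Greene invariants, are also sound (though the latter is itself a nontrivial affine extension of Greene's theorem that deserves a reference or a proof). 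The gap is exactly where you flag it: the monotonicity of $\partition(\wposet(\cdot))$ along arbitrary edges of the two-sided preorder --- not only those realized by star operations and $\pi$-translation --- is the theorem, and in \cite{Shi} it occupies a substantial analysis via admissible sign types. A second concern is that you import ``Lusztig's count of $p(n)$ two-sided cells'' as a prior fact. In type $\tilde A_{n-1}$ this count is not independent of the structure being proved: Shi's book both establishes the Greene-invariant description and derives the count from it, and Lusztig's bijection between two-sided cells and unipotent classes in the dual group postdates \cite{Shi}. Assuming the count as a black box therefore risks circularity, or at least requires care in specifying which parts of \cite{Shi, L two-sided An} one is permitted to take as given. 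In short, the proposal is a faithful high-level outline of the known strategy, but both the monotonicity step and the cell count remain to be supplied, and the first of these is where all the work lives.
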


Since $\partition(\wposet(w))$ is constant on Knuth equivalence classes, we may define $\partition(T)$ to be $\partition(\wposet(w))$ for any (every) $w$ inserting to $T$.

\begin{definition}
\label{d dual gp ccp copy}
Suppose that $U$ is an AT such that $\sgnq(\reading(U)) = \sgnq(\reading(\dual{G}_\lambda))$. The dual GP ccp copy $\gpda{U}{\sgnp(U)}$ is the ccp on the set of tableaux
    \[\{P(w) : w \in \eW \text{ such that } \sgnp(w) = \sgnp(U) \text{ and } \sgnq(w) \in \sgnrec{\lambda}\}.\]
\end{definition}
For special $U$, this definition will be superseded by another more explicit description of the ccp (Definition \ref{d ccp copy stackable}). These definitions are conjecturally equivalent.  See Example \ref{ex dual gp ccp} for an example.

\begin{conjecture}
\label{cj gp dual copies}
Suppose that $U$ is an AT such that $\sgnq(\reading(U)) = \sgnq(\reading(\dual{G}_\lambda))$. Then
\begin{list}{\emph{(\alph{ctr})}} {\usecounter{ctr} \setlength{\itemsep}{1pt} \setlength{\topsep}{2pt}}
\item  $\gpda{U}{\sgnp{U}} \cong \gpda{\dual{G}_\lambda}{\dual{G}_{1^n}}$ in CCP.
\item $\fsp(\csqa{U}{\sgnp(U)}) = \gpda{U}{\sgnp(U)}$.
\item $\csqa{U}{\sgnp(U)} \cong \csqa{\dual{G}_\lambda}{\dual{G}_{1^n}}$ in  $\cs$.
\end{list}
\end{conjecture}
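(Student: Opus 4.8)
\textbf{Proof proposal for Conjecture \ref{cj gp dual copies}.}

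\emph{Overview of strategy.} The three parts of the conjecture are interlocking: (a) is a purely combinatorial statement in CCP, (b) says that the tableau set $\gpda{U}{\sgnp(U)}$ is actually the image under $\fsp$ of a cellular subquotient, and (c) upgrades (b) to an isomorphism of $\pH$-modules with basis. My plan is to prove (c) first by an induction on the sign-insertion structure using the star-operation machinery of Proposition \ref{p star ops give isomorphism}, and then deduce (a) and (b) as consequences. The key conceptual point is that $\dual{G}_\lambda$ and $U$ (which have the same $\sgnq$ by hypothesis) should be connected to each other by a sequence of right star operations and right multiplications by $\pi$, and each such move is an isomorphism in $\cs$ by Proposition \ref{p star ops give isomorphism}(ii)--(iii) that moreover preserves $\sgnq$ by Proposition \ref{p sign insertion algorithm facts}(h),(i) and commutes with $\sgnp$ appropriately by parts (h),(i) of the same proposition.

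\emph{Step 1: reduce to connecting $U$ to $\dual{G}_\lambda$ by right star operations and $\pi$.} First I would show that any AT $U$ with $\sgnq(\reading(U)) = \sgnq(\reading(\dual{G}_\lambda))$ is obtained from $\dual{G}_\lambda$ by a sequence of right star operations $*\subseteq S$ and right multiplications by $\pi^{\pm 1}$ applied at the level of words. The mechanism is the reverse sign-insertion algorithm (\ref{e f inverse}): $w$ is determined by the pair $(\sgnp(w),\sgnq(w))$, and since $\sgnq$ is fixed, varying $\sgnp$ among the AT of shape $1^n$ that are compatible with this recording tableau is exactly what a chain of dual Knuth transformations (right star operations) and $\pi$-multiplications does, by Proposition \ref{p sign insertion algorithm facts}(h),(i). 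Here I would invoke the structure of $\pW$-cells: the set $\gpda{\dual{G}_\lambda}{\dual{G}_{1^n}}$ is a union of left $\pW$-cells of $\cinv$ whose cocyclage graph is connected (this is essentially the content of Proposition \ref{p catabolizable basics} together with the connectivity built into the embedding $\ccp(SYT)\hookrightarrow\ccp(PAT)$), so by Proposition \ref{p connectivity left cell eW} it lies in a single left $\eW$-cell; the same holds for the putative copy $\gpda{U}{\sgnp(U)}$ since its defining conditions ($\sgnp = \sgnp(U)$, $\sgnq\in\sgnrec\lambda$) are exactly transported from those of $\gpda{\dual{G}_\lambda}{\dual{G}_{1^n}}$ by the chosen chain of star operations.

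\emph{Step 2: propagate the isomorphism.} Given the chain $\dual{G}_\lambda = U_0 \leadsto U_1 \leadsto \cdots \leadsto U_m = U$ of Step 1, I would argue inductively. Suppose $\gpda{U_i}{\sgnp(U_i)}$ has been identified with $\csqa{\dual{G}_\lambda}{\dual{G}_{1^n}}$ as an object of $\cs$ and that $\fsp$ of this equals $\gpda{U_i}{\sgnp(U_i)}$. If $U_{i+1}$ is obtained from $U_i$ by the right star operation with respect to $*=\{s,t\}$, then by Proposition \ref{p sign insertion algorithm facts}(h) every word $w$ with $\sgnp(w)=\sgnp(U_i)$, $\sgnq(w)\in\sgnrec\lambda$ lies in $D_R(s,t)$ (here one uses that the whole set lies in a single left $\eW$-cell, so $R(w)$ is constant by Proposition \ref{p right descents constant}(i), hence if one element is in $D_R(s,t)$ all are --- this is Proposition \ref{p star ops give isomorphism}(i)); then Proposition \ref{p star ops give isomorphism}(ii) gives $\csqa{U_i}{\sgnp(U_i)} \cong \csqa{U_i}{\sgnp(U_i)}^* \in \cs$, and by Proposition \ref{p sign insertion algorithm facts}(h) the starred set is precisely $\{w : \sgnp(w)=\sgnp(U_{i+1}),\ \sgnq(w)\in\sgnrec\lambda\}$, i.e. $\gpda{U_{i+1}}{\sgnp(U_{i+1})}$. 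The $\pi$-steps are handled identically using Proposition \ref{p star ops give isomorphism}(iii) and Proposition \ref{p sign insertion algorithm facts}(i). This proves (c), and since $\fsp$ only records shapes of left-cell labels and the cocyclage structure, which star operations and $\pi$-multiplication respect, we get (b) and (a) at the same time.

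\emph{Anticipated main obstacle.} The crux is Step 1: showing that \emph{every} admissible $U$ is reachable from $\dual{G}_\lambda$ by right star operations and $\pi$-multiplications, without gaps. This is essentially a statement that the set of AT of shape $1^n$ realizing a fixed $\sgnq$-class is a single orbit of the "dual Knuth plus $\pi$" groupoid, which is the affine analogue of the classical fact that dual Knuth equivalence classes of SYT of a given shape form a single orbit --- but here the combinatorics of $\sgnp$ and the sets $\sgnrec\lambda$ is exactly what Remark \ref{r sign insertion fail} flags as poorly understood, so making Step 1 rigorous for all $\lambda$ is where the real work (and the reason this remains a conjecture) lies; for small $\lambda$ or $\lambda$ with few columns it can be checked by hand, consistent with the remark that the conjecture has only been verified in low rank.
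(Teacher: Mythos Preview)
The statement you are attempting to prove is labeled a \emph{Conjecture} in the paper, and the paper does not prove it. The paragraph following the conjecture says only that (a) would follow from (b) and (c), that computer experimentation provides evidence for (a) and (c), and that assuming (b) the authors verified (c) by Magma for standard $U$ with $n\le 6$. There is no proof in the paper to compare your proposal against.

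Your proposal is a reasonable strategy sketch, and you are honest that Step~1 is the real obstruction. But there is a second gap you have not flagged: your induction needs a base case, namely that for $U=\dual{G}_\lambda$ itself one already knows
\[
\fsp\big(\csqa{\dual{G}_\lambda}{\dual{G}_{1^n}}\big)=\gpda{\dual{G}_\lambda}{\dual{G}_{1^n}}.
\]
This equality is precisely part (b) of the conjecture specialized to $U=\dual{G}_\lambda$, and the paper explicitly lists it as conjectural in the definition of DGP~CCP (\S\ref{ss atom categories}) and again as a corollary \emph{of} Conjecture~\ref{cj flip}. So even granting Step~1 in full, your inductive scheme would only propagate an unproved base case. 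Relatedly, in Step~2 you assert that $(\csqa{U_i}{\sgnp(U_i)})^*$ coincides with $\csqa{U_{i+1}}{\sgnp(U_{i+1})}$; Proposition~\ref{p star ops give isomorphism} gives you that the starred set is \emph{some} object of $\cs$, but identifying it as the minimal cellular subquotient containing $\Gamma_{U_{i+1}}$ and $\Gamma_{\sgnp(U_{i+1})}$ requires exactly the description of $\fsp$ that (b) asserts and that you do not yet have.

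In short: your outline correctly isolates the transitivity of the right-star/$\pi$ action on fixed-$\sgnq$ classes as one hard point, but it hides a second hard point of equal weight---the base case is itself open---and so the argument as written is circular. The paper offers no alternative route; the statement remains a conjecture.
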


Note that the combinatorial conjecture (a) would follow from (b) and (c).  Our computer experimentation provides substantial evidence for (a), and our main reason for believing (c) is primarily this evidence as well.  Assuming (b), we checked using Magma that (c) holds in the special case that $U$ is a standard tableau and $n \leq 6$.  That is, we found an isomorphism $\alpha : \gpda{U}{\sgnp{U}} \to \gpda{\dual{G}_\lambda}{\dual{G}_{1^n}}$ in CCP and checked that the edge weights $\mu(\alpha(x),\alpha(w))$ and $\mu(x, w)$ agree whenever $L(x) \not \subseteq L(w)$. See Example \ref{ex easy stackable} for more about the special case in which $U$ is standard.

\begin{conjecture}
\label{cj eW-cells and dual GP copies}
\
\begin{list}{\emph{(\alph{ctr})}} {\usecounter{ctr} \setlength{\itemsep}{1pt} \setlength{\topsep}{2pt}}
\item If $P$ is a single-column tableau and $\Gamma_P \in \lrcell$, then $\sgnpa{P} = \gpda{U}{P}$ for a unique $U$ of shape $\nu$. Thus, in $\lrcell$, tableaux of shape $\nu$ are in bijection with tableaux of shape $1^n$ via $U \mapsto \sgnp(U)$.
\item The left $\eW$-cells of $\lrcell$ are in bijection with $\sgnrec{\nu}$ and are of the form
    \[ \Upsilon_Q := \{ w \in \lrcell : \sgnq(w^{-1}) = Q \},\ Q \in \sgnrec{\nu}. \]
\item The left cell $\Upsilon_Q$ decomposes into dual GP csq as
    \[ \Upsilon_Q = \bigsqcup_{\substack{\sgnq((\reading(U))^{-1}) = Q, \\ \sh(U) = \nu, \\ \partition(U) = \nu}} \csqa{U}{\sgnp(U)}. \]
\item This further gives the decomposition of $\lrcell$ into dual GP csq
    \[ \lrcell = \bigsqcup_{\substack{\sh(U) = \nu,\ \\ \partition(U) = \nu}} \csqa{U}{\sgnp(U)}.\]
\end{list}
\end{conjecture}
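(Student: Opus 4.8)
The decomposition in (d) is a formal consequence of parts (b) and (c): part (b) gives $\lrcell = \bigsqcup_{Q \in \sgnrec{\nu}} \Upsilon_Q$, part (c) refines each $\Upsilon_Q$ as $\bigsqcup_U \csqa{U}{\sgnp(U)}$ over the $U$ with $\sh(U) = \partition(U) = \nu$ and $\sgnq((\reading(U))^{-1}) = Q$, and collecting over all $Q$ — using that every $U$ with $\sh(U) = \partition(U) = \nu$ satisfies $\sgnq((\reading(U))^{-1}) \in \sgnrec{\nu}$, a fact one reads off from the sign insertion algorithm — produces exactly the index set of (d). So the plan is to prove (a), (b), (c) in order; (d) then costs nothing.

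For (a), by Proposition \ref{p sign insertion algorithm facts}(c) the tableau $\sgnp(w)$ equals $\rJ{(y^\lambda w)}{S}$ for some $\lambda \in \pY$, and $\sgnp(w)$ is produced from $w$ by a chain of Knuth transformations and corotations. Knuth transformations preserve the left $\eW$-cell (Theorem \ref{t star operations kl}) and corotations do too, since $\pi w \sim_L w$ — the fact used in Proposition \ref{p connectivity left cell eW} — so $\sgnp(w)$ lies in the same two-sided cell as $w$; hence $\partition(\sgnp(w)) = \partition(w)$, and $\sgnp$ carries $\lrcell$ into single-column tableaux $P$ with $\Gamma_P \in \lrcell$. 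Running the reverse algorithm (\ref{e f inverse}) and matching against the enumeration — via Greene's theorem together with Shi's description of $\lrcell$ — of shape-$\nu$ tableaux in $\lrcell$ versus single-column tableaux $P$ with $\Gamma_P \in \lrcell$ upgrades this to the bijection $U \mapsto \sgnp(U)$; the identification $\sgnpa{P} = \gpda{U}{P}$ is then the bookkeeping statement, compared with Definition \ref{d dual gp ccp copy}, of which pairs $(P,Q)$ the reverse algorithm accepts.

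For (b), one first checks that $\sgnq(w^{-1})$ is constant on left $\eW$-cells, by passing to $w^{-1} \sim_R (w')^{-1}$ and combining the invariance of $\sgnq$ under right star operations (Proposition \ref{p sign insertion algorithm facts}(i)) with the fact — which I would import from Shi and Xi (\cite{Shi, X2}) rather than reprove — that right cells of $\eW$ are connected under right star operations. Surjectivity of $Q \mapsto \Upsilon_Q$ onto $\sgnrec{\nu}$ then comes from unwinding $\sgnrec{\nu} = \{\sgnq(w) : w \in \gpda{\dual{G}_\nu}{\dual{G}_{1^n}}\}$ after transposing via $\invlr$ and $\dagger$, while injectivity requires showing that two elements of $\lrcell$ with equal $\sgnq(w^{-1})$ are joined by Knuth transformations and corotations. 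For (c): within a fixed $\Upsilon_Q$ the invariant $\sgnp(w)$ partitions the cell into the supports of the various $\gpda{U}{\sgnp(U)}$ (this is precisely Definition \ref{d dual gp ccp copy}, disjointness being automatic), and to see that each such piece is a cellular subquotient isomorphic to the dual GP csq I would invoke parts (b) and (c) of Conjecture \ref{cj gp dual copies} and transport the cellularity of $\csqa{\dual{G}_\nu}{\dual{G}_{1^n}}$ — Theorem \ref{t main theorem gp} together with the transpose duality of \textsection\ref{s flip} — along a chain of right star operations and multiplications by $\pi$, as in Proposition \ref{p star ops give isomorphism} and Example \ref{ex dke}, using that $\gpda{U}{\sgnp(U)}$ is connected and hence lies inside a single left $\eW$-cell.

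The main obstacle is not (d) itself but its inputs: Conjecture \ref{cj gp dual copies} and, beneath it, the sign insertion algorithm. As Remark \ref{r sign insertion fail} makes explicit, there is at present no workable combinatorial description of $\sgnrec{\lambda}$ — not even for $\lambda = (n)$ — so the assertion that $\sgnp$ cuts $\lrcell$ into connected pieces each isomorphic to the appropriate dual GP csq is exactly what cannot yet be verified in general (only by machine in small cases, and the $\cs$-isomorphism of Conjecture \ref{cj gp dual copies}(c) only for standard $U$ and $n \le 6$). I would therefore regard the real crux as the prior problem of giving a closed description of the $\sgnp$-fiber of $\dual{G}_\lambda$ — equivalently of $\sgnrec{\lambda}$ — which Remark \ref{r sign insertion fail} flags as the principal open difficulty; modulo that, everything above is routine assembly of results already in the excerpt together with Shi's and Xi's combinatorics of the cells of $\eW$.
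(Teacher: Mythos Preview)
Your high-level plan matches the paper's: (d) follows formally from (b) and (c), and the real content is in (a)--(c), all conditional on Conjecture \ref{cj gp dual copies}. But there is a genuine difference in scope. The paper does \emph{not} attempt to prove (a); the Proposition immediately following the conjecture only shows that Conjecture \ref{cj gp dual copies} together with part (a) imply parts (b), (c), (d). Your sketch of (a) --- ``matching against the enumeration via Greene's theorem together with Shi's description'' --- is not a proof: you would need to know that $U \mapsto \sgnp(U)$ is injective on shape-$\nu$ tableaux in $\lrcell$ and that every single-column $P$ with $\Gamma_P \in \lrcell$ arises this way, and neither follows from the tools in the paper or from Shi's results without real work. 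The paper treats (a) as an independent hypothesis precisely because this step is not available.

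For (b) the two arguments also differ. You propose to show directly that $\sgnq(w^{-1})$ is constant on left $\eW$-cells by passing to right cells and using invariance of $\sgnq$ under right star operations (you cite Proposition \ref{p sign insertion algorithm facts}(i), though (h) is the part about dual Knuth transformations; you would need both (h) and (i), since right $\eW$-cells are not connected by dual Knuth moves alone). Even granting constancy, this only shows each $(\Upsilon_Q)^{-1}$ is a union of right cells, not a single one. The paper instead assumes (a), uses it with Proposition \ref{p connectivity left cell eW} to write $\lrcell = \bigsqcup \gpda{U}{\sgnp(U)}$, observes that the isomorphisms $f_{UU'}$ from Conjecture \ref{cj gp dual copies}(a) preserve $\sgnq$, anchors the argument at the known canonical right cell $(\Upsilon_{Z^*_{1^n}})^{-1}$ from \cite{X2}, notes that left star operations and left $\pi$-multiplication carry right cells to right cells, and finishes with a count: Shi proved there are $\binom{n}{\nu'_1,\dots,\nu'_{\nu_1}}$ right cells in $\lrcell$, which equals $|\sgnrec{\nu}|$, so the containments $(\Upsilon_Q)^{-1} \subseteq$ (right cell) are forced to be equalities. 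This sidesteps having to characterize right cells by $\sgnq$ directly. Your diagnosis of the ultimate obstacle is correct, but you should recognize that (a) is part of that obstacle, not something the available tools deliver.
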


It is straightforward from the definition of $\lrcell$  that $\Gamma_P \in \lrcell$ implies $\sh(P) \ld \nu$, which is consistent with this conjecture.

\begin{proposition}
Conjecture \ref{cj gp dual copies} and Conjecture \ref{cj eW-cells and dual GP copies} (a) imply Conjecture \ref{cj eW-cells and dual GP copies}.
\end{proposition}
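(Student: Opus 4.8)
Part (a) is assumed throughout; the plan is to establish (d) first, then to deduce (b) and (c) from it by a left-$\eW$-cell analysis. Write $\lrcell=\lrcelllong{\nu}$. By the Shi--Lusztig classification of two-sided $\eW$-cells, and since $\partition(\wposet(w))$ depends only on the Knuth class (hence only on $P(w)$), $\lrcell$ is the union of the $\pW$-cells $\Gamma_T$ with $\partition(T)=\nu$. The sign insertion algorithm produces $\reading(\sgnp(w))$ from $w$ by a sequence of Knuth transformations and corotation-edges (the word-sequence viewpoint in the proof of Proposition \ref{p sign insertion algorithm facts}(h)--(j)), and such moves preserve two-sided $\eW$-cells; thus $\partition(\sgnp(T))=\partition(T)$, and $\Gamma_{\sgnp(T)}\subseteq\lrcell$ exactly when $\Gamma_T\subseteq\lrcell$. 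Now fix a single-column AT $P$ with $\Gamma_P\subseteq\lrcell$. Hypothesis (a) provides a unique shape-$\nu$ tableau $U_P$ with $\sgnp(U_P)=P$ and $\sgnpa{P}=\gpda{U_P}{P}$; since $\dual{G}_\lambda$ has shape $\lambda$, the hypothesis of Conjecture \ref{cj gp dual copies} holds for $U_P$ with $\lambda=\nu$, so Conjecture \ref{cj gp dual copies}(b) gives $\gpda{U_P}{P}=\gpda{U_P}{\sgnp(U_P)}=\fsp(\csqa{U_P}{\sgnp(U_P)})$. As $\sgnpa{P}$ is by definition the ccp on $\{T\in\text{AT}:\sgnp(T)=P\}$, this identifies $\csqa{U_P}{P}$, as a union of $\pW$-cells, with $\bigsqcup_{\sgnp(T)=P}\Gamma_T$. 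Partitioning the $\pW$-cells of $\lrcell$ by their $\sgnp$-value (always a single-column AT with $\partition$ equal to $\nu$) and reindexing by the bijection $P\leftrightarrow U_P$ of (a) yields
\[ \lrcell=\bigsqcup_{\sh(U)=\nu,\ \partition(U)=\nu}\csqa{U}{\sgnp(U)}, \]
which is part (d).

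For (b) and (c), the key invariant is $\sgnq(w^{-1})$. Two elements of a common left $\eW$-cell are joined by a chain of Knuth transformations and left translations $w\mapsto\pi^{\pm1}w$ (as in the proof of Proposition \ref{p connectivity left cell eW}); applying $w\mapsto w^{-1}$ converts these into dual Knuth transformations and right translations $v\mapsto v\pi^{\mp1}$ on $w^{-1}$, under which $\sgnq$ is preserved by Proposition \ref{p sign insertion algorithm facts}(i) and (j). Hence $w\mapsto\sgnq(w^{-1})$ is constant on left $\eW$-cells. Moreover, by Conjecture \ref{cj gp dual copies}(a) the ccp $\gpda{U}{\sgnp(U)}\cong\gpda{\dual{G}_\nu}{\dual{G}_{1^n}}$ is connected, and it equals $\fsp(\csqa{U}{\sgnp(U)})$ by Conjecture \ref{cj gp dual copies}(b), so Proposition \ref{p connectivity left cell eW} places each piece $\csqa{U}{\sgnp(U)}$ inside a single left $\eW$-cell; evaluating the invariant at $\reading(U)\in\Gamma_U$ shows that this cell is contained in $\Upsilon_Q$ with $Q=\sgnq((\reading(U))^{-1})$. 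Combined with (d), each $\Upsilon_Q$ is simultaneously a union of left $\eW$-cells and a union of pieces. Consequently (b) and (c) reduce to one assertion: the map sending a left $\eW$-cell of $\lrcell$ to its value of $\sgnq(w^{-1})$ is a bijection onto $\sgnrec{\nu}$. Granting this, $\Upsilon_Q$ is exactly the unique left cell with invariant $Q$ (part (b)), and intersecting the decomposition (d) with $\Upsilon_Q$ gives part (c).

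The main obstacle is therefore this bijectivity statement. That its image lies in and exhausts $\sgnrec{\nu}$ should follow by transporting $\sgnq$-data across the isomorphisms of Conjecture \ref{cj gp dual copies}(a) and (c) from the model piece $\csqa{\dual{G}_\nu}{\dual{G}_{1^n}}$, whose set of $\sgnq$-values is $\sgnrec{\nu}$ by definition; the content here is to check that these isomorphisms are compatible with sign insertion on inverses, a routine but fiddly bookkeeping. The genuinely difficult point is injectivity: if $\sgnq((\reading(U))^{-1})=\sgnq((\reading(U'))^{-1})$, then $\reading(U)$ and $\reading(U')$ must lie in a common left $\eW$-cell, i.e.\ be joined by Knuth transformations and corotation/rotation-edges. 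I would attack this with the reverse sign insertion algorithm \eqref{e f inverse}: fixing $\sgnq(w^{-1})$ constrains the available dual Knuth transformations and right $\pi^{\pm1}$-translations on $w^{-1}$, and one needs these to act transitively enough to link all words with the given value of $\sgnq(w^{-1})$ and $\partition$-value $\nu$. This is exactly the combinatorics underlying Shi's description of left $\eW$-cells (compare the adaptation of \cite[Lemma 9.2.1]{Shi} in Algorithm \ref{a sign insertion}), and carrying it through in the present language is where the bulk of the work lies.
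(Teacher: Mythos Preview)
Your derivation of (d) from hypothesis (a) and Conjecture \ref{cj gp dual copies}(b) is correct and is essentially the displayed decomposition $\lrcell=\bigsqcup\sgnpa{P}=\bigsqcup\gpda{U}{\sgnp(U)}$ in the paper's proof. Reordering so that (d) comes first is harmless.

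The treatment of (b) has two genuine gaps. First, your assertion that any two elements of a common left $\eW$-cell are joined by Knuth transformations and left $\pi^{\pm1}$-translations is \emph{not} a consequence of Proposition \ref{p connectivity left cell eW}: that proposition establishes only the forward direction (such chains remain within one left $\eW$-cell), not the converse. The converse would amount to a combinatorial characterization of affine left cells and is nowhere established in the paper; without it you have not shown that $w\mapsto\sgnq(w^{-1})$ is constant on left $\eW$-cells. (Incidentally, invariance of $\sgnq$ under dual Knuth transformations and right $\pi$-multiplication is Proposition \ref{p sign insertion algorithm facts}(h) and (i), not (i) and (j).) Second, you leave the bijectivity between left cells and $\sgnrec{\nu}$ unfinished, and your proposed attack via reverse sign insertion runs straight into the difficulty flagged in Remark \ref{r sign insertion fail}.

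The paper avoids both problems by reversing the direction of the containment and then counting. Rather than showing each left cell lies in some $\Upsilon_Q$, it shows each $(\Upsilon_Q)^{-1}$ lies in a single right cell. The isomorphisms $f_{UU'}$ of Conjecture \ref{cj gp dual copies}(a) preserve $\sgnq$ (argued as in the proof of Proposition \ref{p sign insertion algorithm facts}(h)--(i)); using that $(\Upsilon_{Z^*_{1^n}})^{-1}$ is the canonical right cell, together with the known fact that left star operations and left $\pi$-multiplication carry right cells to right cells, one concludes that $w$ and $f_{UU'}(w)$ always lie in the same right cell. This forces all of $(\Upsilon_Q)^{-1}$ into one right cell. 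The finish is a count you did not invoke: Shi proved there are exactly $\binom{n}{\nu'_1,\ldots,\nu'_{\nu_1}}$ right cells in $\lrcell$, and $|\sgnrec{\nu}|$ equals this same multinomial (via the standardization map). Since the $(\Upsilon_Q)^{-1}$ partition $\lrcell$, each is exactly one right cell, giving (b); parts (c) and (d) then follow from (a) and (b).
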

\begin{proof}
First note that (d) is an easy consequence of Conjecture \ref{cj eW-cells and dual GP copies} (b) and (c).
We know that for any AT $P$ of shape $1^{n}$, the ccp $\sgnpa{P}$ is contained in a left $\eW$-cell of $\eW$ (Proposition \ref{p connectivity left cell eW}).  Thus (c) follows from Conjecture \ref{cj eW-cells and dual GP copies} (a) and (b).

Now we prove (b) (actually its equivalent statement for right cells).
Proposition \ref{p connectivity left cell eW} and Conjecture \ref{cj eW-cells and dual GP copies} (a) imply that
\be \label{e two-sided cell decomposition}
\lrcell = \bigsqcup_{\substack{\sh(P) = 1^n,\ \\ \partition(P) = \nu}} \sgnpa{P} = \bigsqcup_{\substack{\sh(U) = \nu,\ \\ \partition(U) = \nu}} \gpda{U}{\sgnp(U)}.
\ee
Let
\[ f_{U U'} : \gpda{U}{\sgnp(U)} \to \gpda{U'}{\sgnp(U')} \]
be the isomorphisms given by Conjecture \ref{cj gp dual copies} (a) for all $U,U'$ of shape $\nu$ with $\partition(U) = \partition(U') = \nu$. By a similar argument to the proof of Proposition \ref{p sign insertion algorithm facts} (h) and (i), $\sgnq(w) = \sgnq(f_{U U'}(w))$ for all $w \in \gpda{U}{\sgnp(U)}$.

Let $Z_{1^n}^*$ be the standard tableau of shape $1^n$. It is known that $(\Upsilon_{Z_{1^n}^*})^{-1}$ is a right $\eW$-cell of $\eW$, called the canonical right cell (denoted $\Phi_{\lrcell}$ in \cite{X2}). It is also known that applying a sequence of left star operations with $* \subseteq S$ and left multiplications by $\pi$ to a right cell results in a right cell. It follows that $w$ and $f_{U U'}(w)$ belong to the same right cell for any $w \in \gpda{U}{\sgnp(U)}$. Thus by the previous paragraph, $(\Upsilon_Q)^{-1}$ is contained in a right cell.

Finally, it is known that there are $\binom{n}{\nu'_1,\nu'_2,\dots,\nu'_{\nu_1}}$ right cells in $\lrcell$ \cite{Shi}. This  is also the cardinality of $\sgnrec{\nu}$, which follows, for instance, by the standardization map. Thus since $\bigsqcup_{Q \in \sgnrec{\nu}} (\Upsilon_Q)^{-1} = \lrcell$ by (\ref{e two-sided cell decomposition}), $(\Upsilon_Q)^{-1}$ is exactly a right cell.
\end{proof}

Since $\csqa{\dual{G}_\lambda}{\dual{G}_{1^n}}$ is a cellular submodule of $\csqa{\dual{G}_\nu}{\dual{G}_{1^n}}$ for $\lambda \ld \nu$, Conjectures \ref{cj gp dual copies} and \ref{cj eW-cells and dual GP copies} would give a complete description of dual GP csq copies:

\begin{corollary} [of Conjectures \ref{cj gp dual copies} and \ref{cj eW-cells and dual GP copies}]
The copies of the csq $\csqa{\dual{G}_\lambda}{\dual{G}_{1^n}}$ are of the form $\csqa{U}{\sgnp(U)}$
for $U$ as in Definition \ref{d dual gp ccp copy} and such that $\sh(U) = \lambda$ and $\Gamma_U \in \lrcell$ and $\lambda \ld \nu$.
\end{corollary}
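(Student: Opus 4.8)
The plan is to prove two inclusions: that every cellular subquotient of the asserted form is a copy of $\csqa{\dual{G}_\lambda}{\dual{G}_{1^n}}$, and conversely that every copy arises in this way. The first inclusion is immediate from the conjectures being assumed: if $U$ is as in Definition~\ref{d dual gp ccp copy} with $\sh(U)=\lambda$, $\Gamma_U\in\lrcell$, and $\lambda\ld\nu$, then Conjecture~\ref{cj gp dual copies}(c) gives an isomorphism $\csqa{U}{\sgnp(U)}\cong\csqa{\dual{G}_\lambda}{\dual{G}_{1^n}}$ in $\cs$, so $\csqa{U}{\sgnp(U)}$ is by definition a copy. The content is therefore entirely in the reverse inclusion.

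So let $K\in\cs$ with $K\cong\csqa{\dual{G}_\lambda}{\dual{G}_{1^n}}$. First I would note that $\fsp(K)$ is connected, since the $\fsp$-image of a dual GP csq is a dual GP ccp and dual GP ccp are connected; hence by Proposition~\ref{p connectivity left cell eW} the csq $K$ is contained in a single left $\eW$-cell, and so in a single two-sided $\eW$-cell $\lrcell$ for some $\nu$. Conjecture~\ref{cj eW-cells and dual GP copies}(d) then writes $\lrcell$ as a disjoint union $\bigsqcup\csqa{U'}{\sgnp(U')}$ over $U'$ of shape $\nu$ with $\partition(U')=\nu$, a decomposition in which there are no cocyclage-edges between distinct summands; connectedness of $K$ thus forces $K\subseteq\csqa{U_0}{\sgnp(U_0)}$ for a unique such $U_0$, and since the cells of $K$ lie in $\lrcell$ we get $\lambda\ld\nu$. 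By Conjecture~\ref{cj gp dual copies}(b) together with Definition~\ref{d dual gp ccp copy}, every cell $\Gamma_T$ of $\csqa{U_0}{\sgnp(U_0)}$ satisfies $\sgnp(\reading(T))=\sgnp(U_0)$ and $\sgnq(\reading(T))\in\sgnrec{\nu}$.

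Next I would exploit that a copy of $\csqa{\dual{G}_\lambda}{\dual{G}_{1^n}}$ has a unique maximal cell (of shape $\lambda$) and a unique minimal cell (of shape $1^n$), and, being defined as a minimal cellular subquotient, coincides with the minimal csq containing those two cells. Applied to $K$, its maximal cell is $\Gamma_U$ with $\sh(U)=\lambda$ and $\Gamma_U\in\lrcell$, while its minimal cell, having shape $1^n$, must equal the unique shape-$1^n$ cell of $\csqa{U_0}{\sgnp(U_0)}$, namely $\Gamma_{\sgnp(U_0)}$; since $\sgnp(\reading(U))=\sgnp(U_0)$ this gives $K=\csqa{U}{\sgnp(U)}$. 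To finish one must check $U$ is as in Definition~\ref{d dual gp ccp copy}, i.e.\ $\sgnq(\reading(U))=\sgnq(\reading(\dual{G}_\lambda))$, which I would obtain by transporting $K$ along the isomorphism $\csqa{U_0}{\sgnp(U_0)}\xrightarrow{\cong}\csqa{\dual{G}_\nu}{\dual{G}_{1^n}}\subseteq\cinv$ of Conjecture~\ref{cj gp dual copies}(c)---this isomorphism preserves $\sgnq$, by the argument given in the proof of the proposition immediately preceding this corollary, which rests on Proposition~\ref{p sign insertion algorithm facts}(h)--(j)---and then identifying the image of $\Gamma_U$ in $\cinv$ with the top cell $\Gamma_{\dual{G}_\lambda}$ of the dual GP csq of shape $\lambda$. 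Combined with the already-noted containment $\csqa{\dual{G}_\lambda}{\dual{G}_{1^n}}\subseteq\csqa{\dual{G}_\nu}{\dual{G}_{1^n}}$ for $\lambda\ld\nu$, this completes the reverse inclusion.

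I expect the main obstacle to be this last step: ensuring that the chain of isomorphisms carries the maximal cell of $K$ exactly onto $\dual{G}_\lambda$, rather than merely onto \emph{some} shape-$\lambda$ tableau with the correct $\fsp$-image. This hinges on the $\sgnq$-equivariance of the copy-isomorphisms $\csqa{U}{\sgnp(U)}\cong\csqa{U'}{\sgnp(U')}$ and hence on the conjectural identity $\fsp(\csqa{U}{\sgnp(U)})=\gpda{U}{\sgnp(U)}$ of Conjecture~\ref{cj gp dual copies}(b). A secondary point deserving care is the reading of Conjecture~\ref{cj eW-cells and dual GP copies}(d) as a decomposition with no cocyclage-edges across summands, which is what lets connectedness of $K$ confine it to a single dual GP csq; granting that, the rest is bookkeeping assembling the two conjectures with Proposition~\ref{p connectivity left cell eW} and the minimality built into the definition of $\csqa{-}{-}$.
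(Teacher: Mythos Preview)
The paper does not prove this corollary; it is stated without proof as an immediate consequence of the two conjectures together with the one-sentence observation directly preceding it (that $\csqa{\dual{G}_\lambda}{\dual{G}_{1^n}}$ is a cellular submodule of $\csqa{\dual{G}_\nu}{\dual{G}_{1^n}}$ whenever $\lambda\ld\nu$). Your argument traces out exactly this implicit reasoning: the forward direction by Conjecture~\ref{cj gp dual copies}(c), the reverse by locating a given copy inside the decomposition of Conjecture~\ref{cj eW-cells and dual GP copies}(d) and then transporting into $\cinv$ via the submodule observation. So the approaches match, with yours being considerably more explicit.

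The two subtleties you flag are genuine and the paper does not address them either. In particular, your final step---that the image of $\Gamma_U$ under the isomorphism to $\csqa{\dual{G}_\nu}{\dual{G}_{1^n}}$ is precisely $\Gamma_{\dual{G}_\lambda}$---amounts to knowing that $\csqa{\dual{G}_\nu}{\dual{G}_{1^n}}$ contains a \emph{unique} copy of $\csqa{\dual{G}_\lambda}{\dual{G}_{1^n}}$, which is a special case of the very corollary being proved; the $\sgnq$-equivariance you invoke gives $\sgnq(\reading(U))=\sgnq(\reading(T))$ for the image $T$, but not yet $T=\dual{G}_\lambda$. This circularity is not resolved by the conjectures as stated, and the paper's own treatment is silent here. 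Given the conjectural nature of the statement, your write-up is at least as complete as the paper's, and your identification of this as the main obstacle is accurate.
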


\subsection{}
Here we give an alternative definition of $\gpda{U}{\sgnp(U)}$ for special $U$, which is related to catabolizability. We can prove some partial results towards the conjectures in \textsection\ref{ss cells in We} in this special case. In the even more restricted case in which $U$ has shape $(n)$, we can prove these conjectures, thereby describing all objects in $\cs$ isomorphic to $\cinv$.

\begin{definition}
Given an AT  $U$, let $P$ be the filling of $1^n$ obtained by stacking the columns of $U$ on top of each other (in order, from left to right) and adding $n(c-1)$ to the entries in the $c$-th column of $U$. If $P$ is a tableau, then $U$ is said to be \emph{stackable}.
\end{definition}
For example,
\renewcommand{\duma}{15}
\renewcommand{\dumb}{13}
\renewcommand{\dumc}{14}
\renewcommand{\dumd}{23}
\renewcommand{\dume}{26}
\renewcommand{\dumf}{45}
\[ \text{if } U = {\Yboxdim10pt \tiny \young(146\duma,2\dumb)}\ , \text{ then } P = {\Yboxdim10pt \tiny\young(1,2,\dumc,\dumd,\dume,\dumf)}\ .\]
The filling  $P$ is a tableau, so $U$ is stackable.

Stackable tableaux are those for which $\sgnp$ is easy to compute. If $U$ is stackable, then it is straightforward to see that  $\sgnp(U)=\sgnp(\creading(U))$ is the tableau $P$ in the definition above. Note that the dual Garnir tableau $\dual{G}_\lambda$ is stackable and $\sgnp(\dual{G}_\lambda) = \dual{G}_{1^n}$. As we will soon see, the (conjectural) dual Gasia-Procesi ccp copies  $\gpda{U}{\sgnp(U)}$ for $U$ stackable of shape  $\lambda$ have similar combinatorics to the dual Garsia-Procesi ccp $\gpda{\dual{G}_\lambda}{\dual{G}_{1^n}}$.  These appear to be exactly the ccp copies for which catabolizability as defined in \textsection\ref{ss catabolizability definition} is sensible.

\begin{example}\label{ex easy stackable}
Let $U$ be a standard tableau of shape $\lambda \vdash n$. The tableau $U$ is certainly stackable.  Let $\Gamma_U$ be the corresponding left  $W_f$-cell of $\H = \H(\S_n)$ and define $\AA_U$ by
\be \AA_U := \eH \tsr_\H A \Gamma_U = A\{\C_w : P(\rj{w}{S}) = U\}. \ee
This equality, which shows that $\AA_U$ is a cellular subquotient, is a special case of \cite[Proposition 2.6]{B0}.

We expect that  $\sgnp(U)$ is the unique minimal degree occurrence of the sign representation in $\AA_U$.  The csq $\csqa{U}{\sgnp(U)}$ is the minimal cellular quotient of $\AA_U$ containing $\sgnp(U)$ and we expect that this is equal to the minimal quotient of $\AA_U$ containing $\sgnp(U)$.  Note that the $\csqa{U}{\sgnp(U)}$ as $U$ ranges over standard tableau of shape $\lambda$ are all isomorphic by Proposition \ref{p star ops give isomorphism}.
\end{example}

\begin{algorithm}
\label{a cat insertion}
This algorithm depends on a AT $Q$ of shape $\lambda$ such that $\sgn(Q)$ exists and integers $d_1 \leq d_2 \leq\dots \leq d_{\lambda_1}$. It takes as input an affine word $w$ and outputs true or false.
This algorithm is a variant of the sign insertion algorithm (Algorithm \ref{a sign insertion}) and we maintain the notation from its description.  The insertion tableau will be the same as for the sign insertion algorithm; the recording tableau is still a single-column tableau, but is no longer required to have distinct entries.  Let  $x''$ and $P''$ be the word and insertion tableau  after the  $c$-th step of the sign insertion algorithm.  Let $k$ be maximal such that $P''_{1^k} = \sgnp(w)_{1^k}$.  Then define  $f'$ by
\be \label{e f'}
f'(c-1, x, P, Q) = (c,x'', P'', Q''),
\ee
where $Q''$ is the tableau obtained from $Q$ by appending entries $c$ to the bottom of $Q$ until $|Q''| = k$.

This algorithm repeatedly applies $f'$ to the tuple $(0, w, \emptyset, \emptyset)$ and terminates when the word of the tuple is empty and outputs the pair of tableaux.  Step, insertion step, and corotation step are defined here as they are  for the sign insertion algorithm.

Define a sequence of integers $d_1 < d_2 <\dots < d_{r}$ inductively as follows:   $d_1 =|w|$, and $d_j = d_{j-1} + |x|$, where $x$ is the word after the $d_{j-1}$-th step. The integer $r$ is defined so that  the algorithm terminates after the $d_r$-th step. The $j$-th \emph{pass} of the algorithm consists of steps $d_{j-1}+1$ to  $d_{j}$ inclusive (define  $d_0=0$). We remark that the sequence $d_1, d_2,\dots, d_r$ can be obtained in a straightforward way from $\sgnq(w)$.
\end{algorithm}

\begin{definition}
Let $\eta$ be an  $r$-composition of  $n$ with partial sums $l_j = \sum_{i=1}^{j} \eta_i$, $j \in [r]$. An affine word $w$ is  $\eta$-\emph{word catabolizable} if  $|Q''| \geq l_j$ for all $j \in [r]$, where $Q''$ is the recording tableau of $w$ after the $j$-th pass of Algorithm \ref{a cat insertion}.
\end{definition}

\begin{example}
Let $w$ be the word from Example \ref{ex sign insertion}. Then the sequence of recording tableaux produced by Algorithm \ref{a cat insertion} is \newcommand{\eleven}{11}
\newcommand{\twelve}{12}
\[ \emptyset, \emptyset, \emptyset, {\Yboxdim10pt \tiny \young(3)}\ , {\Yboxdim10pt \tiny \young(3)}\ , {\Yboxdim10pt \tiny \young(3,5)}\ , {\Yboxdim10pt \tiny \young(3,5)}\ ,{\Yboxdim10pt \tiny \young(3,5)}\ ,{\Yboxdim10pt \tiny \young(3,5,8)}\ ,{\Yboxdim10pt \tiny \young(3,5,8,9)}\ ,{\Yboxdim10pt \tiny \young(3,5,8,9)}\ ,{\Yboxdim10pt \tiny \young(3,5,8,9,\eleven)}\ ,{\Yboxdim10pt \tiny \young(3,5,8,9,\eleven,\twelve)}.\]

The word $w$ is $(2,2,1,1)$-word catabolizable, but not $(2,2,2)$-word catabolizable.
\end{example}

\begin{lemma}
\label{l algorithm facts}
Let $x$ be the word after the  $j$-th pass of Algorithm \ref{a cat insertion}.  Then $x$ can be replaced by $\leftexp{*}{x}$, with $x \tto \leftexp{*}{x}$ a Knuth transformation, without changing the insertion tableau or the length of the recording tableau after subsequent passes.  In particular, the set of $\eta$-word catabolizable words is invariant under Knuth transformations.
\end{lemma}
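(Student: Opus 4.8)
\textbf{Proof plan for Lemma \ref{l algorithm facts}.}
The plan is to reduce the statement to the corresponding fact for the sign insertion algorithm, namely Proposition \ref{p sign insertion algorithm facts}(f) and its proof, and then track the effect on pass-boundaries. First I would recall that Algorithm \ref{a cat insertion} produces exactly the same sequence of words and insertion tableaux as the sign insertion algorithm (Algorithm \ref{a sign insertion}); the only difference is in the recording tableau, which in Algorithm \ref{a cat insertion} is refilled so that $|Q''|$ equals the number $k$ of smallest entries of $\sgnp(w)$ that already sit at the bottom of the current insertion tableau. In particular, after the $j$-th pass the word $x$ is one of the intermediate words $xP$ that appear (via concatenation, as in the proof of Proposition \ref{p sign insertion algorithm facts}(h)) in the sign insertion run, and the statement ``$x$ may be replaced by $\leftexp{*}{x}$ without changing the insertion tableau'' is precisely the slightly-stronger claim established in the proof of Proposition \ref{p sign insertion algorithm facts}(f): a Knuth transformation applied to the trailing word of any tuple propagates, by induction over the remaining column-insertions and corotations, to a Knuth transformation of the later words, and the final insertion tableau is unchanged.

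The new content is the assertion about the \emph{lengths of the recording tableaux after subsequent passes}. Here the key observation is that the recording-tableau length after a pass is a function only of the sequence of words and insertion tableaux that occur — indeed $|Q''|$ after step $c$ equals the maximal $k$ with $P''_{1^k} = \sgnp(w)_{1^k}$, and by Proposition \ref{p sign insertion algorithm facts}(c),(j) the bottom $k$ entries $\sgnp(w)_{1^k}$ are exactly the $k$ smallest entries of the final insertion tableau and are never bumped. Since the Knuth transformation $x \tto \leftexp{*}{x}$ leaves $\sgnp(w)$ and every subsequent insertion tableau unchanged (by the previous paragraph), it also leaves unchanged, step by step, the quantity $k$, hence the recording-tableau length at every later step, and in particular at every later pass boundary $d_1 < d_2 < \dots$. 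One must also check that the pass boundaries themselves are unaffected: whether a given step is an insertion step or a corotation step depends only on the left descent set of the current word (as noted in the proof of Proposition \ref{p sign insertion algorithm facts}(h)), and the propagated words are Knuth-equivalent to the original ones with the same descent behaviour at each step, so the breakpoints $d_j = d_{j-1} + |x|$ are the same.

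Granting the above, the final sentence follows immediately: a word $w$ is $\eta$-word catabolizable iff $|Q''| \geq l_j$ after the $j$-th pass for all $j$, and this condition has just been shown invariant under a Knuth transformation applied at a pass boundary. To get invariance under an \emph{arbitrary} Knuth transformation of $w$, apply the pass-boundary version with $j = 0$ (so $x = w$ is the word after the $0$-th pass, and $\leftexp{*}{w}$ is any Knuth transform of $w$): the insertion tableau and all subsequent recording-tableau lengths agree, so $w$ is $\eta$-word catabolizable iff $\leftexp{*}{w}$ is.

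The main obstacle I expect is the bookkeeping in the first paragraph — verifying that a Knuth transformation really does propagate cleanly through the mixture of column-insertions \emph{and} corotation steps without ever failing to be a legal Knuth transformation. This is exactly the case analysis in the proof of Proposition \ref{p sign insertion algorithm facts}(f) (the subcases according to whether the bumped-out entries $f'$, $d'$ are defined, whether $d' < f$, and whether a further entry $f''$ lies below $f'$), and the honest way to write the lemma's proof is to invoke that proposition rather than redo the analysis; the genuinely new part, the claim about recording-tableau lengths, is then short because those lengths are determined by data the transformation provably preserves.
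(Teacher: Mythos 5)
Your argument is the paper's: invoke Proposition \ref{p sign insertion algorithm facts}(f) and its proof to control the insertion tableau, note that the recording‑tableau length at any step is a function of the current and the final insertion tableaux, and check that the pass boundaries $d_j$ (which depend only on word lengths, preserved by Knuth transformations) match, so that one can induct over passes; the ``in particular'' sentence is then the $j=0$ case. One correction: in your second paragraph you assert that the Knuth transformation leaves ``every subsequent insertion tableau unchanged,'' but the proof of Proposition \ref{p sign insertion algorithm facts}(f) only establishes agreement of the single‑column insertion tableau after complete blocks of three steps (and hence at pass boundaries); at the one or two intermediate steps within such a block the tableaux will in general differ, so the step‑by‑step claim is an overstatement. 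The paper avoids this by comparing only the tuples at the end of the $(j{+}1)$‑st pass, and since your conclusion only ever reads off recording‑tableau lengths at pass boundaries, your argument survives once the intermediate claim is weakened to that level.
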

\begin{proof}
Let $(d_{j+1}, x', P',Q')$ be the tuple for  $x$ and $(d_{j+1}, \leftexp{*}{x}', P'',Q'')$  the tuple for  $\leftexp{*}{x}$ after the  $j+1$-st pass. By Proposition \ref{p sign insertion algorithm facts} (f) and its proof,   $P' = P''$. Since the final insertion tableau is also unchanged by the Knuth transformation and the length of the recording tableau only depends on the current insertion tableau  and the final insertion tableau,  $|Q'|=|Q''|$.  By the proof of Proposition \ref{p sign insertion algorithm facts} (f), either $x' = \leftexp{*}{x}'$ or $x' \tto \leftexp{*}{x}'$ is a Knuth transformation, so the result  follows by induction.
\end{proof}

\begin{conjecture}
\label{cj cat equivalences}
Suppose $U$ is a stackable AT of shape $\lambda$. Then the following are equivalent for an AT $T$:
\begin{list}{\emph{(\roman{ctr})}} {\usecounter{ctr} \setlength{\itemsep}{1pt} \setlength{\topsep}{2pt}}
\item $T$ is $(\sgnp(U),\lambda')$-row catabolizable.
\item  $\sgnp(T) = \sgnp(U)$ and $\reading(T)$ is  $\lambda'$-word catabolizable.
\item $\sgnp(T) = \sgnp(U)$ and $\creading(T)$ is  $\lambda'$-word catabolizable.
\item $T$ is $(U,1^{\lambda_1})$-column catabolizable.
\item There is a sequence of Knuth transformations and corotation-edges from $w := \reading(T)$ to $\reading(\sgnp(U))$ and there is a sequence of Knuth transformations, corotation-edges, and ascent-edges from $\reading(U)$ to $w$.
\end{list}
\end{conjecture}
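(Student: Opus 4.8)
The plan is to prove Conjecture~\ref{cj cat equivalences} by closing a cycle of implications among (i)--(v), organized in tiers of increasing difficulty: elementary Knuth-invariance for (ii)$\Leftrightarrow$(iii); the Shimozono--Weyman row/column catabolism equivalence for (i)$\Leftrightarrow$(iv); a structural analysis of Algorithm~\ref{a cat insertion} identifying its passes with catabolism steps, for (ii)$\Leftrightarrow$(iv); and a cellular-preorder argument, reduced where possible to the base case $U=\dual{G}_\lambda$, for (v)$\Leftrightarrow$ the rest.

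The easy reductions come first. Since $\reading(T)$ and $\creading(T)$ both insert to $T$ they are Knuth equivalent, and Lemma~\ref{l algorithm facts} asserts that $\lambda'$-word catabolizability is invariant under Knuth transformations, while Proposition~\ref{p sign insertion algorithm facts}(f) says the same for $\sgnp$; hence (ii)$\Leftrightarrow$(iii). For (i)$\Leftrightarrow$(iv), recall that for $U$ stackable $\sgnp(U)=\sgnp(\creading(U))$ is the single-column tableau obtained by stacking the columns of $U$ with successive shifts of $n$, so that its top $\lambda'_1$ entries are exactly the (unshifted) first column of $U$; one then checks step by step that the recursion defining $(\sgnp(U),\lambda')$-row catabolizability matches that defining $(U,1^{\lambda_1})$-column catabolizability, the $+n$ and $-n$ shifts appearing in $\cat_{R_1}$ and $\ccat_{C_1}$ being absorbed correctly by the stacking. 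Granting that, (i)$\Leftrightarrow$(iv) is the equivalence of row- and column-catabolizability of \cite{SW} (see \cite{B1} for a bijective proof), in the dualized form recorded in Proposition~\ref{p catabolizable basics}; one point that genuinely needs checking is that this equivalence survives the passage from ordinary to affine tableaux and from the case $U=\dual{G}_\lambda$ (tableaux in the image of $\ccp(SYT)\hookrightarrow\ccp(PAT)$, to which Proposition~\ref{p catabolizable basics} literally applies) to arbitrary stackable $U$.

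The technical heart is (ii)$\Leftrightarrow$(iv): that Algorithm~\ref{a cat insertion} computes iterated column catabolism. I would prove by induction on $n$, or on the number of passes, the statement that with $Q=U$ and $\vec d$ as in Algorithm~\ref{a cat insertion}, $\reading(T)$ is $\eta$-word catabolizable if and only if $\sgnp(T)=\sgnp(U)$ and $T$ is $(U,\eta)$-column catabolizable. The key observations are: the first pass column-inserts $\reading(T)$, with the corotation steps supplying precisely the $\pm n$ wrap-arounds in the definition of $\ccat_{C_1}$; at the moment the insertion tableau first acquires a second column, stripping off its $1^{|P|}$-part is exactly the passage from $T$ to $\ccat_{C_1}(T)$ after removing $T_{C_1}$; the requirement ``$|Q''|\geq l_1$ after the first pass'' translates to ``$C_1\subseteq\sh(T)$''; and the constraint ``$T_{C_1}=U_{C_1}$'' is encoded by ``$\sgnp(T)=\sgnp(U)$ and $U$ stackable.'' By Lemma~\ref{l algorithm facts} one may replace the word after any pass by a Knuth-equivalent word, e.g.\ the column reading word of the current insertion tableau, which is what makes the inductive step go through. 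I expect this tier to be the main obstacle: the bookkeeping of shifts is delicate, one must show that a pass fails to ``fit'' (its recording tableau comes up short) exactly when the corresponding catabolism is undefined, including the degenerate case of many consecutive corotation steps, and, as flagged in Remark~\ref{r sign insertion fail}, $\sgnp$ itself resists a clean description, so controlling when $\sgnp(T)=\sgnp(U)$ holds along the recursion is the crux.

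Finally, (v). The first clause --- a path of Knuth transformations and corotation-edges from $w=\reading(T)$ to $\reading(\sgnp(U))$ --- is equivalent to $\sgnp(T)=\sgnp(U)$: the sign insertion algorithm itself furnishes such a path by Proposition~\ref{p sign insertion algorithm facts}(a), and conversely both edge types preserve $\sgnp$ (Proposition~\ref{p sign insertion algorithm facts}(e),(f)) while $\sgnp$ of a single-column affine tableau is itself. The second clause --- a path of Knuth transformations, corotation-edges and ascent-edges from $\reading(U)$ to $w$ --- says that $\C_w\in\csqa{U}{\sgnp(U)}$, i.e.\ that $\Gamma_{P(w)}$ lies below $\Gamma_U$ via ascent-induced and cocyclage edges (Proposition~\ref{p preorder cells vs cocyclage poset}). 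For $U=\dual{G}_\lambda$ this reduces to the description of $\csqa{\dual{G}_\lambda}{\dual{G}_{1^n}}$ in terms of catabolizability --- essentially the dual of Theorem~\ref{t main theorem gp} via Proposition~\ref{p catabolizable basics} and the algorithm of \cite{B1} --- so that (v)'s second clause becomes condition (iv); this is cleanest when $\lambda=(n)$, where $\csqa{U}{\sgnp(U)}$ is all of $\cinv$ and every step above is already known. For general stackable $U$ of shape $\lambda$ I would transport the $U=\dual{G}_\lambda$ case along a chain of right star operations and right $\pi$-multiplications connecting $\Gamma_U$ to $\Gamma_{\dual{G}_\lambda}$, using Proposition~\ref{p star ops give isomorphism} for the cellular isomorphism, Proposition~\ref{p sign insertion algorithm facts}(h),(i),(j) for the compatible behaviour of $\sgnp$ and $\sgnq$, and the stability of catabolizability under star operations (implicit in Lascoux's standardization map). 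This last reduction is precisely where the statement remains conjectural: identifying $\csqa{U}{\sgnp(U)}$ with $\csqa{\dual{G}_\lambda}{\dual{G}_{1^n}}$ in $\cs$ for arbitrary stackable $U$ is Conjecture~\ref{cj gp dual copies}(c), so an unconditional proof appears to require either that conjecture or a restriction to cases where the star-operation chain can be exhibited explicitly --- $U$ standard (cf.\ Example~\ref{ex easy stackable}), and in the fully clean case $\lambda=(n)$.
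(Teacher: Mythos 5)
Your statement is a conjecture, and the paper's own treatment (Proposition \ref{p cat equivalences}) is explicitly partial: it proves (ii)$\Leftrightarrow$(iii)$\Leftrightarrow$(iv), (i)$\Rightarrow$(ii), and that any of (i)--(iv) implies (v), but it does not prove (ii)$\Rightarrow$(i) nor (v)$\Rightarrow$(i)--(iv). Your outline is aligned with that partial result where both exist --- Knuth-invariance via Lemma \ref{l algorithm facts} for (ii)$\Leftrightarrow$(iii), an induction tracking Algorithm \ref{a cat insertion} for (ii)$\Leftrightarrow$(iv), cellular-preorder reasoning via Proposition \ref{p preorder cells vs cocyclage poset} for (v) --- and you correctly flag the places where the argument becomes conditional, including the dependence on Conjecture \ref{cj gp dual copies}(c).

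The main place where your route diverges from the paper's is (i) versus the rest. You propose (i)$\Leftrightarrow$(iv) via the Shimozono--Weyman row/column equivalence and Proposition \ref{p catabolizable basics}. The paper instead proves only (i)$\Rightarrow$(ii), by a direct adaptation of the chain-of-implications argument for (iii)$\Leftrightarrow$(iv) to rows (tracking the word $x'$ through Algorithm \ref{a cat insertion} and checking that $\widehat{\lambda'}$-word catabolizability is preserved under corotations with $w_{n-1}<w_n$); it does not invoke \cite{SW}. Your route is stronger when it applies, but Proposition \ref{p catabolizable basics} is formulated only for $T$ in the image of $\ccp(SYT)\hookrightarrow\ccp(PAT)$, hence essentially for $U=\dual{G}_\lambda$, and the proposed transport to arbitrary stackable $U$ via right star operations relies on precisely the unproven identification $\csqa{U}{\sgnp(U)}\cong\csqa{\dual{G}_\lambda}{\dual{G}_{1^n}}$. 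So the direction (ii)$\Rightarrow$(i) should be listed alongside (v)$\Rightarrow$the rest as what keeps this a conjecture. One further minor point: your gloss that ``$T_{C_1}=U_{C_1}$ is encoded by $\sgnp(T)=\sgnp(U)$ and $U$ stackable'' is less precise than (\ref{e words cat initial block}), where this is shown to be equivalent to $|Q|\geq\lambda'_1$ and to an insertion-tableau condition, with the $\sgnp$ condition entering the recursion through Proposition \ref{p sign insertion algorithm facts}(j).
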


\begin{proposition}
\label{p cat equivalences}
Maintain the notation of Conjecture \ref{cj cat equivalences}. Properties (ii), (iii), and (iv) are equivalent, (i) implies (ii), and any of (i)-(iv) implies (v).
\end{proposition}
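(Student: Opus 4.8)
The plan is to prove the easily verifiable part of the chain of equivalences, postponing only the genuinely hard direction $(\textrm{ii})\Rightarrow(\textrm{i})$ (which involves reconstructing a row‑catabolizability witness from the pass structure of Algorithm~\ref{a cat insertion}) to the conjectural statement. First I would establish the equivalence of $(\textrm{ii})$, $(\textrm{iii})$, and $(\textrm{iv})$. The key observation is that $\creading(T)$ and $\reading(T)$ are Knuth‑equivalent, so by Lemma~\ref{l algorithm facts} and Proposition~\ref{p sign insertion algorithm facts}(f) the quantities appearing in the definition of $\eta$‑word catabolizability --- the insertion tableau $\sgnp$ and the successive lengths $|Q''|$ of the recording tableau after each pass --- depend only on the Knuth class of the input word. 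This immediately gives $(\textrm{ii})\Leftrightarrow(\textrm{iii})$. For $(\textrm{iii})\Leftrightarrow(\textrm{iv})$ I would unwind the definition of column catabolizability from \textsection\ref{ss catabolizability definition}: since $U$ is stackable, $\sgnp(U)$ is literally the column‑stacking of $U$ with shifts $n(c-1)$, and the column‑catabolism operation $\ccat_{C_1}$ peeling off the first column of $U$ corresponds step‑by‑step to one pass of Algorithm~\ref{a cat insertion} over $\creading(T)$ (each pass inserts a column's worth of the word and records exactly the entries that settle into the ``frozen'' bottom part $P''_{1^k}=\sgnp(w)_{1^k}$, by Proposition~\ref{p sign insertion algorithm facts}(c),(j)). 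The composition $\eta = 1^{\lambda_1}$ is exactly what tracks ``one column at a time,'' so the recursive definition of $(U,1^{\lambda_1})$‑column catabolizability matches the pass‑by‑pass condition $|Q''|\ge l_j$ for all $j$.

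Next I would prove $(\textrm{i})\Rightarrow(\textrm{ii})$. Here the point is that row catabolism with the $(n+{-})$ shift (as in \textsection\ref{ss catabolizability definition}) is, after stacking, the \emph{same} operation as column catabolism with the $(-n+{-})$ shift applied to the transpose, via the duality of Proposition~\ref{p catabolizable basics}(b)$\Leftrightarrow$(d) --- but more directly, one checks that if $T$ is $(\sgnp(U),\lambda')$‑row catabolizable then in particular $\sgnp(T)=\sgnp(U)$ (the first row‑catabolism step forces $T_{R_1}=\sgnp(U)_{R_1}$, and iterating pins down all of $\sgnp(T)$ since $\sgnp(U)$ has shape $1^n$ and each $R_i$ is a single box), and that the successive row‑catabolism residues, once stacked back up, exhibit precisely the frozen recording‑tableau growth demanded by $\lambda'$‑word catabolizability. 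This is a bookkeeping argument comparing the uninsertion/reinsertion paths of row catabolism with the insertion steps of Algorithm~\ref{a cat insertion}; I would phrase it using Proposition~\ref{p sign insertion algorithm facts}(c), (j), and the Knuth‑invariance just established so as to reduce to the column reading word.

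Finally, for ``any of (i)--(iv) implies (v),'' since (i)--(iv) are now known to coincide under the hypotheses already proved ($(\textrm{i})\Rightarrow(\textrm{ii})\Leftrightarrow(\textrm{iii})\Leftrightarrow(\textrm{iv})$), it suffices to derive (v) from (iv), say. I would argue that each column‑catabolism step of Algorithm~\ref{a cat insertion} is realized on words by a sequence of Knuth transformations followed by a left multiplication by $\pi$, i.e.\ a corotation‑edge (this is exactly Proposition~\ref{p sign insertion algorithm facts}(a) together with the description of a corotation step); running the full algorithm on $\reading(T)$ thus produces a path of Knuth transformations and corotation‑edges down to the fully collapsed word $\reading(\sgnp(U))$. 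For the other half of (v), the ascent‑edges come in because $\reading(U)$ and $\reading(T)$ have the same $\sgnp$ and one can climb from $U$ up to $T$ within the cellular picture using ascent‑edges and corotation‑edges (this mirrors the argument in the proof of Theorem~\ref{t main theorem gp} invoking the catabolizability algorithm of \cite{B1}); I would cite \cite{B1} for this climbing step as the proof of Theorem~\ref{t main theorem gp} does. \textbf{The main obstacle} I expect is the precise matching in $(\textrm{i})\Rightarrow(\textrm{ii})$ between the combinatorics of iterated row catabolism (with its uninsert‑a‑corner/column‑insert moves and the $+n$ shift) and the frozen‑prefix growth of the recording tableau in Algorithm~\ref{a cat insertion}; getting the shifts and the stacking order to line up exactly, uniformly in the recursion, is the delicate part, and it is exactly why the reverse implication $(\textrm{ii})\Rightarrow(\textrm{i})$ is left to Conjecture~\ref{cj cat equivalences}.
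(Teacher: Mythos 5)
Your proposal captures the paper's broad structure --- Knuth-invariance for (ii)$\Leftrightarrow$(iii), a column-peeling induction for (iii)$\Leftrightarrow$(iv), and a pass-by-pass correspondence between catabolism and Algorithm~\ref{a cat insertion} --- but there are three concrete gaps.

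First, a factual error: when $Q=\sgnp(U)$ has shape $1^n$ and $\eta=\lambda'$, the block $R_1$ removed in the first row-catabolism step is $1^{\lambda'_1}$, a column of $\lambda'_1$ boxes, not a single box, and likewise for subsequent $R_i$. So the parenthetical ``each $R_i$ is a single box'' is wrong unless $\lambda=(n)$, and the claim that this ``pins down all of $\sgnp(T)$'' doesn't go through as stated. Relatedly, invoking Proposition~\ref{p catabolizable basics}(b)$\Leftrightarrow$(d) is not quite licit here: that proposition concerns tableaux in the image of the $\ccp(SYT)\hookrightarrow\ccp(PAT)$ embedding, whereas Proposition~\ref{p cat equivalences} is stated for arbitrary stackable $U$.

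Second, for (i)$\Rightarrow$(ii) your ``bookkeeping argument'' elides the actual technical content. The paper runs the same induction as for (iii)$\Leftrightarrow$(iv) but now on the \emph{row} reading word, and the delicate step is precisely that the word obtained after a row catabolism step is not literally the word after the first pass of the algorithm --- one must show that $\widehat{\lambda'}$-word catabolizability is preserved under passage to the latter. The paper isolates the key fact: if $w\tto\pi w$ is a corotation-edge with $w_{n-1}<w_n$, then $w$ being $\widehat{\lambda'}$-word catabolizable implies $\pi w$ is too, and a step of Algorithm~\ref{a cat insertion} factors as Knuth transformations composed with such corotations. Without making this observation, (i)$\Rightarrow$(ii) is only an implication by direction, not by argument.

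Third, for the second half of (v), citing \cite{B1} is not sufficient: that reference handles the standard-tableau/$\cinv$ case (and indeed the paper's proof of Theorem~\ref{t main theorem gp} offers \cite{B1} as an \emph{alternative} to Proposition~\ref{p cat equivalences} for that special case), whereas here $U$ is an arbitrary stackable AT, where \cite{B1} does not directly apply. The paper's own proof is self-contained: one peels columns $c=1,2,\dots$ of the inductive column-catabolism witness, uses stackability to guarantee that the entries in $Q^*_{1,\text{west}}$ exceed those in $U_{c,\text{west}}$, and shows that each single peeling step moves $\creading(U_{c-1,\text{east}})\creading(Q)$ to $\creading(U_{c,\text{east}})\creading(\ccat_{1^{\lambda'_c}}(Q))$ by rotation-edges, Knuth transformations, and ascent-edges in reverse. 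That is the argument you would want to supply here.
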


\begin{proof}
The equivalence of (ii) and (iii) is immediate from Lemma \ref{l algorithm facts}.

We now prove the equivalence of (iii) and (iv). Let $(d_1, x,P,Q)$ be the tuple after the first pass of Algorithm \ref{a cat insertion} run on  $\creading(T)$.  Then  $P$ is equal to the first column of $T$ implying
\be
\label{e words cat initial block}
T_{1^{\lambda'_1}} = U_{1^{\lambda'_1}} \iff P_{1^{\lambda'_1}} = U_{1^{\lambda'_1}} \iff |Q| \geq \lambda'_1.
\ee
If any (all) of these conditions fails, then (iii) and (iv) do not hold, so we may assume these conditions hold.
Then we have the following chain of equivalences
\refstepcounter{equation}
\label{e word catab equivalences}
\begin{enumerate}[label={(\theequation.\alph{*})}]
\item Property (iii) holds.
\item $\sgnp(x P_1 P_2 \dots P_{|P|-l_1}) = \sgnp(n + U_{1, \text{east}})$ and $x P_1 P_2 \dots P_{|P|-l_1}$ is $\widehat{\lambda'}$-word catabolizable.
\item $\sgnp(x P_1 P_2 \dots P_{|P|-l_1}) = \sgnp(n + U_{1, \text{east}})$ and $\creading((n + T^*_{1, \text{east}}) T^*_{1, \text{west}})$ is $\widehat{\lambda'}$-word catabolizable.
\item $(n + T^*_{1, \text{east}}) T^*_{1, \text{west}}$ is $(n + U_{1, \text{east}}, 1^{\lambda_1-1})$-column catabolizable.
\item $T^*_{1, \text{east}} (\ng n + T^*_{1, \text{west}})$ is $(U_{1, \text{east}}, 1^{\lambda_1-1})$-column catabolizable.
\item Property (iv) holds,
\end{enumerate}
where $\widehat{\lambda'} = (\lambda'_2, \lambda'_3, \dots, \lambda'_{\lambda_1})$ and $T^*$ is the skew subtableau of $T$ obtained by removing $T_{1^{l_1}}$.
Note that if we replace the tuple $(d_1, x, P, Q)$ after the first pass with  $(d_1, x P_1 P_2 \dots P_{|P|-l_1}, P_{1^{l_1}}, Q_{1^{l_1}})$, nothing changes in the remainder of the algorithm except the indexing of steps; in particular, the recording tableau after each subsequent pass does not change.
The equivalence of (\ref{e word catab equivalences}.a) and (\ref{e word catab equivalences}.b) follows from
this observation, Proposition \ref{p sign insertion algorithm facts} (j), and the assumption that $U$ is stackable.
Statements (\ref{e word catab equivalences}.b) and (\ref{e word catab equivalences}.c) are equivalent by Lemma \ref{l algorithm facts} as  $x = \creading(n + T^*_{1, \text{east}})$ and $P_1 P_2 \dots P_{|P|-l_1} = \creading(T^*_{1, \text{west}})$. Statements (\ref{e word catab equivalences}.c) and (\ref{e word catab equivalences}.d) are equivalent by induction. The equivalence of (\ref{e word catab equivalences}.d), (\ref{e word catab equivalences}.e), and (\ref{e word catab equivalences}.f) is clear.

The proof that (i) implies (ii) is similar to the proof that (iii) and (iv) are equivalent.  We may assume that the conditions in (\ref{e words cat initial block}) hold.  There is a chain of implications similar to  the  chain of equivalences above.  The difference occurs when we know $\reading((n + T^*_{l_1, \text{south}}) T^*_{l_1, \text{north}})$ is $\widehat{\lambda'}$-word catabolizable by induction.  Let $X = T^*_{l_1, \text{north}}$ and
\[x' := \reading(n+X_{1,\text{east}})\reading(n + T^*_{l_1, \text{south}})\reading(X_{1,\text{west}}),\]
which is the word of the tuple after the first pass of the algorithm run on $\reading(T)$.  The key fact to check is that $\reading((n + T^*_{l_1, \text{south}}) T^*_{l_1, \text{north}})$ is $\widehat{\lambda'}$-word catabolizable implies the same for $x'$.  To see this, note that if $w \tto \pi w$ is a corotation-edge with $w_{n-1} < w_n$, then  $w$'s being $\widehat{\lambda'}$-word catabolizable implies the same for $\pi w$.  Since a step of Algorithm \ref{a cat insertion} can be rephrased as a composition of Knuth transformations and corotation-edges of the above form, the fact follows.

If any of (i)-(iv) holds, then (ii) holds, so there is a path of Knuth transformations and corotation-edges from $\reading(T)$ to $\reading(\sgnp(T))$.

If any of (i)-(iv) holds, then (iv) holds.  Suppose that $Q$ is a tableau that appears in the inductive verification that $T$ is $(U, 1^{\lambda_1})$-column catabolizable, i.e.  $Q_{1^{\lambda'_c}}$ is the  $c$-th column of $U$ and $\ccat_{1^{\lambda'_c}}(Q)$ is  $(U_{c,\text{east}}, 1^{\lambda_1 - c})$-column catabolizable. Since $U$ is stackable, the entries in $Q^*_{1,\text{west}}$ are larger than those in $U_{c,\text{west}}$. Thus the word
\[\creading(U_{c,\text{east}})\creading(\ccat_{1^{\lambda'_c}}(Q))\]
can be obtained from
\[\creading(U_{c-1,\text{east}})\creading(Q)\]
by following a sequence of rotation-edges, Knuth transformations, and ascent-edges in reverse.  Iterating this argument shows that there is a sequence of Knuth transformations, corotation-edges, and ascent-edges from $\reading(U)$ to $\reading(T)$, completing the proof that any of (i)-(iv) implies (v).
\end{proof}

\begin{definition}
\label{d ccp copy stackable}
If  $U$ is stackable, redefine the dual Garsia-Procesi cocyclage poset copy $\gpda{U}{\sgnp(U)}$ to be the ccp on the set of AT satisfying conditions (ii)-(iv) of Conjecture \ref{cj cat equivalences}.  This is conjecturally the same as $\gpda{U}{\sgnp(U)}$ from Definition \ref{d dual gp ccp copy} in the case $U$ is stackable.
\end{definition}

\begin{remark}
\label{r definitions gp dual agree}
For $U = \dual{G}_\lambda$, this definition agrees with  the definition of $\gpda{\dual{G}_\lambda}{\dual{G}_{1^n}}$ given in  \textsection\ref{ss atom categories}.  The only way this could fail is if
$\gpda{U}{\sgnp(U)} \not \subseteq \fsp(\cinv) = \dsw$ $(U = \dual{G}_\lambda)$.  We know this inclusion to hold however, because  if $T$ satisfies (iv) of Conjecture \ref{cj cat equivalences}, then it satisfies (v) of this conjecture, implying  $T \in \csqa{G_{(n)}}{G_{1^n}}$. But we know that this csq is equal to $\cinv$.
\end{remark}

In the case $U$ is stackable, the ``easy half'' of Conjecture \ref{cj gp dual copies} (b) is immediate from the new definition of $\gpda{U}{\sgnp(U)}$ and Proposition \ref{p cat equivalences}:

\begin{proposition}
\label{p q sgn q contains q sgn q}
If $U$ is stackable, then $\fsp(\csqa{U}{\sgnp(U)}) \supseteq \gpda{U}{\sgnp(U)}$.
\end{proposition}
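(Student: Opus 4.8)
The plan is to deduce this directly from Proposition \ref{p cat equivalences}: the new (stackable) definition of $\gpda{U}{\sgnp(U)}$ in Definition \ref{d ccp copy stackable} is tailored exactly so that membership in it forces condition (v) of Conjecture \ref{cj cat equivalences}, and that condition is precisely what is needed to place the corresponding cell inside the cellular subquotient $\csqa{U}{\sgnp(U)}$. So, fix an $AT$ $T \in \gpda{U}{\sgnp(U)}$. By Definition \ref{d ccp copy stackable} it satisfies conditions (ii)--(iv) of Conjecture \ref{cj cat equivalences}, and hence, by Proposition \ref{p cat equivalences}, also condition (v): there is a sequence of Knuth transformations and corotation-edges from $\reading(T)$ to $\reading(\sgnp(U))$, and a sequence of Knuth transformations, corotation-edges, and ascent-edges from $\reading(U)$ to $\reading(T)$ (as produced by Algorithm \ref{a cat insertion}, each corotation-edge runs from $\C_v$ to $\C_{\pi v}$ and each ascent-edge from $\C_v$ to $\C_{sv}$ with $s \in S$, $sv > v$).

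Next I would translate these two sequences into relations in the partial order $\klo{\pH}$ on the left $\pW$-cells of $\eH$ (for the $\pH$-action). A Knuth transformation is a left star operation with $* \subseteq S$, so it preserves the insertion tableau and therefore leaves its argument in the same left $\pW$-cell (Corollary \ref{c restrict cells2}). A corotation-edge $\C_{\pi v} \klo{\pH} \C_v$ and an ascent-edge $\C_{sv} \klo{\pH} \C_v$ ($s \in S$, $sv > v$) are genuine relations in $\klo{\pH}$ on $\eH$, since $\pi$ and $\C_s$ lie in $\pH$ (see \textsection\ref{ss easyedges}), and they descend to cocyclage-edges, respectively ascent-induced edges, on cells. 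Walking the first sequence from $\reading(T)$ down to $\reading(\sgnp(U))$ therefore yields $\Gamma_{\sgnp(U)} \klo{\pH} \Gamma_T$, and walking the second from $\reading(U)$ down to $\reading(T)$ yields $\Gamma_T \klo{\pH} \Gamma_U$.

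Finally I would invoke convexity. A $\pH$-cellular subquotient of $\eH$ is a cellular submodule of a cellular quotient, hence corresponds to a union of left $\pW$-cells of $\eW$ that is convex for $\klo{\pH}$: if $\Gamma_1 \klo{\pH} \Gamma_2 \klo{\pH} \Gamma_3$ with $\Gamma_1, \Gamma_3$ in the subquotient, then so is $\Gamma_2$. Since $\csqa{U}{\sgnp(U)}$ contains both $\Gamma_U$ and $\Gamma_{\sgnp(U)}$ and we have shown $\Gamma_{\sgnp(U)} \klo{\pH} \Gamma_T \klo{\pH} \Gamma_U$, convexity forces $\Gamma_T \subseteq \csqa{U}{\sgnp(U)}$, i.e. $T \in \fsp(\csqa{U}{\sgnp(U)})$. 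As $T$ was arbitrary this gives $\gpda{U}{\sgnp(U)} \subseteq \fsp(\csqa{U}{\sgnp(U)})$ as sets; since in both cases the cocyclage-poset structure is the one generated by the cocyclage-edges with both ends in the set, this is the asserted inclusion in CCP. The real content of the argument is entirely contained in Proposition \ref{p cat equivalences}; the only place needing any care is the middle step — verifying that each of the three kinds of edge occurring in condition (v) translates, with the correct orientation, into a relation of $\klo{\pH}$ between cells — together with the routine observation that $\pH$-cellular subquotients of $\eH$ are exactly the $\klo{\pH}$-convex unions of left $\pW$-cells.
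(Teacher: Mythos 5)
Your proof is correct and essentially matches the paper's one-line argument that the result is immediate from Definition~\ref{d ccp copy stackable} and Proposition~\ref{p cat equivalences}. You have simply spelled out the implicit step: condition (v) of Conjecture~\ref{cj cat equivalences} sandwiches $\Gamma_T$ between $\Gamma_{\sgnp(U)}$ and $\Gamma_U$ in $\klo{\pH}$, and convexity of a $\pH$-cellular subquotient then forces $\Gamma_T \subseteq \csqa{U}{\sgnp(U)}$.
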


\begin{example}\label{ex dual gp ccp}
Figure \ref{f q sgn q} depicts the ccp $\gpda{U}{\sgnp(U)}$ for $U = \setlength{\cellsize}{11pt} \small \tableau{1&3&4&5\\2}$.
\begin{figure}
\begin{pspicture}(0pt,0pt)(500pt,400pt){\tiny
\hoogte=9pt
\breedte=9pt
\dikte=0.2pt

\newdimen\horizcent
\newdimen\ycor
\newdimen\xcor
\newdimen\horizspace
\newdimen\temp

\horizcent=185pt
\ycor=375pt
\advance\ycor by 0pt
\horizspace=50pt
\xcor=\horizcent
\temp=50pt
\multiply \temp by 0 \divide \temp by 2
\advance\xcor by -\temp
\rput(\xcor,\ycor){\rnode{v15h1}{\begin{Young}
1&3&4&5\cr
2\cr
\end{Young}}}
\advance\xcor by \horizspace
\advance\ycor by -55pt
\horizspace=50pt
\xcor=\horizcent
\temp=50pt
\multiply \temp by 1 \divide \temp by 2
\advance\xcor by -\temp
\rput(\xcor,\ycor){\rnode{v25h2}{\begin{Young}
1&3&4\cr
2&15\cr
\end{Young}}}
\advance\xcor by \horizspace
\rput(\xcor,\ycor){\rnode{v25h3}{\begin{Young}
1&3&4\cr
2\cr
15\cr
\end{Young}}}
\advance\xcor by \horizspace
\advance\ycor by -55pt
\horizspace=50pt
\xcor=\horizcent
\temp=50pt
\multiply \temp by 2 \divide \temp by 2
\advance\xcor by -\temp
\rput(\xcor,\ycor){\rnode{v35h4}{\begin{Young}
1&3&15\cr
2&14\cr
\end{Young}}}
\advance\xcor by \horizspace
\rput(\xcor,\ycor){\rnode{v35h5}{\begin{Young}
1&3&15\cr
2\cr
14\cr
\end{Young}}}
\advance\xcor by \horizspace
\rput(\xcor,\ycor){\rnode{v35h6}{\begin{Young}
1&3\cr
2&15\cr
14\cr
\end{Young}}}
\advance\xcor by \horizspace
\advance\ycor by -55pt
\horizspace=50pt
\xcor=\horizcent
\temp=50pt
\multiply \temp by 2 \divide \temp by 2
\advance\xcor by -\temp
\rput(\xcor,\ycor){\rnode{v45h7}{\begin{Young}
1&14&15\cr
2\cr
13\cr
\end{Young}}}
\advance\xcor by \horizspace
\rput(\xcor,\ycor){\rnode{v45h8}{\begin{Young}
1&3\cr
2&14\cr
25\cr
\end{Young}}}
\advance\xcor by \horizspace
\rput(\xcor,\ycor){\rnode{v45h9}{\begin{Young}
1&3\cr
2\cr
14\cr
25\cr
\end{Young}}}
\advance\xcor by \horizspace
\advance\ycor by -65pt
\horizspace=50pt
\xcor=\horizcent
\temp=50pt
\multiply \temp by 1 \divide \temp by 2
\advance\xcor by -\temp
\rput(\xcor,\ycor){\rnode{v55h10}{\begin{Young}
1&14\cr
2&25\cr
13\cr
\end{Young}}}
\advance\xcor by \horizspace
\rput(\xcor,\ycor){\rnode{v55h11}{\begin{Young}
1&14\cr
2\cr
13\cr
25\cr
\end{Young}}}
\advance\xcor by \horizspace
\advance\ycor by -65pt
\horizspace=50pt
\xcor=\horizcent
\temp=50pt
\multiply \temp by 0 \divide \temp by 2
\advance\xcor by -\temp
\rput(\xcor,\ycor){\rnode{v65h12}{\begin{Young}
1&25\cr
2\cr
13\cr
24\cr
\end{Young}}}
\advance\xcor by \horizspace
\advance\ycor by -65pt
\horizspace=50pt
\xcor=\horizcent
\temp=50pt
\multiply \temp by 0 \divide \temp by 2
\advance\xcor by -\temp
\rput(\xcor,\ycor){\rnode{v75h13}{\begin{Young}
1\cr
2\cr
13\cr
24\cr
35\cr
\end{Young}}}
\advance\xcor by \horizspace
}
\ncline[nodesep=1pt]{->}{v15h1}{v25h3}
\ncline[nodesep=1pt]{->}{v25h2}{v35h5}
\ncline[nodesep=1pt]{->}{v25h2}{v35h6}
\ncline[nodesep=1pt]{->}{v25h3}{v35h6}
\ncline[nodesep=1pt]{->}{v35h4}{v45h7}
\ncline[nodesep=1pt]{->}{v35h4}{v45h8}
\ncline[nodesep=1pt]{->}{v35h5}{v45h9}
\ncline[nodesep=1pt]{->}{v35h6}{v45h7}
\ncline[nodesep=1pt]{->}{v45h7}{v55h11}
\ncline[nodesep=1pt]{->}{v45h8}{v55h10}
\ncline[nodesep=1pt]{->}{v45h9}{v55h11}
\ncline[nodesep=1pt]{->}{v55h10}{v65h12}
\ncline[nodesep=1pt]{->}{v55h11}{v65h12}
\ncline[nodesep=1pt]{->}{v65h12}{v75h13}
\end{pspicture}
\caption{ The ccp copy $\gpda{U}{\sgnp(U)}$ for $U$ the tableau in the top row. All cocyclage-edges are drawn.}
\label{f q sgn q}
\end{figure}
\end{example}

We are now in a position to state the generalization of Corollary \ref{c cinv} that uses the full power of Theorem \ref{t factoriz}. For $\lambda \in \pY_+$ and $u_2 \in \pW$ such that $\invlr(u_2) \in \ds$, put
\be N_{\lambda,u_2}:=A \{s_{\lambda}(\y) \C_{u_1 w_0 u_2}:u_1 \in \ds\}. \ee

\begin{theorem}
\label{t coinvariant copies}
Suppose $w \in \eW$ is maximal in its coset $W_f w$ and let $w = w_0 y^\beta u'$ with $\beta \in \pY_+$, $\invlr(u') \in \ds$ be the factorization of Proposition \ref{p two-sided primitive}. Let $U$ be the single-row tableau $P(w)$. Then
\begin{list}{\emph{(\alph{ctr})}} {\usecounter{ctr} \setlength{\itemsep}{1pt} \setlength{\topsep}{2pt}}
\item $N_{\beta,u'}$ and $A\{\C_{uw} : u \in \ds \}$ are equal and are cellular subquotients of $\pH$.
\item $N_{\beta,u'}$ is isomorphic to $\cinv$ in $\cs$ (and therefore isomorphic to any $N_{\lambda,u_2}$).
\item $\fsp(N_{\beta,u'}) = \sgnpa{\sgnp(U)}$ and is equal to $\gpda{U}{\sgnp(U)}$, with the old Definition \ref{d dual gp ccp copy}.
\item $\fsp(N_{\beta,u'})$ is equal to $\gpda{U}{\sgnp(U)}$, with the new Definition \ref{d ccp copy stackable}.
\item $N_{\beta,u'} = \csqa{U}{\sgnp(U)}$.
\end{list}
\end{theorem}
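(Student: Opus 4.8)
The plan is to reduce the entire statement to the already-established case $N_{0,e}=\cinv$ (Corollary \ref{c cinv}) by building an isomorphism $N_{\beta,u'}\cong\cinv$ in $\cs$ and transporting all structure along it. For part (a), note first that since $w$ is maximal in $W_f w$ its word is weakly increasing, so $U=P(w)$ has shape $(n)$ and $w\in\lrcelllong{(n)}\cap\pW$; moreover $w=e\cdot w_0y^\beta\cdot u'$ and, for each $u\in\ds$, $uw=u\cdot w_0y^\beta\cdot u'$ is the factorization of Proposition \ref{p two-sided primitive} applied to $uw\in\lrcelllong{(n)}\cap\pW$ (a short check that the product is reduced and $u,\invlr(u')\in\ds$). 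Applying Theorem \ref{t factoriz} to $uw$, and with empty $\lambda$ to $uw_0u'$, gives $\C_{uw}=s_\beta(\y)\lC_u\C_{w_0}\rC_{u'}=s_\beta(\y)\C_{uw_0u'}$, so comparison with the definition of $N_{\beta,u'}$ yields $N_{\beta,u'}=A\{\C_{uw}:u\in\ds\}$. That this is a cellular subquotient is proved together with (b).

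For part (b), the passage from $\cinv=A\{\C_{uw_0}:u\in\ds\}$ to $N_{\beta,u'}$ is a composition of operations each of which is an isomorphism in $\cs$. Since $\dsw$ is connected by cocyclage-edges, it lies in a single left $\eW$-cell (Proposition \ref{p connectivity left cell eW}), so Proposition \ref{p star ops give isomorphism} applies to $\cinv$; since $\invlr(u')\in\ds$, one then shows—in the manner of Shi's analysis of iterated star operations \cite{Shi}, using Lemma \ref{l pivpi} to supply a convenient reduced word and checking the descent hypotheses of Proposition \ref{p star ops give isomorphism} at each stage—that passing from $\{\C_{uw_0}\}_{u\in\ds}$ to $\{\C_{uw_0u'}\}_{u\in\ds}$ is a sequence of right star operations and right multiplications by $\pi$, identifying $\cinv$ in $\cs$ with a cellular subquotient $N_{0,u'}$. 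Next, since symmetric functions in the Bernstein generators are central in $\eH$, hence in $\pH$, multiplication by $s_\beta(\y)$ is an injective $\pH$-module endomorphism of $\pH$, and by the identity of part (a) it carries $N_{0,u'}$ onto $A\{\C_{uw}:u\in\ds\}=N_{\beta,u'}$ sending canonical basis to canonical basis. Hence $N_{\beta,u'}\in\cs$, which finishes (a), and $N_{\beta,u'}\cong\cinv$ in $\cs$; as every $N_{\lambda,u_2}$ arises this way, they are mutually isomorphic.

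For parts (c)--(e), let $\theta:\cinv\xrightarrow{\sim}N_{\beta,u'}$ be the isomorphism just built. Because $\pi\in\pH$, every morphism in $\cs$ intertwines left multiplication by $\pi$, so $\theta$ preserves corotation-edges, hence cocyclage-edges on cells, and $\fsp(N_{\beta,u'})$ is isomorphic in CCP to the connected ccp $\fsp(\cinv)$. As $\theta$ is assembled from right star operations, right $\pi$-multiplications and central multiplications, Proposition \ref{p sign insertion algorithm facts}(c),(e),(h),(i) shows $\sgnp$ is transported compatibly; in particular the unique single-column cell $\Gamma_{\dual{G}_{1^n}}$ of $\cinv$ is sent to $\Gamma_{\sgnp(U)}$ (here $U$ is single-row, hence stackable, and $\sgnp(U)=\sgnp(\creading(U))$ is its stacked column). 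With Proposition \ref{p connectivity left cell eW} and the connectedness just noted, this gives $\fsp(N_{\beta,u'})=\sgnpa{\sgnp(U)}$. The identification with $\gpda{U}{\sgnp(U)}$ in the sense of the new Definition \ref{d ccp copy stackable} then follows from Proposition \ref{p cat equivalences}, using that every tableau labeling a cell of $N_{\beta,u'}$ is $(U,1^n)$-column catabolizable—inherited from $\cinv$ through $\theta$, since by Proposition \ref{p cat equivalences}(v) catabolizability is detected by paths of Knuth transformations, corotation- and ascent-edges, all respected by $\theta$; the identification with the old Definition \ref{d dual gp ccp copy} holds because the set $\sgnrec{(n)}$ of occurring $\sgnq$-values is likewise transported by $\theta$. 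Finally, for (e): $\Gamma_U=\{\C_w\}\subseteq N_{\beta,u'}$ because $e\in\ds$, and $\Gamma_{\sgnp(U)}\subseteq N_{\beta,u'}$ by the above, so $\csqa{U}{\sgnp(U)}\subseteq N_{\beta,u'}$; conversely $N_{\beta,u'}\cong\cinv=\csqa{G_{(n)}}{G_{1^n}}$ (Remark \ref{r definitions gp dual agree}) is the minimal cellular subquotient containing its unique single-row and single-column cells, which $\theta$ identifies with $\Gamma_U$ and $\Gamma_{\sgnp(U)}$, so $N_{\beta,u'}\subseteq\csqa{U}{\sgnp(U)}$ and equality holds.

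The heart of the matter is the claim in (b) that the passage from $\cinv$ to $N_{0,u'}$ really is a sequence of right star operations and right $\pi$-multiplications: one must verify that the descent-set hypotheses of Proposition \ref{p star ops give isomorphism} can be arranged at each stage, and this is precisely where the hypothesis $\invlr(u')\in\ds$ enters. A secondary difficulty is the ``hard'' inclusion needed in (c)--(d)—that every affine tableau $T$ with $\sgnp(T)=\sgnp(U)$ is $(U,1^n)$-column catabolizable—which I would deduce by pulling the statement back to the known case of $\cinv$ along $\theta$ rather than manipulating the sign insertion algorithm directly.
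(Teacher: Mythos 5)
The overall reduction to $\cinv$ is in the right spirit, but your route to cellularity and to parts (c)–(d) has concrete gaps that the paper avoids by a structurally different argument.

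\textbf{Cellularity in (a)--(b).} You establish cellularity of $N_{\beta,u'}$ as a corollary of (b), and (b) in turn rests on two steps, both of which have holes. First, the claim that one can pass from $\{\C_{uw_0}\}_{u\in\ds}$ to $\{\C_{uw_0u'}\}_{u\in\ds}$ by a chain of right star operations and right $\pi$-multiplications is asserted, not proved -- you acknowledge this as the ``heart of the matter,'' but no chain is exhibited and no check of the descent hypotheses of Proposition \ref{p star ops give isomorphism} is offered. Second, even granting that $N_{0,u'}\in\cs$, the step ``multiplication by $s_\beta(\y)$ carries $N_{0,u'}$ onto $N_{\beta,u'}$ sending canonical basis to canonical basis, hence $N_{\beta,u'}\in\cs$'' is not a valid inference. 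Multiplication by a central element is an injective $\pH$-endomorphism of $\pH$, but the image of a cellular submodule under such a map need not be a cellular submodule; the fact that it sends the particular elements $\C_{uw_0u'}$ to canonical basis elements $\C_{uw}$ (via Theorem \ref{t factoriz}) does not by itself exhibit the image as a quotient of cellular submodules. The paper sidesteps both difficulties: it introduces the filtration $N_{\supseteq\lambda,u_2}\supseteq N_{\supsetneq\lambda,u_2}$, shows directly from Theorem \ref{t factoriz} and the Littlewood--Richardson rule that $N_{\supseteq\lambda}$ and $N_{\supsetneq\lambda}$ are cellular submodules (so that $N_\lambda$ is a csq), observes that $N_{\supseteq\textbf{0},u_2}=N_{\supseteq\textbf{0},\idelm}\rC_{u_2}$ is a cellular submodule, and then obtains $N_{\lambda,u_2}$ as the intersection of two csq's. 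For (b), it defines the isomorphism $f: N_{\supseteq\textbf{0},\idelm}\to N_{\supseteq\lambda,u_2}$ directly by $\C_{w_0}\mapsto s_\lambda(\y)\C_{w_0 u_2}$ on the cyclic module $\pH e^+$ and passes to quotients; no star operations appear.

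\textbf{Parts (c)--(d).} For the equality $\fsp(N_{\beta,u'})=\sgnpa{\sgnp(U)}$, you only supply one inclusion (connectedness and Proposition \ref{p connectivity left cell eW} give $\fsp(N_{\beta,u'})\subseteq\sgnpa{\sgnp(U)}$). The reverse inclusion is the nontrivial one, and connectedness alone does not give it. The paper's argument is that any $v$ with $\sgnp(v)=\sgnp(U)$ lies in $\lrcelllong{(n)}\cap\pW$ and therefore belongs to some $N_{\beta',u''}$, on which $\sgnp$ is constant, forcing $U'=U$. For (d), your proposal to ``pull the statement back to $\cinv$ along $\theta$'' via Proposition \ref{p cat equivalences}(v) does not go through: part (v) of that proposition is a necessary consequence of catabolizability, not a characterization (the equivalence is only Conjecture \ref{cj cat equivalences}), and catabolizability is a combinatorial property of the tableau that is not an invariant of the $\cs$-structure, so it cannot be transported along $\theta$. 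The paper instead proves (d) directly: for $v\in\sgnpa{\sgnp(U)}$, the poset $\wposet(v)$ is a total order (and is invariant under the moves that the sign insertion algorithm performs), from which $1^n$-word catabolizability follows.

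\textbf{Part (e).} Your inclusion $\csqa{U}{\sgnp(U)}\subseteq N_{\beta,u'}$ is fine, but for the reverse inclusion you invoke minimality of $\cinv$ and transport it along $\theta$; the paper instead deduces $N_{\beta,u'}\subseteq\csqa{U}{\sgnp(U)}$ from (d) together with Proposition \ref{p q sgn q contains q sgn q}, which does not require arguing about minimality in $\cinv$.

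In short: your reduction-to-$\cinv$ plan can be carried out, but the two load-bearing claims of (b) are unproved (and the second is not a valid inference as stated), the reverse inclusion in (c) is missing, and the pullback argument for (d) cannot work because catabolizability is not a $\cs$-invariant. The paper's proof is genuinely different in its mechanics: it constructs $N_{\lambda,u_2}$ as an intersection of cellular subquotients via the $N_{\supseteq\lambda}$-filtration, defines the isomorphism of (b) abstractly on the cyclic module $\pH e^+$, and handles (c)--(d) with a direct combinatorial argument via $\wposet$.
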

\begin{proof}
The equality of $N_{\beta,u'}$ and $A\{\C_{uw} : u \in \ds \}$ is immediate from Theorem \ref{t factoriz}. For $\lambda \in \pY_+$ and $u_2 \in \pW$ such that $\invlr(u_2) \in \ds$, define
 \[ N_{\supseteq \lambda,u_2} := \bigoplus_{\mu \supseteq \lambda} N_{\mu,u_2},\ N_{\lambda}:= \bigoplus_{\invlr(u_2) \in \ds} N_{\lambda, u_2}, \]
\[ \ N_{\supseteq \lambda} := \bigoplus_{\mu \supset \lambda} N_{\mu}, \ \text{ and } N_{\supsetneq \lambda} := \bigoplus_{\mu \supsetneq \lambda} N_{\mu}.\]
By Theorem \ref{t factoriz} and the Littlewood-Richardson rule,  $N_{\supseteq \lambda}$ and $N_{\supsetneq \lambda}$ are submodules of $\pH$.  Thus $N_{\lambda} = N_{\supseteq \lambda} / N_{\supsetneq \lambda}$ is a cellular subquotient of  $\pH$.
Moreover, as  $\pH e^+ \cong N_{\supseteq \textbf{0}, \idelm}$ is a submodule of the left $\pH$-module $\pH$, $N_{\supseteq \textbf{0},u_2} = N_{\supseteq \textbf{0},\idelm} \rC_{u_2}$ is as well, where the equality is by Theorem \ref{t factoriz}. Since the intersection of two cellular subquotients is a cellular subquotient, $N_{\lambda, u_2} = N_{\supseteq \textbf{0},u_2} \cap N_{\lambda}$ is a cellular subquotient of $\pH$. This proves (a).

For (b), define the map $f: N_{\supseteq \textbf{0}, \idelm} \to N_{\supseteq \lambda,u_2}$ by requiring $\C_{w_0} \mapsto s_\lambda(\y) \C_{w_0 u_2}$. This implies $f(\C_{u w_0})= s_\lambda(\y) \C_{u w_0 u_2}$ for all  $u \in \ds$ and  $f(N_{\supsetneq \textbf{0},\idelm}) \subseteq N_{\supsetneq \lambda,u_2}$ by Theorem \ref{t factoriz}.  Thus $f$ gives rise to the isomorphism
\be N_{\textbf{0}, \idelm} = N_{\supseteq \textbf{0}, \idelm}/N_{\supsetneq \textbf{0},\idelm}\xrightarrow{\cong} N_{\supseteq \lambda,u_2} /N_{\supsetneq \lambda,u_2} = N_{\lambda,u_2}.\ee This proves (b) as $N_{\textbf{0}, \idelm} = \cinv$.

For statement (c), first note that $\fsp(\cinv)$ is equal to $\gpda{G_{(n)}}{\sgnp(G_{(n)})}$ (old definition) by definition. By (b), $\fsp(N_{\beta,u'}) \cong \fsp(\cinv)$. Then by the same argument as in the proof of Proposition \ref{p sign insertion algorithm facts} (h) and (i), we have $\fsp(N_{\beta,u'}) = \gpda{U}{\sgnp(U)}$ (old definition). This ccp is certainly contained in $\sgnpa{\sgnp(U)}$. To see that it is equal, suppose $v \in \sgnpa{\sgnp(U)}$. Then $v \in \lrcelllong{(n)} \cap \pW$, so $v$ belongs to some $N_{\beta',u''} = (\fsp)^{-1} (\gpda{U'}{\sgnp(U')})$, where $U' = P(w_0 y^{\beta'} u'')$; but then $\sgnp(U) = \sgnp(v) = \sgnp(U')$, implying $U = U'$. Hence $v \in \fsp(N_{\beta,u'}) = \gpda{U}{\sgnp{U}}$, as desired.

We certainly have that $\gpda{U}{\sgnp(U)}$ (new definition) is contained in $\sgnpa{\sgnp(U)}$, so given (c), we must show this containment is an equality. Suppose $v \in \sgnpa{\sgnp(U)}$.  We need to show that $v$ is $1^n$-word catabolizable. Our assumption implies $v \in \lrcelllong{(n)} \cap \pW$. Now the poset $\wposet(v)$ is a total order on $[n]$, which is equal to $\wposet(v')$ for any $v'$ obtainable from $v$ by Knuth transformations and corotations. It follows that after the $j$-th pass of Algorithm \ref{a cat insertion} run on $v$, the insertion tableau contains at least the $j$ numbers whose residues correspond to the $j$ smallest elements of this total order. This shows that $v$ is $1^n$-word catabolizable, proving (d).

Since $N_{\beta,u'}$ contains the left cells $\Gamma_U$ and $\Gamma_{\sgnp(U)}$, $N_{\beta,u'} \supseteq \csqa{U}{\sgnp(U)}$. Then by (d) and Proposition \ref{p q sgn q contains q sgn q}, $\fsp(N_{\beta,u'}) = \gpda{U}{\sgnp(U)} \subseteq \fsp(\csqa{U}{\sgnp(U)})$. This implies $N_{\beta,u'} \subseteq \csqa{U}{\sgnp(U)}$, hence (e) is proved.
\end{proof}

\begin{corollary}
Conjecture \ref{cj gp dual copies} holds for $U$ of shape $(n)$ and Conjecture \ref{cj eW-cells and dual GP copies} holds for the lowest two-sided  $\eW$-cell of $\eW$ ($= \lrcelllong{(n)}$).
\end{corollary}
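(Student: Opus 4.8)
The plan is to extract everything from Theorem \ref{t coinvariant copies}, which already settles the shape-$(n)$ case essentially on the nose. First I would record the trivial identifications $\dual{G}_{(n)} = G_{(n)}$, $\dual{G}_{1^n} = G_{1^n}$, so that $\csqa{\dual{G}_{(n)}}{\dual{G}_{1^n}} = \csqa{G_{(n)}}{G_{1^n}} = \cinv$ (the last equality by Remark \ref{r definitions gp dual agree} / Corollary \ref{c cinv}). Now fix any PAT $U$ of shape $(n)$ and set $w := \reading(U)$. Its window word is strictly increasing, so $w$ is maximal in its coset $W_f w$ (Proposition \ref{p affinewordlessthan}); its entries are positive, so $w \in \pW$; and $\wposet(w)$ is a chain, so $w \in \lrcelllong{(n)}$; moreover $P(w) = U$. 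Hence Theorem \ref{t coinvariant copies} applies with this $w$: parts (e) and (c)/(d) give $\csqa{U}{\sgnp(U)} = N_{\beta,u'}$ and $\fsp(\csqa{U}{\sgnp(U)}) = \gpda{U}{\sgnp(U)}$ (under either Definition \ref{d dual gp ccp copy} or \ref{d ccp copy stackable}, as a single-row tableau is trivially stackable), while part (b) gives $N_{\beta,u'} \cong \cinv$ in \cs. These are exactly parts (b) and (c) of Conjecture \ref{cj gp dual copies} for shape $(n)$, and part (a) follows by applying the functor $\fsp$ to the isomorphism of part (c), as noted in the remark following that conjecture.

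For Conjecture \ref{cj eW-cells and dual GP copies} with $\nu = (n)$, I would first prove part (a). Every single-column PAT $P$ automatically has $\partition(P) = (n)$ (again $\wposet(\reading(P))$ is a chain), so the hypothesis $\Gamma_P \in \lrcelllong{(n)}$ is vacuous and part (a) concerns all single-column PAT. The reading word $\reading(P)$ has strictly decreasing window; running the sign insertion algorithm on a decreasing word involves only insertion steps (each new letter exceeds all current entries and is appended to the bottom of the single column), so $\sgnp(\reading(P)) = P$. On the other hand, by Proposition \ref{p two-sided primitive} the element $\reading(P) \in \lrcelllong{(n)} \cap \pW$ factors as $u_1 \cdot w_0 y^\beta \cdot u_2$ with $u_1, \invlr(u_2) \in \ds$, $\beta \in \pY_+$, and $w := w_0 y^\beta u_2$ is again maximal in its coset and lies in $\lrcelllong{(n)} \cap \pW$ (since $w_0 y^\beta$ is the longest element of its double coset); thus Theorem \ref{t coinvariant copies}(a),(e) gives $\reading(P) \in A\{\C_{u w} : u \in \ds\} = N_{\beta,u_2} = \csqa{U}{\sgnp(U)}$ for the single-row tableau $U = P(w)$. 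Therefore $P \in \fsp(\csqa{U}{\sgnp(U)}) = \sgnpa{\sgnp(U)}$, whence $P = \sgnp(\reading(P)) = \sgnp(U)$. This proves that $U \mapsto \sgnp(U)$ maps single-row PAT onto single-column PAT; injectivity is immediate because $\gpda{U}{\sgnp(U)} \cong \fsp(\cinv)$ has a unique maximal element and shapes are preserved by morphisms in Cocyclage Posets, so $\sgnp(U) = \sgnp(U')$ forces $U = U'$. Finally $\sgnpa{P} = \sgnpa{\sgnp(U)} = \gpda{U}{\sgnp(U)} = \gpda{U}{P}$, giving part (a).

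With Conjecture \ref{cj gp dual copies} established for shape $(n)$ and Conjecture \ref{cj eW-cells and dual GP copies}(a) for $\lrcelllong{(n)}$, parts (b), (c), (d) follow by rerunning the argument of the proposition stating that Conjecture \ref{cj gp dual copies} together with Conjecture \ref{cj eW-cells and dual GP copies}(a) imply the rest of Conjecture \ref{cj eW-cells and dual GP copies}, checking that it invokes its hypotheses only at $\nu = (n)$. Concretely: the decomposition $\lrcelllong{(n)} = \bigsqcup_{\sh(U)=(n)} \gpda{U}{\sgnp(U)}$ comes from Proposition \ref{p connectivity left cell eW} and part (a); the isomorphisms $f_{UU'}$ between the pieces exist because each $\gpda{U}{\sgnp(U)}$ is isomorphic to $\cinv$; the canonical-right-cell input is the standard fact about $\lrcelllong{(n)}$; and the counting input is the $\nu=(n)$ case of Shi's count of right cells ($n!$) together with $|\sgnrec{(n)}| = n!$ via the standardization map.

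I expect the only step needing genuine care is the surjectivity in part (a): one must be certain the csq copies $N_{\beta,u_2}$ exhaust $\lrcelllong{(n)}\cap\pW$ and that each is covered by Theorem \ref{t coinvariant copies} (i.e.\ that $w_0 y^\beta u_2$ is maximal in its coset, which rests on $w_0 y^\beta$ being longest in $W_f(w_0 y^\beta)$). Everything else is bookkeeping assembling Theorem \ref{t coinvariant copies}, Proposition \ref{p two-sided primitive}, and the already-proved implication proposition.
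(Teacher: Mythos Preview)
Your approach is correct and is exactly what the paper intends: the corollary is meant to be read off from Theorem~\ref{t coinvariant copies} together with the implication proposition (Conjecture~\ref{cj gp dual copies} + Conjecture~\ref{cj eW-cells and dual GP copies}(a) $\Rightarrow$ the rest of Conjecture~\ref{cj eW-cells and dual GP copies}), specialized to $\nu=(n)$. Your detailed unpacking of Conjecture~\ref{cj eW-cells and dual GP copies}(a) via the factorization of Proposition~\ref{p two-sided primitive} is the right way to see surjectivity of $U \mapsto \sgnp(U)$.

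There is, however, one genuine error. You claim that every single-column PAT $P$ automatically has $\partition(P)=(n)$ because ``$\wposet(\reading(P))$ is a chain.'' This is false: for a decreasing window word the first clause in the definition of $\prec$ never fires, and the second clause need not either. Concretely, for $n=2$ the single-column PAT with entries $1,2$ has reading word $2\ 1$, which is the identity of $\eW$; one checks $\wposet(\idelm)$ is an antichain, so $\partition=(1,1)\neq(2)$. (Indeed $\idelm\notin\lrcelllong{(n)}$ since it admits no reduced factorization through $w_0$.) So the hypothesis $\Gamma_P\in\lrcelllong{(n)}$ in Conjecture~\ref{cj eW-cells and dual GP copies}(a) is \emph{not} vacuous, and your surjectivity conclusion should read ``onto single-column PAT in $\lrcelllong{(n)}$,'' not onto all single-column PAT.

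Fortunately this error is cosmetic for the proof itself: you immediately invoke Proposition~\ref{p two-sided primitive}, whose hypothesis is precisely $\reading(P)\in\lrcelllong{(n)}\cap\pW$, and everything downstream is fine. Just delete the false aside, carry the hypothesis $\Gamma_P\in\lrcelllong{(n)}$ through explicitly, and adjust the target of the bijection accordingly.
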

\subsection{}
\label{ss SSYT in PAT}
Here we show that the dual of $\ccp(\Tab(\eta))$ is strongly isomorphic to a subposet of $\ccp(AT)$ (see Definition \ref{d strongly isomorphic}).

Given an $r$-composition $\eta$ of $n$, let $\eS_\eta = \eS_{\eta_1} \times \eS_{\eta_2} \times \cdots \times \eS_{\eta_r}$. Let $l_j = \sum_{i=1}^{j-1} \eta_i$, $j \in [r+1]$ be the partial sums of $\eta$ and $B_j$ be the interval $[l_j+1,l_{j+1}]$ for $j \in [r]$. Recall that the notation $\rsd{a}$ denotes the element of $[n]$ congruent to the integer $a$ mod $n$.

Define $\alpha: \eS_n \to \eS_\eta$ by $\alpha(w) = (x^1, x^2, \dots, x^r)$, where (identifying $\eS_n$ and $\eS_\eta$ with affine words and $r$-tuples of affine words) $x^j$ is determined as follows: let $\tilde{w}$ be the word $w_1\ w_2\ \dots\ w_n$ sorted in increasing order, i.e., $\tilde{w} = w_0 (\rj{w}{S})$. Let $w^j$, $j \in [r]$ be the subword of the word of $w$ consisting of the numbers in $\{\tilde{w}_i : i \in B_j\}$; $w$ is a shuffle of its subwords $w^1, \dots, w^r$. Then $x^j$ is determined by $w^j$ and the conditions
\refstepcounter{equation}
\begin{enumerate}[label={(\theequation.\roman{*})}]
\item $\rsd{x_1^j}\ \rsd{x_2^j}\ \dots\ \rsd{x_{\eta_j}^j} \text{ has the same relative order as } \rsd{w_1^j}\ \rsd{w_2^j}\ \dots\ \rsd{w_{\eta_j}^j},$
\item $\frac{x^j_i - \rsd{x^j_i}}{\eta_j} = \frac{w^j_i - \rsd{w^j_i}}{n}, \ i \in [\eta_j].$
\end{enumerate}

\begin{example}
Suppose $n = 9$ and $\eta = (2,2,1,4)$. Then for the given $w$, $\alpha(w)$ is computed below.
\[ \begin{array}{rrcl}
&w &=& 13\ 44\ 9\ 31\ 12\ 25\ 46\ 7\ 8 \\
&(w^1, w^2, \dots, w^r) &= &(7\ 8, 9\ 12, 13, 44\ 31\ 25\ 46) \\
\alpha(w) = & (x^1, x^2, \dots, x^r) &= &(1\ 2, 2\ 11, 11, 42\ 31\ 23\ 44)
\end{array} \]
\end{example}

For $D = (D_1, D_2, \dots,  D_r)$ an ordered partition of the set $[n]$ with $|D_i| = \eta_i$, define another map $\br{\alpha}_D :\eS_n \to \eS_\eta$ by $\br{\alpha}_D(w) = (x^1, x^2, \dots, x^r)$, where $x^j$ is defined in terms of $w^j$ as above and $w^j$ is the subword of $w$ consisting of those $w_i$ such that $\rsd{w_i} \in D_j$ ($i \in [n]$).

\begin{example}
\label{ex bar alpha definition}
Suppose $n = 9$ and $\eta = (2,2,1,4)$ and $D = \{1, 5\} \sqcup \{2,9\} \sqcup \{6\} \sqcup \{3,4,7,8\}$. Then $\br{\alpha}_D$ is computed below.
\begin{align*}
w &= 13\ 44\ 9\ 31\ 12\ 25\ 46\ 7\ 8 \\
(w^1, w^2, \dots, w^r)&= (31\ 25, 9\ 12, 46, 13\ 44\ 7\ 8) \\
(x^1, x^2, \dots, x^r) &= (31\ 22, 2\ 11, 41, 11\ 42\ 3\ 4)
\end{align*}
\end{example}

Recall that $\word(\eta)$ denotes the set of words of content $\eta$. Let $\dual{\word(\eta)}$ be the set of semistandard words of content $\eta$, but with the convention that if two numbers in such a word are the same, then the one on the left is slightly bigger; let $\dual{\Tab(\eta)}$ be the set of transposed tableaux of content $\eta$, or equivalently, the set of insertion tableaux of $\dual{\word(\eta)}$; let $\ccp(\dual{\Tab(\eta)})$ be the cocyclage poset obtained from $\ccp(\Tab(\eta))$ by transposing tableaux and reversing edges. Let $W' \subseteq \eW$ consist of those $w$ such that $\alpha(w) = (x^1, \dots, x^r)$ with $x^j$ decreasing.
For $w \in W'$, denote by $\beta(w)$ the unique element of $\dual{\word(\eta)}$ such that $\lj{w}{S}$ and $\beta(w)$ have the same relative order. This defines a map $\beta: W' \to \dual{\word(\eta)}$.

For $D = (D_1, D_2, \dots,  D_r)$ an ordered partition of the set $[n]$ with $|D_i| = \eta_i$, define a map  $\br{\beta}_D: \eW \to \word(\eta)$, similar to $\br{\alpha}_D$, as follows: $\br{\beta}_D(w)$ has word $x = x_1\ x_2\ \dots\ x_n$, where $x_i = j$ if and only if $\rsd{w_i} \in D_j$.

\begin{example}
If $n = 9$, $\eta = (2, 2, 1, 4)$, $D_j = [l_j + 1, l_{j+1}]$ with $l_j = \sum_{i=1}^{j-1} \eta_i$, and $w$ is as shown, then $\beta(w)$ and $\br{\beta}_D(w)$ follow
\[ \begin{array}{rrccccccccc}
&w =& 2& 23& 48& 35& 47& 1& 46& 39& 14,\\
&\lj{w}{S} =&2&4&9&5&8&1&7&6&3,\\
\beta(w) = &\br{\beta}_D(w) =& 1&2&4&3&4&1&4&4&2.
\end{array}\]
\end{example}

Let $W'_D := \{w \in W': \beta(w) = \br{\beta}_D(w)\}$. A \emph{weak corotation} of a semistandard word is the same as a corotation except we allow the number 1 to be corotated.

\begin{proposition}
\label{p beta commutes with kt and pi}
The map $\beta$ (or $\br{\beta}_D$)  restricted to $W'_D$ commutes with left-multiplication by $s \in S$, preserves left descent sets, and commutes with weak corotations.
\end{proposition}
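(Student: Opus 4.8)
The plan is to reduce the statement to the pointwise description of $\br{\beta}_D$ together with the word–multiplication formulas of \textsection\ref{ss words of eW}, using that $\beta$ and $\br{\beta}_D$ coincide on $W'_D$ by the very definition of that set. The only feature of $\br{\beta}_D$ really needed is that the $i$-th letter of $\br{\beta}_D(w)$ depends only on $\rsd{w_i}$. Granting this, the two commutation claims become automatic for $\br{\beta}_D$ regarded as a map on all of $\eW$: for $s_i\in S$ (so $i\in[n-1]$), formula (\ref{e wordmult3}) says the affine word of $s_i w$ is that of $w$ with the entries in positions $i$ and $i+1$ interchanged, and interchanging two entries changes no residues, so $\br{\beta}_D(s_i w)$ is $\br{\beta}_D(w)$ with its $i$-th and $(i+1)$-st letters interchanged, i.e.\ $\br{\beta}_D(s_i w)=s_i\,\br{\beta}_D(w)$; and formula (\ref{e wordmult7}) says the word of $\pi w$ is $1.w_n\ w_1\cdots w_{n-1}$, and since $\rsd{1.w_n}=\rsd{w_n}$ the word $\br{\beta}_D(\pi w)$ is obtained from $\br{\beta}_D(w)$ by moving the last letter to the front, which is precisely a weak corotation.

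Next I would transfer these facts from $\br{\beta}_D$ on $\eW$ to $\beta$ on $W'_D$. When $w$ and its image ($s_i w$, resp.\ $\pi w$) both lie in $W'_D$ this is immediate, since $\beta=\br{\beta}_D$ there. The work is to check that the cases in which the image leaves $W'_D$ are exactly the degenerate ones: using the description of $\alpha$ through the sorted subwords $w^j$ and the condition that each $x^j$ be decreasing, one sees that $w\in W'_D$ with $s_i w\notin W'_D$ forces positions $i,i+1$ to lie in the same block $w^j$ and $\beta(w)_i=\beta(w)_{i+1}$, so that $s_i$ interchanges two equal letters of the semistandard word $\beta(w)$ and acts trivially on it; similarly $\pi w\notin W'_D$ forces the letter cycled to the front to be a $1$, which is exactly the situation in which a weak corotation is applied to a word whose cyclic shift falls outside $\dual{\word(\eta)}$. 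In both cases the asserted identity holds verbatim once the operation on $\dual{\word(\eta)}$ is read with the usual conventions.

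For ``preserves left descent sets'' I would use $\lj{w}{S}$ as the bridge. By Proposition \ref{p affinewordlessthan}, $L(w)\cap S=\{s_i\in S: w_i>w_{i+1}\}$; from the parabolic factorization $w=\lj{w}{S}\cdot\rJ{w}{S}$ with $\rJ{w}{S}\in\leftexp{S}{\eW}$ one gets $L(w)\cap S=L(\lj{w}{S})$, and by Proposition \ref{p wordlj} the word of $\lj{w}{S}$ is the standardization of the window word of $w$. On the other side, the left descent set of a word $v\in\dual{\word(\eta)}$ is $\{s_i:v_i\ge v_{i+1}\}$ and coincides with the descent set of the standardization of $v$ (the left-tie-bigger convention is arranged precisely so that equal adjacent letters become descents). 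For $w\in W'_D$ the defining property of $\beta$ says that $\lj{w}{S}$ is the standardization of $\beta(w)$, so $L(\beta(w))=L(\lj{w}{S})=L(w)\cap S$, and the same holds for $\br{\beta}_D$ since the two maps agree on $W'_D$.

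The main obstacle is the middle step: one must verify carefully, from the definitions of $W'$ and $W'_D$, that left–multiplication by an $s\in S$ and weak corotation either keep an element inside $W'_D$ or eject it in precisely the way corresponding to a trivial operation on the associated semistandard word, so that the two ``commutes with'' assertions are literally true; the descent-set claim and the $\br{\beta}_D$ computations themselves are routine once this bookkeeping is in place.
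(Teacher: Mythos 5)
Your proof is correct in substance and addresses more boundary detail than the paper's one-line argument, but it distributes the work between the two descriptions of the map differently. The paper's proof exploits the fact that each of the two definitions makes one claim transparent: the definition of $\beta$ through $\lj{w}{S}$ (i.e.\ $\beta(w)$ and $\lj{w}{S}$ have the same relative order) immediately gives the commutation with $s\in S$ \emph{and} the preservation of left descent sets (since $\lj{(sw)}{S}=s\,\lj{w}{S}$ and $L(w)\cap S=L(\lj{w}{S})$), while the residue description of $\br{\beta}_D$ immediately gives the weak-corotation commutation under $\pi$; the equality $\beta=\br{\beta}_D$ on $W'_D$ then transports each property to the other map. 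You instead derive \emph{both} commutations from the residue description of $\br{\beta}_D$ on all of $\eW$, and transfer them to $\beta$ on $W'_D$, which is why you must then argue about what happens when $sw$ or $\pi w$ leaves $W'_D$. That extra bookkeeping is not needed under the paper's split: for the $s$-claim, the relative-order description of $\beta$ already bypasses it. Your boundary-case reasoning for $s$ is nonetheless correct — on $W'_D$ the sorting blocks $w^j$ and the residue blocks $D_j$ carve up the window identically (since $\beta(w)=\br{\beta}_D(w)$), so $sw\notin W'$ happens exactly when $\beta(w)_i=\beta(w)_{i+1}$ and the swap is vacuous on words — but this is the kind of case the proposition, as used, is content to elide. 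Your descent-set argument via $\lj{w}{S}$ and standardization with the left-tie-bigger convention is exactly the paper's. In short: same ingredients, with your approach routing the $s$-claim through $\br{\beta}_D$ at the cost of an (ultimately correct) edge-case discussion that the paper avoids by using $\beta$'s own definition.
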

\begin{proof}
Since $\beta(w)$ and $\lj{w}{S}$ have the same relative order, $\beta$ certainly commutes with left-multiplications by $s \in S$ and preserves left descent sets. On the other hand, $\br{\beta}_D$ certainly commutes with weak corotations.
\end{proof}

Also write $\beta$ for the map of tableau $P(w) \mapsto P(\beta(w))$, which is well-defined by the proposition.

Let $\eta_+$ be the partition obtained by sorting the parts of $\eta$.  Note that the word of  $\pi^{k}$,  $k \in \ZZ$ is decreasing and that  $A\{\pi^k:k \in [i,j]\}$,  $0 \leq i \leq j$, is a cellular subquotient of $\eH$.  Suppose  $\textbf{a} \in \ZZ^r$ and $\lceil a_j/\eta_j \rceil +1< \lceil a_{j+1}/\eta_{j+1} \rceil$ for all $j \in [r-1]$.  Next suppose an affine word $w$ satisfies
\be \label{e ssyt form w}
\alpha(w)=\br{\alpha}_D(w)  = (\pi^{a_1}, \pi^{a_2}, \dots, \pi^{a_r}).\ee
Then  $w$ is a shuffle of decreasing subwords  $w^j$ of the form
\be
\label{e a word}
a.c_k \ a.c_{k-1} \cdots a.c_1 \ (a-1).c_{\eta_j} \ (a-1).c_{\eta_{j-1}} \cdots (a-1).c_{k+1}, \ee
where $a = \lceil a_j / \eta_j \rceil$, $D_j = \{c_1,\ldots,c_{\eta_j}\}$, and $c_1 < \dots < c_{\eta_j}$.
It is then not hard to see that
\refstepcounter{equation}
\label{e ssyt tab def}
\begin{enumerate}[label={(\theequation)}]
\item there is a unique tableau  $U$ of shape $\lambda = (\eta_+)'$ such that any word inserting to $U$ satisfies  (\ref{e ssyt form w}).
\end{enumerate}
For example, with $\eta = (3,2,2,1)$,  $D_j = [l_j + 1, l_{j+1}]$, and $\textbf{a} = (0,2,4,3)$, the resulting $U$ is the  tableau in the first row and second column of Figure \ref{f part of 3 gpd atoms}.

\begin{definition}
For a cocyclage poset $\AA$, set $\word(\AA) = \{w:P(w) \in \AA\}$.
\end{definition}

\begin{lemma}
\label{p large a}
With $U$ defined in terms of $\textbf{a}$ as above and if  $\eta = \eta_+$, then  $U$ is stackable.  If, in addition, $a_1/\eta_1 << a_2/\eta_2 << \dots << a_r/\eta_r$, then $\word(\gpda{U}{\sgnp(U)}) \subseteq W'_D$.
\end{lemma}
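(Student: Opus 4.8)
The plan is to prove the two assertions separately: stackability by a direct computation with the explicit form of $U$, and the containment $\word(\gpda{U}{\sgnp(U)})\subseteq W'_D$ by reducing, via Definition \ref{d ccp copy stackable} and Proposition \ref{p cat equivalences}, to a statement about Algorithm \ref{a cat insertion} and then exploiting the extreme separation of the $a_j/\eta_j$.

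\emph{Stackability.} By (\ref{e a word}) and (\ref{e ssyt tab def}), the $c$-th column of $U$, read top to bottom, is the increasing rearrangement of the word $w^c$ of (\ref{e a word}); its entries therefore all satisfy $n(a^*_c-1)<e\le na^*_c+n$ with $a^*_c:=\lceil a_c/\eta_c\rceil$, and their residues are exactly the elements of $D_c$. Since $\eta=\eta_+$, the columns of this filling have weakly decreasing heights $\eta_1\ge\eta_2\ge\cdots$, so it is a genuine tableau of shape $\lambda=\eta'$. The hypothesis $\lceil a_c/\eta_c\rceil+1<\lceil a_{c+1}/\eta_{c+1}\rceil$ gives $a^*_{c+1}\ge a^*_c+2$, so after shifting column $c$ of $U$ by $n(c-1)$ and column $c{+}1$ by $nc$, the largest shifted entry of column $c$ is at most $n(a^*_c+c)$ while the smallest shifted entry of column $c{+}1$ is at least $n(a^*_c+c+1)+1$; hence the stacked filling $P=\sgnp(U)$ is strictly increasing and $U$ is stackable. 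The same estimate shows that the $n$ entries of $\sgnp(U)$ split, by column of $U$, into $r$ blocks occupying pairwise disjoint ranges of values, the $c$-th block consisting of the $\eta_c$ entries with residues in $D_c$; under the stronger hypothesis $a_1/\eta_1\ll\cdots\ll a_r/\eta_r$ consecutive blocks are separated by gaps much larger than $n$.

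\emph{The containment.} Since $U$ is stackable, Definition \ref{d ccp copy stackable} and the equivalence of (ii)--(iv) in Conjecture \ref{cj cat equivalences} (Proposition \ref{p cat equivalences}) identify $\gpda{U}{\sgnp(U)}$ with the set of AT $T$ having $\sgnp(T)=\sgnp(U)$ and $\reading(T)$ $\eta$-word catabolizable (note $\lambda'=\eta$). Both conditions are Knuth-invariant (Proposition \ref{p sign insertion algorithm facts}(f) and Lemma \ref{l algorithm facts}), so $w\in\word(\gpda{U}{\sgnp(U)})$ exactly when $\sgnp(w)=\sgnp(U)$ and $w$ is $\eta$-word catabolizable, and it suffices to show such a $w$ satisfies the two defining conditions of $W'_D$: (1) $w_i<w_{i'}$ whenever $\rsd{w_i}\in D_j$, $\rsd{w_{i'}}\in D_{j'}$ with $j<j'$, and (2) for each $j$, the subword of $w$ on positions $i$ with $\rsd{w_i}\in D_j$ is decreasing. (Unwinding the definitions of $\alpha$, $\beta$ and $\br\beta_D$, the pair (1)+(2) is exactly the condition ``$w\in W'$ and $\beta(w)=\br\beta_D(w)$''.) I would prove (1) and (2) by induction on $r$, mirroring the peeling argument in the chain (\ref{e word catab equivalences}): the first pass of Algorithm \ref{a cat insertion}, being required to end with recording tableau of length $\ge\eta_1$ while $\sgnp(w)=\sgnp(U)$, must leave the $\eta_1$ smallest entries of $\sgnp(U)$ --- namely column $1$ of $U$, the entries in the lowest block, with residues in $D_1$ --- at the top of the insertion column; because that block is separated from the rest of $\sgnp(U)$ by a gap much larger than $n$, this should force the $D_1$-residue entries of $w$ to appear in decreasing order, at low levels, and strictly below every other entry of $w$, giving (1) and (2) for $j=1$. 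Removing $T_{C_1}$ and applying the inductive hypothesis to $\ccat_{C_1}$, exactly as in (\ref{e word catab equivalences}), finishes the induction.

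The main obstacle is the claim just made: extracting, from $\eta$-word catabolizability together with $\sgnp(w)=\sgnp(U)$ and the huge gaps, the precise relative order and relative levels of the $w_i$ within the $D_1$-block. The subtlety is that the $w_i$ need not themselves lie at the levels of the matching entries of $\sgnp(U)$ --- only the shifted values $w_i+n\mu_i$ do, where $\mu\in\pY$ is as in Proposition \ref{p sign insertion algorithm facts}(c). So one must analyze a single pass of Algorithm \ref{a cat insertion} directly: track which entries can be bumped out of the insertion column and recycled by corotations, and use that each recycled entry is raised by only $n$ --- far less than a block-gap --- to conclude that the $D_1$-block of $w$ cannot interact with the higher blocks and must already be correctly sorted. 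This bookkeeping is the crux; everything else is routine.
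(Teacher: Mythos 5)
Your stackability argument is fine and matches the paper's (which just observes the column reading word of $U$ has the form $w^1 w^2 \cdots w^r$ with each $w^j$ as in (\ref{e a word}) and that the gap hypothesis makes the stacked column strictly increasing); you spell out the arithmetic more explicitly, which is harmless.

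For the containment $\word(\gpda{U}{\sgnp(U)}) \subseteq W'_D$, however, you take a genuinely different route from the paper, and you have not actually closed it. You reduce to ``$\sgnp(w)=\sgnp(U)$ and $w$ is $\eta$-word catabolizable $\Rightarrow w \in W'_D$'' and then try to peel the $D_1$-block via an induction mirroring (\ref{e word catab equivalences}). But the key step --- that a pass of Algorithm \ref{a cat insertion}, constrained to end with recording tableau of length $\geq \eta_1$, forces the $D_1$-residue entries of $w$ to be decreasing and strictly below all other entries --- is exactly what you flag as ``the crux'' and leave unproved. That is a real gap, not a routine detail: $\eta$-word catabolizability is a statement about recording-tableau lengths after each pass, and extracting from it the required block-sorting of the original word $w$ (rather than of the entries after corotation shifts) is precisely the nontrivial bookkeeping. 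Your reduction also implicitly uses Definition \ref{d ccp copy stackable}, which for stackable $U$ is fine, but it is worth noting that this is not the machinery the paper invokes here.

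The paper's argument is both simpler and complete, and sidesteps the algorithm entirely. It observes that any $w \in \word(\gpda{U}{\sgnp(U)})$ is connected to $\reading(\sgnp(U))$ by a path of Knuth transformations and at most $\binom{n}{2}$ rotation-edges. Since $\reading(\sgnp(U))$ is a shuffle of the blocks $w^1,\ldots,w^r$ of (\ref{e a word}), since each rotation-edge moves a single window entry by exactly $n$, and since Knuth transformations do not change values at all, a bounded number of such moves changes any entry by at most $\binom{n}{2} n$. Under the hypothesis $a_1/\eta_1 \ll \cdots \ll a_r/\eta_r$ the inter-block gaps are much larger than this, so $w$ remains a shuffle of blocks of the form (\ref{e a word}), which is easily seen to be in $W'_D$. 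You would do well to replace your peeling induction by this global bound; it eliminates the bookkeeping you correctly identify as problematic.
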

\begin{proof}
With the present hypotheses, the column reading word of  $U$ is  $w^1 w^2 \dots w^r$, where $w^j$ is of the form (\ref{e a word}). Then  $\lceil a_j/\eta_j \rceil +1< \lceil a_{j+1}/\eta_{j+1} \rceil$ for all $j \in [r-1]$ implies  $U$ is stackable.

For the second statement observe that $\reading(\sgnp(U))$ is a shuffle of words $w^1,\ldots,w^r$, where  $w^j$ is of the form (\ref{e a word}) for  $a$ not too far from $\lceil a_j / \eta_j \rceil$.  Any word obtained from $\reading(\sgnp(U))$ by sequence of Knuth transformations and a small number of rotation-edges is also a shuffle of a similar form.  It is easy to see that such a shuffle belongs to $W'_D$.  For any $w \in \word(\gpda{U}{\sgnp(U)})$, there is a path from $\reading(\sgnp(U))$ to  $w$ consisting of Knuth transformations and at most  $\binom{n}{2}$ rotation-edges, hence the desired result.

\end{proof}

\begin{theorem}
\label{t ssyt in pat}
With $U$ defined in terms of $\textbf{a}$ as in (\ref{e ssyt tab def}) and if $\word(\gpda{U}{\sgnp(U)}) \subseteq W'_D$, then there exists a section $\beta': \dual{\word(\eta)} \to W'_D$ of $\beta$ with image $\word(\gpda{U}{\sgnp(U)})$. If, in addition, $\eta$ is a partition, then $\beta: \gpda{U}{\sgnp(U)} \to \ccp(\dual{\Tab(\eta)})$  is a strong isomorphism.
\end{theorem}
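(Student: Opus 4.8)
The plan is to prove both statements at once by showing that $\beta$ restricts to a bijection $\word(\gpda{U}{\sgnp(U)}) \to \dual{\word(\eta)}$ that intertwines the generating combinatorial operations of the two cocyclage posets. The section $\beta'$ is then defined as the inverse of this bijection; its image is $\word(\gpda{U}{\sgnp(U)})$ by construction, and $\beta \circ \beta' = \id$. Once $\eta$ is a partition, so that the cocyclage structure on $\dual{\Tab(\eta)}$ is generated directly by (weak) corotations rather than by reflection operators, the intertwining upgrades $\beta$ from a bijection to an isomorphism in CCP, and the strong isomorphism then follows from the very explicit nature of $\beta$ on words.

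\textbf{First steps.} First I would note that $\beta$ does carry $\word(\gpda{U}{\sgnp(U)})$ into $\dual{\word(\eta)}$: this is immediate from the hypothesis $\word(\gpda{U}{\sgnp(U)}) \subseteq W'_D$ and the definition of $\beta$ (for $w \in W'_D$, $\beta(w)$ is the word of $\dual{\word(\eta)}$ with the same relative order as $\lj{w}{S}$). Next, Proposition \ref{p beta commutes with kt and pi} supplies the structural compatibility: on $W'_D$ the map $\beta$ commutes with left multiplication by $s \in S$ and with weak corotations and preserves left descent sets, hence it takes Knuth transformations to Knuth transformations and corotation-edges to weak corotations. Since $\beta$ is shape-preserving on tableaux (because $P(\lj{w}{S}) = \lj{P(w)}{S}$ has the same shape as $\beta(P(w))$ and $\lj{P(w)}{S}$ has the same shape as $P(w)$), it descends to a shape-preserving, color-preserving map of cocyclage posets $\gpda{U}{\sgnp(U)} \to \ccp(\dual{\Tab(\eta)})$ -- i.e.\ a morphism in CCP. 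Here I would also check that the base points match, namely that $\beta(\sgnp(U))$ is the (unique) tableau of shape $1^n$ in $\ccp(\dual{\Tab(\eta)})$, the transpose of the single-row tableau of content $\eta$; this is a direct computation from the explicit shuffle form (\ref{e a word}) of the column reading word of $\sgnp(U)$.

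\textbf{Bijectivity.} The heart of the proof is that the above morphism is a bijection. For surjectivity I would use that $\ccp(\dual{\Tab(\eta)})$ is connected (as $\ccp(\Tab(\eta))$ is, by \cite{LS}) together with the connectedness of $\gpda{U}{\sgnp(U)}$: in the stackable case the latter follows from Proposition \ref{p cat equivalences}(v), which gives, from any element $T$, a path of Knuth transformations and corotation-edges down to $\sgnp(U)$. One then lifts a path in $\ccp(\dual{\Tab(\eta)})$ from the base tableau to an arbitrary $v \in \dual{\word(\eta)}$, step by step, to a path in $\word(\gpda{U}{\sgnp(U)})$ starting from $\reading(\sgnp(U))$; the point that makes this go through is that $\word(\gpda{U}{\sgnp(U)})$ is closed under Knuth transformations (it is a union of cells, by Corollary \ref{c restrict cells2}) and under those corotation-edges that project to genuine cocyclages, which is exactly the content of Lemma \ref{l algorithm facts} and the catabolizability arguments in the proof of Proposition \ref{p cat equivalences}. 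Injectivity I would then obtain from a cardinality count, $|\word(\gpda{U}{\sgnp(U)})| = \sum_T f^{\sh(T)} = \binom{n}{\eta_1,\dots,\eta_r} = |\dual{\word(\eta)}|$, where the first equality is Corollary \ref{c restrict cells2} and the middle equality comes from the identification (via the new Definition \ref{d ccp copy stackable} and Proposition \ref{p cat equivalences}) of $\gpda{U}{\sgnp(U)}$ with a dual Garsia--Procesi cocyclage poset whose dimension is known from the $\u=1$ results behind Theorem \ref{t main theorem gp} and Lascoux's standardization map; a bijective morphism of connected ccp is automatically injective once the cardinalities agree.

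\textbf{Strong isomorphism and the main obstacle.} For the final upgrade (with $\eta$ a partition) it remains to check that the uninsertion and insertion paths of a cocyclage $T \cc T'$ in $\gpda{U}{\sgnp(U)}$ coincide with those of $\beta(T) \cc \beta(T')$ in $\ccp(\dual{\Tab(\eta)})$. This is essentially automatic: on words $\beta$ replaces each letter $w_i$ by the index $j$ with $\rsd{w_i} \in D_j$ in an order-preserving way, so column insertion and uninsertion commute with $\beta$ verbatim -- a letter is bumped in the affine-word insertion precisely when the corresponding content letter is bumped. I expect the main obstacle to be the surjectivity/bijectivity step, specifically verifying that lifted cocyclages remain inside $\gpda{U}{\sgnp(U)}$ (equivalently, pinning down $|\word(\gpda{U}{\sgnp(U)})|$), because this is exactly where one must reconcile the catabolizability description of $\gpda{U}{\sgnp(U)}$ with the combinatorics of the sign insertion algorithm, the difficulty already flagged in Remark \ref{r sign insertion fail}; this is also why the result is cleanest, and the argument simplest, when one is content to assume $\word(\gpda{U}{\sgnp(U)}) \subseteq W'_D$ (guaranteed, for suitable $\textbf{a}$, by Lemma \ref{p large a}).
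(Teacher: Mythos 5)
Your overall skeleton (use Proposition \ref{p beta commutes with kt and pi} to show $\beta$ is compatible with Knuth transformations and corotations, then prove $\beta$ restricted to $\word(\gpda{U}{\sgnp(U)})$ is a bijection onto $\dual{\word(\eta)}$, then upgrade to a strong isomorphism) matches the paper's plan, but there are two genuine gaps.

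First, your injectivity argument leans on a cardinality identity $|\word(\gpda{U}{\sgnp(U)})| = \binom{n}{\eta_1,\dots,\eta_r}$ that you attribute to Theorem \ref{t main theorem gp} and the standardization map. But that theorem gives the cardinality only for $\gpda{\dual{G}_\lambda}{\dual{G}_{1^n}}$; extending it to the general stackable $U$ at hand is precisely Conjecture \ref{cj gp dual copies}(a), which is not available. The paper sidesteps this entirely: injectivity and surjectivity are each proved by an induction on degree distance from $\sgnp(U)$ (resp.\ from the decreasing word), using only the connectedness coming from Proposition \ref{p cat equivalences}(v) and the fact that $\beta$ commutes with Knuth transformations and weak corotations. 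No cardinality count is needed.

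Second, and more seriously, the strong isomorphism is not "essentially automatic," and the insertion/uninsertion-path check is not where the work is. What you actually need, once you know $\beta$ is a bijection, is both directions of compatibility with cocyclages: (i) if $w, \pi w$ are both in $\word(\gpda{U}{\sgnp(U)})$ then $\beta(\pi w)$ is a genuine corotation (not a corotation of a $1$) of $\beta(w)$; and (ii) for \emph{every} corotation $x\to x'$ in $\dual{\word(\eta)}$, $\beta'(x') = \pi\beta'(x)$. You haven't addressed (i) at all; the paper disposes of it quickly by observing that if $\beta(w)$ ended in a $1$ then $\sgnp(\pi w) \neq \sgnp(w)$, contradicting membership in $\gpda{U}{\sgnp(U)}$. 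Statement (ii) is the real obstacle, and the reason it is not automatic is that $\beta'$ was defined only abstractly (by your bijectivity argument or the paper's induction), via paths of a \emph{special} form (those arising from the sign insertion algorithm), whereas (ii) must hold for \emph{every} corotation, not just those in the special paths. Commutation of $\beta$ with weak corotations does not settle this, because one must show $\pi\beta'(x) \in \word(\gpda{U}{\sgnp(U)})$, equivalently $\sgnp(\pi\beta'(x)) = \sgnp(U)$, and this does not follow formally from $x$ not ending in a $1$. The paper proves (ii) by introducing an explicit closed-form candidate $\beta''$ for $\beta'$, defined in terms of the charges of truncated words $x^\dagger|_{[j]}$, verifying $\beta'=\beta''$ by induction along the special paths, and then reading off (ii) from the explicit formula. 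Your proposal identifies "lifted cocyclages remain inside $\gpda{U}{\sgnp(U)}$" as a potential sticking point but then declares the remainder automatic; in fact this is where the paper spends most of the proof.
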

\begin{proof}
For any $T_1, T_2 \in \gpda{U}{\sgnp(U)}$, we will show by induction on $\deg(\sgnp(U))-\deg(T_i)$ that $\beta(T_1) = \beta(T_2)$ implies $T_1 = T_2$. The base case is $\sh(\beta(T_1)) = \sh(\beta(T_2)) = 1^n$. Since the only tableau of $\gpda{U}{\sgnp(U)}$ of shape $1^n$ is $\sgnp(U)$, we have $T_1 = \sgnp(U) = T_2$. For $\beta(T_1) = \beta(T_2)$ not of shape $1^n$, we use that every $w \in \word(\gpda{U}{\sgnp(U)})$ has a path of Knuth transformations and corotation-edges to $\reading(\sgnp(U))$.  Thus induction and Proposition \ref{p beta commutes with kt and pi} imply $T_1 = T_2$. This shows that $\beta$ restricted to $\gpda{U}{\sgnp(U)}$ is injective. Since every word of  $\dual{\word(\eta)}$ has a path of Knuth transformations and corotations to the decreasing word of  $\dual{\word(\eta)}$, a similar inductive argument shows that $\beta$ restricted to $\gpda{U}{\sgnp(U)}$ is surjective. Thus we can define $\beta'$ to be the inverse of $\beta$ restricted to $\gpda{U}{\sgnp(U)}$.

The second statement of theorem is a little tricky. The fact that $w$ and $\beta(w)$ have the same relative order together with Proposition \ref{p beta commutes with kt and pi} imply that $\beta$ is a strong isomorphism of cocyclage posets provided we check the following: (i) if $w, \pi w \in \word(\gpda{U}{\sgnp(U)})$, then $\beta(\pi w)$ is a corotation of  $\beta(w)$; (ii) if $x,x' \in \dual{\word(\eta)}$ and $x'$ is a corotation of x, then  $\beta'(x') = \pi \beta'(x)$.  Statement (i) would fail only if $\beta(w)$ ends in a 1, but this would imply $\sgnp(\pi w) \neq \sgnp(w)$, contradicting $w, \pi w \in \word(\gpda{U}{\sgnp(U)})$.  To prove (ii), we need a more explicit description of $\beta'$.

Note that a word $w \in \eW$ can be recovered uniquely from  $\br{\alpha}_D(w) \in \eS_\eta$ and  $\br{\beta}_D(w) \in \dual{\word(\eta)}$, and therefore $w \in W'_D$ can be recovered from $\br{\alpha}_D(w)$ and $\beta(w)$. For a word $x$, let $x|_{[j]}$ be the subword of $x$ obtained by removing from $w$ all numbers not in $[j]$. Also let $x^\dagger$ denote the reverse of the word $x$. We will need the notion of charge of a  semistandard word, which can be computed by the well-known circular-reading procedure (see, for instance, \cite[\textsection 3.6]{SW}).  The map $\beta'$ has the following description (which we temporarily denote by $\beta''$): given  $x \in \dual{\word(\eta)}$ define \[c_i:=\charge(x^\dagger|_{[j]}) - \charge(x^\dagger|_{[j-1]}), \ j \in [r],\] (charge of the empty word is defined to be $0$); then $w:= \beta''(x) \in  W'_D$ is determined by  $\beta(w) = x$ and  \[\br{\alpha}_D(w) = (\pi^{c_1},\ldots,\pi^{c_r}) \cdot \br{\alpha}_D(\reading(U))\] (this is a product in the group $\eS_\eta$).

There is a path of Knuth transformations, corotation-edges, and rotation-edges from  $\reading(U)$ to any element of $\word(\gpda{U}{\sgnp(U)})$, and by (i) and the remarks preceding it, this path is mapped by  $\beta$ to a path of Knuth transformations, corotations, and rotations in $\dual{\word(\eta)}$.  Then  $\beta' = \beta''$ is proved by showing that these maps agree on $x = \beta(\reading(U))$ and that if they agree on $x$, then they agree on $x'$, where  $x \tto x'$ is an edge in one of the paths just described.  These claims are straightforward to check.  To check it for corotations, suppose  $\beta(\pi w) = x'$ is a corotation of $\beta(w) = x$ (and is an edge in one of the paths just described) and $x_n = j$. Then
\[
\charge({x'}^\dagger|_{[i]}) - \charge(x^\dagger|_{[i]}) =
\begin{cases}
1 & \text{ if } i \geq j, \\
$0$ & \text{ if } i < j.
\end{cases}\]
On the other hand,
\be \label{e beta' beta''}
\br{\alpha}_D(\pi w) = (\idelm,\ldots,\pi,\ldots,\idelm) \cdot \br{\alpha}_D(w),\ee where  $\pi$ occurs in the $j$-th position.  Thus  $\beta'(x') = \pi \beta'(x)$ and  $\beta''(x')$ both map to the right hand side of (\ref{e beta' beta''}) under $\br{\alpha}_D$, hence  $\beta'(x') = \beta''(x')$.

The main point is that the path in $\word(\gpda{U}{\sgnp(U)})$ from $w$ to  $\reading(U)$ consists of corotation-edges and rotation-edges of a special form (those corresponding to corotation steps in the sign insertion algorithm), however by the check for corotations in the previous paragraph,  $\pi \beta''(x) = \beta''(x')$ for \emph{any} corotation $x \tto x'$.  This proves (ii).
\end{proof}

\begin{corollary}\label{c ssyt in pat}
If $\textbf{a}$ is as in Lemma \ref{p large a}, $U$ is defined in terms of  $\textbf{a}$ as in (\ref{e ssyt tab def}), and  $\eta = \eta_+$, then  \[ \beta: \gpda{U}{\sgnp(U)} \to \ccp(\dual{\Tab(\eta)}) \] is a strong isomorphism of cocyclage posets.
\end{corollary}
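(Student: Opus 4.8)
The statement follows by combining Lemma \ref{p large a} with Theorem \ref{t ssyt in pat}, so the plan is essentially to verify that the hypotheses of those two results are met and then chain them. First I would unpack the phrase ``$\textbf{a}$ as in Lemma \ref{p large a}'': it carries the standing requirement $\lceil a_j/\eta_j \rceil + 1 < \lceil a_{j+1}/\eta_{j+1} \rceil$ for all $j \in [r-1]$ (which is what legitimizes the construction (\ref{e ssyt tab def}) of the tableau $U$), together with the stronger spacing $a_1/\eta_1 \ll a_2/\eta_2 \ll \dots \ll a_r/\eta_r$. Since in addition $\eta = \eta_+$, Lemma \ref{p large a} applies verbatim and yields two facts: $U$ is stackable (so that $\gpda{U}{\sgnp(U)}$ is defined by Definition \ref{d ccp copy stackable}), and $\word(\gpda{U}{\sgnp(U)}) \subseteq W'_D$.

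With the inclusion $\word(\gpda{U}{\sgnp(U)}) \subseteq W'_D$ established and $\eta = \eta_+$ a partition, the hypotheses of Theorem \ref{t ssyt in pat} are exactly satisfied, and its second assertion states precisely that $\beta: \gpda{U}{\sgnp(U)} \to \ccp(\dual{\Tab(\eta)})$ is a strong isomorphism of cocyclage posets. This is the corollary.

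There is essentially no obstacle: the only point demanding attention is that the tableau $U$ named in the corollary is literally the same object occurring in Lemma \ref{p large a} and Theorem \ref{t ssyt in pat} — it is, in all three, the unique tableau of shape $(\eta_+)'$ all of whose inserting words satisfy (\ref{e ssyt form w}) — so the corollary is purely a matter of bookkeeping. All of the real content lives in the proof of Theorem \ref{t ssyt in pat}: the injectivity and surjectivity of $\beta$ restricted to $\gpda{U}{\sgnp(U)}$, proved by induction on $\deg(\sgnp(U)) - \deg(T)$ using that every word of $\word(\gpda{U}{\sgnp(U)})$ has a path of Knuth transformations and corotation-edges to $\reading(\sgnp(U))$ and that $\beta$ commutes with such moves (Proposition \ref{p beta commutes with kt and pi}); and the two corotation-compatibility checks that upgrade the resulting poset isomorphism to a \emph{strong} isomorphism, via the explicit circular-charge formula for the section $\beta'$. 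Nothing further need be done in the proof of the corollary itself.
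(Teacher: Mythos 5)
Your argument is correct and coincides with the paper's (which leaves the corollary unproved precisely because it is the immediate conjunction of Lemma \ref{p large a} and the second assertion of Theorem \ref{t ssyt in pat}). You have identified the only points that need checking — that the spacing hypothesis on $\textbf{a}$ delivers both stackability and $\word(\gpda{U}{\sgnp(U)}) \subseteq W'_D$, and that $\eta = \eta_+$ satisfies the partition hypothesis of the theorem — so nothing more is required.
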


\begin{conjecture}\label{cj ssyt in pat}
Corollary \ref{c ssyt in pat} holds for arbitrary $\eta$.
\end{conjecture}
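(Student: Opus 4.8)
\textbf{Proof proposal for Conjecture \ref{cj ssyt in pat}.}

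The plan is to reduce the arbitrary-content case to the partition-content case already handled in Corollary \ref{c ssyt in pat}, using the reflection-operator machinery that defines $\ccp(\Tab(\eta))$ for non-partition $\eta$ in terms of $\ccp(\Tab(\eta_+))$ (following \cite{SW}). The target object $\ccp(\dual{\Tab(\eta)})$ is, by construction, obtained from $\ccp(\dual{\Tab(\eta_+)})$ by applying the appropriate sequence of Bender--Knuth / Lascoux--Sch\"utzenberger reflection operators on the content side (and then transposing and reversing, as in the definition of $\dual{\Tab(\eta)}$ preceding Theorem \ref{t ssyt in pat}). So what must be produced is a stackable AT $U$ of shape $(\eta_+)'$ together with an identification of $\gpda{U}{\sgnp(U)}$ with the reflected poset, compatibly with $\beta$. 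First I would fix a composition $\eta$ with $\eta_+$ as in Lemma \ref{p large a}, take the tableau $U_+$ of shape $\lambda = (\eta_+)'$ furnished by (\ref{e ssyt tab def}) with a generic dominant $\textbf{a}$, and let $U$ be the AT of the same shape $\lambda$ obtained by permuting the residue-blocks $D_j$ according to the permutation sending $\eta$ to $\eta_+$; the content of $U$ is then governed by an ordered set partition $D$ adapted to $\eta$ rather than to $\eta_+$. The first technical step is to check that this $U$ is still ``stackable-after-reordering'' in the weak sense needed, and that $\word(\gpda{U}{\sgnp(U)}) \subseteq W'_D$ for this reordered $D$ when the $a_j/\eta_j$ are sufficiently separated — this is the natural analogue of Lemma \ref{p large a} and should go through by the same shuffle-of-(\ref{e a word})-words argument, since the separation hypothesis is on the \emph{values} $\lceil a_j/\eta_j\rceil$ and is insensitive to which residue block sits in which slot.

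The second and central step is to establish that $\beta$ (equivalently $\br\beta_D$) restricted to $\gpda{U}{\sgnp(U)}$ is a strong isomorphism onto $\ccp(\dual{\Tab(\eta)})$. The injectivity/surjectivity argument from the proof of Theorem \ref{t ssyt in pat} carries over verbatim: every word of $\word(\gpda{U}{\sgnp(U)})$ has a path of Knuth transformations and corotation-edges to $\reading(\sgnp(U))$, and every word of $\dual{\word(\eta)}$ has a path of Knuth transformations and weak corotations to the decreasing word of $\dual{\word(\eta)}$, so Proposition \ref{p beta commutes with kt and pi} plus induction on $\deg(\sgnp(U))-\deg(T)$ does it — and Proposition \ref{p beta commutes with kt and pi} is already stated for general $D$, not just $D_j = [l_j+1,l_{j+1}]$. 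The genuinely new content is verifying the two strong-isomorphism conditions (i) and (ii) at the end of that proof: (i) $\beta(\pi w)$ is a corotation of $\beta(w)$ whenever $w,\pi w\in\word(\gpda{U}{\sgnp(U)})$, and (ii) $\beta'(x')=\pi\beta'(x)$ for corotations $x\tto x'$ in $\dual{\word(\eta)}$. Condition (i) should again follow from the observation that failure forces $\beta(w)$ to end in $1$, which forces $\sgnp(\pi w)\neq\sgnp(w)$. For (ii), the explicit description of $\beta'$ via charge (the map $\beta''$ with $c_i = \charge(x^\dagger|_{[j]}) - \charge(x^\dagger|_{[j-1]})$) must be re-derived with the blocks $D_j$ in the $\eta$-order rather than the $\eta_+$-order; the circular-reading computation of charge is unaffected by this relabeling, and the key local identity $\charge({x'}^\dagger|_{[i]}) - \charge(x^\dagger|_{[i]}) = [i\geq j]$ for a corotation with $x_n = j$ is a purely combinatorial statement about semistandard words of content $\eta$, so it holds for general $\eta$.

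The main obstacle I anticipate is the compatibility with the reflection operators, i.e.\ making the reduction ``$\eta \rightsquigarrow \eta_+$'' precise on both sides simultaneously. On the tableau side, $\ccp(\dual{\Tab(\eta)})$ is \emph{defined} through reflection operators, so there one is on safe ground; but on the $\pH$-side I need to know that $\gpda{U}{\sgnp(U)}$ for the reordered $U$ is related to $\gpda{U_+}{\sgnp(U_+)}$ by exactly the matching sequence of operations. The cleanest way to get this is probably \emph{not} to chase reflection operators at all, but to prove directly — by the injectivity/surjectivity + (i) + (ii) argument above — that $\beta: \gpda{U}{\sgnp(U)} \to \ccp(\dual{\Tab(\eta)})$ is a strong isomorphism, taking the definition of $\ccp(\dual{\Tab(\eta)})$ as a black box and only using that $\beta$ intertwines Knuth transformations and (weak) corotations and preserves shape-transpose and degree. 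In other words, the strategy is to bypass the hard step by observing that Theorem \ref{t ssyt in pat}'s proof never actually used that $\eta$ was a partition \emph{except} in the very last sentence (``If, in addition, $\eta$ is a partition\ldots''), where partition-ness was invoked only to know that $\ccp(\dual{\Tab(\eta)})$ is literally the cocyclage poset on transposed tableaux; for general $\eta$ one must instead verify that the poset $\beta(\gpda{U}{\sgnp(U)})$, with its induced covering relations, coincides with the reflection-operator-defined $\ccp(\dual{\Tab(\eta)})$ — and this coincidence is exactly what the charge formula for $\beta''$, together with the known behavior of charge under reflection operators (\cite{SW}), is designed to deliver. So the real work is: (a) the analogue of Lemma \ref{p large a} for reordered $D$; (b) re-deriving the $\beta''$ charge formula in the reordered setting; (c) checking (i), (ii); and (d) matching the resulting poset against the reflection-operator definition, which I expect to be the most delicate bookkeeping but conceptually routine given \cite{SW}.
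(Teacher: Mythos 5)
This statement is a \emph{conjecture} in the paper (Conjecture \ref{cj ssyt in pat}); the paper itself offers no proof, only the remark that a proof would yield a reflection-operator-free computation of charge for non-partition content. So there is no ``paper proof'' to compare against, and any putative proof would be new mathematics, not a reconstruction.

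Your proposal is a sensible attack plan, and you correctly identify the two places where Theorem \ref{t ssyt in pat} and its Corollary \ref{c ssyt in pat} actually use $\eta = \eta_+$: the stackability of $U$ in Lemma \ref{p large a}, and the final matching of the image of $\beta$ against the reflection-operator-defined poset $\ccp(\dual{\Tab(\eta)})$. But as written, the proposal is a strategy outline, not a proof: each of the three or four genuinely new steps is asserted (``should go through,'' ``is unaffected by this relabeling,'' ``is a purely combinatorial statement, so it holds for general $\eta$,'' ``conceptually routine'') rather than established. Two gaps in particular would need real work. First, for non-partition $\eta$ the tableau $U$ produced by (\ref{e ssyt tab def}) need not be stackable, and without stackability the ``new'' Definition \ref{d ccp copy stackable} of $\gpda{U}{\sgnp(U)}$ does not apply; you would have to fall back on Definition \ref{d dual gp ccp copy}, which itself carries the hypothesis $\sgnq(\reading(U)) = \sgnq(\reading(\dual{G}_\lambda))$ that you have not verified in the reordered setting. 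This in turn blocks the use of Proposition \ref{p cat equivalences} (restricted to stackable $U$) and the proof of Lemma \ref{p large a}, whose argument explicitly reads off the column reading word of $U$ as a concatenation of the blocks $w^j$ in $\eta_+$-order. You flag a ``stackable-after-reordering'' workaround but do not define it or show it suffices for the shuffle argument. Second, and more fundamentally, $\ccp(\dual{\Tab(\eta)})$ for non-partition $\eta$ is \emph{defined} via reflection operators on $\ccp(\dual{\Tab(\eta_+)})$, so the ``bypass'' strategy you propose ultimately cannot avoid invoking how the reflection operators interact with charge, degree, and the covering structure; the local identity $\charge({x'}^\dagger|_{[i]}) - \charge(x^\dagger|_{[i]})$ at a corotation is a concrete statement you would need to re-prove for arbitrary content rather than appeal to as ``purely combinatorial.'' The injectivity/surjectivity step and the use of Proposition \ref{p beta commutes with kt and pi} (which is indeed stated for general $D$) carry over cleanly, as you say; those are not where the difficulty lies.
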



\begin{example}
\label{ex ssyt in PAT}
Suppose $n = 8$, $\eta = (3,2,2,1)$ and $D_j = [l_j + 1, l_{j+1}]$. Let $U, \tilde{U}, \beta(U)$ be the three tableaux in the first row of Figure \ref{f part of 3 gpd atoms} and let $T^{r,c}$ be the tableau in the $r$-th row and $c$-th column. In each column is a selection of tableaux from the isomorphic cocyclage posets $\gpda{U}{\sgnp(U)}$, $\gpda{\tilde{U}}{\sgnp(\tilde{U})}$, $\ccp(\dual{\Tab(\eta)})$ such that $T^{r,1} \leftrightarrow T^{r,2} \leftrightarrow T^{r,3}$ under the isomorphisms between these posets. In the first column, we see that $\beta_D = \beta$ on the second row but not on the third and there is a cocyclage-edge from $T^{2,1}$ to $T^{3,1}$. The $a_i$ are large enough so that $\beta(T^{r,2}) = \beta_D(T^{r,2}) = T^{r,3}$ for all $r$; this is in contrast to $\beta(T^{5,1}) \neq \beta_D(T^{5,1}) = T^{5,3}$. Also the cocyclage-edges $T^{4,1} \cc T^{5,1}$ and $T^{4,3} \cc T^{5,3}$ demonstrate that $\gpda{U}{\sgnp(U)}$ and $\ccp(\dual{\Tab(\eta)})$ are not strongly isomorphic.
\begin{figure}
\[
\footnotesize
\hoogte=11pt
\breedte=12pt
\dikte=0.2pt
\begin{array}{lll}
\begin{Young}
1&4&6&8\cr
2&5&7\cr
3\cr
\end{Young}&
\begin{Young}
1&14&26&38\cr
2&15&27\cr
3\cr
\end{Young}&
\begin{Young}
1&2&3&4\cr
1&2&3\cr
1\cr
\end{Young}\\
\begin{Young}
1&4&7\cr
2&5\cr
3&18\cr
16\cr
\end{Young}&
\begin{Young}
1&14&27\cr
2&15\cr
3&48\cr
36\cr
\end{Young}&
\begin{Young}
1&2&3\cr
1&2\cr
1&4\cr
3\cr
\end{Young}\\
\begin{Young}
1&5&7\cr
2&16&18\cr
3\cr
14\cr
\end{Young}&
\begin{Young}
1&15&27\cr
2&36&48\cr
3\cr
24\cr
\end{Young}&
\begin{Young}
1&2&3\cr
1&3&4\cr
1\cr
2\cr
\end{Young}\\
\begin{Young}
1&5&7\cr
2\cr
3\cr
14\cr
18\cr
26\cr
\end{Young}&
\begin{Young}
1&15&36\cr
2\cr
3\cr
24\cr
37\cr
48\cr
\end{Young}&
\begin{Young}
1&2&3\cr
1\cr
1\cr
2\cr
3\cr
4\cr
\end{Young}\\
\begin{Young}
1&5\cr
2&18\cr
3\cr
14\cr
17\cr
26\cr
\end{Young}&
\begin{Young}
1&15\cr
2&48\cr
3\cr
24\cr
37\cr
46\cr
\end{Young}&
\begin{Young}
1&2\cr
1&4\cr
1\cr
2\cr
3\cr
3\cr
\end{Young}
\end{array}
\]
\caption{A selection of tableaux from three isomorphic cocyclage posets.}
\label{f part of 3 gpd atoms}
\end{figure}
\end{example}

\begin{remark}
\label{r new charge computation}
The sign insertion algorithm and  Corollary \ref{c ssyt in pat} give an algorithm for computing charge of a semistandard word  $x$ of partition content. If Conjecture \ref{cj ssyt in pat} holds, then the sign insertion algorithm would give a way of computing charge for semistandard words not of partition content that avoids reflection operators.
%
\end{remark}

\subsection{}
\label{ss sw atoms}
Here we discuss Shimozono-Weyman atoms in more detail. Recall from \textsection\ref{ss atom categories} that the SW ccp $\swra{G_\lambda}{\eta}$ (resp. $\swca{\dual{G}_\lambda}{\eta}$) is the cocyclage poset on the set of $(G_\lambda,\eta)$-row (resp. $(\dual{G}_\lambda,\eta)$-column) catabolizable tableaux. Semistandard versions of these sets of tableaux first appear in \cite{SW}, where they are conjectured to give a combinatorial description of certain generalized Hall-Littlewood polynomials.  These polynomials are symmetric polynomials with coefficients in $\CC[t]$ and are defined as the formal characters of the Euler characteristics of certain $\CC[\gl_n]$-modules supported in nilpotent conjugacy class closures. The generalized Hall-Littlewood polynomials are known to be $t$-analogues of the character of certain  $\S_n$-modules induced from a parabolic subgroup of $\S_n$, however as far as we know, it is yet to be proved that SW ccp also satisfy this property.

We now state precisely a conjecture mentioned in \textsection\ref{ss atom categories}.
\begin{conjecture}
The SW csq $(\fsp)^{-1}(\swra{G_\lambda}{\eta})$ and $(\fsp)^{-1}(\swca{\dual{G}_\lambda}{\eta})$ exist.
\end{conjecture}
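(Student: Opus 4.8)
The plan is to prove that the cellular subquotients $(\fsp)^{-1}(\swra{G_\lambda}{\eta})$ and $(\fsp)^{-1}(\swca{\dual{G}_\lambda}{\eta})$ exist by identifying them with appropriate cellular subquotients built out of induced modules, in direct analogy with the proof of Theorem \ref{t main theorem gp}. The point is that the set of $(G_\lambda,\eta)$-row catabolizable tableaux is exactly the combinatorial object that, at $\u=1$, governs the generalized Hall-Littlewood polynomials, which in turn are characters of $\S_n$-modules $M$ induced from parabolic subgroups. So the strategy is to realize such an $M$ as $\fmod$ of an explicit cellular subquotient of $\eH$ (or $\pH$), and then to argue that the set of left cells occurring in that subquotient is precisely the set of $(G_\lambda,\eta)$-row catabolizable PAT.

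First I would set up the candidate module. Following the pattern of \textsection\ref{ss BZ category}, for a partition $\lambda$ and composition $\eta$ one wants a parabolic induction $M_{\eta,\lambda,\textbf{a}} := \pH_n \tsr_{\pH_\eta} N$, where $N$ is built so that its points (Proposition \ref{p eigenspaces}) encode the shape $\lambda$ and the catabolism blocking described by $\eta$ — concretely, $N$ should be chosen as the analogue of a Garsia-Procesi module $\R_\lambda$-type representation for each block, with generic spacing between blocks. One then takes the vector $v^+$ spanning the trivial $\H$-submodule (exactly as in Proposition \ref{p irreducibility needed}, whose argument applies verbatim once $M$ is irreducible) and sets $N = \ann v^+$, $\swra{}{} $-csq $:= \pH e^+ / \gr(N) e^+$. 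The analogue of Proposition \ref{p garcia-procesi prop 3.1} — computing which elementary-symmetric-in-subsets elements $s_{1^d}(\y_1,\dots,\y_k)$ lie in $\gr(\ann M)$ — should go through with $(\eta_i - (n-k))_{\geq 0}$ replaced by the block-wise count appropriate to the $\R_\lambda$-flavored $N$, and Theorem \ref{t elemsymkl} then translates these generators into a cellular submodule, so that $\gr(N)e^+$ is cellular by Corollary \ref{c positivity trick 2}. The $\u=1$ results of Shimozono-Weyman (or the relevant Euler characteristic / Hall-Littlewood computation) pin down the rank of the quotient, and Lascoux's standardization map pins down the cardinality of the catabolizable-tableaux set, forcing the cellular quotient to have precisely the cells indexed by $(G_\lambda,\eta)$-row catabolizable PAT. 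This gives $(\fsp)^{-1}(\swra{G_\lambda}{\eta})$; the column version $(\fsp)^{-1}(\swca{\dual{G}_\lambda}{\eta})$ follows either by the analogous construction with $\invlr$ applied, or by the row/column catabolizability equivalence of Proposition \ref{p catabolizable basics} together with the duality $\dual{}$ on $\cinv$, modulo the subtlety that the full duality is only conjectural (Conjecture \ref{cj flip}) — here one would need the weaker, provable part.

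The main obstacle I anticipate is the step that corresponds to ``$\Il{cell} \subseteq \Il{cat}$'' in the proof of Theorem \ref{t main theorem gp}: showing that for every $w$ with $P(w)$ being $(G_\lambda,\eta)$-row catabolizable, there is a path of ascent-edges and corotation-edges from $w$ up to the top cell of the candidate subquotient. For the Garsia-Procesi case this came from the explicit catabolism algorithm of \cite{B1} (equivalently Proposition \ref{p cat equivalences}). The SW generalization would require an analogous connectivity statement for the more intricate $\eta$-iterated catabolism, and I expect this to be the genuinely new combinatorial input — it is essentially the assertion that the cocyclage poset $\swra{G_\lambda}{\eta}$ is connected and sits as a cellular piece, which is why the existence of $(\fsp)^{-1}$ is only conjectured rather than proved. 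A secondary difficulty is verifying irreducibility of the induced module $M_{\eta,\lambda,\textbf{a}}$ for suitably generic $\textbf{a}$; this should follow from the general theory of the category $\mathcal{C}^\ZZ_n$ (cf. the remarks after Proposition \ref{p eigenspaces}), but one must check that the non-generic internal structure of each block (coming from the $\R_\lambda$-flavor of $N$) does not obstruct the gluing, much as in the Remark warning that irreducibility ``cannot be dropped.''
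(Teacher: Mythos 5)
This statement is a \emph{conjecture} in the paper, not a theorem; the paper offers no proof, only the remark that it has been checked by computer for $n \leq 6$. So there is no ``paper's own proof'' to compare against. What can be assessed is whether your proposed strategy would in fact close the gap, and it would not, for two reasons beyond the one you yourself identify.

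First, the $\u=1$ leg of the argument is not available. In the Garsia--Procesi case the proof of Theorem \ref{t main theorem gp} pins down the rank of the candidate quotient using the \emph{proven} $\u=1$ results of Garsia--Procesi and Bergeron--Garsia, and separately pins down the cardinality of the catabolizable set via Lascoux's standardization map; equality of the two numbers then forces the chain of inclusions to collapse. Your plan substitutes ``the $\u=1$ results of Shimozono--Weyman (or the relevant Euler characteristic / Hall--Littlewood computation)'' for the first count, and the standardization map for the second. But as the paper notes in \textsection\ref{ss sw atoms}, while the generalized Hall--Littlewood polynomials are known to be $t$-analogues of characters of induced modules, it is an \emph{open} conjecture of Shimozono--Weyman that the $(G_\lambda,\eta)$-catabolizable tableaux give the correct combinatorics for those polynomials. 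So the numerical input you want to use to squeeze the sandwich shut is itself a conjecture equivalent in strength to part of what you are trying to prove, and the argument becomes circular. There is also no known analogue of Lascoux's standardization bijection for general $\eta$ that would give the rank of the catabolizable set directly.

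Second, and more mechanically, the setup of the candidate induced module is off. In $(G_\lambda,\eta)$-row catabolizability, $\eta$ is a composition of $r=\ell(\lambda)$ (it groups \emph{rows} of $\lambda$ into blocks), not a composition of $n$, so the module one wants is not a $\pH_\eta$-induction in the sense of \textsection\ref{ss BZ category}. One should instead induce from the Young subgroup determined by the block sizes $|\lambda^{(1)}|,\dots,|\lambda^{(k)}|$ with the $i$-th factor carrying an $\R_{\lambda^{(i)}}$-type module, and the analogue of Proposition \ref{p garcia-procesi prop 3.1} would have to be reworked for this nested structure; the formula with $(\eta_i-(n-k))_{\geq 0}$ does not directly apply. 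Your last paragraph correctly identifies the remaining genuinely new combinatorial ingredient (the $\eta$-iterated analogue of the connectivity statement, i.e.\ the analogue of Proposition \ref{p cat equivalences}), and the paper implicitly agrees that this is the crux: it is precisely what keeps the statement at the level of a conjecture.
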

This has been checked in Magma for $n$ up to 6.

It seems that catabolizability combinatorics can be extended to the dual GP ccp copy $\gpda{U}{\sgnp(U)}$ for  $U$ stackable, but not to copies for non-stackable $U$.  For $U$ stackable, let  $\swca{U}{\eta}$ be the cocyclage poset on the set of $(U,\eta)$-column catabolizable tableaux.

\begin{conjecture}
\label{cj init block}
Suppose that  $U$ and  $U'$ are stackable AT of shape $\lambda$.  Then $\swca{U}{\eta}$ is a sub-cocyclage poset of $\gpda{U}{\sgnp(U)}$ (and $\swca{U'}{\eta}$ is a sub-cocyclage poset of $\gpda{U'}{\sgnp(U')}$).  Further suppose that $f: \gpda{U}{\sgnp(U)} \cong \gpda{U'}{\sgnp(U')}$ is an isomorphism in Cocyclage Posets.   Then  $f$ restricts to an isomorphism $\swca{U}{\eta} \cong \swca{U'}{\eta}.$
\end{conjecture}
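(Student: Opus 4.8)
The first assertion is essentially formal. A standard stability property of catabolizability — valid for the affine modification of \textsection\ref{ss catabolizability definition} with no change to the argument (see \cite{SW}, or \cite{B1} for a clean treatment) — is that if $T$ is $(U,\eta)$-column catabolizable and $\eta'$ is a refinement of $\eta$, then $T$ is $(U,\eta')$-column catabolizable. Taking $\eta'=1^{\lambda_1}$, the finest composition of $\lambda_1$, shows that every $(U,\eta)$-column catabolizable AT is $(U,1^{\lambda_1})$-column catabolizable, hence lies in $\gpda{U}{\sgnp(U)}$ by Definition \ref{d ccp copy stackable}. Since both $\swca{U}{\eta}$ and $\gpda{U}{\sgnp(U)}$ carry, by the convention of Definition \ref{d cocyclage posets}, the poset structure generated by the cocyclage-edges with both ends in the respective underlying set, this set inclusion immediately exhibits $\swca{U}{\eta}$ as a sub-cocyclage poset of $\gpda{U}{\sgnp(U)}$ (and likewise $\swca{U'}{\eta}$ inside $\gpda{U'}{\sgnp(U')}$).

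For the second assertion, the plan is to show that $\swca{U}{\eta}$ is \emph{intrinsic} to the data that a CCP-isomorphism sees — the partial order, the shapes of the tableaux, and the colors of the cocyclage-edges — so that $f(\swca{U}{\eta})=\swca{U'}{\eta}$ is forced. The first step toward this is a word-catabolism reformulation generalizing Proposition \ref{p cat equivalences}: one should prove that, for $U$ stackable of shape $\lambda$ and $\eta$ a composition of $\lambda_1$,
\[ \swca{U}{\eta}\;=\;\bigl\{\, T \in \gpda{U}{\sgnp(U)} \ :\ \reading(T)\text{ is }\zeta\text{-word catabolizable}\,\bigr\}, \]
where $\zeta$ is the composition of $n$ obtained by coarsening $\lambda'$ into consecutive blocks of lengths $\lambda'_1+\dots+\lambda'_{\eta_1}$, $\lambda'_{\eta_1+1}+\dots+\lambda'_{\eta_1+\eta_2}$, and so on (for $\eta=1^{\lambda_1}$ one has $\zeta=\lambda'$ and the identity is the equivalence of (ii)--(iv) in Proposition \ref{p cat equivalences}). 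The general case should follow by the same induction as in the chain of equivalences (\ref{e word catab equivalences}), peeling $\eta_1$ columns off at once rather than one at a time and tracking the recording tableau of Algorithm \ref{a cat insertion} across the corresponding block of passes. By Lemma \ref{l algorithm facts} the right-hand condition is invariant under Knuth transformations, so it genuinely descends to tableaux; and — this is the point — its value on $T$ is governed only by the trajectory of insertion and corotation steps of the sign insertion algorithm run on $\reading(T)$, which is the same data that produces the cocyclage-edges carrying $T$ down to $\sgnp(U)=\sgnp(T)$ (the unique minimal element), and the ascent/corotation paths up to $U$ (the unique maximal element). The remaining work is to convert ``the recording lengths meet the $\zeta$-thresholds after the corresponding passes'' into a purely CCP-structural statement about $T$ and its down-set.

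I see two routes for that conversion. The recursive route: define the catabolism map $T\mapsto\ccat_{C_1}(T)$ on the sub-cocyclage poset of $\gpda{U}{\sgnp(U)}$ consisting of tableaux whose first $\eta_1$ columns agree with $U$, and prove it is a morphism of cocyclage posets onto (an order filter of) $\gpda{U_{\eta_1,\text{east}}}{\sgnp(U_{\eta_1,\text{east}})}$ which is compatible with $f$ in the sense that it intertwines the corresponding maps for $U$ and $U'$; then $\swca{U}{\eta}$ is the preimage of $\swca{U_{\eta_1,\text{east}}}{\widehat\eta}$ and one concludes by induction on the number of parts of $\eta$ (base case $\eta=(\lambda_1)$: $\swca{U}{(\lambda_1)}=\{U\}$, the maximal element, obviously preserved). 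The transport route, which works cleanly when $U$ and $U'$ are of the generic stackable form (\ref{e ssyt tab def}) with the same content: Theorem \ref{t ssyt in pat} supplies strong isomorphisms $\beta,\beta'$ to the common poset $\ccp(\dual{\Tab(\lambda')})$, the composite $\beta'\circ f\circ\beta^{-1}$ is a CCP-automorphism of $\ccp(\dual{\Tab(\lambda')})$, which is rigid (any endomorphism of such a connected cocyclage poset is pinned down by color-preservation from the unique shape-$(n)$ tableau, exactly as in the uniqueness of embeddings discussed in \textsection\ref{ss atoms 1}), so $\beta'\circ f\circ\beta^{-1}=\id$ and $f=(\beta')^{-1}\circ\beta$; then $f(\swca{U}{\eta})=\swca{U'}{\eta}$ follows once one identifies $\beta(\swca{U}{\eta})$ with the dual Shimozono--Weyman sub-cocyclage poset $CT(\lambda;R)$ of $\ccp(\dual{\Tab(\lambda')})$ — the statement recalled in \textsection\ref{ss sw atoms} — by matching the column-catabolism recursion against the defining recursion of $CT$ under $\beta$. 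The case of general stackable $U$ would then be reduced to the generic model along an isomorphism, which unconditionally requires (the stackable case of) Conjecture \ref{cj gp dual copies}(a).

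I expect the rigidity/invariance input to be the main obstacle: a CCP-isomorphism remembers only shapes, colors, and order, and proving that this suffices to reconstruct $\swca{U}{\eta}$ (equivalently, that $\zeta$-word catabolizability is a CCP-invariant, or that the catabolism maps are canonical, or that $\gpda{U}{\sgnp(U)}$ is rigid for all stackable $U$) is exactly the kind of difficulty that keeps Conjecture \ref{cj gp dual copies}(a) and the hoped-for category SW CSQ open. A concrete line of attack is to propagate ``being fixed by $f$'' downward one degree at a time, using the unique maximal element $U$, the shape filtration $\sh(T)\ld\lambda$, and the rich color data on cocyclage-edges between tableaux of equal shape; but as Remark \ref{r sign insertion fail} indicates, even describing the relevant combinatorics (for instance the sets $\sgnrec{\lambda}$) is hard, so the cleanest proof may have to wait for the ``less rigid combinatorial structure'' called for in the introduction.
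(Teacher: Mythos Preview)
This statement is labeled \emph{Conjecture} in the paper, and the paper does not prove it: the only remark following it is that it ``has been extensively tested'' in the special case where $f$ is Lascoux's standardization map. There is therefore no paper proof to compare your proposal against.

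Your first paragraph is a reasonable heuristic, but it is not the ``essentially formal'' argument you claim. The refinement stability you invoke --- that $(U,\eta)$-column catabolizable implies $(U,\eta')$-column catabolizable for any refinement $\eta'$ of $\eta$ --- is indeed known in the classical Shimozono--Weyman setting, but the paper's affine column catabolism (Definition in \textsection\ref{ss catabolizability definition}) inserts the shift $-n$ at each step, and the paper does not establish refinement stability for this variant. Whether the SW/\cite{B1} argument transfers ``with no change'' is exactly the kind of thing that would need checking; note that even the most basic equivalence (i)$\Leftrightarrow$(iv) of Conjecture \ref{cj cat equivalences} is left open in the paper (Proposition \ref{p cat equivalences} only proves one direction). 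So the first assertion of the conjecture is genuinely part of what is being conjectured, not a formality.

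Your second paragraph is an honest sketch of a program rather than a proof, and you say as much in your final paragraph: both your ``recursive route'' and your ``transport route'' bottom out in rigidity or invariance statements that are themselves open (rigidity of $\gpda{U}{\sgnp(U)}$ in CCP, or the stackable case of Conjecture \ref{cj gp dual copies}(a)). That is a fair assessment of the situation and matches the paper's stance. In short, your proposal correctly identifies the shape of an attack and its obstacles, but it does not close any of the gaps that keep the statement conjectural.
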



Let $U$ be the tableau $\beta'(\dual{Z}_{\lambda'})$, where $\dual{Z}_{\lambda'}$ is the transpose of the superstandard tableau of shape and content $\lambda'$ and  $\beta'$ is as in Theorem \ref{t ssyt in pat}.  As remarked in \textsection\ref{ss catabolizability definition}, the SW ccp copy $\swca{U}{\eta}$ agrees with the original set of semistandard tableaux $CT(\lambda; R)$ as defined in \cite{SW} under the correspondence $\beta$.

Conjecture \ref{cj init block} has been extensively tested in the case  $f$ is the standardization map (or, to be consistent with the notation in the conjecture, in the case $U$ is as in the previous paragraph and $U' = \dual{G}_\lambda$).

\subsection{}
\label{ss chen atom}
Here we give the definition of Chen ccp as the intersection of certain SW ccp \cite{Ch}.

For a skew shape $\theta = \Theta / \nu$, define $\row(\theta)$ (resp. $\col(\theta)$) to be the composition given by the row (resp. column) lengths of $\theta$, i.e., $\row(\theta)_i = \Theta_i-\nu_i$ and $\col(\theta)_i = \Theta'_i - \nu'_i$.

\begin{definition}
If $\theta = \Theta / \nu$ is a skew shape with $|\theta| = n$ and $\lambda, \mu \vdash n$, then $\lambda$ is \emph{skew-linked to} $\mu$ \emph{by} $\theta$, written $\skl{\lambda}{\theta}{\mu}$, if $\row(\theta) = \lambda$ and $\col(\theta) = \mu'$. If $\lambda$ is skew-linked to $\mu$ by $\theta$ for some $\lambda, \mu \vdash n$, then we say that $\theta$ is a \emph{skew linking} shape. See Figure \ref{f skew-link}.
\end{definition}

\begin{figure}
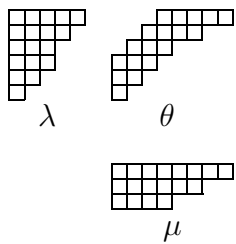

\[ \Yboxdim6pt \begin{array}{cc} \yng(5,4,3,3,2,1) & \young(:::\hfil\hfil\hfil\hfil\hfil,::\hfil\hfil\hfil\hfil,:\hfil\hfil\hfil,\hfil\hfil\hfil,\hfil\hfil,\hfil)  \\
{\small \lambda} & {\small \theta} \vspace{10pt}\  \\
& \yng(8,6,4)\\  &{\small \mu}\end{array}\]
 \caption{$\lambda$ skew-linked to $\mu$ by $\theta$.}
 \label{f skew-link}
\end{figure}

Let $V_\lambda$ be the Specht module of shape $\lambda$, $J_\mu \subseteq S$ be as in \textsection\ref{ss BZ category}, $\S_\mu$ the parabolic subgroup ${\S_n}_{J_\mu}$, and $e^+_\mu := \lj{e^+}{J_\mu}$ the trivial module for $\CC\S_\mu$.

\begin{proposition}[{\cite{Ch}}]
\label{p chen skew-link}
The following are equivalent.
\begin{list}{\emph{(\alph{ctr})}} {\usecounter{ctr} \setlength{\itemsep}{1pt} \setlength{\topsep}{2pt}}
\item There is a skew shape $\theta$ such that $\skl{\lambda}{\theta}{\mu}$,
\item There exists a non-negative integer  $d(\mu,\lambda)$ such that in the $\S_n$-module $R \tsr_\CC (\ind^{\S_n}_{\S_\mu} e^+_\mu)$,  $V_\lambda$ occurs with multiplicity 1 in degree $d(\mu,\lambda)$ and this is the unique occurrence of any $V_\nu$ with $\nu \ld \lambda$ in degree less than or equal to $d(\mu,\lambda)$.
\end{list}
\end{proposition}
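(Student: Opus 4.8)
$\textbf{Proof proposal for Proposition \ref{p chen skew-link}.}$

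The plan is to prove the equivalence by passing through the well-developed theory of the Garsia-Procesi modules and the Hall-Littlewood/Macdonald machinery, using the fact that $R \tsr_\CC (\ind^{\S_n}_{\S_\mu} e^+_\mu)$ has a graded character given by the modified Hall-Littlewood polynomial $\tilde{H}_\mu(t)$ (in the cocharge-variant normalization, as in \cite{H3}), which expands as $\tilde{H}_\mu(t) = \sum_{\lambda} \tilde{K}_{\lambda\mu}(t) s_\lambda$ with $\tilde{K}_{\lambda\mu}(t)$ the (cocharge) Kostka-Foulkes polynomials. Under this identification, the multiplicity of $V_\lambda$ in degree $d$ is the coefficient of $t^d$ in $\tilde{K}_{\lambda\mu}(t)$, so the statement in (b) becomes a purely combinatorial statement about the lowest-degree term of $\tilde{K}_{\lambda\mu}(t)$ and the lowest-degree terms of all $\tilde{K}_{\nu\mu}(t)$ with $\nu \ld \lambda$.

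First I would recall the well-known positivity and monotonicity properties of $\tilde K_{\lambda\mu}(t)$: it is a polynomial in $t$ with nonnegative integer coefficients, it is nonzero if and only if $\lambda \gd \mu$, its degree is $n(\mu) - n(\lambda)$ when $\lambda \gd \mu$ (dominating degree), and its \emph{lowest} degree, which I will call $\text{lo}(\lambda,\mu)$, admits a combinatorial description: $\tilde K_{\lambda\mu}(t) = t^{\text{lo}(\lambda,\mu)}(\text{const} + \cdots)$ where the constant coefficient counts column-strict tableaux of shape $\lambda$ and content $\mu$ on which cocharge is minimized. The key classical input here is the characterization, due essentially to results surrounding the atom/catabolism picture (and made explicit in the skew-linking framework of \cite{Ch}, cf.\ also \cite{SW}), that $\text{lo}(\lambda,\mu)$ equals the minimal value of $n(\Theta)-n(\nu)$ over skew shapes $\theta = \Theta/\nu$ with $\row(\theta)=\lambda$, $\col(\theta) = \mu'$, and that this minimum is achieved (uniquely, with multiplicity-one leading coefficient) precisely when such a skew-linking shape $\theta$ exists. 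Thus $(a)\Rightarrow(b)$: given $\skl{\lambda}{\theta}{\mu}$, set $d(\mu,\lambda) = n(\Theta)-n(\nu)$ for that $\theta$; the skew-linking condition forces the lowest-degree coefficient of $\tilde K_{\lambda\mu}(t)$ to be exactly $1$, and a dominance-order comparison (if $\nu \ld \lambda$ then $\text{lo}(\nu,\mu) \geq \text{lo}(\lambda,\mu)$ with equality only for $\nu = \lambda$, which follows from the fact that a skew-linking shape for a fixed $\mu$ determines $\lambda$ and the lengths force the inequality) shows no $V_\nu$ with $\nu \ld \lambda$ appears in degree $\leq d(\mu,\lambda)$. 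For $(b)\Rightarrow(a)$, run the argument in reverse: the hypothesis pins down $\text{lo}(\lambda,\mu)$ and forces its coefficient to be $1$, and by the combinatorial characterization of when that coefficient equals $1$ (as opposed to being $0$, i.e.\ $\lambda \not\gd \mu$, or $\geq 2$), one extracts an explicit skew shape $\theta$ with $\row(\theta)=\lambda$, $\col(\theta)=\mu'$, namely the shape of the optimal tableau.

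The main obstacle I anticipate is establishing the precise combinatorial lemma that the lowest-degree coefficient of $\tilde K_{\lambda\mu}(t)$ is $1$ exactly when $\lambda$ is skew-linked to $\mu$ (and $0$ or $\geq 2$ otherwise), together with the dominance-monotonicity of $\text{lo}(\nu,\mu)$ in $\nu$ that guarantees uniqueness of the minimal occurrence. This is really a statement about charge/cocharge on column-strict tableaux and about the structure of the poset of such tableaux under the relevant moves; the cleanest route is probably to invoke the construction of \cite{Ch} directly — since this proposition is attributed to \cite{Ch}, the skew-linking shape is essentially defined so as to make this work — and to assemble the multiplicity statement from the known expansion of the generalized Hall-Littlewood polynomial attached to $\theta$ together with the branching of $\tilde K_{\lambda\mu}(t)$ coefficients. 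I would present the proof as: (1) translate (b) into coefficients of $\tilde K_{\lambda\mu}(t)$ via the graded Frobenius character of $R \tsr (\ind e^+_\mu)$; (2) cite/prove the lowest-degree formula $\text{lo}(\lambda,\mu) = \min_\theta (n(\Theta)-n(\nu))$ and its multiplicity-one characterization; (3) deduce both implications by the dominance comparison. Steps (1) and (3) are routine; step (2) is where the real content lies and where I would lean most heavily on \cite{Ch}.
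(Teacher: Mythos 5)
The paper provides no proof of this proposition: it is attributed entirely to \cite{Ch}, which is a private communication, so there is no argument in the text against which to compare yours. You must therefore be judged on internal correctness, and there is a genuine gap at the very first step.

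You assert that the graded Frobenius character of $R \tsr_\CC (\ind^{\S_n}_{\S_\mu} e^+_\mu)$ is $\tilde H_\mu(t) = \sum_\lambda \tilde K_{\lambda\mu}(t) s_\lambda$. This is false. The module that has graded Frobenius $\tilde H_\mu(t)$ is the finite-dimensional Garsia-Procesi quotient $R_\mu = R/I_\mu$, not the free $R$-module on $M^\mu = \ind^{\S_n}_{\S_\mu} e^+_\mu$. The latter is infinite-dimensional in every positive degree, and by Frobenius reciprocity its graded Frobenius is the plethysm $h_\mu\!\left[\frac{X}{1-t}\right]$, which is an infinite power series in $t$. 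In particular, its degree-$0$ part is $h_\mu = \sum_\lambda K_{\lambda\mu} s_\lambda$, not $s_{(n)}$. Consequently the lowest degree in which $V_\lambda$ occurs in $R \otimes M^\mu$ is \emph{not} the lowest degree term of $\tilde K_{\lambda\mu}(t)$. A concrete counterexample: take $n=2$, $\mu = \lambda = (1,1)$. Then $\lambda$ is skew-linked to $\mu$ by the column $\theta = (1,1)$, and in $R \otimes M^{(1,1)}$ (Frobenius $\bigl(\tfrac{s_1}{1-t}\bigr)^2 = \tfrac{s_2 + s_{11}}{(1-t)^2}$) the representation $V_{(1,1)}$ occurs in degree $0$ with multiplicity $1$, so $d(\mu,\lambda)=0$. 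But $\tilde K_{(1,1),(1,1)}(t) = t$, whose lowest degree is $1$. For $\mu = (1^n)$ the contrast is even starker: every $V_\lambda$ occurs in degree $0$ of $R\otimes\CC[\S_n]$ (with multiplicity $f^\lambda$), whereas $\tilde K_{\lambda,1^n}(t)$ has a strictly positive lowest degree for all $\lambda \neq (n)$.

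Because of this, steps (2) and (3) of your plan are aimed at the wrong polynomial: the combinatorial characterization you would need concerns the lowest-degree coefficient of $[s_\lambda]\,h_\mu[X/(1-t)]$ (equivalently, a Cauchy-type expansion involving $s_\lambda(1,t,t^2,\ldots)$), not of $\tilde K_{\lambda\mu}(t)$. Your remaining argument, which is essentially ``cite \cite{Ch} for the hard combinatorial lemma,'' is also problematic here since \cite{Ch} is exactly the source being proved and is a private communication, so the step (2) you label as ``where the real content lies'' is left genuinely unaddressed. A correct proof would have to identify the low-degree behaviour of $h_\mu[X/(1-t)]$ combinatorially and show that the multiplicity-one/uniqueness condition in (b) pins down precisely the existence of a skew shape $\theta$ with $\row(\theta)=\lambda$ and $\col(\theta)=\mu'$; the Kostka–Foulkes machinery as you have invoked it does not do this.
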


\begin{definition}
For $\lambda, \mu$ satisfying any (all) of the conditions in Proposition \ref{p chen skew-link}, let $\moda{\mu}{\lambda}$ be the minimal quotient of $R \tsr_\CC (\ind^{\S_n}_{\S_\mu} e^+_\mu)$ (in $R\star W_f$-$\Mod$) whose restriction to  $\CC \S_n$ contains $V_\lambda$. By the proposition, this determines a unique $R \star W_f$-module.
\end{definition}

\begin{conjecture}
\label{cj skew linking}
Suppose $\skl{\lambda}{\theta}{\mu}$ and $M \in \cs$ is a copy of a cellular subquotient of $\cinv$. The following are equivalent for an AT $Q \in \fsp(M)$ of shape $\mu$ and an AT $P \in \fsp(M)$ of shape $\lambda$:
\begin{list}{\emph{(\alph{ctr})}} {\usecounter{ctr} \setlength{\itemsep}{1pt} \setlength{\topsep}{2pt}}
\item In the minimal cellular quotient of $M$ containing $\Gamma_P$, the AT  $Q$ is the unique tableau of shape $\gd \mu$ in degree $\geq \deg(P) - d(\mu,\lambda)$.
\item In the minimal cellular submodule of $M$ containing $\Gamma_Q$, the AT $P$ is the unique tableau of shape $\ld \lambda$ in degree $\leq \deg(Q) + d(\mu,\lambda)$.
\end{list}
\end{conjecture}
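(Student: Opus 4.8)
The plan is to convert both conditions into statements about the graded $\S_n$-characters of small cellular sub- and quotient-modules, and then to recognize them as the two halves of Chen's Proposition~\ref{p chen skew-link}. First I would rephrase everything poset-theoretically. A cellular submodule of $M$ is spanned by an order ideal of $\klo{M}$ and a cellular quotient by an order filter (\textsection\ref{ss cells}, Remark~\ref{r edge direction}); hence the minimal cellular quotient $\bar{M}_P$ of $M$ containing $\Gamma_P$ has cells exactly the up-set of $\Gamma_P$ in $\fsp(M)$, and the minimal cellular submodule $M^Q$ containing $\Gamma_Q$ has cells exactly the down-set of $\Gamma_Q$, as with $\Il{cell}$ in the proof of Theorem~\ref{t main theorem gp}. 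By Corollary~\ref{c positivity trick 1} the relations $\Gamma_P \klo{M} \Gamma_T$ and $\Gamma_T \klo{M} \Gamma_Q$ are each witnessed by a single Hecke-algebra element, hence (Proposition~\ref{p preorder cells vs cocyclage poset}) computable from $\Res_\H$-edges and cocyclage-edges alone; since cocyclage-edges raise degree by $1$ and $\Res_\H$-edges preserve it, $\Gamma_P \klo{M} \Gamma_Q$ forces $\deg P \geq \deg Q$. Because $\Res_\H M = \bigoplus_{T \in \fsp(M)} \Res_\H \Gamma_T$ with $\Gamma_T \subseteq M_{\deg T}$, the graded character of $\fmod(M)$ records in each degree $d$ the multiset $\{\,\sh(T) : T \in \fsp(M),\ \deg T = d\,\}$, and the same holds for the graded quotient $\fmod(\bar{M}_P)$ and the graded submodule $\fmod(M^Q)$ of $\fmod(M)$. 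Thus (a) says precisely that in $\fmod(\bar{M}_P)$ the only irreducible constituent of shape $\gd\mu$ occurring in degrees $\geq \deg P - d(\mu,\lambda)$ is $V_\mu$, and it occurs with multiplicity one; (b) says the mirror statement for $\fmod(M^Q)$ in degrees $\leq \deg Q + d(\mu,\lambda)$.

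Next I would reduce to the case that $M$ is an actual cellular subquotient of $\cinv$: an isomorphism in $\cs$ preserves $\klo{M}$, identifies cells while preserving shape by~(\ref{e cell isomorphisms}), and --- as with the isomorphisms built in Theorem~\ref{t coinvariant copies} --- alters all cell-degrees by one common shift; since (a) and (b) involve only relative degrees and the shape-constant $d(\mu,\lambda)$, no generality is lost. Then $\fmod(M)$ is a graded subquotient of $R_{1^n}$. The core of the argument is to identify the top $d(\mu,\lambda)+1$ graded layers of $\bar{M}_P$ with the corresponding layers of a copy --- sitting inside $M$ with $\Gamma_P$ at its top --- of the (conjectural) Chen cellular subquotient attached to $\skl{\lambda}{\theta}{\mu}$, and dually for $M^Q$. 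In the Garsia--Procesi special case this is clean: $M = \cinv$ and $P = G_\lambda$ give $\bar{M}_P = \R_\lambda$ (Theorem~\ref{t main theorem gp}), whose graded character is the transformed Hall--Littlewood polynomial $\tilde{H}_\lambda(t)$ of~(\ref{e frobenius}); a general $\Gamma_P$ of shape $\lambda$ is brought to this model by the same reductions via right star operations and multiplications by $\pi$ used in \textsection\ref{ss cells in We}, which by Proposition~\ref{p star ops give isomorphism} transport $M$ by isomorphisms in $\cs$ preserving shapes and relative degrees. With this identification in hand, the uniqueness clause of (a) becomes exactly the distinguished-occurrence statement of Proposition~\ref{p chen skew-link}(b) for the $\S_n$-module $R\tsr_\CC(\ind^{\S_n}_{\S_\mu}e^+_\mu)$, which holds because $\skl{\lambda}{\theta}{\mu}$; this is where the hypothesis that $\theta$ skew-links the pair (rather than being a general pair $\mu,\lambda$) is consumed.

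For (b) I would argue dually. The assignment $T \mapsto \dual{T}$ of \textsection\ref{s flip} gives, unconditionally, a bijection on the cells of $\cinv$ that transposes shapes and complements degrees ($\deg\dual{T} = \binom{n}{2} - \deg T$), and by the already-established part~(a) of Conjecture~\ref{cj flip} it interchanges corotation-edges with rotation-edges. Using this, together with the elementary fact that $\theta$ skew-links $(\lambda,\mu)$ if and only if the transposed skew shape $\theta'$ skew-links $(\mu',\lambda')$ with $d(\mu,\lambda) = d(\lambda',\mu')$ (a transpose symmetry already present in the proof of Proposition~\ref{p chen skew-link}), one transports the submodule statement (b) for $(M, Q, P)$ to the quotient statement (a) for the transposed data, which was established above; this last transport is where the full strength of Conjecture~\ref{cj flip} enters. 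Since (a) and (b) are thereby each seen to be equivalent to the single combinatorial condition ``$\skl{\lambda}{\theta}{\mu}$ holds and $\Gamma_Q$ is the distinguished tableau predicted by $d(\mu,\lambda)$ below $\Gamma_P$'', they are equivalent to each other.

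The step I expect to be the main obstacle is the identification in the second paragraph: that the minimal \emph{cellular} quotient $\bar{M}_P$ (respectively submodule $M^Q$), within the degree window of width $d(\mu,\lambda)$ beneath $\Gamma_P$ (respectively above $\Gamma_Q$), has the same graded character as the minimal graded $R\star W_f$-quotient (respectively submodule) containing the distinguished copy of $V_\lambda$ (respectively $V_\mu$). A priori $\bar{M}_P$ could be strictly larger than the latter, introducing spurious cells of shape $\gd\mu$ in the window and breaking (a). Controlling this is essentially the content of the still-open existence conjectures of \textsection\ref{ss atom categories} and \textsection\ref{ss chen atom} --- the categories SW CSQ, LLM CSQ, Chen CSQ mapping onto the corresponding cocyclage posets under $\fsp$ --- and, for the dual half, of the duality Conjecture~\ref{cj flip} (equivalently Conjecture~\ref{cj pairing}, via Proposition~\ref{p pairing}). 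Modulo these inputs, what remains is bookkeeping: translating the catabolism conditions of Propositions~\ref{p catabolizable basics} and~\ref{p cat equivalences} into the stated degree windows and quoting Proposition~\ref{p chen skew-link} together with~(\ref{e frobenius}).
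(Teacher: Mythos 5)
The statement you are attempting to prove is Conjecture~\ref{cj skew linking}, and the paper does not prove it: the only evidence offered is computational ("Computing in Magma, we verified Conjecture~\ref{cj skew linking} \ldots for $n$ up to 6"). So there is no proof in the paper for your argument to be compared against, and any purported proof would be a new result beyond the paper's scope. You are candid that your sketch depends on open Conjecture~\ref{cj flip} and on the still-conjectural existence of SW/Chen cellular subquotients, which is the right attitude; but even granting all of these inputs, the sketch does not close.

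Two concrete gaps. First, the reduction "bring $\Gamma_P$ to the model case $\Gamma_{G_\lambda}\subseteq\cinv$ via right star operations and $\pi$" is not available in the generality claimed: Proposition~\ref{p star ops give isomorphism} transports a \emph{fixed} cellular subquotient along star operations (and requires the right descent sets to be constant across $\Gamma$), it does not provide a path of star operations carrying an arbitrary cell $\Gamma_P$ of shape $\lambda$ inside an arbitrary $M\in\cs$ onto $\Gamma_{G_\lambda}$ inside $\cinv$; indeed, distinct copies of $\cinv$ need not be related by star operations, and the isomorphism of Theorem~\ref{t coinvariant copies}(b) uses multiplication by $s_\lambda(\y)$, which is not a star operation and does not a priori identify \emph{which} cell labeled by a shape-$\lambda$ tableau maps to which. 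Second, and more centrally, the passage from "the distinguished $V_\mu$ occurs uniquely in degrees $\geq\deg P - d(\mu,\lambda)$ of $\fmod(\bar M_P)$" (a statement about graded characters) to "the AT $Q$ is the unique tableau of shape $\gd\mu$ in that degree window of $\fsp(\bar M_P)$" (a statement about cells) is precisely what needs proof: cells are finer than irreducibles, and Proposition~\ref{p chen skew-link}(b) gives multiplicity-one for the irreducible but says nothing that would rule out two distinct cells of shape $\gd\mu$ appearing in the window in other degrees. You flag that $\bar M_P$ could be "strictly larger than the Chen csq," but this is not resolved by the existence conjectures you invoke; those assert that certain cocyclage posets lie in the image of $\fsp$, not that $\bar M_P$ coincides with the predicted Chen csq within the relevant degree range. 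The "bookkeeping" in your final sentence is exactly the content of the conjecture.
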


\begin{definition}
If  $P$ and $Q$ satisfy (a) and (b) of Conjecture \ref{cj skew linking} for some $M$ satisfying the stated condition, then $P$ is \emph{skew-linked to} $Q$ \emph{by} $\theta$, or  $P$ and  $Q$ are \emph{skew-linked}.
\end{definition}

With the notation of Conjecture \ref{cj skew linking}, let $P = G_\lambda$ and $M = \R_\lambda$.  Then by Theorem \ref{t main theorem gp} and a dual version of Proposition \ref{p chen skew-link}, there is a unique $U$ of shape $\mu$ in  $\fsp(\R_\lambda)$ such that  $G_\lambda$ is skew-linked to $U$ by  $\theta$.

For a skew shape $\theta = \Theta / \nu$ with $\skl{\lambda}{\theta}{\mu}$, define the intervals $[b_r, d_r]$ for $r \in [\ell(\lambda)]$ as follows: let $c = \nu_r$. If $\nu_r \neq 0$, then $b_i = \mu'_{c+1}$, $d_i = \mu'_c$; if $\nu_r = 0$, then $b_r = d_r = \ell(\lambda) + 1 - r$ (which is $\leq \mu'_1$).

\begin{example}
Let $\theta$ be as shown.
\[ \Yboxdim6pt
\theta = \young(:::\hfil\hfil\hfil\hfil,:::\hfil\hfil\hfil\hfil,::\hfil\hfil\hfil\hfil,:\hfil\hfil\hfil\hfil,\hfil\hfil\hfil,\hfil\hfil\hfil,\hfil\hfil\hfil,\hfil\hfil,\hfil,\hfil) \]
The intervals $[b_1, d_1] \dots [b_{10},d_{10}]$ are
\[ [4,5]\ [4,5]\ [5,5]\ [5,6]\ [6,6]\ [5,5]\ [4,4]\ [3,3]\ [2,2]\ [1,1]. \]
\end{example}


\begin{definition}
Let $\skl{\lambda}{\theta}{\mu}$ and $b_r, d_r$ be as above. Suppose $G_\lambda$ is skew-linked to  $U$ by  $\theta$. The \emph{Li-Chung Chen ccp} $\chena{U}{G_\lambda}$ is the intersection of $\swra{G_\lambda}{\eta}$ over those $\ell(\lambda)$-compositions $\eta$ such that $\eta_{i} \leq d_{l_i+1}$ for $i \in [\ell(\lambda)]$, where $l_j = \sum_{i=1}^{j-1} \eta_i$.

%
\end{definition}

\renewcommand{\duma}{16}
\renewcommand{\dumb}{15}
\begin{example}
\label{ex chen atoms}
The leftmost cocyclage poset in the top row of Figure \ref{f atomcopies} is the Chen ccp $\chena{U}{G_{(3,2,1)}}$,  $U = {\tiny\Yboxdim9pt\young(1234\duma,\dumb)}$, corresponding to the skew linking shape $\Yboxdim7pt \young(::\hfil\hfil\hfil,\hfil\hfil,\hfil)$.
\end{example}

Assuming Conjecture \ref{cj init block}, then $\chena{U}{G_\lambda}$ is isomorphic to Chen's original definition of these ccp in terms of semistandard tableaux \cite{Ch}.

Also, if we take $U = \beta'(\dual{Z}_\mu)$ with $\dual{Z}_\mu$ the transpose of the superstandard tableau of shape and content $\mu$ and  $\beta'$ as in Theorem \ref{t ssyt in pat}, then there is a unique $P$ of shape $\lambda$ in $\gpda{U}{\sgnp(U)}$ in degree $\deg(U)+d(\mu,\lambda)$. Let $\chena{U}{P}$ be defined analogously to $\chena{U}{G_\lambda}$ with $\swca{U}{\eta}$ in place of $\swra{G_\lambda}{\eta}$.  Then $\chena{U}{P}$ is the original definition of Chen ccp under the correspondence $\beta$.

\begin{conjecture}\label{cj chen atoms}
Suppose $\skl{\lambda}{\theta}{\mu}$ and $G_\lambda$ is skew-linked to $U$ by $\theta$. Then
\begin{list}{\emph{(\alph{ctr})}} {\usecounter{ctr} \setlength{\itemsep}{1pt} \setlength{\topsep}{2pt}}
\item $\mathscr{F}(\moda{\mu}{\lambda})t^{\deg{U}} = \mathscr{F}(\chena{U}{G_\lambda})$.
\item $\fmod(\csqa{U}{G_\lambda}) = \moda{\mu}{\lambda}$.
\item $\fsp(\csqa{U}{G_\lambda}) = \chena{U}{G_\lambda}$.
\item if $P$ is skew-linked to $Q$ by $\theta$, then $\csqa{Q}{P} \cong \csqa{U}{G_\lambda}$.
\end{list}
\end{conjecture}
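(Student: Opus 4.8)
The statement in question is Conjecture~\ref{cj chen atoms}, so strictly speaking there is nothing to \emph{prove}; what follows is the proposal for an eventual proof.

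\medskip

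\textbf{Approach.} The plan is to reduce the four assertions to the already-established machinery of Theorem~\ref{t main theorem gp} and Theorem~\ref{t coinvariant copies}, exactly as the Garsia--Procesi case was handled, but now with the parabolic module $R \tsr_\CC (\ind^{\S_n}_{\S_\mu} e^+_\mu)$ in place of $Re^+$. The key point is that $\chena{U}{G_\lambda}$ is, by definition, an intersection of SW ccp $\swra{G_\lambda}{\eta}$, and each of these should be realized as a $\pH$-cellular subquotient once we know Conjecture~\ref{cj init block} and the existence of SW csq. So the first step is to isolate the hypotheses we genuinely need: (1) existence of $(\fsp)^{-1}(\swra{G_\lambda}{\eta})$ as an object of $\cs$, and (2) that a finite intersection of cellular subquotients is a cellular subquotient (this last is already used freely in the proof of Theorem~\ref{t coinvariant copies}(a)). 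Granting these, $\csqa{U}{G_\lambda}$, defined as the minimal $\pH$-cellular subquotient of $\eH$ containing $\Gamma_U$ and $\Gamma_{G_\lambda}$, is forced to coincide with $\bigcap_\eta (\fsp)^{-1}(\swra{G_\lambda}{\eta})$, since the shape-$\mu$ tableau $U$ is the unique minimal-degree occurrence of shape $\mu$ in each piece (by Proposition~\ref{p chen skew-link} and its dual), and this would give (c).

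\medskip

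\textbf{Order of steps.} First I would prove (c) under the stated auxiliary conjectures, as sketched above: identify $\fsp(\csqa{U}{G_\lambda})$ with the intersection of the $\fsp(\swra{G_\lambda}{\eta})$'s, using Proposition~\ref{p preorder cells vs cocyclage poset} to control the partial order on cells, and the uniqueness clause in Proposition~\ref{p chen skew-link}(b) to pin down that the intersection contains no extra shape-$\ld\lambda$ tableaux below degree $d(\mu,\lambda)$. Second, (b) would follow by applying $\fmod$: since $\fmod$ is exact on cellular subquotients and $\fmod(\csqa{G_{(n)}}{G_\lambda}) = R_\lambda = R/I_\lambda$ by Theorem~\ref{t main theorem gp}, one checks that $\fmod$ of the intersection is the minimal quotient of $\fmod$ of the ambient copy of $R \tsr_\CC (\ind e^+_\mu)$ containing $V_\lambda$; here one needs an analogue of Proposition~\ref{p garcia-procesi prop 3.1} producing Tanisaki-type generators annihilating the relevant induced Bernstein--Zelevinsky module $M_{\eta,\textbf{a}}$ with $\eta_+ = \mu$, together with the $\u=1$ results identifying ranks. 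Third, (a) is the character-level shadow of (b): apply $\mathscr{F}$ and use that $\mathscr{F}(\moda{\mu}{\lambda})$ is by Chen's work the generalized Hall--Littlewood polynomial, while $\mathscr{F}(\chena{U}{G_\lambda})$ is the cocharge-weighted shape generating function over the intersection of catabolizability conditions; matching these is Chen's combinatorial identity, quoted from \cite{Ch}. Fourth, (d) is a copy-independence statement: given $P$ skew-linked to $Q$ by $\theta$ inside some $M \cong$ a copy of a cellular subquotient of $\cinv$, one transports the isomorphism $M \to$ (the relevant dual GP csq) provided by Theorem~\ref{t coinvariant copies}(b) and Proposition~\ref{p star ops give isomorphism}, and checks it carries $\csqa{Q}{P}$ to $\csqa{U}{G_\lambda}$ because both are characterized as minimal cellular subquotients realizing the multiplicity-one occurrence of the shape-$\lambda$ cell over the shape-$\mu$ cell; Conjecture~\ref{cj skew linking} is exactly what guarantees this characterization is copy-independent.

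\medskip

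\textbf{Main obstacle.} The hard part will be step one's auxiliary input: proving that $(\fsp)^{-1}(\swra{G_\lambda}{\eta})$ \emph{exists} as a cellular subquotient, i.e.\ that the set of $(G_\lambda,\eta)$-row catabolizable tableaux is closed under the preorder $\klo{\cinv}$ and that its span is $\br\cdot$-stable. In the Garsia--Procesi case ($\eta = 1^{\ell(\lambda)}$) this came for free from Theorem~\ref{t elemsymkl} plus the positivity trick of Corollary~\ref{c positivity trick 2}; for general $\eta$ one does not have an analogue of Theorem~\ref{t elemsymkl} identifying the requisite annihilator elements as products of elementary symmetric functions in the Bernstein generators, and indeed this is precisely what the paper flags as open (the conjecture in \textsection\ref{ss sw atoms} that SW csq exist). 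Absent that, one cannot even begin step~(c). A secondary, softer obstacle is that (d) relies on Conjecture~\ref{cj skew linking}, whose copy-independence assertion is itself only empirically verified; so a complete proof of Conjecture~\ref{cj chen atoms} presupposes resolving both Conjecture~\ref{cj init block}/the SW csq existence conjecture and Conjecture~\ref{cj skew linking}, and the realistic intermediate goal is a conditional theorem deducing (a)--(d) from those two inputs.
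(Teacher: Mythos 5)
You correctly recognize that this is a conjecture, not a theorem, and that the paper offers no proof.  The paper's only contribution here is the one-line observation following the statement: since $\fmod(\csqa{U}{G_\lambda}) \supseteq \moda{\mu}{\lambda}$ holds unconditionally (by minimality of $\moda{\mu}{\lambda}$), statements (a) and (c) together imply (b), so the author only needs to verify (a), (c), and (d) computationally and gets (b) for free.  Your proposal does not mention this unconditional inclusion, and as a result your logical ordering of (b) and (a) is effectively circular.  In your second step you derive (b) from (c) by claiming that ``$\fmod$ of the intersection is the minimal quotient \dots containing $V_\lambda$'' and appeal to ``$\u=1$ results identifying ranks''; but the rank-matching input you invoke there is precisely the character identity that constitutes (a), which you then defer to your third step and attribute to Chen.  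In other words, your plan proves (b) using (a) and then declares (a) ``the character-level shadow of (b)''; one of these two must be taken as the primitive input.  The paper's framing — take (a) as Chen's combinatorial theorem, take (c) as the cellularity statement, and deduce (b) from the containment — resolves the circularity and is the cleaner route.

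Beyond that, your identification of the central obstacle is accurate and matches what the paper flags: one has no analogue of Theorem~\ref{t elemsymkl} identifying canonical basis elements with the relevant annihilator elements for general $\eta$, so the existence of SW csq (and hence the well-definedness of the intersection defining $\chena{U}{G_\lambda}$ as a cellular subquotient) is genuinely open.  Your use of Conjecture~\ref{cj skew linking} for (d) is also in line with how the paper sets things up.  One smaller gap in your step for (c): you argue that $\csqa{U}{G_\lambda}$ is contained in each $(\fsp)^{-1}(\swra{G_\lambda}{\eta})$ and hence in the intersection, but the converse containment (that the intersection contains nothing beyond $\csqa{U}{G_\lambda}$) is left implicit and is where the uniqueness clause of Proposition~\ref{p chen skew-link}(b) would have to be deployed carefully together with Proposition~\ref{p preorder cells vs cocyclage poset}; this deserves to be spelled out rather than asserted.
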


Note that since $\fmod(\csqa{U}{G_\lambda}) \supseteq \moda{\mu}{\lambda}$, (a) and (c) imply (b).
Chen has verified statement (a) of this conjecture up to $n=7$. Computing in Magma, we verified Conjecture \ref{cj skew linking} and (c) and (d) of the above conjecture (and therefore (b) as well) for $n$ up to 6.

\begin{example}
Continuing Example \ref{ex chen atoms}, the top row of Figure \ref{f atomcopies} contains all cellular subquotients of $\cinv$ isomorphic to $\csqa{U}{G_{(3,2,1)}}$. The bottom row contains two other cellular subquotients of $\eH$ isomorphic to $\csqa{U}{G_{( 3, 2, 1)}}$.  By Example \ref{ex dke}, the two subquotients on the bottom row are isomorphic.
\end{example}

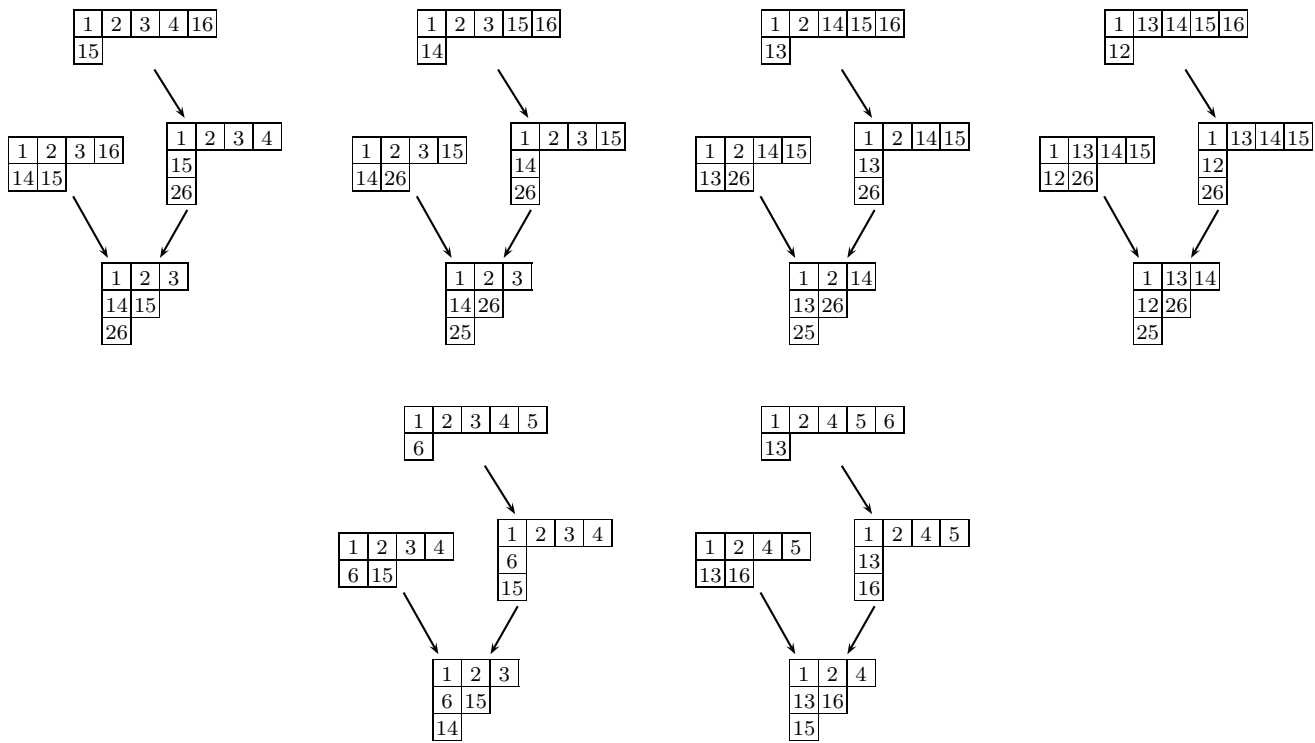
\begin{figure}[htp]
\begin{pspicture}(0pt,0pt)(15,300pt){\tiny

\hoogte=10pt
\breedte=11pt
\dikte=0.2pt

\newdimen\rowoneh
\rowoneh=275pt
\horizcent=370pt

\ycor=\rowoneh
\advance\ycor by 0pt
\horizspace=48pt
\xcor=\horizcent
\temp=48pt
\multiply \temp by 0 \divide \temp by 2
\advance\xcor by -\temp
\rput(\xcor,\ycor){\rnode{v71h4}{\begin{Young}
1&13&14&15&16\cr
12\cr
\end{Young}}}
\advance\xcor by \horizspace
\advance\ycor by -48pt
\horizspace=60pt
\xcor=\horizcent
\temp=60pt
\multiply \temp by 1 \divide \temp by 2
\advance\xcor by -\temp
\rput(\xcor,\ycor){\rnode{v81h2}{\begin{Young}
1&13&14&15\cr
12&26\cr
\end{Young}}}
\advance\xcor by \horizspace
\rput(\xcor,\ycor){\rnode{v81h3}{\begin{Young}
1&13&14&15\cr
12\cr
26\cr
\end{Young}}}
\advance\xcor by \horizspace
\advance\ycor by -53pt
\horizspace=48pt
\xcor=\horizcent
\temp=48pt
\multiply \temp by 0 \divide \temp by 2
\advance\xcor by -\temp
\rput(\xcor,\ycor){\rnode{v91h1}{\begin{Young}
1&13&14\cr
12&26\cr
25\cr
\end{Young}}}
\advance\xcor by \horizspace
\ncline[nodesep=2pt]{->}{v81h2}{v91h1}
\ncline[nodesep=2pt]{->}{v81h3}{v91h1}
\ncline[nodesep=2pt]{->}{v71h4}{v81h3}

\horizcent=240pt
\ycor=\rowoneh
\advance\ycor by 0pt
\horizspace=48pt
\xcor=\horizcent
\temp=48pt
\multiply \temp by 0 \divide \temp by 2
\advance\xcor by -\temp
\rput(\xcor,\ycor){\rnode{v61h4}{\begin{Young}
1&2&14&15&16\cr
13\cr
\end{Young}}}
\advance\xcor by \horizspace
\advance\ycor by -48pt
\horizspace=60pt
\xcor=\horizcent
\temp=60pt
\multiply \temp by 1 \divide \temp by 2
\advance\xcor by -\temp
\rput(\xcor,\ycor){\rnode{v71h2}{\begin{Young}
1&2&14&15\cr
13&26\cr
\end{Young}}}
\advance\xcor by \horizspace
\rput(\xcor,\ycor){\rnode{v71h3}{\begin{Young}
1&2&14&15\cr
13\cr
26\cr
\end{Young}}}
\advance\xcor by \horizspace
\advance\ycor by -53pt
\horizspace=48pt
\xcor=\horizcent
\temp=48pt
\multiply \temp by 0 \divide \temp by 2
\advance\xcor by -\temp
\rput(\xcor,\ycor){\rnode{v81h1}{\begin{Young}
1&2&14\cr
13&26\cr
25\cr
\end{Young}}}
\advance\xcor by \horizspace
\ncline[nodesep=2pt]{->}{v71h2}{v81h1}
\ncline[nodesep=2pt]{->}{v71h3}{v81h1}
\ncline[nodesep=2pt]{->}{v61h4}{v71h3}

\horizcent=110pt
\ycor=\rowoneh
\advance\ycor by 0pt
\horizspace=48pt
\xcor=\horizcent
\temp=48pt
\multiply \temp by 0 \divide \temp by 2
\advance\xcor by -\temp
\rput(\xcor,\ycor){\rnode{v51h4}{\begin{Young}
1&2&3&15&16\cr
14\cr
\end{Young}}}
\advance\xcor by \horizspace
\advance\ycor by -48pt
\horizspace=60pt
\xcor=\horizcent
\temp=60pt
\multiply \temp by 1 \divide \temp by 2
\advance\xcor by -\temp
\rput(\xcor,\ycor){\rnode{v61h3}{\begin{Young}
1&2&3&15\cr
14&26\cr
\end{Young}}}
\advance\xcor by \horizspace
\rput(\xcor,\ycor){\rnode{v61h2}{\begin{Young}
1&2&3&15\cr
14\cr
26\cr
\end{Young}}}
\advance\xcor by \horizspace
\advance\ycor by -53pt
\horizspace=60pt
\xcor=\horizcent
\temp=60pt
\multiply \temp by 0 \divide \temp by 2
\advance\xcor by -\temp
\rput(\xcor,\ycor){\rnode{v71h1}{\begin{Young}
1&2&3\cr
14&26\cr
25\cr
\end{Young}}}
\advance\xcor by \horizspace
\ncline[nodesep=2pt]{->}{v61h2}{v71h1}
\ncline[nodesep=2pt]{->}{v61h3}{v71h1}
\ncline[nodesep=2pt]{->}{v51h4}{v61h2}

\horizcent=-20pt
\ycor=\rowoneh
\advance\ycor by 0pt
\horizspace=60pt
\xcor=\horizcent
\temp=48pt
\multiply \temp by 0 \divide \temp by 2
\advance\xcor by -\temp
\rput(\xcor,\ycor){\rnode{v41h4}{\begin{Young}
1&2&3&4&16\cr
15\cr
\end{Young}}}
\advance\xcor by \horizspace
\advance\ycor by -48pt
\horizspace=60pt
\xcor=\horizcent
\temp=60pt
\multiply \temp by 1 \divide \temp by 2
\advance\xcor by -\temp
\rput(\xcor,\ycor){\rnode{v51h2}{\begin{Young}
1&2&3&16\cr
14&15\cr
\end{Young}}}
\advance\xcor by \horizspace
\rput(\xcor,\ycor){\rnode{v51h3}{\begin{Young}
1&2&3&4\cr
15\cr
26\cr
\end{Young}}}
\advance\xcor by \horizspace
\advance\ycor by -53pt
\horizspace=48pt
\xcor=\horizcent
\temp=48pt
\multiply \temp by 0 \divide \temp by 2
\advance\xcor by -\temp
\rput(\xcor,\ycor){\rnode{v61h1}{\begin{Young}
1&2&3\cr
14&15\cr
26\cr
\end{Young}}}
\advance\xcor by \horizspace
\ncline[nodesep=2pt]{->}{v51h2}{v61h1}
\ncline[nodesep=2pt]{->}{v51h3}{v61h1}
\ncline[nodesep=2pt]{->}{v41h4}{v51h3}


\newdimen\rowtwoh
\rowtwoh=125pt

\horizcent=105pt
\ycor=\rowtwoh
\advance\ycor by 0pt
\horizspace=48pt
\xcor=\horizcent
\temp=48pt
\multiply \temp by 0 \divide \temp by 2
\advance\xcor by -\temp
\rput(\xcor,\ycor){\rnode{v51h4}{\begin{Young}
1&2&3&4&5\cr
6\cr
\end{Young}}}
\advance\xcor by \horizspace
\advance\ycor by -48pt
\horizspace=60pt
\xcor=\horizcent
\temp=60pt
\multiply \temp by 1 \divide \temp by 2
\advance\xcor by -\temp
\rput(\xcor,\ycor){\rnode{v61h3}{\begin{Young}
1&2&3&4\cr
6&15\cr
\end{Young}}}
\advance\xcor by \horizspace
\rput(\xcor,\ycor){\rnode{v61h2}{\begin{Young}
1&2&3&4\cr
6\cr
15\cr
\end{Young}}}
\advance\xcor by \horizspace
\advance\ycor by -53pt
\horizspace=60pt
\xcor=\horizcent
\temp=60pt
\multiply \temp by 0 \divide \temp by 2
\advance\xcor by -\temp
\rput(\xcor,\ycor){\rnode{v71h1}{\begin{Young}
1&2&3\cr
6&15\cr
14\cr
\end{Young}}}
\advance\xcor by \horizspace
\ncline[nodesep=2pt]{->}{v61h2}{v71h1}
\ncline[nodesep=2pt]{->}{v61h3}{v71h1}
\ncline[nodesep=2pt]{->}{v51h4}{v61h2}

\horizcent=240pt
\ycor=\rowtwoh
\advance\ycor by 0pt
\horizspace=60pt
\xcor=\horizcent
\temp=48pt
\multiply \temp by 0 \divide \temp by 2
\advance\xcor by -\temp
\rput(\xcor,\ycor){\rnode{v41h4}{\begin{Young}
1&2&4&5&6\cr
13\cr
\end{Young}}}
\advance\xcor by \horizspace
\advance\ycor by -48pt
\horizspace=60pt
\xcor=\horizcent
\temp=60pt
\multiply \temp by 1 \divide \temp by 2
\advance\xcor by -\temp
\rput(\xcor,\ycor){\rnode{v51h2}{\begin{Young}
1&2&4&5\cr
13&16\cr
\end{Young}}}
\advance\xcor by \horizspace
\rput(\xcor,\ycor){\rnode{v51h3}{\begin{Young}
1&2&4&5\cr
13\cr
16\cr
\end{Young}}}
\advance\xcor by \horizspace
\advance\ycor by -53pt
\horizspace=48pt
\xcor=\horizcent
\temp=48pt
\multiply \temp by 0 \divide \temp by 2
\advance\xcor by -\temp
\rput(\xcor,\ycor){\rnode{v61h1}{\begin{Young}
1&2&4\cr
13&16\cr
15\cr
\end{Young}}}
\advance\xcor by \horizspace
\ncline[nodesep=2pt]{->}{v51h2}{v61h1}
\ncline[nodesep=2pt]{->}{v51h3}{v61h1}
\ncline[nodesep=2pt]{->}{v41h4}{v51h3}

}
\end{pspicture}
\caption{Copies of Chen csq.}
\label{f atomcopies}
\end{figure}

\begin{remark}
\label{r copies important}
The main reason we think that finding nice combinatorics for describing copies of atoms is important is that in $\csqa{U}{G_\lambda}$, with $G_\lambda$ and $U$ skew-linked, there is a path of ascent-edges, corotation-edges, and Knuth transformations from any word to $\reading(G_\lambda)$, but there is no obvious path from a word to $\reading(U)$ in general. However, letting $\mu = \sh(U)$ and assuming Conjecture \ref{cj flip}, there is a  $P$ of shape $\lambda$ such that  $P$ is skew-linked to  $\dual{G}_\mu$.  We expect that  $\csqa{U}{G_\lambda} \cong \csqa{\dual{G}_\mu}{P}$, and there is a path of ascent-edges, corotation-edges and Knuth transformations from $\reading(\dual{G}_\mu)$ to any word in  $\csqa{\dual{G}_\mu}{P}$.  Thus understanding both these copies simultaneously would give a very nice description of the atom.
\end{remark}

\begin{remark}
The cocyclage poset $\chena{U}{G_\lambda}$ is not necessarily connected. The smallest $n$ for which this occurs is $n = 6$ and the only skew linking shape corresponding to a disconnected ccp is \Yboxdim6pt $\young(:::\hfil\hfil,::\hfil,:\hfil,\hfil,\hfil)$. It may be the case that all LLM ccp are connected (see the next subsection).
\end{remark}
\subsection{}
\label{ss LLM atom}
The Lascoux-Lapointe-Morse super atoms of \cite{LLM} are conjecturally a special case of Chen ccp. Let us see how this comes about.

A partition $\lambda$ is $k$-\emph{bounded} if its parts have length $\leq k$. For a $k$-bounded partition $\lambda$ with $r$ parts, the skew shape $\theta^k(\lambda) = \Theta / \nu$ is defined uniquely by the condition $\row(\Theta / \nu) = \lambda$ and the inductive conditions: $\Theta_r = \lambda_r$, $\nu_r = 0$; $\widehat{\theta^k(\lambda)} = \theta^k(\widehat{\lambda})$ and $\nu_1$ is the smallest non-negative integer such that the hook lengths of $\theta^k(\lambda)$ are $\leq k$ ($\widehat{\lambda}$ denotes the partition obtained by removing the first part of $\lambda$). See (\ref{e k-conjugate}).

\be
\label{e k-conjugate}
\Yboxdim6pt
\begin{array}{cc}
\yng(4,3,2,2,2,1) &\young(:::::\hfil\hfil\hfil\hfil,::\hfil\hfil\hfil,:\hfil\hfil,\hfil\hfil,\hfil\hfil,\hfil) \\
{\small \lambda} & {\small \theta^4(\lambda)}
\end{array} \ee

\begin{proposition}[{\cite[Property 33]{LLM}}]
\label{p LLM k conjugate}
For a $k$-bounded partition $\lambda$, $\skl{\lambda}{\theta^k}{\mu}$ for some partition $\mu$ whose conjugate is also $k$-bounded.
\end{proposition}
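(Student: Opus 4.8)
\textbf{Proof proposal for Proposition \ref{p LLM k conjugate}.}

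The plan is to prove the statement by induction on the number of parts $r$ of the $k$-bounded partition $\lambda$, following the recursive structure of the definition of $\theta^k(\lambda)$. The statement has two assertions: (i) that the skew shape $\theta^k(\lambda) = \Theta/\nu$ actually satisfies $\skl{\lambda}{\theta^k}{\mu}$ for \emph{some} partition $\mu$, i.e.\ that $\col(\theta^k(\lambda))$ is a partition conjugate — equivalently that $\theta^k(\lambda)$ is a genuine skew linking shape in the sense of the definition preceding Figure \ref{f skew-link} — and (ii) that this $\mu$ has $k$-bounded conjugate, i.e.\ $\mu' = \col(\theta^k(\lambda))'$ has parts $\leq k$, which amounts to saying the columns of $\theta^k(\lambda)$ have length $\leq k$. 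First I would record the two facts that make the induction go: the recursion $\widehat{\theta^k(\lambda)} = \theta^k(\widehat\lambda)$ (with $\widehat\lambda$ again $k$-bounded and having one fewer part), and the defining minimality property that $\nu_1$ is the least nonnegative integer making all \emph{hook lengths} of $\theta^k(\lambda)$ at most $k$.

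The base case $r = 1$ is immediate: $\theta^k((\lambda_1)) = (\lambda_1)/\emptyset$ is a single row of length $\lambda_1 \le k$, whose column lengths are all $1 \le k$, and it is skew-linked to $\mu = (1^{\lambda_1})$, whose conjugate $(\lambda_1)$ is $k$-bounded. For the inductive step, write $\theta = \theta^k(\lambda) = \Theta/\nu$ and $\widehat\theta = \theta^k(\widehat\lambda)$, so that $\theta$ is obtained from $\widehat\theta$ by prepending a new top row of length $\lambda_1$, shifted right by $\nu_1$ cells, where $\nu_1$ is chosen minimally so that all hooks of $\theta$ have length $\le k$. I would verify: (a) $\row(\theta) = \lambda$, which is built into the construction; (b) the cell contents of $\theta$ are such that $\col(\theta)$ is weakly decreasing — this needs an argument that adding the new row does not create a column-length profile that fails to be a partition conjugate. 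The key point here is that a skew shape $\Theta/\nu$ has $\col(\Theta/\nu)$ a partition conjugate precisely when reading column lengths left to right gives a weakly decreasing sequence that then must equal some $\mu'$; I would show the new row is positioned so that each column it meets is a column that was already among the longest, using that the rows of a partition shape are weakly decreasing in length and the inductive hypothesis that $\col(\widehat\theta)$ is a valid column profile. (c) The hook-length $\le k$ condition, which holds by the very choice of $\nu_1$, translates exactly into the column lengths of $\theta$ being $\le k$: indeed the hook length of the topmost, leftmost cell of a column of $\theta$ equals (column length) $+$ (arm length in the top row), so hook $\le k$ forces column length $\le k$; conversely I would check that the \emph{minimal} such $\nu_1$ produces column lengths exactly filling out a partition conjugate $\mu'$, so $\mu$ is a partition.

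The main obstacle I expect is step (b)/(c) above: showing that the minimality in the definition of $\nu_1$ not only keeps hooks bounded but also forces the column-length sequence of $\theta$ to be \emph{exactly} a partition conjugate (rather than merely $k$-bounded), so that a genuine partition $\mu$ with $\skl{\lambda}{\theta}{\mu}$ exists. This is essentially a careful bookkeeping argument comparing the column lengths of $\widehat\theta$ with those of $\theta$: prepending a row of length $\lambda_1$ starting at column $\nu_1 + 1$ adds $1$ to the length of columns $\nu_1+1, \dots, \nu_1 + \lambda_1$, and one must check that after this increment the sequence is still weakly decreasing — using $\lambda_1 \ge \lambda_2 \ge \cdots$ and the inductive description of $\widehat\theta$ — and that minimality of $\nu_1$ means $\nu_1$ is the unique shift for which the resulting $\mu$ is a partition with $k$-bounded conjugate. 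Since this is \cite[Property 33]{LLM}, I would also remark that the statement and its proof are due to Lascoux, Lapointe, and Morse, and the argument above is simply an exposition of theirs adapted to our notation; the content we genuinely need downstream is only the \emph{existence} of such $\mu$, which then feeds into Proposition \ref{p chen skew-link} via $\skl{\lambda}{\theta^k}{\mu}$.
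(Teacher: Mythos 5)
The paper does not prove this proposition; it simply cites \cite[Property 33]{LLM} and uses the result as a black box. So there is no argument in the paper for you to match against, and your proposal is supplying a proof the paper deliberately omits.

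That said, your inductive strategy (induct on the number of parts of $\lambda$, following the recursion $\widehat{\theta^k(\lambda)} = \theta^k(\widehat\lambda)$) is the natural one and is very likely the shape of the argument in \cite{LLM}. Your base case and your observation that, for the topmost cell $(r_0,c)$ of a column, the hook equals (arm) $+$ (column length) and hence bounds the column length by $k$, are both correct. The genuine gap is exactly where you flag it: you assert but do not establish that the minimal $\nu_1$ makes $\col(\theta^k(\lambda))$ weakly decreasing, and you would also need a point you do not mention at all — that the minimal $\nu_1$ with all hooks $\le k$ actually yields a \emph{valid skew shape}, i.e.\ $\nu_1 \ge \nu_2$, $\Theta_1 = \nu_1 + \lambda_1 \ge \Theta_2$, and $\nu_2 \le \Theta_1$. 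Without this the hook condition itself is not even well-posed, and it is not automatic from minimality alone. Concretely, after prepending the new row in columns $\nu_1+1,\dots,\nu_1+\lambda_1$, the column-length sequence of $\theta^k(\widehat\lambda)$ (call it $c_1 \ge c_2 \ge \cdots$) becomes $c_1,\dots,c_{\nu_1}, c_{\nu_1+1}+1, \dots, c_{\nu_1+\lambda_1}+1, \dots$, and weak decrease requires the \emph{strict} inequality $c_{\nu_1} > c_{\nu_1+1}$ whenever $\nu_1 > 0$; you would need to derive this strict drop from the minimality of $\nu_1$ together with the hook bound (roughly: if $c_{\nu_1} = c_{\nu_1+1}$ one can slide the top row one step left without violating hooks, contradicting minimality). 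Since this is a cited result of Lascoux, Lapointe, and Morse, the cleanest resolution is simply to defer to \cite{LLM} as the paper does; if you want a self-contained proof, these are the two bookkeeping lemmas you must add.
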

In the language of \cite{LLM}, the $k$-conjugate of $\lambda$ is $\mu'$ in the proposition.

\begin{definition}
For $\mu$, $\lambda$ as in Proposition \ref{p LLM k conjugate} and tableaux $G_\lambda, U$ that are skew-linked by $\theta^k$, the \emph{LLM atom} of $G_\lambda, U$ is $\chena{U}{G_\lambda}$.
\end{definition}

\section*{Acknowledgments}
I am deeply grateful to my advisor Mark Haiman for his wonderful ideas,  his generous advice, and our many detailed discussions that made this paper possible. I am grateful to Ryo Masuda, John Wood, Michael Phillips, and Kristofer Henriksson for help typing and typesetting figures.

 \end{document}